\documentclass[11pt,a4paper]{article}

\usepackage{cmap}
\usepackage[T1]{fontenc}
\input{glyphtounicode}
\usepackage[utf8]{inputenc}
\usepackage[margin=2.6cm]{geometry}
\usepackage{amsmath,amssymb,amsthm,mathtools}
\usepackage{booktabs,array,enumitem}
\usepackage{pgfplots}
\pgfplotsset{compat=1.18}
\usepackage{aliascnt}
\usepackage{xcolor}
\definecolor{linkblue}{RGB}{0,60,170}
\usepackage[colorlinks=true,linkcolor=linkblue,citecolor=linkblue,urlcolor=linkblue]{hyperref}
\hypersetup{
  pdftitle={The Type-II Error of Test Supermartingales:
            e-Power versus the Chernoff-Stein Exponent},
  pdfauthor={Patrick Forre},
  pdfsubject={Anytime-valid hypothesis testing},
  pdfkeywords={e-value, e-variable, test supermartingale, safe testing,
               anytime-valid inference, type-II error, error exponent,
               Renyi divergence, Chernoff-Stein lemma}}
\usepackage[capitalize,noabbrev]{cleveref}

\allowdisplaybreaks
\newcommand{\doi}[1]{\href{https://doi.org/#1}{\texttt{doi:\detokenize{#1}}}}
\newcommand{\E}{\mathbb{E}}
\newcommand{\Prob}{\mathbb{P}}
\newcommand{\R}{\mathbb{R}}
\newcommand{\N}{\mathbb{N}}
\newcommand{\Xc}{\mathcal{X}}
\newcommand{\Bs}[1]{\mathcal{B}_{#1}}
\newcommand{\Fc}{\mathcal{F}}
\newcommand{\Ev}{\mathcal{E}}
\newcommand{\KL}{\mathrm{KL}}
\newcommand{\KLrev}{\overline{\mathrm{KL}}}
\DeclareMathOperator*{\argmin}{arg\,min}
\newcommand{\Bc}{\mathcal{B}}
\newcommand{\Ren}{\mathrm{D}}
\newcommand{\Var}{\operatorname{Var}}
\newcommand{\evar}{\textsf{e}-variable}
\newcommand{\evars}{\textsf{e}-variables}
\newcommand{\epower}{\textsf{e}-power}
\newcommand{\etm}{test supermartingale}
\newcommand{\eproc}{\textsf{e}-process}
\newcommand{\Hc}{\mathcal{H}}

\newcommand{\lp}{\left(}
\newcommand{\rp}{\right)}
\newcommand{\lB}{\left[}
\newcommand{\rB}{\right]}
\newcommand{\lC}{\left\{}
\newcommand{\st}{\;\middle|\;}
\newcommand{\rC}{\right\}}

\newtheorem{theorem}{Theorem}[section]
\newcommand{\aliasthm}[3]{%
  \newaliascnt{#1}{theorem}%
  \theoremstyle{#3}\newtheorem{#1}[#1]{#2}%
  \aliascntresetthe{#1}%
  \crefname{#1}{#2}{#2s}\Crefname{#1}{#2}{#2s}}

\theoremstyle{plain}
\aliasthm{proposition}{Proposition}{plain}
\aliasthm{lemma}{Lemma}{plain}
\aliasthm{corollary}{Corollary}{plain}
\aliasthm{definition}{Definition}{definition}
\aliasthm{assumption}{Assumption}{definition}
\aliasthm{construction}{Construction}{definition}
\aliasthm{example}{Example}{definition}
\aliasthm{remark}{Remark}{remark}
\crefname{theorem}{Theorem}{Theorems}
\Crefname{theorem}{Theorem}{Theorems}

\title{\bfseries The Type-II Error of Test Supermartingales:\\[3pt]
\textsf{e}-Power versus the Chernoff--Stein Exponent}
\author{Patrick Forr\'e\\[+10pt]
\small{AI4Science Lab}\\[0pt]
\small{Korteweg-de Vries Institute for Mathematics}\\[0pt]
\small{University of Amsterdam}\\[0pt]
\small{\texttt{p.d.forre@uva.nl}}}
\date{}

\begin{document}
\maketitle

\begin{abstract}
\noindent
In safe hypothesis testing with \etm{}s, Ville's inequality provides
anytime-valid type-I error guarantees for every significance level
$\alpha\in(0,1]$, if one rejects the null hypothesis whenever the wealth
process first exceeds $\frac1\alpha$. Due to an inherent asymmetry, the type-II
error behaves differently. We prove two things about the latter, for a simple
null and alternative.

First, the mean growth rate $\E_{P_1}[\log E]$, the \epower{}, that Kelly
betting and growth-rate-optimal \evars{} maximise, bounds nothing on its own.
For every level $c>0$, every $\alpha$ and horizon $t$ we construct \evars{} of
conditional \epower{} exactly $c$ whose probability of not rejecting by $t$ is
arbitrarily close to one. It forces eventual rejection, but no finite-horizon
guarantee follows.

Second, the quantity that does control the type-II error is the
\emph{Chernoff--Stein exponent of an \evar{}},
$\Lambda(E)=\sup_{s\ge0}\lC-\log\E_{P_1}[E^{-s}]\rC$, whose range is exactly
determined: $\sup_E\Lambda(E)=\KL(P_0\|P_1)$, the classical Chernoff--Stein
exponent, and so the ceiling of its own per-\evar{} form. One conditional
application of H\"older's inequality per step gives it, for every \etm{} on an
arbitrary filtered space, with no independence or product structure; the
i.i.d.\ case adds that it is matched, and attained by nothing.

The \epower{} has its own ceiling, $\KL(P_1\|P_0)$, and that one \emph{is}
attained, $P_0$-a.s.\ uniquely, by the likelihood ratio $R$. The two optima are
the same divergence in opposite arguments, at opposite ends of the flattened
family $R^{\beta}/\E_{P_0}[R^{\beta}]$: the ceiling as $\beta\downarrow0$, $R$
at $\beta=1$. Which $\beta$ is best is settled by the horizon, exactly: $R$ is
optimal at $t=\log\frac1\alpha/\KL(P_1\|P_0)$ alone, beaten by sharpening
$(\beta>1)$ below it and by flattening above.

\medskip
\noindent
\textbf{Keywords.} \textsf{e}-value; \evar{}; \etm{}; safe testing;
anytime-valid
inference; type-II error; error exponent; Chernoff--Stein exponent; R\'enyi divergence;
Chernoff--Stein lemma.

\medskip
\noindent
\textbf{MSC 2020.} 62L10 (primary); 60F10, 60G40, 60G42, 62F03, 94A17
(secondary).
\end{abstract}

\tableofcontents

\section{Introduction}
\label{sec:intro}

\subsection*{The asymmetry}

Fix a filtered space $(\Omega,\Fc,(\Fc_t)_{t\in\N_0})$ carrying a null
$\Prob_0$ and an alternative $\Prob_1$ (\Cref{def:setting}); the case to keep in
mind, and the one in force wherever a lower bound is claimed, is the
\emph{i.i.d.\ model} (\Cref{def:iid}), in which one observes $X_1,X_2,\dots$
i.i.d.\ from a one-observation law $P_i$, so that $\Prob_i=P_i^{\otimes\N}$ and
$\Fc_t=\sigma(X_1,\dots,X_t)$. Now bet: choose at each step a \emph{conditional
\evar{}} $E_t$, a nonnegative statistic with
$\E_{\Prob_0}[E_t\mid\Fc_{t-1}]\le1$, and accumulate wealth
$W_t=\prod_{n\le t}E_n$. Reject at the \emph{rejection time}
\[
   \tau_\alpha\ :=\ \inf\lC t\in\N\st W_t>\tfrac1\alpha\rC ,
\]
the first moment at which the wealth passes $1/\alpha$. Ville's inequality then
gives $\Prob_0(\tau_\alpha<\infty)\le\alpha$ --- type-I error controlled
uniformly over all times, for any betting sequence whatsoever
(\Cref{thm:typeI}). This is the whole appeal of the framework, and it costs the
statistician nothing.

The type-II error is the opposite. It is not controlled by the structure, it is
not a single number but a function of a horizon --- we write
$\gamma_t(\alpha)=\Prob_1(W_t\le\frac1\alpha)$ for the fixed-horizon error and
$\bar\gamma_t(\alpha)=\Prob_1(\tau_\alpha>t)$ for the sequential one
(\Cref{rem:noinfty}) --- and what
it depends on is not what the design literature optimises. This paper is about
that gap. Its two results, and the dichotomy they produce, are stated below;
they are proved in \Cref{sec:nfl,sec:ceiling}, and
\Cref{sec:horizon} turns the second one into a design rule.

\subsection*{First: \texorpdfstring{\epower{}}{e-power} alone bounds nothing}

The standard summary of a betting design is its \emph{\epower{}}, the growth
rate $\E_{P_1}[\log E]$ of a single \evar{} $E$ --- an element of the class
$\Ev$ of nonnegative statistics of one observation with $\E_{P_0}[E]\le1$
\cite{VW21} --- that Kelly betting \cite{Kel56,Bre61},
growth-rate-optimal \evars{} \cite{GHK24} and the reverse information
projection \cite{LRR25} all maximise. Since $\E_{\Prob_1}[\log W_t]\ge tc$ when
the conditional \epower{} is at least $c$, the wealth grows linearly in
expectation, and it is tempting to conclude that it therefore crosses $1/\alpha$
before long.

It does not follow, and the failure is total rather than technical.

\begin{quote}
\textbf{\Cref{thm:nfl}.} \emph{For every $c>0$, $\alpha\in(0,1)$,
$\eta\in(0,1)$ and $t\in\N$ there are hypotheses $(\Prob_0,\Prob_1)$ and a
betting sequence with
$\E_{\Prob_1}[\log E_n\mid\Fc_{n-1}]=c$ exactly, for every $n$, and with
$\Prob_1(\tau_\alpha>t)\ge1-\eta$. Consequently no function
$\Phi(c,\alpha,t)<1$ bounds the type-II error of every design of conditional
\epower{} $c$, uniformly over testing problems.}
\end{quote}

The construction (\Cref{con:nfl}) is one catastrophic first bet, made deep
enough that the remaining $t-1$ bets of the same \epower{} cannot climb back to
the threshold. A lower bound on a conditional mean permits an arbitrarily heavy
lower tail, and that is all the room the counterexample needs.

The theorem does not contradict consistency, and the distinction matters.

\begin{quote}
\textbf{\Cref{prp:powerone}.} \emph{For a fixed $E\in\Ev$ with
$\E_{P_1}[\log E]>0$, the i.i.d.\ product it
generates rejects eventually: $\tau_\alpha<\infty$ $\Prob_1$-almost surely, for
every $\alpha$.}
\end{quote}

What fails in \Cref{thm:nfl} is therefore not consistency but \emph{uniformity}
over the class that \epower{} alone defines --- and uniformity is exactly what a
sample-size calculation needs (\Cref{rem:alone}). The
missing ingredient is any quantitative control of the lower tail of $\log E_t$;
\Cref{rem:cramer} sorts out which tail hypothesis buys which rate, and the rest
of the paper takes the sharpest of them.

\subsection*{Second: the exponent has an exact ceiling}

The quantity that does control the error is the \emph{tilted exponent}
$\Lambda_s(E)=-\log\E_{P_1}[E^{-s}]$ and its optimised form, the
\emph{Chernoff--Stein exponent} $\Lambda(E)=\sup_{s\ge0}\Lambda_s(E)$ --- so named
because at $E=R$ it is the Chernoff information of the pair
(\Cref{cor:chernoffinfo}). It lower-bounds the type-II exponent
of the i.i.d.\ product generated by $E$ always, and equals it when the \epower{}
is positive and $0$ is interior to the domain of the relevant rate function
(\Cref{prp:exponent}). Its exact range over the whole class is the
second result. Three quantities enter. Write
$R=\frac{dP_1}{dP_0}$ for the likelihood ratio,
$Z_\beta=\E_{P_0}[R^{\beta}]$ for its \emph{Hellinger integral} of order
$\beta\in(0,1]$, and
\[
   \lambda(\beta)\ :=\ -\tfrac1\beta\log Z_\beta
   \ =\ \Ren_{1-\beta}(P_0\|P_1)
\]
for the associated R\'enyi divergence of order $1-\beta$, a decreasing function
of $\beta$ (\Cref{def:renyi}). The \emph{flattened} designs
$R_\beta=R^{\beta}/Z_\beta$ are the \evars{} obtained by raising $R$ to a
power and renormalising; $\beta=1$ recovers $R$ itself, and $\beta>1$, legitimate
whenever $Z_\beta<\infty$, \emph{sharpens} rather than flattens
(\Cref{def:flattened}). Only $\beta\in(0,1]$ arises in the ceiling, where
$\beta=\frac1{1+s}$ is tied to the tilt; the horizon question of
\Cref{sec:horizon} uses the whole range.

\begin{quote}
\textbf{\Cref{thm:upper}, \Cref{prp:upper-seq}, \Cref{thm:ceiling}.}
\emph{For every \evar{} $E$ and every $s>0$, with $\beta=\frac1{1+s}$,}
\[
   \Lambda_s(E)\ \le\ \lambda(\beta),
   \qquad\text{with equality iff }E=R^{\beta}/Z_\beta ,
\]
\emph{(equality $P_0$-a.s.), and the same bound holds step by step for an
arbitrary test supermartingale. Consequently}
\[
   \sup_{\mathcal W}\ \Lambda^{(t)}\ =\ \lim_{\beta\downarrow0}\lambda(\beta)
   \ =\ \KL(P_0\|P_1)\qquad\text{for every }t\in\N,
\]
\emph{the supremum being over all test supermartingales, $\Lambda^{(t)}$
denoting the optimised $t$-step exponent of \Cref{def:exponent}, and approached
along the flattened family as $\beta\downarrow0$; and if $P_0$ and $P_1$ are
mutually absolutely continuous $(P_0\sim P_1)$, $\KL(P_0\|P_1)<\infty$ and
$\log R$ is not $P_0$-a.s.\ constant,
it is attained by none of them at any horizon.}
\end{quote}

The upper bound is a single application of H\"older's inequality
(\Cref{thm:upper}); what makes it hold for adaptive designs is that it can be
applied conditionally on the past, once per step, and the steps multiply
(\Cref{prp:upper-seq}). That is also what makes it general. Nothing in the
argument uses the product structure, and on an arbitrary filtered space
\Cref{cor:ceiling} reads
\[
   \Lambda^{(t)}\ \le\ \frac1t\sum_{n\le t}\ \operatorname*{ess\,sup}\
   \KLrev_n ,
   \qquad
   \KLrev_n=-\E_{\Prob_0}\lB\log R_n\mid\Fc_{n-1}\rB ,
\]
the average of the per-step conditional relative entropies, with $R_n$ the
one-step likelihood ratio --- the per-step forms of the Chernoff--Stein exponent
named below. A design attaining this bound in the limit exists in the same generality
(\Cref{lem:flat-cond}). What the i.i.d.\ model supplies is not the bound and not
the design but three things about them: that the essential suprema are
superfluous and the average is a single number, $\KL(P_0\|P_1)$; that the bound
is therefore matched, and not attained; and that it binds the type-II error and
not merely the exponent. \Cref{rem:generalceiling} says which hypothesis each of
the three needs, and \Cref{app:general} supplies them. The value $\KL(P_0\|P_1)$ is the Chernoff--Stein
exponent, the best available to \emph{any} level-$\alpha$ test when
$\KL(P_0\|P_1)<\infty$, so the class of test supermartingales is
exponent-optimal as a class while no member of it attains the ceiling at any
finite horizon (\Cref{prp:stein-ceiling}, \Cref{rem:stein}). The lower bound is attained inside
the smallest natural subclass, the i.i.d.\ products of a single \evar{}, so
adaptivity buys nothing \emph{in the exponent} (\Cref{rem:scope}) --- though it
buys a constant, and \Cref{sec:adaptive} says how much.

\subsection*{The dichotomy, and what to bet}

Both scalars have ceilings, and this is where the two halves meet.

\begin{quote}
\textbf{\Cref{thm:two}.} \emph{$\sup_E\E_{P_1}[\log E]=\KL(P_1\|P_0)$, attained
at $E=R$ and, up to $P_0$-null modification, only there; while
$\sup_E\Lambda(E)=\KL(P_0\|P_1)$, attained by nobody --- under the
non-degeneracy hypotheses above.}
\end{quote}

The same divergence in opposite arguments, one optimum attained and one not.
The two are not even comparable as numbers (\Cref{rem:notcomparable}). Along the
family the tension is visible in one curve: the \epower{}
$c_\beta=\beta(\KL(P_1\|P_0)+\lambda(\beta))$ rises to its ceiling at $\beta=1$
and falls away on both sides, while $\lambda(\beta)$ decreases throughout, from
its ceiling at $\beta=0$ (\Cref{lem:family}, \Cref{fig:two}).

Which end one wants is settled by the horizon, and here the answer is exact.
Every member of the family performs the same likelihood-ratio test with a
different threshold: writing $G_t=\sum_{n\le t}\log R(X_n)$ and
$B_t(\beta)=\frac{\ell}{\beta}-t\lambda(\beta)$ with $\ell=\log\frac1\alpha$,
the type-II error of $R_\beta$ is \emph{exactly} $\Prob_1(G_t\le B_t(\beta))$
(\Cref{prp:exactfamily}). Minimising the error over the family is therefore
minimising $B_t$, and that has a clean answer.

\begin{quote}
\textbf{\Cref{thm:betastar}.} \emph{Under the non-degeneracy hypotheses above,
$B_t$ has, whenever one exists, a unique minimiser $\beta^{\ast}(t)$, and
$\beta^{\ast}(t)=1$ --- the
growth-optimal design is the horizon-optimal one within the flattened family ---
if and only if $t=\ell/\KL(P_1\|P_0)$; below that horizon $R$ is beaten by
sharpening $(\beta>1)$ and above it by flattening. If moreover $\E_{P_0}[R^{-\delta}]<\infty$ for some
$\delta>0$ and $V_0=\Var_{P_0}(\log R)\in(0,\infty)$ is the variance of the
log-likelihood-ratio under the null, then as $t\to\infty$ one
has $\beta^{\ast}(t)=\sqrt{2\ell/(tV_0)}(1+o(1))$ and the design
$R_{\beta^{\ast}(t)}$ satisfies}
\[
   \gamma_t(\alpha)\ \le\ \exp\lp-t\,\KL(P_0\|P_1)+\sqrt{2\ell tV_0}+O(1)\rp .
\]
\end{quote}

The crossover $\ell/\KL(P_1\|P_0)$ is exactly the number of observations $R$
needs to accumulate $\ell$ nats of evidence on average. So the growth-optimal
\evar{} is the right design at exactly the horizon at which it typically
rejects, and is beaten on either side of it (\Cref{rem:crossover}). The price of the near-optimal
exponent is paid in horizon: a tilt of order $\epsilon$ buys an exponent within
$\epsilon$ of the ceiling and costs a horizon of order $\epsilon^{-1}$ before
the test has any power at all (\Cref{lem:floor}, \Cref{cor:price}). In the
Gaussian location model everything is closed form, and $R$ realises exactly a
quarter of the ceiling (\Cref{ex:gauss}).

\subsection*{Scope}

The null and the alternative are simple and both are known; $\KL(P_0\|P_1)$ is
an oracle quantity and the designs approaching it are built from $R$. The
composite case, which is what growth-rate-optimal \evars{} exist for, is not
treated (\Cref{rem:oracle}). Neither is the exact second-order behaviour beyond
the $\sqrt t$ term, nor misspecification (\Cref{rem:scopefinal}). The
observations are i.i.d.\ wherever a lower bound is claimed; the upper bounds are
proved without that assumption, and \Cref{app:general} states what each of the
remaining results becomes, and at what cost, when it is dropped. Results quoted
from the literature are collected in \Cref{app:quoted}, restated in the notation
used here; \Cref{sec:notation} is an index of that notation.

\section{Setup and notation}
\label{sec:setup}

We fix notation once and use it without further comment. Every map is given
with its domain and codomain. Throughout, $\log$ is the natural logarithm,
$\log 0:=-\infty$, $e^{-\infty}:=0$, $0\cdot\infty:=0$, and for
$x\in[0,\infty]$ we set $x^{0}:=1$ and, for $s>0$, $0^{-s}:=+\infty$ and
$(+\infty)^{-s}:=0$; also $\log(+\infty):=+\infty$, $e^{+\infty}:=+\infty$ and
$0\cdot(-\infty):=0$, so that $0\log0=0$. With these conventions $x\mapsto x^{-s}$ is nonincreasing
on all of $[0,\infty]$ for every $s\ge0$.

\subsection{The filtered setting}

\begin{definition}[Setting]\label{def:setting}
Let $(\Omega,\Fc)$ be a measurable space carrying a filtration
$(\Fc_t)_{t\in\N_0}$, where $\N_0:=\N\cup\{0\}$, with $\Fc_0=\{\emptyset,\Omega\}$
and $\Fc_\infty:=\sigma\lp\bigcup_{t\in\N}\Fc_t\rp\subseteq\Fc$, and let
$\Prob_0,\Prob_1$ be probability measures on $(\Omega,\Fc)$, the \emph{null} and
the \emph{alternative}. Nothing is assumed about how the filtration arises. We
write $\Bs{S}$ for the $\sigma$-algebra carried by a space $S$.
\end{definition}

\begin{definition}[The i.i.d.\ model]\label{def:iid}
The \emph{i.i.d.\ model} generated by a pair $P_0,P_1$ of probability measures on
a measurable space $(\Xc,\Bs{\Xc})$, the \emph{observation space}, is the
instance of \Cref{def:setting} in which
\[
   \Omega:=\Xc^{\N},\qquad
   X_n:\Omega\to\Xc,\quad X_n(\omega):=\omega_n\ \ (n\in\N),
\]
$\Fc:=\Bs{\Xc}^{\otimes\N}$, $\Fc_t:=\sigma(X_1,\dots,X_t)$ and
$\Prob_i:=P_i^{\otimes\N}$. We call $P_0,P_1$ the \emph{one-observation} laws.
\Cref{def:setting} is in force throughout; the i.i.d.\ model is assumed only in
those statements that name it. A statement is therefore i.i.d.-only exactly when
the one-observation laws $P_0,P_1$ --- as opposed to the path measures
$\Prob_0,\Prob_1$ --- occur in it, and we say so as well.
\end{definition}

\begin{assumption}[Standing]\label{ass:standing}
For every $t\in\N$, $\Prob_1\ll\Prob_0$ on $\Fc_t$ and
\[
   \KL\lp\Prob_1|_{\Fc_t}\,\big\|\,\Prob_0|_{\Fc_t}\rp\ <\ \infty ,
   \qquad\text{where}\qquad
   \KL(Q\|P):=\begin{cases}
      \E_{Q}\lB\log\frac{dQ}{dP}\rB, & Q\ll P,\\
      +\infty, & \text{otherwise.}
   \end{cases}
\]
\end{assumption}

\begin{remark}[What the standing assumption says in the i.i.d.\ model]
\label{rem:one-obs}
In the i.i.d.\ model the chain rule gives
$\KL(\Prob_1|_{\Fc_t}\|\Prob_0|_{\Fc_t})=t\,\KL(P_1\|P_0)$, so
\Cref{ass:standing} says exactly
\[
   P_1\ll P_0\qquad\text{and}\qquad \KL(P_1\|P_0)<\infty ,
\]
and we then write $R:=\frac{dP_1}{dP_0}:\Xc\to[0,\infty)$ for a fixed version of
the density, so that $\KL(P_1\|P_0)=\E_{P_1}[\log R]$ and $\E_{P_0}[R]=1$. Note
that the assumption is imposed at each \emph{finite} $t$ only: the path
divergence $\KL(\Prob_1|_{\Fc_t}\|\Prob_0|_{\Fc_t})$ diverges as $t\to\infty$
unless $P_0=P_1$, which is why every divergence between $P_0$ and $P_1$ below is
per observation, and every divergence in the general setting is per step and
conditional on the past.
\end{remark}

\begin{definition}[Likelihood ratios]\label{def:ratios}
Under \Cref{ass:standing} let
\[
   L_t:=\frac{d\Prob_1|_{\Fc_t}}{d\Prob_0|_{\Fc_t}}\qquad(t\in\N_0) ,
\]
a nonnegative $\Prob_0$-martingale with $L_0=1$, and define the \emph{one-step
ratio}
\[
   R_t:\Omega\to[0,\infty),\qquad
   R_t:=\frac{L_t}{L_{t-1}}\ \ \text{on }\lC L_{t-1}>0\rC ,
   \qquad R_t:=1\ \ \text{elsewhere} .
\]
Because
$\Prob_1(L_{t-1}=0)=\E_{\Prob_0}[L_{t-1}\mathbf 1_{\{L_{t-1}=0\}}]=0$, the
convention off $\lC L_{t-1}>0\rC$ affects nothing asserted $\Prob_1$-a.s. In the
i.i.d.\ model $L_t=\prod_{n\le t}R(X_n)$, and $R_t=R(X_t)$ $\Prob_1$-a.s.\ ---
but not $\Prob_0$-a.s., since $\lC L_{t-1}=0\rC$ may carry $\Prob_0$-mass. Every
identification below is therefore stated under $\Prob_1$.
\end{definition}

\begin{lemma}[Change of measure, one step]\label{lem:bayes}
Let $t\in\N$ and let $Y:\Omega\to[0,\infty]$ be $\Fc_t$-measurable. Then
\[
   \E_{\Prob_1}\lB Y\mid\Fc_{t-1}\rB
   \ =\ \E_{\Prob_0}\lB R_t\,Y\mid\Fc_{t-1}\rB
   \qquad\Prob_1\text{-a.s.}
\]
\end{lemma}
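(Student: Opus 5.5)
The plan is to verify the defining property of conditional expectation under $\Prob_1$ directly. Set $V:=\E_{\Prob_0}[R_tY\mid\Fc_{t-1}]$, an $\Fc_{t-1}$-measurable $[0,\infty]$-valued random variable. It is defined up to $\Prob_0$-null sets, and hence up to $\Prob_1$-null sets on $\Fc_{t-1}$, because $\Prob_1\ll\Prob_0$ there by \Cref{ass:standing}. Since $Y\ge0$, conditional expectations of $[0,\infty]$-valued variables exist without integrability, via monotone convergence. It therefore suffices to show that for every $A\in\Fc_{t-1}$,
\[
   \E_{\Prob_1}[Y\mathbf 1_A]=\E_{\Prob_1}[V\mathbf 1_A].
\]

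For the left side, note that $Y\mathbf 1_A$ is $\Fc_t$-measurable, so $\E_{\Prob_1}[Y\mathbf 1_A]=\E_{\Prob_0}[L_tY\mathbf 1_A]$. Next split on $\{L_{t-1}>0\}$.

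On $\{L_{t-1}=0\}$ we have $L_t=0$ $\Prob_0$-a.s. This holds because $L$ is a nonnegative $\Prob_0$-martingale, so
\[
   \E_{\Prob_0}[L_t\mathbf 1_{\{L_{t-1}=0\}}]=\E_{\Prob_0}[L_{t-1}\mathbf 1_{\{L_{t-1}=0\}}]=0.
\]
On $\{L_{t-1}>0\}$ we have $L_t=L_{t-1}R_t$. Hence
\[
   L_tY\mathbf 1_A=L_{t-1}R_tY\mathbf 1_{A\cap\{L_{t-1}>0\}}\qquad\Prob_0\text{-a.s.}
\]

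Since $A\cap\{L_{t-1}>0\}\in\Fc_{t-1}$ and $L_{t-1}$ is $\Fc_{t-1}$-measurable and nonnegative, the tower property with "taking out what is known" (valid for nonnegative variables, by monotone convergence) gives
\[
   \E_{\Prob_0}[L_{t-1}R_tY\mathbf 1_{A\cap\{L_{t-1}>0\}}]=\E_{\Prob_0}[L_{t-1}V\mathbf 1_{A\cap\{L_{t-1}>0\}}].
\]
The function $V\mathbf 1_{A\cap\{L_{t-1}>0\}}$ is $\Fc_{t-1}$-measurable and nonnegative, and $L_{t-1}$ is the $\Fc_{t-1}$-density. So the right side equals $\E_{\Prob_1}[V\mathbf 1_{A\cap\{L_{t-1}>0\}}]$. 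Finally, $\Prob_1(L_{t-1}=0)=0$ (as noted in \Cref{def:ratios}), so this is $\E_{\Prob_1}[V\mathbf 1_A]$, which is the required identity.

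The only delicate point is bookkeeping with null sets and infinite values. One must not use $R_t=L_t/L_{t-1}$ off $\{L_{t-1}>0\}$, where the paper's convention $R_t=1$ carries no meaning. That is why the split is made, and the martingale property is used to kill the set $\{L_{t-1}=0\}$ under $\Prob_0$-weighting by $L_t$. With the conventions $0\cdot\infty=0$ and everything nonnegative, no integrability assumption on $Y$ is needed, and all identities hold in $[0,\infty]$.
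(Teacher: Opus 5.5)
Your proof is correct and follows essentially the paper's argument: you verify the defining property of $\E_{\Prob_1}[Y\mid\Fc_{t-1}]$ on sets $A\in\Fc_{t-1}$ by passing through the density $L_t$, disposing of $\{L_{t-1}=0\}$ with the $\Prob_0$-martingale property, pulling out $L_{t-1}$, and returning via the density $L_{t-1}$. The only difference is cosmetic: the paper does not restrict to $A\cap\{L_{t-1}>0\}$, because $L_{t-1}R_tY=L_tY$ already holds $\Prob_0$-a.s.\ on all of $\Omega$ (both sides vanish where $L_{t-1}=0$), so it never needs the final step $\Prob_1(L_{t-1}=0)=0$.
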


\begin{proof}
First, $L_t=0$ $\Prob_0$-a.s.\ on $\lC L_{t-1}=0\rC$: by the martingale property
$\E_{\Prob_0}[L_t\mathbf 1_{\{L_{t-1}=0\}}]
=\E_{\Prob_0}[L_{t-1}\mathbf 1_{\{L_{t-1}=0\}}]=0$ and $L_t\ge0$. Hence
$L_{t-1}R_tY=L_tY$ $\Prob_0$-a.s., both sides vanishing on
$\lC L_{t-1}=0\rC$. Now let $A\in\Fc_{t-1}$. Using the density $L_{t-1}$ on
$\Fc_{t-1}$, then the pull-out property for the nonnegative
$\Fc_{t-1}$-measurable factor $\mathbf 1_AL_{t-1}$, then the identity just
proved, then the density $L_t$ on $\Fc_t$:
\[
   \E_{\Prob_1}\lB\mathbf 1_A\,\E_{\Prob_0}[R_tY\mid\Fc_{t-1}]\rB
   =\E_{\Prob_0}\lB\mathbf 1_AL_{t-1}R_tY\rB
   =\E_{\Prob_0}\lB\mathbf 1_AL_tY\rB
   =\E_{\Prob_1}\lB\mathbf 1_AY\rB .
\]
The variable $\E_{\Prob_0}[R_tY\mid\Fc_{t-1}]$ is $\Fc_{t-1}$-measurable and
nonnegative, so this identity for every $A\in\Fc_{t-1}$ is the defining property
of $\E_{\Prob_1}[Y\mid\Fc_{t-1}]$.
\end{proof}

\subsection{\texorpdfstring{\evars}{e-variables} and \texorpdfstring{\etm s}{test supermartingales}}

\begin{definition}[\evar{}]\label{def:evar}
In the i.i.d.\ model (\Cref{def:iid}), a map $E:\Xc\to[0,\infty]$ is an
\emph{\evar{} for $P_0$} if it is
$\Bs{\Xc}$-measurable and
\[
   \E_{P_0}[E]\ \le\ 1 .
\]
We write
\[
   \Ev\ :=\ \lC E:\Xc\to[0,\infty]\ \text{measurable}\st\E_{P_0}[E]\le1\rC .
\]
\end{definition}

\begin{definition}[Conditional \evar{}]\label{def:cond-evar}
Let $t\in\N$. A map $E_t:\Omega\to[0,\infty]$ is a \emph{conditional \evar{} for
$\Prob_0$ at time $t$} if it is $\Fc_t$-measurable and
\[
   \E_{\Prob_0}\lB E_t\mid\Fc_{t-1}\rB\ \le\ 1
   \qquad \Prob_0\text{-a.s.}
\]
A sequence $(E_t)_{t\in\N}$ of such maps is called a \emph{betting sequence}.
\end{definition}

\begin{definition}[Wealth process, \etm{}]\label{def:wealth}
Given a betting sequence $(E_t)_{t\in\N}$, define
\[
   W:\N_0\times\Omega\to[0,\infty],\qquad
   W_0:=1,\qquad W_t:=\prod_{n=1}^{t}E_n .
\]
We call $(W_t)_{t\in\N_0}$ the \emph{\etm{}} of the sequence
\cite{SSVV11,RGVS23}. In the i.i.d.\ model, when $E_t=E(X_t)$ for a single
$E\in\Ev$ and all $t$, we call $(W_t)_{t\in\N_0}$ the \emph{i.i.d.\ product} generated by
$E$ and write $W_t=W_t(E)$.
\end{definition}

\begin{lemma}[The wealth process is a supermartingale]\label{lem:supmg}
$(W_t)_{t\in\N_0}$ is a nonnegative $\Prob_0$-supermartingale with $W_0=1$, and
$E_t<\infty$ $\Prob_0$-a.s.\ for every $t$.
\end{lemma}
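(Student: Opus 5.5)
We argue by induction on $t$, from the defining property $\E_{\Prob_0}[E_t\mid\Fc_{t-1}]\le1$ of \Cref{def:cond-evar}. Nonnegativity and $W_0=1$ are immediate from \Cref{def:wealth}, and adaptedness holds because $W_t=\prod_{n\le t}E_n$ is a product of $\Fc_n$-measurable maps with $n\le t$. Since the $E_n$ take values in $[0,\infty]$, the product is formed in $[0,\infty]$ with the convention $0\cdot\infty=0$. Measurability of that product is unproblematic, since multiplication $[0,\infty]^2\to[0,\infty]$ with this convention is Borel.

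\textbf{Finiteness.} Conditional expectations of $[0,\infty]$-valued variables are defined in the extended sense, so integrability is not available in advance. Taking $\Prob_0$-expectations in the conditional \evar{} inequality gives $\E_{\Prob_0}[E_t]\le1<\infty$, hence $E_t<\infty$ $\Prob_0$-a.s.

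\textbf{Supermartingale inequality.} We show $\E_{\Prob_0}[W_t\mid\Fc_{t-1}]\le W_{t-1}$ together with $\E_{\Prob_0}[W_t]\le1$, the latter giving integrability. Write $W_t=W_{t-1}E_t$ with $W_{t-1}$ nonnegative and $\Fc_{t-1}$-measurable. The pull-out property for nonnegative extended-valued factors, valid by monotone convergence without integrability, gives
\[
\E_{\Prob_0}[W_{t-1}E_t\mid\Fc_{t-1}]=W_{t-1}\,\E_{\Prob_0}[E_t\mid\Fc_{t-1}]\le W_{t-1}\qquad \Prob_0\text{-a.s.}
\]
Here a possible value $W_{t-1}=\infty$ would be a problem, but by induction $W_{t-1}<\infty$ $\Prob_0$-a.s., which follows from finiteness of each $E_n$. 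On the event $\{W_{t-1}=0\}$ both sides vanish under $0\cdot\infty=0$. Taking expectations then yields $\E_{\Prob_0}[W_t]\le\E_{\Prob_0}[W_{t-1}]\le1$, which closes the induction.

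The only real obstacle is bookkeeping with the extended values: justifying the pull-out property when $E_t$ may take the value $\infty$ on a null set and $W_{t-1}$ may vanish. To handle this, I would first truncate with $W_{t-1}\wedge k$, apply the standard pull-out property, and let $k\to\infty$ using conditional monotone convergence.
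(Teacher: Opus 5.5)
Your proof is correct and follows the paper's route: the pull-out property for nonnegative extended-real variables applied to $W_t=W_{t-1}E_t$, induction from $W_0=1$ to get $\E_{\Prob_0}[W_t]\le1$, and finiteness of $E_t$ from the conditional \evar{} bound. The detour through $W_{t-1}<\infty$ is harmless but unnecessary, since $W_{t-1}\,\E_{\Prob_0}[E_t\mid\Fc_{t-1}]\le W_{t-1}$ holds in $[0,\infty]$ under the paper's conventions even where $W_{t-1}=\infty$.
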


\begin{proof}
$W_t=W_{t-1}E_t$ in $[0,\infty]$, with the convention $0\cdot\infty:=0$. Since
$W_{t-1}$ is nonnegative and $\Fc_{t-1}$-measurable, the pull-out property for
nonnegative extended-real variables gives
$\E_{\Prob_0}[W_t\mid\Fc_{t-1}]=W_{t-1}\E_{\Prob_0}[E_t\mid\Fc_{t-1}]\le W_{t-1}$,
and induction from $W_0=1$ gives $\E_{\Prob_0}[W_t]\le1<\infty$. Finally
$\E_{\Prob_0}[E_t\mid\Fc_{t-1}]\le1$ forces $E_t<\infty$ $\Prob_0$-a.s.
\end{proof}

\begin{definition}[Rejection rule and rejection time]\label{def:reject}
Fix $\alpha\in(0,1)$ and put $\ell:=\log\frac1\alpha\in(0,\infty)$. The test
rejects at the \emph{rejection time}
\[
   \tau_\alpha:\Omega\to\N\cup\{\infty\},\qquad
   \tau_\alpha:=\inf\lC t\in\N\st W_t>\tfrac1\alpha\rC ,
   \qquad \inf\emptyset:=\infty .
\]
\end{definition}

The following is the reason for the framework, and the only place where a
probability under the null is computed. It is Ville's inequality
\cite{Vil39} (restated as \Cref{thm:app-ville}) applied to $(W_t)_{t\in\N_0}$.

\begin{theorem}[Type-I guarantee]\label{thm:typeI}
For every betting sequence and every $\alpha\in(0,1)$,
$\Prob_0(\tau_\alpha<\infty)\le\alpha$.
\end{theorem}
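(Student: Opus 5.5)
The plan is to apply Ville's inequality (\Cref{thm:app-ville}) to the wealth process, after checking that its hypotheses hold. By \Cref{lem:supmg}, $(W_t)_{t\in\N_0}$ is a nonnegative $\Prob_0$-supermartingale with $W_0=1$. The only subtlety is that $W_t$ takes values in $[0,\infty]$. \Cref{lem:supmg} also shows $E_t<\infty$ $\Prob_0$-a.s. for every $t$, so on a $\Prob_0$-full set every $W_t$ is finite. I will pass to a version that is real-valued off a null set, which changes no probability under $\Prob_0$. Alternatively, I can run the argument below directly in $[0,\infty]$, since it only uses expectations of nonnegative variables.

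The conclusion to derive is
\[
  \Prob_0\lp\sup_{t\in\N_0}W_t\ge\tfrac1\alpha\rp\le\alpha .
\]
Since $\{\tau_\alpha<\infty\}=\bigcup_{t}\{W_t>\frac1\alpha\}\subseteq\{\sup_t W_t\ge\frac1\alpha\}$, the claim follows.

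To keep the plan self-contained in case the quoted form of Ville differs, I will also sketch the standard proof. Fix $T\in\N$ and consider the bounded stopping time $\tau_\alpha\wedge T$. Optional stopping for nonnegative supermartingales at bounded times gives $\E_{\Prob_0}[W_{\tau_\alpha\wedge T}]\le W_0=1$; this can be proved by hand by summing the supermartingale inequality over the events $\{\tau_\alpha=n\}\in\Fc_n$. On $\{\tau_\alpha\le T\}$ we have $W_{\tau_\alpha\wedge T}>\frac1\alpha$, so Markov's inequality yields $\Prob_0(\tau_\alpha\le T)\le\alpha$. Letting $T\to\infty$ and using continuity from below gives $\Prob_0(\tau_\alpha<\infty)\le\alpha$.

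The main obstacle is only the bookkeeping with extended-real values and the $0\cdot\infty$ convention inside the stopped expectation. The a.s. finiteness from \Cref{lem:supmg}, together with working with nonnegative variables throughout, handles it.
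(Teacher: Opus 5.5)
Your proposal is correct and matches the paper's proof: \Cref{lem:supmg} makes $(W_t)$ a nonnegative $\Prob_0$-supermartingale with $W_0=1$, the inclusion $\{\tau_\alpha<\infty\}\subseteq\{\sup_t W_t\ge\frac1\alpha\}$ holds, and Ville's inequality (\Cref{thm:app-ville}) applies with $x=\frac1\alpha\in(1,\infty)$. Your extra sketch (optional stopping at $\tau_\alpha\wedge T$, then Markov, then $T\to\infty$) is a valid self-contained proof of this instance of Ville, which the paper simply cites.
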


\begin{proof}
$(W_t)_{t\in\N_0}$ is a nonnegative $\Prob_0$-supermartingale with $W_0=1$, and
$\{\tau_\alpha<\infty\}=\{\sup_{t\in\N}W_t>\frac1\alpha\}
\subseteq\{\sup_{t\in\N_0}W_t\ge\frac1\alpha\}$. Apply Ville's
inequality \cite{Vil39}, \cite[Lem.~1]{How20} in the form of
\Cref{thm:app-ville}, with $x=\frac1\alpha$.
\end{proof}

\subsection{The type-II functionals}

\begin{definition}[Type-II error]\label{def:typeII}
Fix $\alpha\in(0,1)$ and a betting sequence. Define
\begin{align}
   \gamma_\bullet(\alpha)&:\N_0\to[0,1], &
   \gamma_t(\alpha)&:=\Prob_1\lp W_t\le\tfrac1\alpha\rp ,
   \label{eq:gamma}\\
   \bar\gamma_\bullet(\alpha)&:\N_0\to[0,1], &
   \bar\gamma_t(\alpha)&:=\Prob_1\lp\tau_\alpha>t\rp
   =\Prob_1\lp\max_{0\le n\le t}W_n\le\tfrac1\alpha\rp .
   \label{eq:gammabar}
\end{align}
We call $\gamma_t$ the \emph{fixed-horizon} and $\bar\gamma_t$ the
\emph{sequential} type-II error.
\end{definition}

\begin{lemma}[Ordering]\label{lem:order}
$\bar\gamma_t(\alpha)\le\gamma_t(\alpha)$ for every $t\in\N_0$ and
$\alpha\in(0,1)$.
\end{lemma}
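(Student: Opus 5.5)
The ordering is a plain event inclusion followed by monotonicity of $\Prob_1$; no probabilistic estimate is involved. I fix $t\in\N_0$ and $\alpha\in(0,1)$. By \eqref{eq:gammabar}, $\bar\gamma_t(\alpha)$ is the $\Prob_1$-probability of the event $\lC\max_{0\le n\le t}W_n\le\tfrac1\alpha\rC$. By \eqref{eq:gamma}, $\gamma_t(\alpha)$ is the $\Prob_1$-probability of $\lC W_t\le\tfrac1\alpha\rC$. Both sets are $\Fc$-measurable, since each $W_n$ is $\Fc_n$-measurable with values in $[0,\infty]$ and a finite maximum of measurable maps is measurable.

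The key step is the inclusion
\[
   \lC\max_{0\le n\le t}W_n\le\tfrac1\alpha\rC\ \subseteq\ \lC W_t\le\tfrac1\alpha\rC .
\]
It holds because $W_t\le\max_{0\le n\le t}W_n$ pointwise on $\Omega$, as $W_t$ is one of the terms in the maximum. This is valid in $[0,\infty]$ as well, so the value $+\infty$ needs no separate treatment. Applying $\Prob_1$ to both sides and using monotonicity of measures gives $\bar\gamma_t(\alpha)\le\gamma_t(\alpha)$.

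The only thing to check with care is the identity $\Prob_1(\tau_\alpha>t)=\Prob_1(\max_{0\le n\le t}W_n\le\tfrac1\alpha)$ that is built into \eqref{eq:gammabar}. Since $\tau_\alpha$ is an infimum over $t\in\N$, the event $\lC\tau_\alpha>t\rC$ equals $\lC W_n\le\tfrac1\alpha\ \text{for all }1\le n\le t\rC$. Adding the index $n=0$ changes nothing, because $W_0=1\le\tfrac1\alpha$ when $\alpha<1$. For $t=0$ the event $\lC\tau_\alpha>0\rC$ is all of $\Omega$, and it coincides with $\lC W_0\le\tfrac1\alpha\rC$, so both error functionals equal $1$ there.

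I do not expect any real obstacle. The only care needed is to state the pointwise comparison in the extended reals and to note that the $t=0$ case is consistent.
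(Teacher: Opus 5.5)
Your proof is correct and is the same as the paper's, which writes $\{\tau_\alpha>t\}=\bigcap_{n=0}^{t}\{W_n\le\frac1\alpha\}\subseteq\{W_t\le\frac1\alpha\}$ and applies monotonicity of $\Prob_1$. Your extra check that adding the index $n=0$ is harmless, because $W_0=1\le\frac1\alpha$, correctly justifies the identity built into \eqref{eq:gammabar}.
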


\begin{proof}
$\{\tau_\alpha>t\}=\bigcap_{n=0}^{t}\{W_n\le\frac1\alpha\}\subseteq
\{W_t\le\frac1\alpha\}$.
\end{proof}

\begin{remark}[There is no type-II error ``at $t=\infty$'']\label{rem:noinfty}
The type-I error is a single number because
$\Prob_0(\sup_{t}W_t>\frac1\alpha)\le\alpha$ involves a supremum over all
times. The formal analogue $\bar\gamma_\infty(\alpha)=\Prob_1(\tau_\alpha=\infty)$
carries no information for the sequences this paper is about: by
\Cref{prp:powerone} below it is $0$ for every i.i.d.\ product generated by an
$E\in\Ev$ of positive \epower{}. (For a general betting sequence it need not
vanish --- $E_t\equiv1$ gives $\tau_\alpha=\infty$ a.s.\ --- but then no
horizon-free statement is available either.) The type-II error is therefore
irreducibly a function of a horizon, and all statements below are indexed by
$t$.
\end{remark}

\subsection{The two scalars attached to an \texorpdfstring{\evar}{e-variable}}

\begin{definition}[\epower{}]\label{def:epower}
The \emph{\epower{}} of an $E\in\Ev$ \cite{VW21} is the quantity
\[
   \E_{P_1}\lB\log E\rB\ \in\ [-\infty,\infty] ,
\]
defined whenever $\E_{P_1}[(\log E)_+]<\infty$ or $\E_{P_1}[(\log E)_-]<\infty$.
Under \Cref{ass:standing} the positive part is always integrable
(\Cref{lem:epower-welldef}), so the \epower{} is a well-defined element of
$[-\infty,\infty)$ for every $E\in\Ev$.
\end{definition}

\begin{remark}[No symbol for the \epower{}]\label{rem:nosymbol}
The \epower{} is written out as $\E_{P_1}[\log E]$ throughout rather than
abbreviated, and its conditional version, used in \Cref{sec:nfl}, as
$\E_{\Prob_1}[\log E_t\mid\Fc_{t-1}]$.
\end{remark}

\begin{definition}[The Chernoff--Stein exponent of an \evar{}]\label{def:exponent}
For $E\in\Ev$ define
\[
   \Lambda_\bullet(E):[0,\infty)\to[-\infty,\infty],\qquad
   \Lambda_s(E):=-\log\E_{P_1}\lB E^{-s}\rB ,
\]
and
\[
   \Lambda:\Ev\to[0,\infty],\qquad
   \Lambda(E):=\sup_{s\in[0,\infty)}\Lambda_s(E) .
\]
Note $\Lambda_0(E)=0$, so $\Lambda(E)\ge0$. We call $\Lambda_s(E)$ the
\emph{tilted exponent} at tilt $s$, and $\Lambda(E)$ the \emph{Chernoff--Stein
exponent of $E$}: it is the type-II error exponent that $E$ achieves in the
Chernoff--Stein regime --- level $\alpha$ fixed, horizon $t\to\infty$
(\Cref{prp:exponent}) --- and the classical Chernoff--Stein exponent
$\KL(P_0\|P_1)$ is its supremum over the class, which is \Cref{thm:ceiling}. The
two levels of the name are related by exactly that theorem. At $E=R$ the value
is the Chernoff information of the pair (\Cref{cor:chernoffinfo}). The name
records the regime, not an identity: \Cref{prp:exponent} proves
$\Lambda(E)\le\liminf_t-\frac1t\log\gamma_t(\alpha)$ always, with equality under
a hypothesis; whether the inequality can be strict we do not know.
For a betting sequence with wealth
process $(W_t)_{t\in\N_0}$ define, in parallel, the \emph{$t$-step exponent}
\[
   \Lambda^{(t)}_\bullet:[0,\infty)\to[-\infty,\infty],\qquad
   \Lambda^{(t)}_s:=-\frac1t\log\E_{\Prob_1}\lB W_t^{-s}\rB
   \qquad (t\in\N) ,
\]
and its optimised form
\[
   \Lambda^{(t)}:=\sup_{s\in[0,\infty)}\Lambda^{(t)}_s\ \in[0,\infty] .
\]
\end{definition}

\begin{lemma}[The two exponents agree on i.i.d.\ products]\label{lem:iidexp}
In the i.i.d.\ model, if $(W_t)_{t\in\N_0}$ is the i.i.d.\ product generated by a
single $E\in\Ev$, then
$\Lambda^{(t)}_s=\Lambda_s(E)$ for every $s\in[0,\infty)$ and every $t\in\N$.
\end{lemma}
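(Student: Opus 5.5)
The plan is to compute $\E_{\Prob_1}[W_t^{-s}]$ directly and show that it factorises as $\E_{P_1}[E^{-s}]^t$. Taking $-\frac1t\log$ of that identity then gives $\Lambda^{(t)}_s=-\frac1t\log\E_{P_1}[E^{-s}]^t=\Lambda_s(E)$, with the convention $-\log(+\infty)=-\infty$ handling the case where the expectation is infinite. The case $s=0$ is trivial: both sides equal $0$ because $x^0:=1$ for all $x\in[0,\infty]$. So fix $s>0$.

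\textbf{Step 1: the product identity.} In the i.i.d.\ model, $W_t=\prod_{n\le t}E(X_n)$. I will check that the conventions give
\[
W_t^{-s}=\prod_{n\le t}E(X_n)^{-s}
\]
pointwise in $[0,\infty]$.

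The only delicate products are those where some factor is $0$ and another is $+\infty$. Here this is a $\Prob_1$-null issue, since $E<\infty$ holds $P_0$-a.s.\ by \Cref{lem:supmg}, hence $P_1$-a.s.\ because $P_1\ll P_0$ (\Cref{rem:one-obs}). Off that null set, if some $E(X_n)=0$, then $W_t=0$ and $W_t^{-s}=+\infty$. On the right, the factor $E(X_n)^{-s}=+\infty$ appears and every other factor lies in $(0,\infty]$, so the product is also $+\infty$. If instead all factors lie in $(0,\infty)$, the identity is just ordinary algebra. Therefore the identity holds $\Prob_1$-a.s., which is all that is needed.

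\textbf{Step 2: factorising the expectation.} Under $\Prob_1=P_1^{\otimes\N}$ the variables $X_1,\dots,X_t$ are i.i.d.\ with law $P_1$. The factors $E(X_n)^{-s}$ are therefore independent, nonnegative and extended-real valued. For such variables the expectation of the product equals the product of the expectations in $[0,\infty]$; this follows from Tonelli on the product measure, with the convention $0\cdot\infty=0$. Hence
\[
\E_{\Prob_1}[W_t^{-s}]=\E_{P_1}[E^{-s}]^t .
\]

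\textbf{Step 3: the edge cases.} I do not expect a genuine obstacle here; the care is all in the conventions at $0$ and $\infty$.

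The product $0\cdot\infty$ cannot occur in Step 2, since $\E_{P_1}[E^{-s}]\ge\E_{P_1}[\mathbf 1_{\{E\le1\}}]$ is not obviously positive. A cleaner argument: each factor equals the same number $m=\E_{P_1}[E^{-s}]\in(0,\infty]$. It is strictly positive because $E^{-s}>0$ $P_1$-a.s., again using $E<\infty$ $P_1$-a.s. So the product is $m^t$ with no indeterminate form.

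Finally, $-\frac1t\log(m^t)=-\log m$ holds in $[-\infty,\infty)$, including the case $m=+\infty$.
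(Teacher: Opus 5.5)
Your proof is correct and follows the paper's argument: you establish $W_t^{-s}=\prod_{n\le t}E(X_n)^{-s}$ $\Prob_1$-a.s.\ from $P_1(E=\infty)=0$ (via $\E_{P_0}[E]\le1$ and $P_1\ll P_0$), then factorise with independence and Tonelli, and take $-\frac1t\log$. Delete the garbled first sentence of your Step 3: all $t$ factors equal the same $m\in(0,\infty]$, so no $0\cdot\infty$ can arise in that product anyway.
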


\begin{proof}
Under $\Prob_1=P_1^{\otimes\N}$ the factors $E(X_1),\dots,E(X_t)$ are
independent with common law that of $E$ under $P_1$. Moreover
$\E_{P_0}[E]\le1$ forces $P_0(E=\infty)=0$, hence $P_1(E=\infty)=0$ by
\Cref{ass:standing}, so $W_t^{-s}=\prod_{n\le t}E(X_n)^{-s}$ holds
$\Prob_1$-a.s.\ (the identity can fail only where one factor is $0$ and another
$\infty$). Tonelli then gives
$\E_{\Prob_1}[W_t^{-s}]=(\E_{P_1}[E^{-s}])^{t}$ in $(0,\infty]$; take
$-\frac1t\log$.
\end{proof}

\begin{lemma}[\epower{} is well defined and bounded]\label{lem:epower-welldef}
Under \Cref{ass:standing}, in the i.i.d.\ model, for every $E\in\Ev$,
\[
   \E_{P_1}\lB(\log E)_+\rB\ \le\ \KL(P_1\|P_0)+1\ <\ \infty ,
   \qquad\text{and}\qquad
   \E_{P_1}[\log E]\ \le\ \KL(P_1\|P_0) ,
\]
with equality in the second bound if and only if $E=R$ $P_0$-a.s.
\end{lemma}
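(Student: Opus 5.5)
The plan is to compare $\log E$ with $\log R$ under $P_1$ and handle the positive part separately, so that integrability is never assumed. Since $P_1\ll P_0$, we have $P_1(R=0)=0$. Also $P_0(E=\infty)=0$ because $\E_{P_0}[E]\le1$, and hence $P_1(E=\infty)=0$. So $P_1$-a.s. we can write
\[
   \log E=\log R+\log\tfrac{E}{R},
\]
with $\log R$ finite and $\log\frac ER\in[-\infty,\infty)$. The first step is the bound on $(\log E)_+$. Pointwise, $(\log E)_+\le(\log R)_++(\log\frac ER)_+$. For the second term use $(\log x)_+\le x$ for $x\ge0$, which gives
\[
   \E_{P_1}\!\left[\left(\log\tfrac ER\right)_+\right]\le\E_{P_1}\!\left[\tfrac ER\mathbf 1_{\{R>0\}}\right]=\E_{P_0}\!\left[E\mathbf 1_{\{R>0\}}\right]\le1 .
\]
For the first term, $\E_{P_1}[(\log R)_+]=\KL(P_1\|P_0)+\E_{P_1}[(\log R)_-]$. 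The negative part is controlled by
\[
   \E_{P_1}[(\log R)_-]=\E_{P_0}\!\left[R\log\tfrac1R\,\mathbf 1_{\{R<1\}}\right]\le e^{-1},
\]
which is finite.

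This lands on $\KL+1+e^{-1}$ rather than $\KL+1$, so the crude split has to be sharpened. The fix is to avoid splitting off $(\log R)_+$ and bound $(\log E)_+$ directly. The pointwise inequality to use is $y\log y\ge y-1$, or in the form $\log x\le \log r+\frac{x}{r}-1$ for $x,r>0$. This gives
\[
   (\log E)_+\le(\log R)_+ +\tfrac{E}{R}\mathbf 1_{\{E\ge1\}}\le \log R+(\log R)_-+\tfrac ER .
\]
Taking $P_1$-expectations yields at most $\KL+e^{-1}+1$ again, so the sharp constant needs a different grouping. The alternative is the concave inequality applied on $\{E\ge1\}$, where $\log E\ge 0$: there $\log E\le\log R+\frac ER-1$. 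Taking the expectation over that set gives
\[
   \E_{P_1}[(\log E)_+]\le \E_{P_1}[\log R\,\mathbf 1_{\{E\ge1\}}]+P_0(E\ge1,R>0)-P_1(E\ge1).
\]
The first term is bounded by $\E_{P_1}[(\log R)_+]$. I expect the exact constant $1$ to follow from this kind of grouping, but I treat it as the fiddly step. Finiteness, which is all that matters for well-definedness, follows from either version.

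The second step is the main inequality, and it is Gibbs/Jensen. With $(\log E)_+$ integrable, $\E_{P_1}[\log E]$ is defined in $[-\infty,\infty)$. If it equals $-\infty$ there is nothing to prove. Otherwise $\log\frac ER=\log E-\log R$ is $P_1$-quasi-integrable with
\[
   \E_{P_1}[\log E]-\KL(P_1\|P_0)=\E_{P_1}\!\left[\log\tfrac ER\right].
\]
Jensen's inequality for the concave $\log$ then gives
\[
   \E_{P_1}\!\left[\log\tfrac ER\right]\le\log\E_{P_1}\!\left[\tfrac ER\right]=\log\E_{P_0}\!\left[E\mathbf 1_{\{R>0\}}\right]\le\log 1=0 .
\]

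The third step is the equality case. If $E=R$ $P_0$-a.s., then equality holds trivially. Conversely, suppose equality holds. Then both inequalities in the Jensen chain are equalities. Strict concavity of $\log$ forces $E/R$ to be $P_1$-a.s. equal to a constant $k$. The second inequality forces
\[
   \E_{P_0}[E\mathbf 1_{\{R>0\}}]=1=\E_{P_0}[E],
\]
so $E=0$ $P_0$-a.s. on $\{R=0\}$. Then $k=\E_{P_1}[E/R]=1$, so $E=R$ $P_1$-a.s., which means $E=R$ $P_0$-a.s. on $\{R>0\}$. On $\{R=0\}$ both functions vanish $P_0$-a.s. Together these give $E=R$ $P_0$-a.s.

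The main obstacle is the measure-theoretic bookkeeping with extended values. This means splitting $\log E$ when it may equal $-\infty$, and transferring $P_1$-a.s. statements to $P_0$-a.s. statements on $\{R>0\}$ versus $\{R=0\}$. The equality case relies most heavily on this bookkeeping.
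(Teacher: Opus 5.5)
\textbf{Gap in the first claim.} You do not prove the first claim as stated. The lemma asserts $\E_{P_1}[(\log E)_+]\le\KL(P_1\|P_0)+1$. You only establish $\KL(P_1\|P_0)+1+e^{-1}$, and you leave the sharp constant as an unresolved ``fiddly step''. That weaker bound does secure finiteness, so well-definedness and your later steps are unaffected, but the explicit constant is not proved.

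Your last grouping also misstates a term: $\E_{P_1}[\frac ER\mathbf 1_{\{E\ge1\}}]=\E_{P_0}[E\mathbf 1_{\{E\ge1,R>0\}}]$, not $P_0(E\ge1,R>0)$.

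All three of your attempts lose the $e^{-1}$ in the same place. At some point you replace $\log R$ by $(\log R)_+$ (or restrict to $\{E\ge1\}$ and then pass to the positive part), and this costs $\E_{P_1}[(\log R)_-]$.

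\textbf{The fix.} Use your own inequality $\log x\le\log r+\frac xr-1$, but apply it everywhere on the $P_1$-full set $\{R>0,\,E<\infty\}$ rather than only on $\{E\ge1\}$, and take $x=\max(1,E)=e^{(\log E)_+}$:
\[
(\log E)_+\ \le\ \log R+\frac{\max(1,E)}{R}-1 .
\]
Since $\E_{P_1}[(\log R)_-]\le e^{-1}$ and $\KL(P_1\|P_0)<\infty$, we have $\log R\in L^1(P_1)$, and it integrates to exactly $\KL(P_1\|P_0)$ with no positive part taken. For the middle term, $\E_{P_1}[\max(1,E)/R]=\E_{P_0}[\max(1,E)\mathbf 1_{\{R>0\}}]\le\E_{P_0}[1+E]\le2$. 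Together these give $\KL(P_1\|P_0)+1$.

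Multiplied by $R$, this is exactly the Young inequality $xy\le x\log x-x+e^{y}$ at $x=R$, $y=(\log E)_+$, integrated against $P_0$. That is how the paper proves the bound.

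\textbf{The rest.} Your Jensen step for $\log\frac ER$ under $P_1$ matches the paper's argument and is correct. So does your equality case: strict concavity forces $E/R$ to be constant $P_1$-a.s., then $\E_{P_0}[E\mathbf 1_{\{R>0\}}]=1=\E_{P_0}[E]$, then you transfer from $\{R>0\}$ to $\{R=0\}$.
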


\begin{proof}
Young's inequality $xy\le x\log x-x+e^{y}$ holds for all $x\ge0$, $y\in\R$ (fix
$x$ and minimise the right-hand side over $y$). Since $\E_{P_0}[E]\le1$ gives
$P_0(E=\infty)=0$, we may apply it pointwise on the $P_0$-full set
$\{E<\infty\}$, with $x=R$ and $y=(\log E)_+\in\R$, and integrate against $P_0$:
\[
   \E_{P_0}\lB R\,(\log E)_+\rB
   \ \le\ \E_{P_0}\lB R\log R-R\rB+\E_{P_0}\lB e^{(\log E)_+}\rB .
\]
The left side is $\E_{P_1}[(\log E)_+]$. On the right,
$\E_{P_0}[R\log R-R]=\KL(P_1\|P_0)-1$, and $e^{(\log E)_+}=\max(1,E)\le1+E$, so
$\E_{P_0}[e^{(\log E)_+}]\le1+\E_{P_0}[E]\le2$. Hence
$\E_{P_1}[(\log E)_+]\le(\KL(P_1\|P_0)-1)+2$, which is the first claim.
For the second, $\log$ is concave, so by Jensen's inequality applied to the
probability measure $P_1$ and the map $E/R$ (defined $P_1$-a.s.\ since $R>0$
$P_1$-a.s.),
\[
   \E_{P_1}[\log E]-\KL(P_1\|P_0)=\E_{P_1}\lB\log\tfrac ER\rB
   \le\log\E_{P_1}\lB\tfrac ER\rB=\log\E_{P_0}\lB E\mathbf 1_{\{R>0\}}\rB\le0 .
\]
For the equality case, equality in Jensen's inequality for the strictly concave
$\log$ forces $E/R$ to be $P_1$-a.s.\ equal to a constant $c$, and equality in
the last step forces $\E_{P_0}[E\mathbf 1_{\{R>0\}}]=1$. On $\{R>0\}$ the
measures $P_0$ and $P_1$ are equivalent --- if $A\subseteq\{R>0\}$ has
$P_1(A)=\int_AR\,dP_0=0$ then $P_0(A)=0$ --- so $E=cR$ $P_0$-a.s.\ on
$\{R>0\}$, whence $1=\E_{P_0}[E\mathbf 1_{\{R>0\}}]=c\,\E_{P_0}[R]=c$. Finally
$\E_{P_0}[E]\le1=\E_{P_0}[E\mathbf 1_{\{R>0\}}]$ gives
$\E_{P_0}[E\mathbf 1_{\{R=0\}}]=0$, so $E=0=R$ $P_0$-a.s.\ there. Conversely
$E=R$ gives equality throughout.
\end{proof}

\begin{remark}[What the two scalars are for]\label{rem:two-scalars}
$\E_{P_1}[\log E]$ is a statement about the \emph{mean} of $\log E$;
$\Lambda(E)$ about its \emph{lower tail}. \Cref{sec:nfl} shows the first
bounds nothing on its own, and \Cref{sec:ceiling} determines the exact range of
the second.
\end{remark}

\subsection{Why \texorpdfstring{$\Lambda$}{Lambda} is the exponent}
\label{sec:whyexponent}

We record the elementary reason $\Lambda^{(t)}$ deserves the name. The first
statement holds for every \etm{}; the second is a matching lower bound on
$\gamma_t(\alpha)$, and there independence is used.

\begin{lemma}[Chernoff bound; any \etm{}]\label{lem:chernoff}
For every betting sequence, every $\alpha\in(0,1)$, every $s\in[0,\infty)$ and
every $t\in\N$,
\begin{equation}\label{eq:chernoff}
   \bar\gamma_t(\alpha)\ \le\ \gamma_t(\alpha)
   \ \le\ \exp\lp s\ell-t\,\Lambda^{(t)}_s\rp .
\end{equation}
\end{lemma}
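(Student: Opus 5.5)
The first inequality is \Cref{lem:order}, so only the second needs an argument: the plan is to apply Markov's inequality to $W_t^{-s}$ under $\Prob_1$.

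Fix $s\in[0,\infty)$ and $t\in\N$. The case $s=0$ is trivial: $\Lambda^{(t)}_0=-\frac1t\log1=0$, and the bound reads $\gamma_t(\alpha)\le1$.

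For $s>0$, use the conventions of \Cref{sec:setup}, under which $x\mapsto x^{-s}$ is nonincreasing on $[0,\infty]$ with $0^{-s}=+\infty$. The event inclusion is then
\[
   \lC W_t\le\tfrac1\alpha\rC\ \subseteq\ \lC W_t^{-s}\ge\alpha^{-s}\rC .
\]
It holds on all of $\Omega$, including where $W_t=0$ (there $W_t^{-s}=\infty$); the value $W_t=\infty$ is not in the event at all. Markov's inequality for the nonnegative extended-real variable $W_t^{-s}$ under $\Prob_1$ then gives
\[
   \gamma_t(\alpha)\ \le\ \alpha^{s}\,\E_{\Prob_1}\lB W_t^{-s}\rB
   \ =\ e^{-s\ell}\exp\lp-t\Lambda^{(t)}_s\rp .
\]

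This has the wrong sign on $s\ell$. The fix is that the event $\{W_t\le\frac1\alpha\}$ says $W_t^{-s}\ge\alpha^{s}$, not $\alpha^{-s}$: raising $W_t\le\alpha^{-1}$ to the power $-s$ reverses the inequality and gives $W_t^{-s}\ge(\alpha^{-1})^{-s}=\alpha^{s}$. So the correct inclusion is
\[
   \lC W_t\le\tfrac1\alpha\rC\ \subseteq\ \lC W_t^{-s}\ge\alpha^{s}\rC ,
\]
and Markov's inequality yields
\[
   \gamma_t(\alpha)\ \le\ \alpha^{-s}\,\E_{\Prob_1}\lB W_t^{-s}\rB
   \ =\ \exp\lp s\ell-t\Lambda^{(t)}_s\rp ,
\]
using $\E_{\Prob_1}[W_t^{-s}]=\exp(-t\Lambda^{(t)}_s)$ by \Cref{def:exponent}.

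Two degenerate cases need a word. If $\E_{\Prob_1}[W_t^{-s}]=\infty$, then $\Lambda^{(t)}_s=-\infty$ and the right side is $+\infty$, so the bound is vacuous. The expectation is always strictly positive, so the logarithm makes sense in $[-\infty,\infty)$.

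The only delicate step, and hence the main one, is getting the extended-real conventions right: the direction of the inequality under the negative power, and the behaviour at $0$ and $\infty$. No independence or other structure is used, so the bound holds for every \etm{}.
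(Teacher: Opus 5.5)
Your final argument is exactly the paper's proof, and it is correct: the inclusion $\{W_t\le\frac1\alpha\}\subseteq\{W_t^{-s}\ge\alpha^{s}\}$ from the monotonicity of $x\mapsto x^{-s}$ on $[0,\infty]$, then Markov's inequality under $\Prob_1$ with $\alpha^{-s}=e^{s\ell}$, then \Cref{lem:order} for the first inequality. In the write-up, delete the first inclusion $\{W_t\le\frac1\alpha\}\subseteq\{W_t^{-s}\ge\alpha^{-s}\}$ and the bound you derived from it rather than retracting them afterwards, since that inclusion is false as stated (the right-hand event is $\{W_t\le\alpha\}$).
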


\begin{proof}
Fix $s\in[0,\infty)$. Since $x\mapsto x^{-s}$ is nonincreasing on $[0,\infty]$
(with the conventions of \Cref{sec:setup}),
\[
   \lC W_t\le\tfrac1\alpha\rC\subseteq\lC W_t^{-s}\ge\alpha^{s}\rC ,
\]
so Markov's inequality applied to the nonnegative variable $W_t^{-s}$ gives
$\gamma_t(\alpha)\le\alpha^{-s}\E_{\Prob_1}[W_t^{-s}]
=\alpha^{-s}e^{-t\Lambda^{(t)}_s}$, and $\alpha^{-s}=e^{s\ell}$.
\Cref{lem:order} gives the first inequality.
\end{proof}

\begin{proposition}[The Chernoff--Stein exponent is the type-II exponent of an
i.i.d.\ product]
\label{prp:exponent}
Work in the i.i.d.\ model. Let $E\in\Ev$ and let $(W_t)_{t\in\N_0}$ be the
i.i.d.\ product generated by $E$. Then
\eqref{eq:chernoff} reads
$\bar\gamma_t(\alpha)\le\gamma_t(\alpha)\le\exp(s\ell-t\Lambda_s(E))$, so that
$\liminf_{t\to\infty}-\frac1t\log\gamma_t(\alpha)\ge\Lambda(E)$. If in
addition $\E_{P_1}|\log E|<\infty$, $\E_{P_1}[\log E]>0$, and $0$ lies in the interior
of $\lC x\in\R\st I(x)<\infty\rC$, where $I:\R\to[0,\infty]$ is the Cram\'er rate
function of $\log E$ under $P_1$ (\Cref{thm:app-cramer}, from
\cite{Cra38}, \cite[Thm.~2.2.3]{DZ10}), then
\begin{equation}\label{eq:cramerlimit}
   \lim_{t\to\infty}-\frac1t\log\gamma_t(\alpha)\ =\ \Lambda(E) .
\end{equation}
Statement \eqref{eq:cramerlimit} concerns the fixed-horizon error. For the
sequential error only the inequality
$\liminf_{t}-\frac1t\log\bar\gamma_t(\alpha)\ \ge\ \Lambda(E)$
is claimed here.
\end{proposition}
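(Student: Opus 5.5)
The upper bound is immediate. By \Cref{lem:iidexp}, $\Lambda^{(t)}_s=\Lambda_s(E)$ for every $t$, so \Cref{lem:chernoff} gives
\[
\bar\gamma_t(\alpha)\le\gamma_t(\alpha)\le\exp\lp s\ell-t\Lambda_s(E)\rp .
\]
Taking $-\frac1t\log$ yields $-\frac1t\log\gamma_t(\alpha)\ge\Lambda_s(E)-s\ell/t$. Letting $t\to\infty$ with $s$ fixed gives $\liminf_t\ge\Lambda_s(E)$, and the supremum over $s\ge0$ gives $\Lambda(E)$. The same computation applied to $\bar\gamma_t$ gives the sequential inequality, which is the only sequential claim.

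For the limit \eqref{eq:cramerlimit} we need the matching upper bound on $-\frac1t\log\gamma_t$. Put $Y_n=\log E(X_n)$, which is i.i.d.\ under $\Prob_1$, integrable, with mean $\mu>0$. Let $\Lambda^\ast(\theta)=\log\E_{P_1}[e^{\theta Y}]$, so that $\Lambda_s(E)=-\Lambda^\ast(-s)$. The event $\{W_t\le 1/\alpha\}$ is $\{\frac1t\sum_{n\le t} Y_n\le \ell/t\}$, and it contains $\{\frac1t\sum Y_n< 0\}$. We apply Cramér's lower bound on the open set $(-\infty,0)$ (\Cref{thm:app-cramer}). Since $\ell/t\downarrow0$, this gives
\[
\liminf_t\frac1t\log\gamma_t\ \ge\ -\inf_{x<0}I(x).
\]
Because $0$ is interior to $\{I<\infty\}$ and $I$ is convex, $I$ is finite and continuous near $0$, so $\inf_{x<0}I(x)=I(0)$. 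Combined with the first part, it remains to show $I(0)=\Lambda(E)$.

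By definition $I(0)=\sup_\theta\{-\Lambda^\ast(\theta)\}$. For $\theta>0$, Jensen gives $\Lambda^\ast(\theta)\ge\theta\mu>0$, so those $\theta$ contribute negatively and $\theta=0$ contributes $0$. Hence $I(0)=\sup_{s\ge0}\{-\Lambda^\ast(-s)\}=\sup_{s\ge0}\Lambda_s(E)=\Lambda(E)$.

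The main obstacle is the continuity step $\inf_{x<0}I(x)=I(0)$, which is exactly where the interiority hypothesis is used. A convex function finite on a neighbourhood of $0$ is continuous there, so $I(x)\to I(0)$ as $x\uparrow0$. I would write out the sandwich $\{\bar Y_t<0\}\subseteq\{\bar Y_t\le \ell/t\}$ carefully, using the Cramér lower bound for the open set $(-\infty,0)$ rather than any closed set.

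A side point needs care: $E$ may vanish with positive $P_1$-probability, so $Y=-\infty$ there. The hypothesis $\E_{P_1}|\log E|<\infty$ excludes this, so $Y$ is real-valued and Cramér applies.
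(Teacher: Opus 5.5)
Your proof is correct, but for the $\liminf$ half of \eqref{eq:cramerlimit} it takes a different route from the paper.

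The paper gets both directions from Cram\'er's theorem:
\begin{itemize}
\item the large-deviation upper bound on the closed half-line $(-\infty,\ell/t_1]$ gives $\liminf_t-\frac1t\log\gamma_t(\alpha)\ge I(\ell/t_1)$;
\item the lower bound on $(-\infty,0)$ gives $\limsup_t-\frac1t\log\gamma_t(\alpha)\le I(0)$;
\item letting $t_1\to\infty$ squeezes the limit to $I(0)$, which is only afterwards identified with $\Lambda(E)$.
\end{itemize}

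You instead reuse the Chernoff bound from the first part for the $\liminf$. So you need only the lower half of Cram\'er's theorem, no auxiliary $t_1$, and only left-continuity of $I$ at $0$. Since the Cram\'er upper bound on a half-line is itself a Chernoff bound, your version avoids proving the same inequality twice. The paper's version has the merit of identifying the limit as $I(0)$ purely from the large-deviation principle, which is the form it reuses verbatim in \Cref{prp:exponent-gen} with $I=\Psi^{\star}$.

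One small imprecision: continuity of $I$ at $0$ alone gives only $\inf_{x<0}I(x)\le I(0)$. Equality needs $I$ to be nonincreasing on $(-\infty,\E_{P_1}[\log E]]$, which holds because $I$ is convex and vanishes at the positive mean. The inequality, however, is all your argument actually uses.
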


\begin{proof}
The displayed bound is \Cref{lem:chernoff} combined with \Cref{lem:iidexp};
taking $-\frac1t\log$ and then the supremum over $s$ gives the $\liminf$
statement.

For \eqref{eq:cramerlimit}, write $Y_n:=\log E(X_n)$ for $n\in\N$, and put
$S_t:=\sum_{n\le t}Y_n$, $\bar Y_t:=t^{-1}S_t$ for $t\in\N$, so that
$\gamma_t(\alpha)=\Prob_1(S_t\le \ell)=\Prob_1(\bar Y_t\le \ell/t)$; let $I$ be the
Cram\'er rate function of $Y_1$ under $P_1$. We record three properties of $I$:
it is convex, it vanishes at $\E_{P_1}[Y_1]=\E_{P_1}[\log E]>0$, hence it is
nonincreasing on $(-\infty,\E_{P_1}[\log E]]$; and it is finite in a neighbourhood of
$0$, hence continuous at $0$, by the interiority hypothesis.

\emph{Upper bound on $\gamma_t$.} Fix $t_1\in\N$ with $\ell/t_1<\E_{P_1}[\log E]$. For
$t\ge t_1$ we have $\ell/t\le \ell/t_1$, so
$\{\bar Y_t\le \ell/t\}\subseteq\{\bar Y_t\in F\}$ with $F:=(-\infty,\ell/t_1]$
closed. Since $I$ is nonincreasing on $(-\infty,\E_{P_1}[\log E]]\supseteq F$,
$\inf_{x\in F}I(x)=I(\ell/t_1)$, so the large-deviation upper bound of
Cram\'er's theorem \cite{Cra38}, \cite[Thm.~2.2.3]{DZ10}
(\Cref{thm:app-cramer}) gives
\[
   \liminf_{t\to\infty}-\tfrac1t\log\gamma_t(\alpha)\ \ge\ I\lp \ell/t_1\rp .
\]

\emph{Lower bound on $\gamma_t$.} Since $\ell/t>0$, the open set
$G:=(-\infty,0)$ satisfies $\{\bar Y_t\in G\}\subseteq\{\bar Y_t\le \ell/t\}$, so
the large-deviation lower bound of \Cref{thm:app-cramer} gives
\[
   \limsup_{t\to\infty}-\tfrac1t\log\gamma_t(\alpha)\ \le\ \inf_{x<0}I(x)
   \ =\ \lim_{x\uparrow0}I(x)\ =\ I(0) ,
\]
the first equality by monotonicity of $I$ on $(-\infty,\E_{P_1}[\log E]]$ and the
second by continuity at $0$.

\emph{Conclusion.} Combining and then letting $t_1\to\infty$, so that
$I(\ell/t_1)\to I(0)$ by continuity, squeezes both outer quantities to $I(0)$.
Finally $I(0)=\sup_{\theta\in\R}\{-\log\E_{P_1}[e^{\theta\log E}]\}
=\sup_{s\ge0}\Lambda_s(E)=\Lambda(E)$: the substitution $\theta=-s$ turns
$\theta<0$ into $s>0$ and $-\log\E_{P_1}[e^{\theta\log E}]$ into
$\Lambda_s(E)$, while for $\theta>0$ Jensen gives
$\log\E_{P_1}[e^{\theta\log E}]\ge\theta\E_{P_1}[\log E]>0$, so positive $\theta$
contribute a value below $0=\Lambda_0(E)$.
\end{proof}

\subsection{Notation index}
\label{sec:notation}

\begin{center}
\footnotesize
\setlength{\tabcolsep}{4pt}
\begin{tabular}{@{}lll@{}}
\toprule
Symbol & Meaning & Defined in\\
\midrule
$(\Omega,\Fc,(\Fc_t)_{t\in\N_0})$ & filtered space & \Cref{def:setting}\\
$\Prob_0,\Prob_1$ & null and alternative & \Cref{def:setting}\\
$(\Xc,\Bs{\Xc})$; $X_n$; $P_0,P_1$ & observation space, coordinates,
   one-observation laws & \Cref{def:iid}\\
$L_t$; $R_t$ & likelihood-ratio martingale; one-step ratio
   & \Cref{def:ratios}\\
$R=\frac{dP_1}{dP_0}:\Xc\to[0,\infty)$ & likelihood ratio, i.i.d.\ model
   & \Cref{rem:one-obs}\\
$Z_{\beta,t}$; $\lambda_t(\beta)$ & conditional Hellinger integral; profile
   & \Cref{def:condZ}\\
$\KLrev_n$; $\bar\lambda_n(0^{+})$ & conditional relative entropy of step $n$;
   its essential supremum & \Cref{cor:ceiling}\\
$\KL(Q\|P)$; $\Ren_a(Q\|P)$ & KL and R\'enyi divergence
   & \Cref{ass:standing}; \Cref{def:renyi}\\
$\Ev$ & the set of \evars{} for $P_0$ & \Cref{def:evar}\\
$E$; $E_t$ & an \evar{}; the bet at time $t$
   & \Cref{def:evar}, \Cref{def:cond-evar}\\
$W_t$ & wealth after $t$ bets & \Cref{def:wealth}\\
$\alpha$; $\ell=\log\frac1\alpha$ & level; log-threshold & \Cref{def:reject}\\
$\tau_\alpha$ & rejection time & \Cref{def:reject}\\
$\gamma_t(\alpha)$; $\bar\gamma_t(\alpha)$
   & type-II error, fixed-horizon; sequential & \Cref{def:typeII}\\
$\E_{P_1}[\log E]$ & \epower{} (no separate symbol) & \Cref{def:epower}\\
$s$ & Chernoff parameter (the tilt of the bound) & \Cref{def:exponent}\\
$\Lambda_s(E)$; $\Lambda(E)$ & tilted exponent at $s$; Chernoff--Stein exponent
   of an \evar{} & \Cref{def:exponent}\\
$\Lambda^{(t)}_s$; $\Lambda^{(t)}$ & $t$-step exponent of a \etm{}
   & \Cref{def:exponent}\\
$\beta$; $\beta_{\max}$ & flattening parameter, $\beta=\frac1{1+s}$ in
   \Cref{sec:ceiling}, in $\Bc$ in \Cref{sec:horizon}
   & \Cref{def:Zbeta}, \Cref{def:flattened}\\
$Z_\beta=\E_{P_0}[R^{\beta}]$ & Hellinger integral & \Cref{def:Zbeta}\\
$\lambda(\beta)=-\frac1\beta\log Z_\beta$ & flattening profile, $=\Ren_{1-\beta}(P_0\|P_1)$;
   bounds $\Lambda$
   & \Cref{def:Zbeta}\\
$R_\beta=R^{\beta}/Z_\beta$ & flattened likelihood ratio, $R_1=R$
   & \Cref{def:flattened}\\
$\mathcal W$; $\mathcal W_{\mathrm{iid}}$ & all \etm s; the i.i.d.\ ones
   & \Cref{thm:ceiling}\\
$c_\beta$ & \epower{} $\E_{P_1}[\log R_\beta]$ of the flattened design
   & \Cref{lem:family}\\
$G_t$; $B_t(\beta)$ & log-likelihood-ratio sum; threshold & \Cref{def:GB}\\
$g(\beta)=\beta\kappa'(\beta)-\kappa(\beta)$ & horizon-optimality function
   & \Cref{thm:betastar}\\
$\beta^{\ast}(t)$; $V_0$ & horizon-optimal tilt; $\Var_{P_0}(\log R)$
   & \Cref{thm:betastar}\\
$C(P_0,P_1)$ & Chernoff information; $=\Lambda(R)$ when $\KL(P_0\|P_1)<\infty$
   & \Cref{cor:chernoffinfo}\\
$\gamma^{\star}_t(\alpha)$ & least type-II error at level $\alpha$
   & \Cref{thm:app-stein}\\
\bottomrule
\end{tabular}
\end{center}

\noindent
A bar denotes the sequential, running-maximum version of a quantity
($\bar\gamma$); a star denotes an optimiser ($s^{\ast},\beta^{\ast}$). The
symbols $\eta,u,\tilde u,M,q,\tilde q,\Phi$ belong to \Cref{con:nfl} alone, and
$\rho,\varphi,\varkappa,c_1,c_2,g_n,h_\beta,\theta,\pi,\sigma,b_t,r,r',\delta,
\epsilon,x,y,U_t,C_t,I$ are local to the proof, remark or quoted theorem in
which they appear. The letter $c$
denotes an \epower{} level throughout (\Cref{con:nfl}, \Cref{lem:floor}, and
$c_\beta$ above), and $\kappa(\beta):=\log Z_\beta$ is used from
\Cref{lem:family} onwards. In \Cref{ex:gauss} only, $D:=\Delta^{2}/2$; it is
unrelated to the upright $\Ren$ of \Cref{def:renyi}. The
$\sigma$-algebra of a space $S$ is written $\Bs{S}$; in the i.i.d.\ model
$\Fc_\infty=\Bs{\Omega}=\Bs{\Xc}^{\otimes\N}$, while in general
\Cref{def:setting} asks only $\Fc_\infty\subseteq\Fc$.

\section{\texorpdfstring{\epower{}}{e-power} alone does not bound the type-II error}
\label{sec:nfl}

\subsection{The heuristic, and the exact point at which it fails}

\begin{remark}[The heuristic]\label{rem:heuristic}
Let $(E_t)_{t\in\N}$ be a betting sequence with conditional \epower{} bounded below by
$c>0$, i.e.
\begin{equation}\label{eq:cond-epower}
   \E_{\Prob_1}\lB\log E_t\mid\Fc_{t-1}\rB\ \ge\ c
   \qquad\Prob_1\text{-a.s., for every }t\in\N .
\end{equation}
Then $\log W_t=\sum_{n\le t}\log E_n$ has $\E_{\Prob_1}[\log W_t]\ge tc\to\infty$,
which is the design principle behind Kelly betting \cite{Kel56,Bre61} and
growth-rate-optimal \evars{} \cite{GHK24,LRR25}. The inference
``therefore $W_t$ exceeds $\frac1\alpha$ with high probability'' is what fails.
\end{remark}

The failure is not subtle, and it can be located precisely. Condition
\eqref{eq:cond-epower} constrains a \emph{conditional mean}. The quantity to be
controlled is a \emph{probability},
\[
   \bar\gamma_t(\alpha)=\Prob_1\lp\max_{n\le t}\log W_n\le \ell\rp ,
\]
and a lower bound on a mean permits arbitrarily heavy lower tails. Concretely:
$\log W_t$ may be very large in expectation because it is enormous on an event
of small probability, while on the complement it is so negative that the
partial maxima never reach $\ell$. \Cref{con:nfl} realises exactly this, with the
crash placed at the first step.

\subsection{The construction}

The first bet must differ from the later ones. A betting sequence may of course
use a different function at each step --- \Cref{def:cond-evar} asks only for
$\Fc_n$-measurability --- but the particular constants below were chosen for two
different one-observation laws, and it is simplest to state the construction
directly on the path space, with $\Prob_0,\Prob_1$ products of non-identical
factors. \Cref{def:setting} allows this: the i.i.d.\ model of \Cref{def:iid} is
assumed only where a statement names it, and no statement in this section
outside \Cref{rem:quantifier}, \Cref{prp:powerone} and \Cref{rem:cramer} does.

\begin{construction}[A betting sequence with prescribed \epower{} and no power]
\label{con:nfl}
Let $c\in(0,\infty)$, $\alpha\in(0,1)$, $\eta\in(0,1)$ and $t\in\N$ be given,
and put $\ell:=\log\frac1\alpha$. Define constants
\begin{align}
   \tilde u&:=2c+1 , & M&:=(t-1)\tilde u+\ell , &
   u&:=\frac{c+(1-\eta)M}{\eta} ,
   \label{eq:nfl-constants}\\
   q&:=\frac{1-e^{-M}}{e^{u}-e^{-M}} , &
   \tilde q&:=\frac{1-e^{-1}}{e^{\tilde u}-e^{-1}} . & &
   \label{eq:nfl-probs}
\end{align}
Take $\Xc:=\{0,1\}$ with $\Bs{\Xc}:=2^{\Xc}$, $\Omega=\Xc^{\N}$, and let
$\Prob_0,\Prob_1$ be the laws on $(\Omega,\Bs{\Omega})$ under which the
coordinates $X_1,X_2,\dots$ are independent with
\[
\begin{array}{lll}
   \Prob_1(X_1=1)=\eta , &\qquad
   \Prob_0(X_1=1)=q , &\\[2pt]
   \Prob_1(X_n=1)=\tfrac12 , &\qquad
   \Prob_0(X_n=1)=\tilde q , &\qquad n\ge2 .
\end{array}
\]
Define the betting sequence $E_n:\Omega\to(0,\infty)$, $n\in\N$, by
\begin{equation}\label{eq:nfl-bets}
   \log E_1:=\begin{cases} u, & X_1=1,\\ -M, & X_1=0,\end{cases}
   \qquad
   \log E_n:=\begin{cases} \tilde u, & X_n=1,\\ -1, & X_n=0,\end{cases}
   \quad n\ge2 .
\end{equation}
\end{construction}

The constants are admissible, by \Cref{lem:nfl-admissible} below. The
construction is a single catastrophic first bet, made deep enough that the
remaining $t-1$ bets cannot climb back to the threshold, together with $t-1$
ordinary bets of the same \epower{}. We verify the four required properties in
turn.

\begin{lemma}[Step 1: the constants are admissible]\label{lem:nfl-admissible}
$\tilde u>1>0$, $M\ge\ell>0$, $u>0$, and $q,\tilde q\in(0,1)$.
\end{lemma}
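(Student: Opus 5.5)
The four claims are elementary inequalities on the constants in \eqref{eq:nfl-constants} and \eqref{eq:nfl-probs}, and I would verify them in the order in which the constants are defined.

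First, $\tilde u=2c+1>1>0$ holds because $c>0$. Next, $M=(t-1)\tilde u+\ell\ge\ell$ holds because $t\ge1$ and $\tilde u>0$, and $\ell=\log\frac1\alpha>0$ because $\alpha\in(0,1)$. For $u$, the numerator $c+(1-\eta)M$ is a sum of $c>0$ and a nonnegative term, since $\eta<1$ and $M>0$. The denominator $\eta$ is positive, so $u>0$.

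For the two probabilities, I would use one observation. Suppose $a>0$ and $b>0$, and put
\[
   p:=\frac{1-e^{-b}}{e^{a}-e^{-b}} .
\]
Then $p\in(0,1)$. Indeed, $e^{-b}<1<e^{a}$, so the numerator and denominator are both strictly positive. Moreover the numerator is strictly smaller than the denominator, because $e^{a}>1$.

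Applying this with $(a,b)=(u,M)$ gives $q\in(0,1)$, since $u>0$ and $M>0$ from the previous step. Applying it with $(a,b)=(\tilde u,1)$ gives $\tilde q\in(0,1)$.

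No step is a real obstacle. The only point needing care is that $M>0$ must be strict, not merely $M\ge0$, so that $e^{-M}<1$ and hence $q>0$. Strictness follows from $M\ge\ell>0$, so I would establish the bounds on $M$ before those on $q$.

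It is also worth recording why these particular $q$ and $\tilde q$ were chosen, since later steps use it: they make $\E_{\Prob_0}[E_1]=qe^{u}+(1-q)e^{-M}=1$, and similarly for $\tilde q$. That identity is not part of this lemma, however.
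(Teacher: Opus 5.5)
Your proof is correct and follows the paper's argument essentially step for step: the same elementary checks on $\tilde u$, $M$ and $u$, and for $q$ and $\tilde q$ the same comparison $0<1-e^{-b}<e^{a}-e^{-b}$. The paper applies it to $(u,M)$ and then says the same argument works for $(\tilde u,1)$, which is exactly what your parametrised observation does.
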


\begin{proof}
$\tilde u=2c+1>1$ since $c>0$; $M=(t-1)\tilde u+\ell\ge \ell>0$ since $t\ge1$;
$u=(c+(1-\eta)M)/\eta>0$ since $c>0$, $M>0$, $\eta\in(0,1)$. For $q$: the
numerator $1-e^{-M}$ lies in $(0,1)$ because $M>0$, and the denominator
satisfies $e^{u}-e^{-M}>1-e^{-M}>0$ because $u>0$; hence $q\in(0,1)$. The same
argument with $(\tilde u,1)$ in place of $(u,M)$ gives $\tilde q\in(0,1)$.
\end{proof}

\begin{lemma}[Step 2: each $E_n$ is a conditional \evar{}]\label{lem:nfl-evar}
For every $n\in\N$, $E_n$ is $\Fc_n$-measurable and
$\E_{\Prob_0}[E_n\mid\Fc_{n-1}]=1$ $\Prob_0$-a.s.
\end{lemma}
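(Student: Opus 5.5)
The plan is to verify the two claims separately: measurability is structural, and the conditional expectation reduces to an unconditional one-step computation. For measurability, note that by \eqref{eq:nfl-bets} each $E_n$ is a deterministic function of the single coordinate $X_n$: it equals $\exp(u)$ or $\exp(-M)$ when $n=1$, and $\exp(\tilde u)$ or $\exp(-1)$ when $n\ge2$. Since $X_n$ is $\Fc_n=\sigma(X_1,\dots,X_n)$-measurable (we take $\Fc_t=\sigma(X_1,\dots,X_t)$ on the path space of \Cref{con:nfl}), so is $E_n$, and $E_n$ takes values in $(0,\infty)$.

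For the conditional expectation, I will use that under $\Prob_0$ the coordinates are independent by construction. Hence $X_n$, and therefore $E_n$, is independent of $\Fc_{n-1}$. Because $E_n$ is nonnegative, this gives $\E_{\Prob_0}[E_n\mid\Fc_{n-1}]=\E_{\Prob_0}[E_n]$ $\Prob_0$-a.s. For $n=1$ this is immediate anyway, since $\Fc_0$ is trivial. It then remains to check two scalar identities. For the first step,
\[
   \E_{\Prob_0}[E_1]=q\,e^{u}+(1-q)\,e^{-M}=e^{-M}+q\lp e^{u}-e^{-M}\rp=e^{-M}+\lp1-e^{-M}\rp=1,
\]
by the definition of $q$ in \eqref{eq:nfl-probs}. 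For $n\ge2$, the same computation with $(\tilde u,1,\tilde q)$ in place of $(u,M,q)$ gives
\[
   \tilde q\,e^{\tilde u}+(1-\tilde q)\,e^{-1}=1 .
\]
\Cref{lem:nfl-admissible} guarantees $q,\tilde q\in(0,1)$, so these are genuine Bernoulli laws and the two-point expectations are legitimate.

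I do not expect a real obstacle. The only point needing care is to justify replacing the conditional expectation by the unconditional one, which rests on the independence of the coordinates under $\Prob_0$ and on $E_n$ depending only on $X_n$. The values of $q$ and $\tilde q$ were evidently chosen precisely to make each mean equal to one.
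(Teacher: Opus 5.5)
Your proposal is correct and follows the paper's proof essentially line for line. You get measurability from \eqref{eq:nfl-bets}, use independence of the coordinates under $\Prob_0$ to replace the conditional expectation by the unconditional one, and then do the same two scalar computations: $q(e^{u}-e^{-M})+e^{-M}=1$ at the first step, and the analogous identity with $(\tilde u,1,\tilde q)$ for $n\ge2$.
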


\begin{proof}
Measurability is clear from \eqref{eq:nfl-bets}. Under $\Prob_0$ the
coordinates are independent, so $\E_{\Prob_0}[E_n\mid\Fc_{n-1}]
=\E_{\Prob_0}[E_n]$. For $n=1$,
\[
   \E_{\Prob_0}[E_1]=q\,e^{u}+(1-q)e^{-M} .
\]
Substituting $q$ from \eqref{eq:nfl-probs},
\[
   q\lp e^{u}-e^{-M}\rp+e^{-M}
   =\frac{1-e^{-M}}{e^{u}-e^{-M}}\lp e^{u}-e^{-M}\rp+e^{-M}
   =\lp1-e^{-M}\rp+e^{-M}=1 .
\]
For $n\ge2$ the identical computation with $(\tilde u,1,\tilde q)$ gives
$\E_{\Prob_0}[E_n]=\tilde q e^{\tilde u}+(1-\tilde q)e^{-1}=1$.
\end{proof}

\begin{lemma}[Step 3: the conditional \epower{} equals $c$ at every step]
\label{lem:nfl-epower}
For every $n\in\N$, $\E_{\Prob_1}[\log E_n\mid\Fc_{n-1}]=c$ $\Prob_1$-a.s.
\end{lemma}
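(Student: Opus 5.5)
The plan is to reduce the conditional statement to an unconditional computation using independence, and then verify two elementary identities directly from the constants in \eqref{eq:nfl-constants}. Under $\Prob_1$ the coordinates $X_1,X_2,\dots$ are independent by \Cref{con:nfl}. Moreover, $E_n$ is a function of $X_n$ alone by \eqref{eq:nfl-bets}, and $\Fc_{n-1}=\sigma(X_1,\dots,X_{n-1})$. Hence $\log E_n$ is independent of $\Fc_{n-1}$ under $\Prob_1$. It takes only the two finite values in \eqref{eq:nfl-bets}, so it is bounded and integrable. Consequently $\E_{\Prob_1}[\log E_n\mid\Fc_{n-1}]=\E_{\Prob_1}[\log E_n]$ holds $\Prob_1$-a.s., and it suffices to show $\E_{\Prob_1}[\log E_n]=c$ for each $n$.

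For $n=1$, the law of $X_1$ under $\Prob_1$ gives
\[
   \E_{\Prob_1}[\log E_1]=\eta u-(1-\eta)M .
\]
Substituting $u=(c+(1-\eta)M)/\eta$ from \eqref{eq:nfl-constants}, we get $\eta u=c+(1-\eta)M$. The right-hand side therefore collapses to exactly $c$. This is precisely why $u$ was defined as it was: it is the unique value making the first-step mean equal $c$, given the crash depth $M$ and the crash probability $1-\eta$.

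For $n\ge2$, $X_n$ is a fair coin under $\Prob_1$, so
\[
   \E_{\Prob_1}[\log E_n]=\tfrac12\tilde u-\tfrac12 ,
\]
and with $\tilde u=2c+1$ this equals $\tfrac12(2c+1)-\tfrac12=c$.

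There is no genuine obstacle here. The only points needing care are the independence justification for dropping the conditioning, and noting that finiteness of $\log E_n$ makes the conditional expectation well defined without appeal to \Cref{lem:epower-welldef}. It is also worth observing that the $\Prob_0$-probabilities $q,\tilde q$ play no role in this step: they enter only the e-variable property of \Cref{lem:nfl-evar}.
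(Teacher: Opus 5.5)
Your proposal is correct and follows the paper's proof: drop the conditioning by independence of the coordinates under $\Prob_1$, then verify $\eta u-(1-\eta)M=c$ and $\tfrac12\tilde u-\tfrac12=c$ from the constants in \eqref{eq:nfl-constants}. Your remark that boundedness of $\log E_n$ makes the conditional expectation well defined is a sensible extra detail the paper leaves implicit.
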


\begin{proof}
Again by independence the conditional expectation is the unconditional one. For
$n=1$, using $u=(c+(1-\eta)M)/\eta$ from \eqref{eq:nfl-constants},
\[
   \E_{\Prob_1}[\log E_1]=\eta u-(1-\eta)M
   =\eta\cdot\frac{c+(1-\eta)M}{\eta}-(1-\eta)M=c .
\]
For $n\ge2$, using $\tilde u=2c+1$,
\[
   \E_{\Prob_1}[\log E_n]=\tfrac12\tilde u-\tfrac12
   =\tfrac12(2c+1)-\tfrac12=c . \qedhere
\]
\end{proof}

\begin{lemma}[Step 4: after a first-step crash the threshold is unreachable]
\label{lem:nfl-crash}
On the event $\{X_1=0\}$ one has $\max_{0\le n\le t}\log W_n\le \ell$, hence
$\tau_\alpha>t$.
\end{lemma}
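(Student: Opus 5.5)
The plan is a direct pathwise bound on $\log W_n$ for $0\le n\le t$ on the event $\{X_1=0\}$, with no probability involved. Since every $E_n$ takes values in $(0,\infty)$ by \eqref{eq:nfl-bets}, each $\log W_n=\sum_{k\le n}\log E_k$ is a finite real sum and we may work additively. The case $n=0$ is immediate: $\log W_0=0<\ell$.

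For $1\le n\le t$ on $\{X_1=0\}$, the first summand is $\log E_1=-M$. Each later summand satisfies $\log E_k\le\tilde u$ regardless of $X_k$, because $-1<\tilde u$ by \Cref{lem:nfl-admissible}. This gives
\[
   \log W_n\ \le\ -M+(n-1)\tilde u\ \le\ -M+(t-1)\tilde u\ =\ -\ell\ <\ \ell .
\]
The second inequality uses $\tilde u>0$ and $n\le t$. The equality is the definition $M=(t-1)\tilde u+\ell$ in \eqref{eq:nfl-constants}, and the final strict inequality is $\ell>0$. Hence $\max_{0\le n\le t}\log W_n\le\ell$.

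To conclude, $W_n\le\frac1\alpha$ for all $n\le t$ is equivalent to $\log W_n\le\ell$. So no $n\in\{1,\dots,t\}$ lies in the set whose infimum defines $\tau_\alpha$ in \Cref{def:reject}, and therefore $\tau_\alpha>t$. Equivalently, the event lies in the set in \eqref{eq:gammabar}.

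I expect no real obstacle here. The only care point is to bound the later bets by their maximal value $\tilde u$ uniformly in the outcomes, rather than by their mean. That bound, together with the deliberate choice of $M$, makes the crash unrecoverable within the horizon.
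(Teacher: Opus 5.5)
Your proposal is correct and follows the paper's own argument: the pathwise bound $\log W_n\le -M+(n-1)\tilde u\le -M+(t-1)\tilde u=-\ell$ on $\{X_1=0\}$, using the maximal value $\tilde u$ of the later increments and the choice of $M$. The only difference is cosmetic: you fold the case $n=1$ into the general bound, while the paper treats it separately.
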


\begin{proof}
Work on $\{X_1=0\}$. First, $\log W_0=0\le \ell$ since $\ell>0$. Second,
$\log W_1=\log E_1=-M<0\le \ell$. Third, for $2\le n\le t$, each of the increments
$\log E_2,\dots,\log E_n$ is at most $\tilde u$ by \eqref{eq:nfl-bets}, so
\[
   \log W_n=-M+\sum_{m=2}^{n}\log E_m\ \le\ -M+(n-1)\tilde u
   \ \le\ -M+(t-1)\tilde u\ =\ -\ell\ \le\ \ell ,
\]
the last two steps by $n\le t$ and by $M=(t-1)\tilde u+\ell$ from
\eqref{eq:nfl-constants}. Hence $\max_{0\le n\le t}\log W_n\le \ell$, i.e.\
$W_n\le\frac1\alpha$ for all $n\le t$, i.e.\ $\tau_\alpha>t$.
\end{proof}

\begin{theorem}[\epower{} alone implies no type-II bound]\label{thm:nfl}
Let $c\in(0,\infty)$, $\alpha\in(0,1)$, $\eta\in(0,1)$ and $t\in\N$. The pair
$(\Prob_0,\Prob_1)$ and the betting sequence of \Cref{con:nfl} --- both of which
depend on $(c,\alpha,\eta,t)$ --- satisfy
\begin{enumerate}[label=(\roman*),leftmargin=2.4em]
\item $E_n$ is a conditional \evar{} for $\Prob_0$ at time $n$, with
      $\E_{\Prob_0}[E_n\mid\Fc_{n-1}]=1$, for every $n\in\N$;
\item $\E_{\Prob_1}[\log E_n\mid\Fc_{n-1}]=c$ $\Prob_1$-a.s.\ for every
      $n\in\N$;
\item $\bar\gamma_t(\alpha)\ \ge\ 1-\eta$, and therefore also
      $\gamma_t(\alpha)\ge1-\eta$.
\end{enumerate}
Consequently there is no function $\Phi:(0,\infty)\times(0,1)\times\N\to[0,1)$
such that
\begin{equation}\label{eq:nophi}
   \bar\gamma_t(\alpha)\ \le\ \Phi(c,\alpha,t)
   \qquad\text{for every pair of hypotheses and every betting sequence
   satisfying \eqref{eq:cond-epower}:}
\end{equation}
no bound on the type-II error is a function of the \epower{} level $c$, the
significance level $\alpha$ and the horizon $t$ alone, \emph{uniformly over
testing problems}.
\end{theorem}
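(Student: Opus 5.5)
Items (i) and (ii) of \Cref{thm:nfl} are \Cref{lem:nfl-evar} and \Cref{lem:nfl-epower} read off directly. I will also note that \Cref{lem:nfl-admissible} guarantees $E_n$ takes values in $(0,\infty)$, that $\Prob_0,\Prob_1$ are genuine product measures on $(\Omega,\Bs{\Omega})$, and that $\Fc_n=\sigma(X_1,\dots,X_n)$ is the filtration in use. One further check is needed for \Cref{ass:standing}. Both one-coordinate laws charge both points, since $q,\tilde q,\eta,\frac12\in(0,1)$. Hence $\Prob_1\ll\Prob_0$ on every $\Fc_t$, and the divergence there is a finite sum of finite per-coordinate divergences. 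The construction is therefore a legitimate instance of \Cref{def:setting}.

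For (iii), the plan is to apply \Cref{lem:nfl-crash}, which gives the inclusion $\{X_1=0\}\subseteq\{\tau_\alpha>t\}$. Taking $\Prob_1$ of both sides yields
\[
   \bar\gamma_t(\alpha)=\Prob_1(\tau_\alpha>t)\ \ge\ \Prob_1(X_1=0)=1-\eta .
\]
The statement about $\gamma_t(\alpha)$ then follows from the ordering $\bar\gamma_t\le\gamma_t$ of \Cref{lem:order}.

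For the consequence, suppose such a $\Phi$ existed and fix any $(c,\alpha,t)$. Because $\Phi(c,\alpha,t)<1$, I can pick $\eta\in(0,1)$ with $1-\eta>\Phi(c,\alpha,t)$, for instance $\eta=\frac12(1-\Phi(c,\alpha,t))$. The construction for $(c,\alpha,\eta,t)$ satisfies \eqref{eq:cond-epower}, with equality, by (ii). By (iii) it has $\bar\gamma_t(\alpha)\ge1-\eta>\Phi(c,\alpha,t)$, which contradicts \eqref{eq:nophi}.

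I do not expect a real obstacle, since the work is already done in the four step lemmas. The only point needing care is the bookkeeping: \eqref{eq:nophi} quantifies over all testing problems, so the hypotheses themselves may depend on $\eta$ and on $t$. The construction does depend on them, through $q$ and $M$. The contradiction argument has to choose $\eta$ before invoking the construction, and this is exactly what the order above does.
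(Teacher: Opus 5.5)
Your proposal is correct and matches the paper's proof essentially step for step. Items (i) and (ii) come from the step lemmas, (iii) from the inclusion $\{X_1=0\}\subseteq\{\tau_\alpha>t\}$ together with $\Prob_1(X_1=0)=1-\eta$ and \Cref{lem:order}, and the consequence follows by fixing $\eta=\frac12(1-\Phi(c,\alpha,t))$ before invoking the construction. Your extra check that the construction satisfies \Cref{ass:standing} is harmless, and the paper leaves it implicit.
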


\begin{remark}[What \Cref{thm:nfl} does and does not quantify over]
\label{rem:quantifier}
The pair $(\Prob_0,\Prob_1)$ of \Cref{con:nfl} is built from
$(c,\alpha,\eta,t)$: the alternative puts mass $\eta$ on $\{X_1=1\}$, and the
null probability $q$ of \eqref{eq:nfl-probs} depends on $\eta$ through $u$. Driving $\eta$ to $0$ against a candidate $\Phi$ therefore varies the
testing problem as well as the design, and \eqref{eq:nophi} is a statement about
the class of \emph{all} testing problems.

This is the right quantifier for a sample-size rule, which must be written down
before the data and cannot depend on the unknown alternative. It is not the
strongest conceivable statement, and the stronger one is false: a
\emph{model-dependent} bound $\Phi(c,\alpha,t;P_0,P_1)$ is not excluded by
\Cref{thm:nfl}, and in some models it exists. Work in the i.i.d.\ model
(\Cref{def:iid}) and suppose $R\le R_{\max}$ $P_0$-a.s.\ and $c>\ell$. Since $\Fc_0$ is trivial, $E_1\in\Ev$, and with
$A:=\{E_1>\frac1\alpha\}$ and $\pi:=P_1(A)$ we have $\log E_1\le\ell$ on
$A^{c}$, while Jensen and $\E_{P_1}[E_1]=\E_{P_0}[RE_1]\le R_{\max}$ give
$\E_{P_1}[\log E_1\mathbf 1_A]\le\pi\log(R_{\max}/\pi)$. Hence
\[
   c\ \le\ \pi\log\frac{R_{\max}}{\pi}+(1-\pi)\ell ,
\]
whose right-hand side tends to $\ell$ as $\pi\downarrow0$; so $c>\ell$ forces
$\pi\ge\pi_0(c,\alpha,R_{\max})>0$, and since
$\{\tau_\alpha>t\}\subseteq\{\tau_\alpha>1\}$,
\[
   \bar\gamma_t(\alpha)\ \le\ \bar\gamma_1(\alpha)=1-\pi\ \le\ 1-\pi_0\ <\ 1
   \qquad\text{for every }t .
\]
The class is nonempty whenever $c\le\KL(P_1\|P_0)$ (take $E_n=R(X_n)$), so the
two conditions $\ell<c\le\KL(P_1\|P_0)$ are jointly satisfiable --- for instance
$P_0=\mathrm{Bern}(10^{-3})$, $P_1=\mathrm{Bern}(0.9)$, $\alpha=0.05$, where
$R_{\max}=900$, $\ell=2.996$ and $\KL(P_1\|P_0)=5.892$. The regime is a
degenerate one --- it requires the \epower{} of a single observation to exceed
$\log\frac1\alpha$, i.e.\ one expects to reject before seeing two data points ---
but it shows that \Cref{thm:nfl} is a statement about uniformity over models and
not only over designs.
\end{remark}

\begin{proof}
(i) is \Cref{lem:nfl-evar}, (ii) is \Cref{lem:nfl-epower}. For (iii),
\Cref{lem:nfl-crash} gives $\{X_1=0\}\subseteq\{\tau_\alpha>t\}$, and
$\Prob_1(X_1=0)=1-\eta$ by construction; \Cref{lem:order} gives the second
inequality. For the final claim, suppose such a $\Phi$ existed and fix
$c,\alpha,t$. Choosing $\eta:=\frac12\lp1-\Phi(c,\alpha,t)\rp\in(0,1)$ yields a
sequence with conditional \epower{} exactly $c$ and
$\bar\gamma_t(\alpha)\ge1-\eta=\frac12\lp1+\Phi(c,\alpha,t)\rp>\Phi(c,\alpha,t)$,
a contradiction.
\end{proof}

\subsection{A worked instance}

\begin{example}\label{ex:nfl}
Take $c=0.1$, $\alpha=0.05$, $\eta=\frac12$, $t=10$. Then
$\ell=\log20=2.995732$, and \eqref{eq:nfl-constants}--\eqref{eq:nfl-probs} give
\[
   \tilde u=1.2,\qquad
   M=9\cdot1.2+2.995732=13.795732,\qquad
   u=\frac{0.1+0.5\cdot13.795732}{0.5}=13.995732 ,
\]
\[
   q=\frac{1-e^{-13.795732}}{e^{13.995732}-e^{-13.795732}}=8.350842\cdot10^{-7},
   \qquad
   \tilde q=\frac{1-e^{-1}}{e^{1.2}-e^{-1}}=0.2141158 .
\]
In words: at the first step the bet pays $e^{13.996}\approx1.2\cdot10^{6}$ on an
event of null probability $8.4\cdot10^{-7}$ and alternative probability
$\frac12$, and otherwise loses all but $e^{-13.796}\approx10^{-6}$ of the
wealth. Every step has \epower{} exactly $0.1$. The four requirements read
\[
   \E_{\Prob_0}[E_1]=1,\quad \E_{\Prob_0}[E_n]=1,\quad
   \E_{\Prob_1}[\log E_n]=0.1,\quad
   -M+(t-1)\tilde u=-2.995732=-\ell ,
\]
and enumeration of all $2^{10}$ paths gives
\[
   \bar\gamma_{10}(0.05)\ =\ 0.500000\ =\ 1-\eta
\]
exactly: on $\{X_1=1\}$ the test rejects already at $t=1$, and on $\{X_1=0\}$ it
never rejects by $t=10$. So a test whose \epower{} promises
$\E_{\Prob_1}[\log W_{10}]=1.0$ nats of evidence has power exactly $\frac12$ at
this horizon --- and by \Cref{thm:nfl} the same construction drives the power to
$0$ for any fixed $c$, $\alpha$ and $t$.
\end{example}

\subsection{What the theorem does not say}

\begin{proposition}[Asymptotic power one]\label{prp:powerone}
Work in the i.i.d.\ model. Let $E\in\Ev$ with $\E_{P_1}[\log E]>0$, and let
$(W_t)_{t\in\N_0}$
be the i.i.d.\ product generated by $E$. Then $\tau_\alpha<\infty$
$\Prob_1$-a.s.\ for every $\alpha\in(0,1)$.
\end{proposition}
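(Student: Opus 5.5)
The plan is to apply the strong law of large numbers to $\log W_t=\sum_{n\le t}\log E(X_n)$ under $\Prob_1$. Under $\Prob_1=P_1^{\otimes\N}$ the summands $Y_n:=\log E(X_n)$ are i.i.d.\ with the law of $\log E$ under $P_1$. By \Cref{lem:epower-welldef} we have $\E_{P_1}[(\log E)_+]<\infty$, so the mean $\mu:=\E_{P_1}[\log E]$ is well defined and lies in $(0,\infty)$ by hypothesis. Because $\mu>-\infty$, we also get $\E_{P_1}[(\log E)_-]<\infty$, so $Y_1$ is integrable. As in the proof of \Cref{lem:iidexp}, $P_1(E=\infty)=0$; and $E=0$ has $P_1$-probability zero since $(\log E)_-$ is integrable. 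Hence $Y_n\in\R$ $\Prob_1$-a.s., and the identity $\log W_t=\sum_{n\le t}Y_n$ holds without any $0\cdot\infty$ issue.

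Next I will invoke Kolmogorov's strong law: $\frac1t\log W_t\to\mu>0$ $\Prob_1$-a.s. In particular $\log W_t\to+\infty$ $\Prob_1$-a.s. So for $\Prob_1$-almost every $\omega$ there is a $t$ with $\log W_t(\omega)>\ell$, i.e.\ $W_t>\frac1\alpha$, and therefore $\tau_\alpha(\omega)<\infty$. Since $\alpha\in(0,1)$ is arbitrary and the null set from the SLLN does not depend on $\alpha$, this also gives the stronger statement that almost surely $\tau_\alpha<\infty$ for all $\alpha$ simultaneously.

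The only delicate step is the integrability bookkeeping in the first paragraph. The SLLN needs $\E_{P_1}|\log E|<\infty$, or at least that the positive part is finite and the mean is positive. The positive part is supplied exactly by \Cref{lem:epower-welldef} under \Cref{ass:standing}. If one prefers not to argue that the negative part is finite, the SLLN for random variables whose mean exists in the extended sense (with $\E Y_1^+<\infty$) would also suffice, but positivity of $\mu$ already makes the negative part integrable, so the plain SLLN applies.
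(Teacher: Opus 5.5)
Your proof is correct and follows the paper's argument essentially verbatim: \Cref{lem:epower-welldef} bounds the positive part, positivity of the mean then forces $\log E\in L^{1}(P_1)$, and the strong law gives $t^{-1}\log W_t\to\E_{P_1}[\log E]>0$ $\Prob_1$-a.s., hence $\log W_t>\ell$ for some $t$. Two of your additions are correct refinements that the paper leaves implicit: the check that $E\in(0,\infty)$ $P_1$-a.s., so that $\log W_t=\sum_{n\le t}Y_n$ holds exactly, and the observation that the exceptional null set does not depend on $\alpha$.
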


\begin{proof}
By \Cref{lem:epower-welldef} $\E_{P_1}[(\log E)_+]<\infty$, so
$\E_{P_1}[\log E]>0$ already forces $\log E\in L^{1}(P_1)$. By the strong law of
large numbers, $t^{-1}\log W_t\to\E_{P_1}[\log E]>0$
$\Prob_1$-a.s., so $\log W_t\to+\infty$ $\Prob_1$-a.s., and in particular
$\log W_t>\ell$ for some $t$.
\end{proof}

\begin{remark}[What the word ``alone'' is doing]\label{rem:alone}
\Cref{thm:nfl} is a statement about a \emph{class}: the class of betting
sequences whose conditional \epower{} is at least $c$ admits no type-II bound
depending only on $(c,\alpha,t)$. It is not the statement that \epower{}
carries no information. A positive \epower{} does control the asymptotics
(\Cref{prp:powerone}: the test rejects eventually, almost surely), and it does
control the \emph{mean} of $\log W_t$ by construction. The two results are not
in conflict, and the difference between them is the difference between a
pointwise limit and a uniform bound: \Cref{prp:powerone} fixes one \evar{} and
lets $t\to\infty$, while \Cref{thm:nfl} fixes $t$ and varies the sequence, which
it is free to do because the crash depth $M=(t-1)\tilde u+\ell$ may depend on
$t$. What fails is not consistency but \emph{uniformity over the class defined
by \epower{} alone} --- and uniformity is what a sample-size calculation
requires. The missing ingredient is any control of the lower tail, and
\Cref{rem:whatworks} and \Cref{sec:ceiling} supply it. Note finally that the
hypothesis of \Cref{thm:nfl} is the favourable one, \epower{} bounded
\emph{below} by a positive constant: were it negative there would be nothing to
discuss, not even \Cref{prp:powerone}, so the theorem is not an artefact of a
degenerate case.
\end{remark}

\begin{remark}[What does control the type-II error]\label{rem:whatworks}
Inspection of \Cref{con:nfl} shows the free parameter is the depth $M$ of the
crash, which \eqref{eq:cond-epower} does not constrain. Any hypothesis bounding
the lower tail of $\log E_t$ restores a bound. Suppose, for some
$s_0>0$ and $c_0>0$,
\[
   \E_{\Prob_1}\lB E_t^{-s_0}\mid\Fc_{t-1}\rB\ \le\ e^{-c_0}
   \qquad\Prob_1\text{-a.s., for every }t\in\N .
\]
The tower-property induction of \Cref{prp:upper-seq}\ref{it:agg-exact}, run with $\le$ in
place of $\ge$, gives $\E_{\Prob_1}[W_t^{-s_0}]\le e^{-tc_0}$, i.e.\
$\Lambda^{(t)}_{s_0}\ge c_0$, and \Cref{lem:chernoff} then gives
$\gamma_t(\alpha)\le\exp(s_0\ell-tc_0)\to0$. The exponent $\Lambda^{(t)}$ of
\Cref{def:exponent} is the sharpest such summary, and \Cref{sec:ceiling}
determines exactly how large it can be.
\end{remark}

\begin{remark}[Which hypothesis buys which rate]\label{rem:cramer}
For an i.i.d.\ product the exponential case is settled exactly, and in both
directions, by \Cref{prp:exponent}: under its hypotheses
$\lim_{t}-\frac1t\log\gamma_t(\alpha)=\Lambda(E)$, so the fixed-horizon error decays
exponentially if and only if $\Lambda(E)>0$, i.e.\ if and only if
$\E_{P_1}[E^{-s}]<1$ for some $s>0$. Given positive \epower{} this is in turn
equivalent to \emph{Cram\'er's moment condition},
$\E_{P_1}[E^{-s_0}]<\infty$ for some
$s_0>0$: that condition makes $\varphi(s):=\E_{P_1}[E^{-s}]$ finite on
$[0,s_0]$, by Lyapunov's inequality $\varphi(s)\le\varphi(s_0)^{s/s_0}$, with
$\varphi(0)=1$; and the difference quotients
$\frac{\varphi(s)-1}{s}=\E_{P_1}[\frac{e^{-s\log E}-1}{s}]$ decrease as
$s\downarrow0$ and are dominated above by their value at $s_0$, so monotone
convergence gives the right derivative
$\varphi'(0^{+})=-\E_{P_1}[\log E]<0$, and $\varphi(s)<1$ for small $s>0$. It is a hypothesis on the lower tail: by Markov's inequality it
gives $\Prob_1(E\le\epsilon)\le\epsilon^{s_0}\E_{P_1}[E^{-s_0}]$, a
polynomially small crash probability, and conversely
$\Prob_1(E\le\epsilon)\le K\epsilon^{a}$ gives finiteness for every $s<a$. The
likelihood ratio satisfies it for free with $s_0=1$, since
$\E_{P_1}[R^{-1}]=P_0(R>0)\le1$; it constrains the \emph{approximation} to $R$
that one bets with, not the testing problem.

What Cram\'er's condition is not is necessary for a bound. If
$\E_{P_1}[\log E]=:m>0$ and $\sigma^{2}:=\Var_{P_1}(\log E)<\infty$ --- which
permits a lower tail with no exponential moment whatsoever --- then Chebyshev's
inequality applied to $\log W_t$ gives, for every $t>\ell/m$,
\[
   \gamma_t(\alpha)\ \le\ \frac{t\,\sigma^{2}}{\lp t\,m-\ell\rp^{2}}
   \ =\ \frac{\sigma^{2}}{m^{2}\,t}\,(1+o(1)) ,
\]
a bound of order $1/t$ which holds \emph{even in the regime where Cram\'er's
moment condition fails and $\Lambda(E)=0$}. So \Cref{thm:nfl} separates
\emph{no} hypothesis on the lower tail from \emph{some} hypothesis, while
Cram\'er's moment condition separates exponential from sub-exponential decay;
under the additional hypothesis $\sigma^{2}<\infty$ the sub-exponential rate is
$O(1/t)$. The two dichotomies are different, and only the second is an
equivalence.

Which \emph{quantitative} bound each hypothesis buys is the subject of the
companion paper \cite{LADDER}: Cram\'er's condition, a crash tail with one of
three further moment conditions, a conditional variance with a wealth floor, a
sub-Gaussian or bounded-tilt lower tail, a bounded log-increment, and i.i.d.\
increments form a partial order of hypotheses, each supplying a majorant for
the conditional cumulant generating function of $\log E_n$ and hence, by one
Legendre transform, an explicit exponent. That paper works with an arbitrary
filtration, asks only that the certified drift and the majorant be predictable,
and needs neither a product structure nor the results proved here; conversely
nothing below uses it.
The two papers meet at exactly one point, its top rung: there the majorant may
be taken to be the truth, the exponent is $\Lambda(E)$, and \Cref{sec:ceiling}
is what bounds $\Lambda(E)$ itself.
\end{remark}

\section{The ceiling: matching upper and lower bounds}
\label{sec:ceiling}

The question of this section is the exact range of the exponent
$\Lambda^{(t)}_s$ of \Cref{def:exponent} over the class of all \etm s. The
engine is a single application of H\"older's inequality, made once per step and
conditionally on the past (\Cref{thm:upper-cond}); \Cref{prp:upper-seq}
aggregates the steps, and \Cref{lem:flat} shows the resulting bound is met
inside the i.i.d.\ subclass. \Cref{thm:ceiling} assembles the three statements.
The upper half needs only the filtered setting of \Cref{def:setting}; the
lower half needs the i.i.d.\ model, and \Cref{rem:whatgeneralises} says why.

\subsection{The R\'enyi quantities}

\begin{definition}[Hellinger integrals and the flattening profile]\label{def:Zbeta}
In the i.i.d.\ model define
\[
   Z:\R\to(0,\infty],\qquad Z_\beta:=\E_{P_0}\lB R^{\beta}\rB ,
\]
and
\[
   \lambda:\R\setminus\{0\}\to[-\infty,\infty],\qquad
   \lambda(\beta):=-\frac1\beta\log Z_\beta .
\]
\end{definition}

\begin{definition}[Flattened likelihood ratios]\label{def:flattened}
In the i.i.d.\ model put
\[
   \Bc\ :=\ \lC\beta>0\st Z_\beta<\infty\rC ,
   \qquad
   \beta_{\max}\ :=\ \sup\Bc\ \in[1,\infty] ,
\]
so that $\Bc$ is an interval with $(0,1]\subseteq\Bc\subseteq(0,\beta_{\max}]$ ---
the first inclusion because $Z_\beta\le1$ on $(0,1]$ by \Cref{lem:Zf}(i) --- and
for $\beta\in\Bc$ define
\[
   R_\beta:\Xc\to[0,\infty),\qquad R_\beta:=\frac{R^{\beta}}{Z_\beta} ,
\]
so that $R_1=R$ and $\E_{P_0}[R_\beta]=1$, whence $R_\beta\in\Ev$ for every
$\beta\in\Bc$.
\end{definition}

\begin{definition}[R\'enyi divergence]\label{def:renyi}
Let $Q,P$ be probability measures on $(\Xc,\Bs{\Xc})$, let $\mu$ be any
$\sigma$-finite measure with $Q,P\ll\mu$, and put $q:=\frac{dQ}{d\mu}$,
$p:=\frac{dP}{d\mu}$. For $a\in(0,1)$ define
\[
   \Ren_a(Q\|P):=\frac{1}{a-1}\log\int q^{a}p^{1-a}\,d\mu\ \in[0,\infty] .
\]
The value does not depend on the choice of $\mu$, and no absolute continuity
between $Q$ and $P$ is required; when $Q\ll P$ it reduces to
$\frac{1}{a-1}\log\E_{P}[(\frac{dQ}{dP})^{a}]$.
\end{definition}

\begin{remark}[Why the dominating-measure form]\label{rem:renyigenerality}
\Cref{ass:standing} assumes $P_1\ll P_0$ but not the converse, so
$\frac{dP_0}{dP_1}$ need not exist; the identity
$\lambda(\beta)=\Ren_{1-\beta}(P_0\|P_1)$ of \Cref{lem:Zf}(iii) would be
meaningless under a definition requiring $P_0\ll P_1$. The model
$P_0=\mathrm{Bern}(\frac12)$, $P_1=\delta_1$ of \Cref{rem:eqcase} satisfies
\Cref{ass:standing} and has $P_0\not\ll P_1$.
\end{remark}

\begin{lemma}[Basic properties of $Z$ and $\lambda$]\label{lem:Zf}
Under \Cref{ass:standing}, in the i.i.d.\ model:
\begin{enumerate}[label=(\roman*),leftmargin=2.4em]
\item $Z_\beta\in(0,1]$ for every $\beta\in(0,1]$ --- so on that range $Z$ and
      $\lambda$ take the values announced in \Cref{def:Zbeta} --- and $Z_1=1$, so
      $\lambda(1)=0$ and $\lambda(\beta)\in[0,\infty)$ for every
      $\beta\in(0,1]$ --- only $\lim_{\beta\downarrow0}\lambda(\beta)$ can be
      infinite;
\item $\lambda$ is nonincreasing on $(0,1]$;
\item for $\beta\in(0,1)$, $\lambda(\beta)=\Ren_{1-\beta}(P_0\|P_1)$, the
      R\'enyi divergence of \Cref{def:renyi} of order $1-\beta$ (skew symmetry,
      \cite[Prop.~2]{vEH14}; see \Cref{thm:app-renyi});
\item if moreover $\log R$ is not $P_0$-a.s.\ constant, then $\lambda$ is strictly
      decreasing on $(0,1]$.
\end{enumerate}
\end{lemma}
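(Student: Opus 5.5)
The plan is to derive all four parts from elementary convexity facts about $\beta\mapsto Z_\beta$ on $[0,1]$. Throughout, $R<\infty$ everywhere, and we use the conventions $R^0:=1$ and $0^\beta=0$ for $\beta>0$.

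For (i), Jensen's inequality for the concave map $x\mapsto x^\beta$ with $\beta\in(0,1]$ gives $Z_\beta=\E_{P_0}[R^\beta]\le(\E_{P_0}[R])^\beta=1$. Positivity follows because $P_1\ll P_0$ and $P_1(R>0)=1$, so $P_0(R>0)>0$ and hence $\E_{P_0}[R^\beta]>0$. Then $Z_1=\E_{P_0}[R]=1$, so $\lambda(1)=0$, and $\lambda(\beta)=-\frac1\beta\log Z_\beta\in[0,\infty)$ because $Z_\beta\in(0,1]$.

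For (ii), I will use log-convexity of $\kappa(\beta):=\log Z_\beta$ on $[0,1]$. By H\"older's inequality with exponents $1/\theta$ and $1/(1-\theta)$,
\[
Z_{\theta a+(1-\theta)b}\le Z_a^\theta Z_b^{1-\theta},
\]
where we set $Z_0:=P_0(R>0)\le1$ with the convention $R^0\mathbf 1_{\{R>0\}}$. A cleaner route avoids $Z_0$ altogether. For $0<\beta<\beta'\le1$, write $\beta=\theta\beta'$ with $\theta\in(0,1)$. Applying Jensen to the concave map $x\mapsto x^\theta$ gives
\[
Z_\beta=\E_{P_0}[(R^{\beta'})^\theta]\le(\E_{P_0}[R^{\beta'}])^\theta=Z_{\beta'}^{\beta/\beta'} .
\]
Taking $-\frac1\beta\log$ reverses the inequality and yields $\lambda(\beta)\ge\lambda(\beta')$. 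This is immediate.

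For (iii), I will take $\mu:=P_0$. Then $P_0\ll\mu$ with $p=1$, and $P_1\ll P_0$ with density $R$. The definition gives, with $a=1-\beta$ and $Q=P_0$, $P=P_1$,
\[
\Ren_{1-\beta}(P_0\|P_1)=\frac1{-\beta}\log\int 1^{1-\beta}R^{\beta}\,dP_0=\lambda(\beta).
\]
The only care needed is the convention $0^{a}$ for $a>0$, and $a=1-\beta\in(0,1)$ here. Independence of the choice of $\mu$ is quoted from \Cref{thm:app-renyi}.

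For (iv), which is the only step requiring any thought, I will trace the equality case in the Jensen step of (ii). Strict concavity of $x\mapsto x^\theta$ with $\theta\in(0,1)$ means equality forces $R^{\beta'}$ to be $P_0$-a.s. constant. Here $R$ may vanish on a set of positive $P_0$-measure, so I must check that the constant is taken on all of $\Xc$ and not merely on $\{R>0\}$. Jensen's equality case applies to the random variable $R^{\beta'}$ on the whole space, so $R^{\beta'}=c$ holds $P_0$-a.s., and therefore $R$ is $P_0$-a.s. constant. In particular $R=1$, since $\E_{P_0}[R]=1$.

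The hypothesis of (iv) is that $\log R$ is not a.s. constant. If $P_0(R=0)>0$ but $R$ is not a.s. zero, then $\log R$ takes the value $-\infty$ and finite values, so it is non-constant. Likewise, if $R$ is a.s. constant then $\log R$ is a.s. constant. So non-constancy of $\log R$ is equivalent to non-constancy of $R$, and in that case the inequality is strict, giving $\lambda(\beta)>\lambda(\beta')$. The conventions $\log0=-\infty$ and $0^\beta=0$ make this consistent.
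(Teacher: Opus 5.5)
Your proposal is correct and follows the paper's proof essentially step for step: Jensen for the concave maps $x\mapsto x^{\beta}$ and $x\mapsto x^{\beta/\beta'}$ in (i)--(ii), the strict-Jensen equality case (non-constancy of $\log R$ being equivalent to non-constancy of $R^{\beta'}$) in (iv), and a direct evaluation of the R\'enyi integral in (iii). The only difference is in (iii). You fix $\mu=P_0$ and rely on the value being independent of $\mu$; that independence is asserted in \Cref{def:renyi} itself, not in \Cref{thm:app-renyi}, and the density equal to $1$ there is $q=dP_0/d\mu$, not $p$. The paper instead computes $\int p_0^{1-\beta}p_1^{\beta}\,d\mu=Z_\beta$ for an arbitrary dominating $\mu$, so it does not need that independence.
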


\begin{proof}
(i) $Z_1=\E_{P_0}[R]=1$. For $\beta\in(0,1)$ the map $x\mapsto x^{\beta}$ is
concave on $[0,\infty)$, so Jensen gives
$Z_\beta=\E_{P_0}[R^{\beta}]\le(\E_{P_0}[R])^{\beta}=1$; and $Z_\beta>0$ because
$P_0(R>0)>0$. Hence $\log Z_\beta\le0$ and $\lambda(\beta)\ge0$.

(ii) Let $0<\beta'\le\beta\le1$ and put $\rho:=\beta'/\beta\in(0,1]$. Applying
Jensen to the concave map $x\mapsto x^{\rho}$,
\[
   Z_{\beta'}=\E_{P_0}\lB\lp R^{\beta}\rp^{\rho}\rB
   \ \le\ \lp\E_{P_0}\lB R^{\beta}\rB\rp^{\rho}=Z_\beta^{\rho} ,
\]
so $\log Z_{\beta'}\le\rho\log Z_\beta$ and, dividing by $-\beta'=-\rho\beta<0$,
$\lambda(\beta')\ge \lambda(\beta)$.

(iii) With $\mu$, $p_0$, $p_1$ as in \Cref{def:renyi} and $\beta\in(0,1)$,
\[
   \int p_0^{1-\beta}p_1^{\beta}\,d\mu
   =\int_{\{p_0>0\}}p_0\,R^{\beta}\,d\mu=Z_\beta ,
\]
the integrand vanishing where $p_0=0$ because $1-\beta>0$; hence
$\Ren_{1-\beta}(P_0\|P_1)=\frac{1}{-\beta}\log Z_\beta=\lambda(\beta)$. No
absolute continuity of $P_0$ with respect to $P_1$ is used. (This is the skew-symmetry identity
$\Ren_a(P\|Q)=\frac{a}{1-a}\Ren_{1-a}(Q\|P)$ of \cite[Prop.~2]{vEH14}, written
out.)

(iv) This is (ii) with strict Jensen. Let $0<\beta'<\beta\le1$, so
$\rho=\beta'/\beta\in(0,1)$ and $x\mapsto x^{\rho}$ is \emph{strictly} concave
on $[0,\infty)$. The variable $R^{\beta}$ is $P_0$-integrable
($\E_{P_0}[R^{\beta}]=Z_\beta\le1$) and is not $P_0$-a.s.\ constant, because
$\log R$ is not; hence Jensen's inequality is strict,
$Z_{\beta'}<Z_\beta^{\rho}$. Taking $\log$ and dividing by $-\beta'=-\rho\beta<0$
gives $\lambda(\beta')>\lambda(\beta)$.
\end{proof}

\subsection{The conditional R\'enyi quantities}

Nothing in \Cref{def:Zbeta} needs the product structure once it is read one step
at a time and conditionally on the past.

\begin{definition}[Conditional Hellinger integrals and profile]\label{def:condZ}
For $t\in\N$ and $\beta\in(0,1]$ define the $\Fc_{t-1}$-measurable variables
\[
   Z_{\beta,t}:=\E_{\Prob_0}\lB R_t^{\beta}\mid\Fc_{t-1}\rB ,
   \qquad
   \lambda_t(\beta):=-\frac1\beta\log Z_{\beta,t} ,
\]
with $R_t$ as in \Cref{def:ratios}. In the i.i.d.\ model
$Z_{\beta,t}=Z_\beta$ and $\lambda_t(\beta)=\lambda(\beta)$ $\Prob_1$-a.s.\ for
every $t$; in particular both are $\Prob_1$-a.s.\ equal to constants.
\end{definition}

\begin{lemma}[Basic properties, conditionally]\label{lem:Zf-cond}
Under \Cref{ass:standing}, for every $t\in\N$ and $\Prob_0$-a.s.\ --- hence also
$\Prob_1$-a.s.:
\begin{enumerate}[label=(\roman*),leftmargin=2.4em]
\item $Z_{\beta,t}\in(0,1]$ for every $\beta\in(0,1]$, and $Z_{1,t}=1$; hence
      $\lambda_t(1)=0$ and $\lambda_t(\beta)\in[0,\infty)$ for every
      $\beta\in(0,1]$;
\item $\beta\mapsto\lambda_t(\beta)$ is nonincreasing on $(0,1]$, so that
      \[
         \lambda_t(0^{+}):=\lim_{\beta\downarrow0}\lambda_t(\beta)
         \ =\ \sup_{\beta\in(0,1]}\lambda_t(\beta)\ \in[0,\infty]
      \]
      exists.
\end{enumerate}
\end{lemma}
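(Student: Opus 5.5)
The plan is to transplant the proof of \Cref{lem:Zf}(i)--(ii) verbatim, with the unconditional Jensen inequality replaced by its conditional version under $\Prob_0$ given $\Fc_{t-1}$. The only genuinely new points are two: the null sets must be handled uniformly in $\beta$, and the passage from $\Prob_0$-a.s.\ to $\Prob_1$-a.s.\ must be justified.

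First, the conditional mean of the one-step ratio. On $\{L_{t-1}>0\}$ we have $R_t=L_t/L_{t-1}$, and the martingale property gives $\E_{\Prob_0}[L_t\mid\Fc_{t-1}]=L_{t-1}$. Pulling out the $\Fc_{t-1}$-measurable factor $\mathbf 1_{\{L_{t-1}>0\}}/L_{t-1}$ yields $Z_{1,t}=1$ there. On $\{L_{t-1}=0\}$ we have $R_t=1$ by definition, so again $Z_{1,t}=1$; hence $Z_{1,t}=1$ $\Prob_0$-a.s. For $\beta\in(0,1)$, conditional Jensen for the concave map $x\mapsto x^\beta$ gives $Z_{\beta,t}\le Z_{1,t}^\beta=1$. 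For positivity, note that $\{Z_{\beta,t}=0\}\in\Fc_{t-1}$, so
\[
   \E_{\Prob_0}\lB R_t^\beta\mathbf 1_{\{Z_{\beta,t}=0\}}\rB=0 ,
\]
which forces $R_t=0$ $\Prob_0$-a.s.\ on that set. But then $Z_{1,t}=0$ on it, which contradicts $Z_{1,t}=1$ unless the set is $\Prob_0$-null. This proves (i).

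For (ii), take $0<\beta'\le\beta\le1$ and put $\rho=\beta'/\beta$. Conditional Jensen for $x\mapsto x^\rho$ gives $Z_{\beta',t}\le Z_{\beta,t}^\rho$ a.s. Taking logarithms and dividing by $-\beta'$ gives $\lambda_t(\beta')\ge\lambda_t(\beta)$. Division is legitimate here because (i) makes every logarithm finite.

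The main obstacle is that each of these inequalities holds only off a null set that depends on $(\beta',\beta)$. To make the whole path $\beta\mapsto\lambda_t(\beta)$ monotone on a single full set, I would proceed as follows.
\begin{enumerate}[label=(\alph*),leftmargin=2.4em]
\item Fix the countable set $D=(0,1]\cap\mathbb{Q}\cup\{1\}$ and collect the countably many null sets needed for (i) and for monotonicity on pairs from $D$.
\item Extend to all of $(0,1]$ by choosing a regular version of the conditional expectation. The cleanest route is a regular conditional distribution of $R_t$ given $\Fc_{t-1}$ under $\Prob_0$, which exists since $R_t$ is real-valued. With this version every $Z_{\beta,t}$ is an integral against a single probability kernel, and Jensen holds pathwise for all $\beta$ at once.
\end{enumerate}
I would adopt the kernel version from the start. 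With it, (i) and (ii) hold for all $\beta$ simultaneously off one $\Prob_0$-null set: the null set on which the kernel integrates $R_t$ to something other than $1$. The limit $\lambda_t(0^+)$ then exists in $[0,\infty]$ and equals the supremum, by monotonicity.

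Finally, $\Prob_1\ll\Prob_0$ on $\Fc_{t-1}$ by \Cref{ass:standing}, and every exceptional set lies in $\Fc_{t-1}$. So each $\Prob_0$-null exceptional set is also $\Prob_1$-null, which gives the statement $\Prob_1$-a.s.
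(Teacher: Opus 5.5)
Your proof is correct. The core steps are the paper's own, but you handle the dependence of the null sets on $\beta$ by a different route.

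\textbf{Shared steps.} Both proofs get $Z_{1,t}=1$ from the martingale property of $L$ on $\{L_{t-1}>0\}$ and the convention $R_t:=1$ elsewhere. Both get the upper bound and the monotonicity from conditional Jensen. Both get positivity from the same contradiction with $Z_{1,t}=1$.

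\textbf{Paper's route.} It fixes versions only for rational $\beta$ and proves (i)--(ii) along those off one null set. It then \emph{defines} $\lambda_t(0^{+})$ as the supremum over rational $\beta$, noting that later uses only need a single $\beta$ (\Cref{cor:ceiling}). This avoids regular conditional distributions, which the paper deliberately avoids elsewhere as well (\Cref{lem:family-cond}).

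\textbf{Your route.} You take a regular conditional distribution $K(\omega,dx)$ of $R_t$ given $\Fc_{t-1}$ under $\Prob_0$ and set $Z_{\beta,t}(\omega):=\int x^{\beta}K(\omega,dx)$. This is a version for every $\beta$ at once.
\begin{itemize}
\item Since $R_t$ is $[0,\infty)$-valued, such a kernel exists on any probability space, so this costs no hypothesis.
\item Off the single null set where $K(\omega,\cdot)$ is not a probability on $[0,\infty)$ with $\int x\,K(\omega,dx)=1$, all of (i)--(ii) hold pathwise for every real $\beta\in(0,1]$. Note that this null set is defined by the kernel integrating the identity map, not ``$R_t$''.
\item Positivity is then automatic, because mean $1$ forces $K(\omega,(0,\infty))>0$. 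Your separate positivity argument is therefore redundant, though correct.
\end{itemize}

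\textbf{What each buys.} Your route proves the statement literally, for all $\beta$ rather than only rational ones. As a bonus, it gives pathwise continuity of $\beta\mapsto Z_{\beta,t}$ by dominated convergence under $K(\omega,\cdot)$, with dominating function $1+x$. The paper obtains that continuity only sequence by sequence, in \Cref{ass:homog}. The paper's route, in exchange, never needs a kernel.
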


\begin{proof}
Each $Z_{\beta,t}$ is a conditional expectation, hence defined up to a null set
for each fixed $\beta$ separately. Fix versions for $\beta\in(0,1]\cap\mathbb{Q}$ and
read (i) and (ii) as holding, off one null set, simultaneously for rational
$\beta$; this is all that is used, since \Cref{cor:ceiling} fixes a single
$\beta$ and $\lambda_t(0^{+})$ may be defined as the supremum over rational
$\beta$, which by (ii) is the same limit.

(i) $Z_{1,t}=\E_{\Prob_0}[R_t\mid\Fc_{t-1}]=1$ $\Prob_0$-a.s.: on
$\lC L_{t-1}>0\rC$ this is the $\Prob_0$-martingale property of $L$, and on
$\lC L_{t-1}=0\rC$ it is the convention $R_t:=1$ of \Cref{def:ratios}. For
$\beta\in(0,1)$ the map $x\mapsto x^{\beta}$ is concave on $[0,\infty)$, so
conditional Jensen gives $Z_{\beta,t}\le(\E_{\Prob_0}[R_t\mid\Fc_{t-1}])^{\beta}=1$
$\Prob_0$-a.s.; and $Z_{\beta,t}>0$ $\Prob_0$-a.s., since $Z_{\beta,t}=0$ on an
$\Fc_{t-1}$-set of positive $\Prob_0$-measure would force $R_t=0$ $\Prob_0$-a.s.\
there, contradicting $\E_{\Prob_0}[R_t\mid\Fc_{t-1}]=1$. Therefore
$\log Z_{\beta,t}\le0$ and $\lambda_t(\beta)\ge0$.

(ii) Let $0<\beta'\le\beta\le1$ and $\rho:=\beta'/\beta\in(0,1]$. Conditional
Jensen for the concave $x\mapsto x^{\rho}$ gives
$Z_{\beta',t}=\E_{\Prob_0}[(R_t^{\beta})^{\rho}\mid\Fc_{t-1}]
\le Z_{\beta,t}^{\rho}$, so $\log Z_{\beta',t}\le\rho\log Z_{\beta,t}$ and,
dividing by $-\beta'=-\rho\beta<0$, $\lambda_t(\beta')\ge\lambda_t(\beta)$.
\end{proof}

\begin{remark}[Two consequences]\label{rem:Zf-cond}
By \Cref{lem:Zf-cond}(i), $\lC0<Z_{\beta,t}<\infty\rC$ has full $\Prob_0$-measure,
so conventions off it are harmless under either measure. And
$\beta\mapsto\log Z_{\beta,t}$ is a conditional cumulant generating function of
$\log R_t$ under $\Prob_0$, hence convex on $\R$, with the value $0$ at
$\beta=1$; it also takes the value $0$ at $\beta=0$ whenever
$\Prob_0(R_t>0\mid\Fc_{t-1})=1$.
\end{remark}

\subsection{The upper bound}

The engine is H\"older's inequality, and H\"older's inequality holds
conditionally. We therefore prove the bound where it lives --- at one step of an
arbitrary \etm{}, conditionally on the past --- and read the one-\evar{}
statement off as the i.i.d.\ case.

\begin{theorem}[Upper bound: one conditional application of H\"older]
\label{thm:upper-cond}
Let $(E_n)_{n\in\N}$ be a betting sequence, let $n\in\N$ and $s\in(0,\infty)$,
and set $\beta:=\frac{1}{1+s}\in(0,1)$. Then
\begin{equation}\label{eq:upper-cond}
   \E_{\Prob_1}\lB E_n^{-s}\mid\Fc_{n-1}\rB\ \ge\ e^{-\lambda_n(\beta)}
   \qquad\Prob_1\text{-a.s.}
\end{equation}
\end{theorem}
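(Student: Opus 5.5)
The plan is to combine \Cref{lem:bayes} with a conditional Hölder inequality under $\Prob_0$, with exponents chosen so that the $E_n$-factor is controlled by the e-variable constraint.

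\textbf{Step 1: change of measure.} By \Cref{lem:bayes} with $Y:=E_n^{-s}$, which is $\Fc_n$-measurable and $[0,\infty]$-valued,
\[
   \E_{\Prob_1}\lB E_n^{-s}\mid\Fc_{n-1}\rB=\E_{\Prob_0}\lB R_n E_n^{-s}\mid\Fc_{n-1}\rB
   \qquad\Prob_1\text{-a.s.}
\]
It therefore suffices to show $\E_{\Prob_0}[R_nE_n^{-s}\mid\Fc_{n-1}]\ge Z_{\beta,n}^{1/\beta}$ $\Prob_0$-a.s. This transfers to $\Prob_1$-a.s.\ because $\Prob_1\ll\Prob_0$ on $\Fc_{n-1}$, and $Z_{\beta,n}^{1/\beta}=e^{-\lambda_n(\beta)}$.

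\textbf{Step 2: the pointwise factorisation.} Write
\[
   R_n^{\beta}=\lp R_nE_n^{-s}\rp^{\beta}\lp E_n\rp^{s\beta},
\]
and note that $\beta+s\beta=1$. Apply conditional Hölder with exponents $p=1/\beta$ and $p'=1/(s\beta)=(1+s)/s$. This gives
\[
   Z_{\beta,n}=\E_{\Prob_0}[R_n^{\beta}\mid\Fc_{n-1}]
   \le\lp\E_{\Prob_0}[R_nE_n^{-s}\mid\Fc_{n-1}]\rp^{\beta}\lp\E_{\Prob_0}[E_n\mid\Fc_{n-1}]\rp^{s\beta}.
\]
The second factor is at most $1$ by the conditional e-variable property. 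Raising to the power $1/\beta$ yields the claim.

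\textbf{Step 3: the main obstacle, extended values.} The factorisation in Step 2 must be checked with the paper's conventions, and conditional Hölder must be justified for $[0,\infty]$-valued variables.

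The factorisation is fine where $E_n=\infty$, since that set is $\Prob_0$-null by \Cref{lem:supmg}. Where $E_n=0$ and $R_n>0$, we have $E_n^{-s}=\infty$, so the left factor is $\infty$ while the right factor is $0$. There I would argue directly instead. On the $\Fc_{n-1}$-set where $\Prob_0(E_n=0,R_n>0\mid\Fc_{n-1})>0$, the conditional expectation $\E_{\Prob_0}[R_nE_n^{-s}\mid\Fc_{n-1}]$ is $+\infty$ and the bound is trivial. Off that set the product identity holds $\Prob_0$-a.s., because on $\{R_n=0\}$ both sides vanish with $0\cdot\infty=0$.

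For conditional Hölder with nonnegative extended variables, I would either cite it, or prove it via regular conditional distributions or the standard truncation and monotone convergence argument. The cleanest route is the pointwise Young inequality $ab\le \beta a^{1/\beta}/A+s\beta\,b^{1/(s\beta)}/B$ with $\Fc_{n-1}$-measurable normalisers $A,B$. After normalising, one takes conditional expectations and handles the cases where $A$ or $B$ is $0$ or $\infty$ separately.
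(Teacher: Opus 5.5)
Your proof is correct and is essentially the paper's own. It uses the same factorisation $R_n^{\beta}=E_n^{1-\beta}(R_nE_n^{-s})^{\beta}$ (your $s\beta$ equals $1-\beta$), conditional H\"older under $\E_{\Prob_0}[\,\cdot\mid\Fc_{n-1}]$ with exponents $\frac1{1-\beta},\frac1\beta$, the \textsf{e}-variable constraint on the $E_n$-factor, \Cref{lem:bayes} to return to $\Prob_1$, and the same disposal of $\{E_n=0,R_n>0\}$. You test $\Prob_0(E_n=0,R_n>0\mid\Fc_{n-1})>0$ where the paper tests $\Prob_1(E_n=0\mid\Fc_{n-1})>0$, and these are the same event up to null sets by \Cref{lem:bayes}.

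One cosmetic slip: the Young inequality you display needs the normalisation on the left, $ab/(A^{\beta}B^{s\beta})\le\beta a^{1/\beta}/A+s\beta\,b^{1/(s\beta)}/B$, with $A,B$ the conditional moments of $a^{1/\beta}$ and $b^{1/(s\beta)}$. That is exactly how \Cref{cor:app-holder} is proved.
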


\begin{proof}
\emph{Step 1 (factorisation).} On $\lC0<E_n<\infty\rC$, using
$1-\beta-s\beta=1-\beta-(1-\beta)=0$,
\begin{equation}\label{eq:factor-cond}
   E_n^{\,1-\beta}\lp R_nE_n^{-s}\rp^{\beta}
   \ =\ R_n^{\beta}\,E_n^{\,1-\beta-s\beta}
   \ =\ R_n^{\beta} .
\end{equation}
We dispose of the degenerate cases. By \Cref{lem:supmg} $E_n<\infty$
$\Prob_0$-a.s., hence $\Prob_1$-a.s.\ by \Cref{ass:standing}. And on
$\lC E_n=0\rC$ we have $E_n^{-s}=+\infty$, so on the event
$\lC\Prob_1(E_n=0\mid\Fc_{n-1})>0\rC$ the left-hand side of
\eqref{eq:upper-cond} is $+\infty$ and there is nothing to prove; on the
complement $\Prob_1(E_n=0\mid\Fc_{n-1})=0$. That is a $\Prob_1$-statement and
the conditional expectation below is taken under $\Prob_0$, so the two must be
linked: by \Cref{lem:bayes},
$\Prob_1(E_n=0\mid\Fc_{n-1})=\E_{\Prob_0}[R_n\mathbf 1_{\{E_n=0\}}\mid\Fc_{n-1}]$,
so on that complement $R_n\mathbf 1_{\{E_n=0\}}=0$ \emph{$\Prob_0$-a.s.} Hence
$\Prob_0$-a.s.\ there either $E_n>0$, where \eqref{eq:factor-cond} is the
displayed algebra, or $R_n=0$, where both of its sides vanish (using
$\beta>0$). So \eqref{eq:factor-cond} may be used inside
$\E_{\Prob_0}[\,\cdot\mid\Fc_{n-1}]$.

\emph{Step 2 (conditional H\"older).} The exponents $\frac1{1-\beta}$ and
$\frac1\beta$ are conjugate. Conditional H\"older (\Cref{cor:app-holder}) under
$\E_{\Prob_0}[\,\cdot\mid\Fc_{n-1}]$, applied to \eqref{eq:factor-cond}, gives, $\Prob_1$-a.s.,
\[
   Z_{\beta,n}
   =\E_{\Prob_0}\lB E_n^{\,1-\beta}\lp R_nE_n^{-s}\rp^{\beta}\mid\Fc_{n-1}\rB
   \ \le\ \lp\E_{\Prob_0}\lB E_n\mid\Fc_{n-1}\rB\rp^{1-\beta}
          \lp\E_{\Prob_0}\lB R_nE_n^{-s}\mid\Fc_{n-1}\rB\rp^{\beta} .
\]

\emph{Step 3 (the two conditional constraints).} The first factor is at most
$1$, by \Cref{def:cond-evar} and $1-\beta>0$. The second is
$(\E_{\Prob_1}[E_n^{-s}\mid\Fc_{n-1}])^{\beta}$, by \Cref{lem:bayes} applied to
$Y=E_n^{-s}$. Hence
$Z_{\beta,n}\le(\E_{\Prob_1}[E_n^{-s}\mid\Fc_{n-1}])^{\beta}$ $\Prob_1$-a.s.

\emph{Step 4 (rearranging).} Both sides are positive $\Prob_1$-a.s.\ by
\Cref{lem:Zf-cond}(i); taking $\log$ and multiplying by $-\frac1\beta<0$
reverses the inequality and gives
$\lambda_n(\beta)\ge-\log\E_{\Prob_1}[E_n^{-s}\mid\Fc_{n-1}]$, which is
\eqref{eq:upper-cond}.
\end{proof}

In the i.i.d.\ model $\lambda_n(\beta)=\lambda(\beta)$ is $\Prob_1$-a.s.\ constant and
\eqref{eq:upper-cond} is a statement about a single \evar{}. The next theorem
records that case together with its equality analysis, which is where the
product structure is genuinely used: the conditional H\"older equality case
would carry an $\Fc_{n-1}$-measurable proportionality factor, and identifying it
requires the argument below.

\begin{theorem}[Upper bound: one application of H\"older]\label{thm:upper}
Let $E\in\Ev$ and $s\in(0,\infty)$, and set
\[
   \beta:=\frac{1}{1+s}\in(0,1),\qquad\text{equivalently}\qquad
   s=\frac{1-\beta}{\beta} .
\]
Then
\begin{equation}\label{eq:upper}
   \Lambda_s(E)\ \le\ \lambda(\beta) ,
\end{equation}
with equality if and only if $E=R^{\beta}/Z_\beta$ $P_0$-a.s.
\end{theorem}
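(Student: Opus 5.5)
The inequality \eqref{eq:upper} is the i.i.d.\ specialisation of \Cref{thm:upper-cond}. I would still redo the argument directly on $\Xc$, because the equality case needs the unconditional H\"older equality analysis. The starting point is the pointwise factorisation $E^{1-\beta}(RE^{-s})^{\beta}=R^{\beta}$, valid wherever $0<E<\infty$, since $1-\beta-s\beta=0$.

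The degenerate cases go first. We have $E<\infty$ $P_0$-a.s., because $\E_{P_0}[E]\le1$, and hence also $P_1$-a.s. If $P_1(E=0)>0$, then $\E_{P_1}[E^{-s}]=\infty$, so $\Lambda_s(E)=-\infty<\lambda(\beta)$ and equality fails. This is consistent with the claim, since $R^\beta/Z_\beta$ satisfies $P_1(R^\beta=0)=P_1(R=0)=0$. Otherwise $R\mathbf 1_{\{E=0\}}=0$ $P_0$-a.s., so the factorisation holds $P_0$-a.s.: both sides vanish where $R=0$.

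H\"older with conjugate exponents $\frac1{1-\beta},\frac1\beta$ under $P_0$ then gives
\[
Z_\beta\le(\E_{P_0}[E])^{1-\beta}(\E_{P_0}[RE^{-s}])^{\beta}\le(\E_{P_1}[E^{-s}])^{\beta}.
\]
Here $\E_{P_0}[RE^{-s}]=\E_{P_1}[E^{-s}]$ is the change of measure. The left side is positive by \Cref{lem:Zf}(i). Taking $-\frac1\beta\log$ yields $\Lambda_s(E)\le\lambda(\beta)$. If $\E_{P_1}[E^{-s}]=\infty$ the bound is trivial, so in the equality analysis we may assume it is finite.

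For the \emph{if} direction, let $E=R^\beta/Z_\beta$. Then $E^{-s}=R^{-s\beta}Z_\beta^{s}=R^{\beta-1}Z_\beta^{s}$, and
\[
\E_{P_1}[R^{\beta-1}]=\E_{P_0}[R^{\beta}\mathbf 1_{\{R>0\}}]=Z_\beta .
\]
Hence $\E_{P_1}[E^{-s}]=Z_\beta^{1+s}=Z_\beta^{1/\beta}$, and $\Lambda_s(E)=-\frac1\beta\log Z_\beta=\lambda(\beta)$.

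For the \emph{only if} direction, equality forces two things: $\E_{P_0}[E]=1$, since $1-\beta>0$ and the final quantity is finite and positive; and equality in H\"older. The equality case of H\"older with both $L^p$ norms finite and positive says that $E$ and $RE^{-s}$ are $P_0$-a.s.\ proportional, i.e.\ $E=cRE^{-s}$. On $\{E>0\}$ this gives $E^{1+s}=cR$, hence $E=c'R^{\beta}$. On $\{E=0\}$ we already know $R=0$ $P_0$-a.s., so $E=c'R^\beta$ there as well, provided $R^\beta=0$ there, which holds. It remains to exclude the set $\{R=0,E>0\}$. There the proportionality $E=cRE^{-s}=0$ contradicts $E>0$, so this set is $P_0$-null. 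Thus $E=c'R^\beta$ $P_0$-a.s., and $\E_{P_0}[E]=1$ gives $c'=1/Z_\beta$.

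The main obstacle is the careful handling of the H\"older equality case with the null sets $\{R=0\}$ and $\{E=0\}$, together with the infinite values. I would state the equality condition in its precise form: $|f|^p$ and $|g|^q$ are proportional a.e. Since both norms are positive, this gives $E=\kappa RE^{-s}$ $P_0$-a.s.\ for a constant $\kappa>0$, and the case analysis above then proceeds cleanly.
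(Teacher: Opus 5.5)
Your proof is correct and is essentially the paper's argument. The paper works with densities $p_0,p_1$ relative to a dominating measure $\mu$; taking $\mu=P_0$, which the paper itself suggests, gives exactly your setting with $p_0=1$ and $p_1=R$, and from there it excludes the same degenerate cases, applies H\"older with exponents $\frac1{1-\beta},\frac1\beta$ to the same factorisation, and gets the equality case from H\"older's proportionality condition together with $\E_{P_0}[E]=1$. Your explicit check of the ``if'' direction is the computation of \Cref{lem:flat}, which the paper states only as ``equality throughout''.
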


\begin{proof}
Fix a $\sigma$-finite measure $\mu$ dominating $P_0$ (for instance $\mu=P_0$)
and write $p_0:=\frac{dP_0}{d\mu}$, $p_1:=\frac{dP_1}{d\mu}$, so that
$R=p_1/p_0$ on $\{p_0>0\}$ and $Z_\beta=\int p_0^{1-\beta}p_1^{\beta}\,d\mu$.

\emph{Step 1 (factorisation).} On $\{0<E<\infty\}$ we have, using
$1-\beta-s\beta=1-\beta-(1-\beta)=0$,
\begin{equation}\label{eq:factor}
   \lp p_0E\rp^{1-\beta}\lp p_1E^{-s}\rp^{\beta}
   \ =\ p_0^{1-\beta}p_1^{\beta}\,E^{\,1-\beta-s\beta}
   \ =\ p_0^{1-\beta}p_1^{\beta} .
\end{equation}
We dispose of two degenerate cases first. If $P_1(E=0)>0$ then
$\E_{P_1}[E^{-s}]=\infty$, so $\Lambda_s(E)=-\infty$ and \eqref{eq:upper} is
trivial; \emph{assume henceforth $P_1(E=0)=0$}. And $P_0(E=\infty)>0$ would give
$\E_{P_0}[E]=\infty>1$, contradicting $E\in\Ev$, so $P_0(E=\infty)=0$.
Consequently $p_1=0$ $\mu$-a.e.\ on $\{E=0\}$ and $p_0=0$ $\mu$-a.e.\ on
$\{E=\infty\}$, and on both of those sets the two sides of \eqref{eq:factor}
vanish (using $1-\beta>0$ and $\beta>0$). Hence \eqref{eq:factor} holds
$\mu$-a.e.\ on $\Xc$.

\emph{Step 2 (H\"older).} The exponents $\frac{1}{1-\beta}$ and $\frac1\beta$
are conjugate, since $(1-\beta)+\beta=1$. Applying H\"older's inequality
\cite[Thm.~3.5]{Rud87} (\Cref{thm:app-holder}) to \eqref{eq:factor},
\begin{equation}\label{eq:holderstep}
\begin{split}
   Z_\beta&=\int\lp p_0E\rp^{1-\beta}\lp p_1E^{-s}\rp^{\beta}d\mu
   \ \le\ \lp\int p_0E\,d\mu\rp^{1-\beta}
          \lp\int p_1E^{-s}\,d\mu\rp^{\beta}\\[2pt]
   &=\lp\E_{P_0}[E]\rp^{1-\beta}\lp\E_{P_1}[E^{-s}]\rp^{\beta} .
\end{split}
\end{equation}

\emph{Step 3 (using the constraint).} $\E_{P_0}[E]\le1$ and $1-\beta>0$, so
$(\E_{P_0}[E])^{1-\beta}\le1$ and
\begin{equation}\label{eq:afterconstraint}
   Z_\beta\ \le\ \lp\E_{P_1}[E^{-s}]\rp^{\beta} .
\end{equation}

\emph{Step 4 (rearranging).} Both sides of \eqref{eq:afterconstraint} are
positive; taking $\log$ and multiplying by $-\frac1\beta<0$ reverses the
inequality:
\[
   \lambda(\beta)=-\frac1\beta\log Z_\beta
   \ \ge\ -\log\E_{P_1}[E^{-s}]=\Lambda_s(E) ,
\]
which is \eqref{eq:upper}.

\emph{Step 5 (equality).} Equality in \eqref{eq:holderstep} holds if and only
if $p_0E$ and $p_1E^{-s}$ are proportional $\mu$-a.e., say
$c_1\,p_0E=c_2\,p_1E^{-s}$ with $c_1,c_2\ge0$ not both zero. We use this on all
of $\Xc$, not only where $p_1>0$. On $\{p_0>0,\,p_1>0\}$ we have
$E\in(0,\infty)$ $\mu$-a.e.\ (Step 1), so $c_1,c_2>0$ there and
$E^{1+s}=\frac{c_2}{c_1}\,\frac{p_1}{p_0}=\frac{c_2}{c_1}R$, i.e.\
$E=\varkappa R^{\beta}$ with $\varkappa:=(c_2/c_1)^{\beta}$, using
$\frac{1}{1+s}=\beta$. On $\{p_0>0,\,p_1=0\}$ the right-hand side vanishes, so
$p_0E=0$ and hence $E=0=R^{\beta}$ there. Therefore
\[
   E\ =\ \varkappa\,R^{\beta}\qquad P_0\text{-a.s.\ on all of }\Xc ,
\]
and only now does the constraint enter: equality in Step 3 requires
$\E_{P_0}[E]=1$, which reads $\varkappa Z_\beta=1$, i.e.\
$\varkappa=Z_\beta^{-1}$. Conversely $E=R^{\beta}/Z_\beta$ gives equality
throughout.
\end{proof}

\Cref{thm:upper} concerns a single \evar{}. The next proposition lifts it to an
arbitrary \etm{}, at no cost in the constant: the bound is applied once per
step, conditionally on the past, and the steps multiply.

\begin{proposition}[Upper bound for an arbitrary \etm{}]\label{prp:upper-seq}
Let $(E_n)_{n\in\N}$ be any betting sequence, with wealth process
$(W_t)_{t\in\N_0}$, and let $s\in(0,\infty)$, $\beta=\frac1{1+s}$. Then:
\begin{enumerate}[label=(\alph*),leftmargin=2.4em]
\item\label{it:agg-exact} for every $t\in\N$,
      \begin{equation}\label{eq:upper-seq-exact}
         \E_{\Prob_1}\lB W_t^{-s}\exp\lp\sum_{n\le t}\lambda_n(\beta)\rp\rB
         \ \ge\ 1 ;
      \end{equation}
\item\label{it:agg-det} if $\bar\lambda_n(\beta)\in[0,\infty]$ are constants
      with $\lambda_n(\beta)\le\bar\lambda_n(\beta)$ $\Prob_1$-a.s., then
      \begin{equation}\label{eq:upper-seq-det}
         \Lambda^{(t)}_s\ \le\ \frac1t\sum_{n\le t}\bar\lambda_n(\beta)
         \qquad\text{for every }t\in\N ;
      \end{equation}
\item\label{it:agg-iid} in the i.i.d.\ model, $\Lambda^{(t)}_s\le\lambda(\beta)$
      for every $t\in\N$, with equality for a given $t$ if and only if
      $E_n=R_\beta(X_n)$ $\Prob_1$-a.s.\ for every $n\le t$, with $R_\beta$ as in
      \Cref{def:flattened} --- equivalently $\Prob_0$-a.s., when $P_0\sim P_1$.
\end{enumerate}
\end{proposition}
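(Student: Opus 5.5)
The plan is to prove \ref{it:agg-exact} by backward induction with the tower property, feeding in \Cref{thm:upper-cond} once per step. Put $S_t:=\sum_{n\le t}\lambda_n(\beta)$, which is $\Fc_{t-1}$-measurable, nonnegative and finite $\Prob_1$-a.s.\ by \Cref{lem:Zf-cond}(i). On the $\Prob_1$-full set where $W_{t-1}<\infty$ (\Cref{lem:supmg} together with \Cref{ass:standing}) we have $W_t^{-s}=W_{t-1}^{-s}E_t^{-s}$ under the conventions of \Cref{sec:setup}. This holds because the factorisation can only fail where one factor is $0$ and another is $\infty$, and $E_n<\infty$ $\Prob_1$-a.s.\ for every $n$.

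Conditioning on $\Fc_{t-1}$ and pulling out the nonnegative $\Fc_{t-1}$-measurable factor $W_{t-1}^{-s}e^{S_t}$ gives
\[
   \E_{\Prob_1}\lB W_t^{-s}e^{S_t}\mid\Fc_{t-1}\rB
   =W_{t-1}^{-s}e^{S_{t-1}}\,e^{\lambda_t(\beta)}\E_{\Prob_1}\lB E_t^{-s}\mid\Fc_{t-1}\rB
   \ \ge\ W_{t-1}^{-s}e^{S_{t-1}} ,
\]
where the inequality is \eqref{eq:upper-cond}. Taking expectations and iterating down to $W_0=1$, $S_0=0$ yields \eqref{eq:upper-seq-exact}. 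All quantities are $[0,\infty]$-valued, so the pull-out and the tower property hold without any integrability hypotheses. Where $W_{t-1}=0$, the factor $W_{t-1}^{-s}=\infty$ makes the inequality trivial.

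Part \ref{it:agg-det} then follows by monotonicity. Since $\lambda_n\le\bar\lambda_n$ a.s., we have $e^{\sum\bar\lambda_n}\E_{\Prob_1}[W_t^{-s}]\ge1$. If some $\bar\lambda_n=\infty$ the bound is vacuous. Otherwise take $-\frac1t\log$ to obtain \eqref{eq:upper-seq-det}.

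For \ref{it:agg-iid} the inequality is \ref{it:agg-det} with $\bar\lambda_n=\lambda(\beta)$, using \Cref{def:condZ}. The hard part is equality, and this is the step I expect to be the main obstacle: the conditional H\"older step carries an $\Fc_{n-1}$-measurable proportionality factor, and that factor has to be pinned down. Equality in the aggregate forces equality almost surely in every step of the induction, because each step is an inequality between nonnegative variables whose expectations agree and are finite. So for each $n\le t$ we get $\E_{\Prob_1}[E_n^{-s}\mid\Fc_{n-1}]=e^{-\lambda(\beta)}$ on $\{W_{n-1}>0\}$, which has full $\Prob_1$-measure since otherwise $\E[W_t^{-s}]=\infty$.

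The next step is to disintegrate. In the product model, for $\Prob_1$-a.e.\ past $x_{1:n-1}$, the section $x\mapsto E_n(x_{1:n-1},x)$ is a measurable function on $\Xc$. By Fubini this section lies in $\Ev$ for $\Prob_0$-a.e.\ past, hence for $\Prob_1$-a.e.\ past, since $\Prob_1\ll\Prob_0$ on $\Fc_{n-1}$. It also attains $\Lambda_s=\lambda(\beta)$. The equality case of \Cref{thm:upper} then identifies the section with $R_\beta$ $P_0$-a.s., and therefore $P_1$-a.s. Reassembling with Fubini gives $E_n=R_\beta(X_n)$ $\Prob_1$-a.s. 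When $P_0\sim P_1$, the path measures are equivalent on each $\Fc_t$, which gives the $\Prob_0$ version.

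For the converse, $E_n=R_\beta(X_n)$ produces an i.i.d.\ product. \Cref{lem:iidexp} and equality in \Cref{thm:upper} then give $\Lambda^{(t)}_s=\lambda(\beta)$. Only $\Prob_1$-a.s.\ agreement is needed here, because $\Lambda^{(t)}_s$ is a $\Prob_1$-functional.
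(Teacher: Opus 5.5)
Your proposal is correct and follows the paper's proof essentially step for step. It uses the same tower-property induction on $W_t^{-s}\exp\bigl(\sum_{n\le t}\lambda_n(\beta)\bigr)$, fed once per step by \Cref{thm:upper-cond}, for (a), and monotonicity for (b); for (c) it forces equality at every step, then identifies the sections $E_n(x_{1:n-1},\cdot)$ via Fubini and the equality case of \Cref{thm:upper}. The only cosmetic difference is that you get the ``if'' direction from \Cref{lem:iidexp} together with the equality case of \Cref{thm:upper}, rather than by observing that the induction runs through equalities.
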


\begin{proof}
\emph{\ref{it:agg-exact}.} Put
$M_t:=W_t^{-s}\exp(\sum_{n\le t}\lambda_n(\beta))$, so $M_0=1$. By
\Cref{lem:supmg} and \Cref{ass:standing}, $E_n<\infty$ $\Prob_1$-a.s., so
$W_n^{-s}=W_{n-1}^{-s}E_n^{-s}$ holds $\Prob_1$-a.s.\ (the identity can fail
only where some $E_n=\infty$). Each $\lambda_k(\beta)$ with $k\le n$ is
$\Fc_{n-1}$-measurable, and $W_{n-1}^{-s}$ is nonnegative and
$\Fc_{n-1}$-measurable, so the pull-out property and \Cref{thm:upper-cond} give
\[
   \E_{\Prob_1}\lB M_n\mid\Fc_{n-1}\rB
   \ =\ M_{n-1}\,e^{\lambda_n(\beta)}\,
        \E_{\Prob_1}\lB E_n^{-s}\mid\Fc_{n-1}\rB
   \ \ge\ M_{n-1}
   \qquad\Prob_1\text{-a.s.}
\]
Taking expectations and inducting from $\E_{\Prob_1}[M_0]=1$ gives
\eqref{eq:upper-seq-exact}.

\emph{\ref{it:agg-det}.} From $\lambda_n(\beta)\le\bar\lambda_n(\beta)$ we get
$M_t\le W_t^{-s}\exp(\sum_{n\le t}\bar\lambda_n(\beta))$ $\Prob_1$-a.s., so
\eqref{eq:upper-seq-exact} gives
$\E_{\Prob_1}[W_t^{-s}]\ge\exp(-\sum_{n\le t}\bar\lambda_n(\beta))$; take
$-\frac1t\log$.

\emph{\ref{it:agg-iid}.} In the i.i.d.\ model $\lambda_n(\beta)=\lambda(\beta)$
$\Prob_1$-a.s.\ for every $n$, so \ref{it:agg-det} applies with
$\bar\lambda_n(\beta)=\lambda(\beta)$ and yields
$\Lambda^{(t)}_s\le\lambda(\beta)$. For the equality case, note first that
$\E_{\Prob_1}[E_n^{-s}\mid\Fc_{n-1}]\ge e^{-\lambda(\beta)}$ is
\Cref{thm:upper-cond}, and that by \Cref{lem:flat}
$\Lambda_s(R_\beta)=\lambda(\beta)$; so if $E_n=R_\beta(X_n)$ $\Prob_1$-a.s.\ for
all $n\le t$ then every inequality in the induction above is an equality and
$\E_{\Prob_1}[W_t^{-s}]=e^{-t\lambda(\beta)}$. Conversely suppose
$\Lambda^{(t)}_s=\lambda(\beta)$, i.e.\
$\E_{\Prob_1}[W_t^{-s}]=e^{-t\lambda(\beta)}\in(0,\infty)$. Running the
induction from index $n$ to index $t$ gives
$\E_{\Prob_1}[W_n^{-s}]\le e^{(t-n)\lambda(\beta)}\E_{\Prob_1}[W_t^{-s}]<\infty$
for every $n\le t$, so every term in the chain is finite and positive and each
inequality must be an equality. Since $\E_{\Prob_0}[E_n\mid\Fc_{n-1}]\le1$
forces $E_n<\infty$ $\Prob_0$-a.s., and $\Prob_1\ll\Prob_0$ on $\Fc_t$, we have
$W_{n-1}^{-s}>0$ $\Prob_1$-a.s.; hence equality at the $n$-th step forces
$\E_{\Prob_1}[E_n^{-s}\mid\Fc_{n-1}]=e^{-\lambda(\beta)}$ $\Prob_1$-a.s. Now
write $E_n=g_n(X_1,\dots,X_n)$ with $g_n$ measurable and, for
$y\in\Xc^{n-1}$, put $E_n^{y}(x):=g_n(y,x)$. Because $\Prob_i=P_i^{\otimes\N}$,
Fubini's theorem gives
\begin{equation}\label{eq:fubini}
   \E_{\Prob_i}\lB h(E_n)\mid\Fc_{n-1}\rB
   =\E_{P_i}\lB h\lp E_n^{Y}\rp\rB\Big|_{Y=(X_1,\dots,X_{n-1})}
   \qquad\Prob_i\text{-a.s.}
\end{equation}
for every measurable $h:[0,\infty]\to[0,\infty]$. With $i=0$ and
$h=\mathrm{id}$ this says $E_n^{y}\in\Ev$ for $P_0^{\otimes(n-1)}$-a.e.\ $y$,
hence for $P_1^{\otimes(n-1)}$-a.e.\ $y$ since $P_1\ll P_0$; with $i=1$ and
$h(x)=x^{-s}$ it turns the displayed equality into
$\Lambda_s(E_n^{y})=\lambda(\beta)$ for $P_1^{\otimes(n-1)}$-a.e.\ $y$. The
equality case of \Cref{thm:upper} identifies $E_n^{y}=R_\beta$ $P_0$-a.s., hence
also $P_1$-a.s.; that is, $E_n=R_\beta(X_n)$ $\Prob_1$-a.s.
\end{proof}

\begin{remark}[The equality case is genuinely only $\Prob_1$-a.s.]
\label{rem:eqcase}
Under \Cref{ass:standing} alone the conclusion of \Cref{prp:upper-seq} cannot be
strengthened to $\Prob_0$-a.s. Take $\Xc=\{0,1\}$,
$P_0=\mathrm{Bern}(\frac12)$ and $P_1=\delta_1$, so that $P_1\ll P_0$ and
$\KL(P_1\|P_0)=\log2<\infty$; here $R=2\cdot\mathbf 1_{\{1\}}$,
$Z_\beta=2^{\beta-1}$ and $R_\beta=R$. With $t=2$, $E_1:=R_\beta(X_1)$ and
\[
   E_2:=R_\beta(X_2)\mathbf 1_{\{X_1=1\}}+\mathbf 1_{\{X_1=0\}} ,
\]
which is a legitimate betting sequence, one has $W_2=4$ $\Prob_1$-a.s., hence
$\E_{\Prob_1}[W_2^{-s}]=4^{-s}=e^{-2\lambda(\beta)}$ and $\Lambda^{(2)}_s=\lambda(\beta)$:
equality holds, although $\Prob_0(E_2\ne R_\beta(X_2))=\frac12$. Under
$P_0\sim P_1$ the two conclusions coincide.
\end{remark}

\begin{corollary}[The ceiling]\label{cor:ceiling}
For every betting sequence, every $s\in[0,\infty)$ and every $t\in\N$,
\begin{equation}\label{eq:ceiling}
   \Lambda^{(t)}_s\ \le\ \frac1t\sum_{n\le t}\ \bar\lambda_n(0^{+}),
   \qquad
   \bar\lambda_n(0^{+}):=\operatorname*{ess\,sup}_{\Prob_1}\ \lambda_n(0^{+}) ,
\end{equation}
with $\lambda_n(0^{+})$ as in \Cref{lem:Zf-cond}(ii); the same bound therefore
holds for $\Lambda^{(t)}$. The quantity being averaged is a divergence:
\begin{equation}\label{eq:lambda0-is-kl}
   \lambda_n(0^{+})\ =\ \KLrev_n\ :=\ -\E_{\Prob_0}\lB\log R_n\mid\Fc_{n-1}\rB ,
\end{equation}
the conditional relative entropy of $\Prob_0$ with respect to $\Prob_1$ over
step $n$ (\Cref{lem:limit-cond}), so \eqref{eq:ceiling} caps the exponent by the
average of the first $t$ per-step conditional Chernoff--Stein exponents. In the
i.i.d.\ model the right-hand side is
$\lim_{\beta\downarrow0}\lambda(\beta)=\KL(P_0\|P_1)$, so
\begin{equation}\label{eq:ceiling-iid}
   \Lambda^{(t)}_s\ \le\ \KL(P_0\|P_1) ,
\end{equation}
and in particular $\Lambda(E)\le\KL(P_0\|P_1)$ for every $E\in\Ev$.
\end{corollary}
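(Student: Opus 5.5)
The plan is to derive \eqref{eq:ceiling} from \Cref{prp:upper-seq}\ref{it:agg-det} by choosing the tilt freely. The case $s=0$ is trivial: $\Lambda^{(t)}_0=0$ and the right-hand side is nonnegative by \Cref{lem:Zf-cond}(i). So fix $s>0$ and put $\beta=\frac1{1+s}\in(0,1)$.

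By \Cref{lem:Zf-cond}(ii), $\lambda_n(\beta)\le\lambda_n(0^{+})\le\bar\lambda_n(0^{+})$ holds $\Prob_1$-a.s., and $\bar\lambda_n(0^{+})$ is a constant in $[0,\infty]$. Hence \Cref{prp:upper-seq}\ref{it:agg-det} applies with $\bar\lambda_n(\beta):=\bar\lambda_n(0^{+})$ and gives \eqref{eq:ceiling} directly. Since the bound does not depend on $s$, taking the supremum over $s$ gives the same bound for $\Lambda^{(t)}$.

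The substantive part is the identity \eqref{eq:lambda0-is-kl}, namely that $\lambda_n(0^{+})=-\E_{\Prob_0}[\log R_n\mid\Fc_{n-1}]$. I would prove it conditionally as follows. Write $\kappa_n(\beta):=\log Z_{\beta,n}$. By \Cref{rem:Zf-cond} this is convex in $\beta$, with $\kappa_n(1)=0$. The ratio $\lambda_n(\beta)=-\kappa_n(\beta)/\beta$ then behaves like a difference quotient at $0$.

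First, if $\Prob_0(R_n=0\mid\Fc_{n-1})>0$, then $Z_{\beta,n}\to\Prob_0(R_n>0\mid\Fc_{n-1})<1$ as $\beta\downarrow0$. Then $\lambda_n(\beta)\to\infty$, while the conditional relative entropy is $+\infty$ as well, so the two sides agree.

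Otherwise $\kappa_n(0)=0$, and $\lambda_n(0^{+})=-\kappa_n'(0^{+})$. I would compute this derivative by monotone convergence applied to the difference quotients $(R_n^{\beta}-1)/\beta$. These decrease to $\log R_n$ as $\beta\downarrow0$, and at $\beta=1$ they are bounded above by the integrable variable $R_n-1$. This gives $\E_{\Prob_0}[(R_n^{\beta}-1)/\beta\mid\Fc_{n-1}]\downarrow\E_{\Prob_0}[\log R_n\mid\Fc_{n-1}]$, with the limit allowed to equal $-\infty$. The same argument is used in \Cref{rem:cramer}.

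Finally, $\log(1+x)\sim x$ converts $(Z_{\beta,n}-1)/\beta$ into $\kappa_n(\beta)/\beta$, once one knows $Z_{\beta,n}\to1$. That convergence follows from dominated convergence on $R_n^{\beta}\le1+R_n$. The infinite case needs separate care: there $-\kappa_n(\beta)\ge 1-Z_{\beta,n}$ still forces $\lambda_n(\beta)\to\infty$. This is presumably \Cref{lem:limit-cond}, and I expect these null-set and infinite-value bookkeeping points to be the main technical obstacle. They can be handled by working with rational $\beta$, as in the proof of \Cref{lem:Zf-cond}.

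In the i.i.d.\ model, \Cref{def:condZ} gives $\lambda_n(\beta)=\lambda(\beta)$ $\Prob_1$-a.s. The essential supremum is therefore the constant $\lim_{\beta\downarrow0}\lambda(\beta)$, and the same computation, done unconditionally, identifies it as $-\E_{P_0}[\log R]=\KL(P_0\|P_1)$. When $P_0(R=0)>0$ both sides are infinite, because $P_0\not\ll P_1$. The average of $t$ equal terms is that constant, which gives \eqref{eq:ceiling-iid}. For $E\in\Ev$, \Cref{lem:iidexp} gives $\Lambda_s(E)=\Lambda^{(t)}_s$ for the i.i.d.\ product generated by $E$, and taking the supremum over $s$ yields $\Lambda(E)\le\KL(P_0\|P_1)$.
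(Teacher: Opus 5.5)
Your proposal is correct and follows the paper's proof essentially line for line. The shared steps are the $s=0$ case, the choice $\beta=\frac1{1+s}$ with the $\Prob_1$-a.s.\ chain $\lambda_n(\beta)\le\lambda_n(0^{+})\le\bar\lambda_n(0^{+})$ fed into \Cref{prp:upper-seq}\ref{it:agg-det}, and the i.i.d.\ specialisation via \Cref{lem:Zf}(ii), \Cref{lem:limit} and \Cref{lem:iidexp}. For the identity \eqref{eq:lambda0-is-kl} the paper simply cites \Cref{lem:limit-cond}, whose proof is exactly the difference-quotient monotone-convergence argument you sketch. Your preliminary split on $\Prob_0(R_n=0\mid\Fc_{n-1})>0$ is absorbed there into the $\KLrev_n=\infty$ branch.
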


\begin{proof}
For $s=0$ the left-hand side is $0$ and the right-hand side is nonnegative by
\Cref{lem:Zf-cond}(i). For $s>0$ put $\beta=\frac1{1+s}\in(0,1)$. By
\Cref{lem:Zf-cond}(ii), $\lambda_n(\beta)\le\lambda_n(0^{+})\le\bar\lambda_n(0^{+})$
$\Prob_1$-a.s., and the right-hand bound is a constant; so
\Cref{prp:upper-seq}\ref{it:agg-det} applies with
$\bar\lambda_n(\beta)=\bar\lambda_n(0^{+})$ and gives \eqref{eq:ceiling}. Taking
the supremum over $s$ gives the statement for $\Lambda^{(t)}$.

In the i.i.d.\ model $\lambda_n(0^{+})=\lim_{\beta\downarrow0}\lambda(\beta)$ is
the same constant for every $n$, and that limit exists in $[0,\infty]$ by
\Cref{lem:Zf}(ii) and equals $\KL(P_0\|P_1)$ by \Cref{lem:limit}; the last claim
is \Cref{lem:iidexp}. Identity \eqref{eq:lambda0-is-kl} is \Cref{lem:limit-cond}.
\end{proof}

\begin{remark}[What the general ceiling settles, and what it does not]
\label{rem:generalceiling}
\Cref{cor:ceiling} holds on any filtered space: no product structure, no
stationarity, no identically distributed anything. Three things about it are
settled elsewhere, and it is worth saying where.

\emph{It is a divergence, up to an essential supremum.} By
\eqref{eq:lambda0-is-kl} each $\lambda_n(0^{+})$ is a conditional relative
entropy, so \eqref{eq:ceiling} is the Ces\`aro average of their essential
suprema, and in particular
$\frac1t\sum_{n\le t}\bar\lambda_n(0^{+})
\ge\frac1t\KL(\Prob_0|_{\Fc_t}\|\Prob_1|_{\Fc_t})$ by \Cref{lem:compensator}.
The two sides agree when the $\KLrev_n$ are $\Prob_1$-a.s.\ constant
(\Cref{ass:homog}), and then the average is the predictable compensator of
$-\frac1t\log L_t$ exactly. Whether it converges as $t\to\infty$ is a property
of the model, not of the method: it does for stationary ergodic pairs, where the
limit is the relative entropy rate, and it can fail for independent but
non-identically distributed ones (\Cref{rem:rate}).

\emph{It is tight, but not for free.} A witness exists in full generality: the
predictable flattened design $R_{\beta,n}$ of \Cref{def:flat-cond} attains the
per-step bound with no extra hypothesis (\Cref{lem:flat-cond}). What is needed
for the per-step values to aggregate into a deterministic total is that they be
non-random (\Cref{ass:homog}); under that hypothesis
\Cref{thm:ceiling-gen} matches \eqref{eq:ceiling}, and under its further
non-degeneracy hypotheses shows the value is not attained. In the i.i.d.\ model
\Cref{ass:homog} is automatic, the total is $t\KL(P_0\|P_1)$, and the matched
statement is \Cref{thm:ceiling}. Without \Cref{ass:homog} the sharpness of
\eqref{eq:ceiling} is open: the essential suprema may be strict, and we know of
no matching example.

\emph{It bounds the exponent, not yet the error.} \eqref{eq:ceiling} caps
$\Lambda^{(t)}$, which enters the error only through \Cref{lem:chernoff}. That
the same ceiling binds the type-II error itself is \Cref{prp:stein-ceiling} in
the i.i.d.\ model, and \Cref{prp:stein-gen} in general, under an
information-spectrum hypothesis on $-\frac1t\log L_t$ (\Cref{ass:spectrum});
\Cref{prp:rate} identifies its constant with the same average, under a
conditional-variance bound.

\emph{And the dichotomy generalises for free.} The \epower{} side of
\Cref{thm:two} needs no extra hypothesis at all: by \Cref{lem:epower-cond} the
conditional \epower{} of any bet is at most the conditional relative entropy
$\KL_n$ in the opposite direction, with equality if and only if $E_n=R_n$. So on
any filtered space the two ceilings are the two conditional divergences, one
attained step by step by the likelihood ratio and one not.

\Cref{app:general} carries all of this out.
\end{remark}

\begin{proposition}[The ceiling binds the error, not only the bound]
\label{prp:stein-ceiling}
Work in the i.i.d.\ model (\Cref{def:iid}) and assume $\KL(P_0\|P_1)<\infty$.
For every betting sequence and every
$\alpha\in(0,1)$,
\begin{equation}\label{eq:stein-ceiling}
   \limsup_{t\to\infty}\ -\frac1t\log\bar\gamma_t(\alpha)
   \ \le\ \KL(P_0\|P_1),
\end{equation}
and the same holds with $\gamma_t(\alpha)$ in place of $\bar\gamma_t(\alpha)$.
\end{proposition}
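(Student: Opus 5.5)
The plan is to reduce the statement to the classical Chernoff--Stein lemma, quoted in \Cref{app:quoted} as \Cref{thm:app-stein}. The betting-sequence rejection rule at horizon $t$ is just one particular level-$\alpha$ test, so its type-II error can be no smaller than the optimal one, $\gamma^{\star}_t(\alpha)$.

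First, I would define the fixed-horizon test $\phi_t:=\mathbf 1\{\tau_\alpha\le t\}$. It is $\Fc_t$-measurable, and by \Cref{thm:typeI} it satisfies
\[
   \Prob_0(\phi_t=1)\le\Prob_0(\tau_\alpha<\infty)\le\alpha .
\]
So $\phi_t$ is a level-$\alpha$ test based on $X_1,\dots,X_t$, and its type-II error is $\Prob_1(\tau_\alpha>t)=\bar\gamma_t(\alpha)$. Likewise $\{W_t>\frac1\alpha\}$ is a level-$\alpha$ rejection region, with type-II error $\gamma_t(\alpha)$. Hence
\[
   \gamma^{\star}_t(\alpha)\le\bar\gamma_t(\alpha)\le\gamma_t(\alpha),
\]
the second inequality being \Cref{lem:order}. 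Here $\gamma^{\star}_t(\alpha)$ is the infimum of $\Prob_1(\text{accept})$ over all (possibly randomised) tests on $\Fc_t$ with $\Prob_0(\text{reject})\le\alpha$.

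Second, I would invoke the Chernoff--Stein lemma in the direction that gives the ceiling:
\[
   \lim_t-\tfrac1t\log\gamma^{\star}_t(\alpha)=\KL(P_0\|P_1)
\]
whenever this quantity is finite, for each fixed $\alpha\in(0,1)$. Monotonicity of $-\log$ then yields \eqref{eq:stein-ceiling} for $\bar\gamma_t$ and, a fortiori, for $\gamma_t$.

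The main obstacle is that \Cref{ass:standing} gives only $P_1\ll P_0$, not $P_0\ll P_1$, while the classical lemma is often stated for mutually absolutely continuous laws. Finiteness of $\KL(P_0\|P_1)$ already forces $P_0\ll P_1$, so in fact $P_0\sim P_1$ and the quoted lemma applies verbatim. Should \Cref{thm:app-stein} only supply the limit along the optimum and not this converse half, I would prove that half directly. Fix $\epsilon>0$ and let
\[
   A_t:=\Bigl\{\textstyle\sum_{n\le t}\log\frac{1}{R(X_n)}\le t(\KL(P_0\|P_1)+\epsilon)\Bigr\},
\]
so that $\Prob_0(A_t)\to1$ by the weak law. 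For any rejection region $B$ with $\Prob_0(B)\le\alpha$, the change of measure $d\Prob_1=L_t\,d\Prob_0$ gives
\[
   \Prob_1(B^c)\ge\Prob_1(B^c\cap A_t)\ge e^{-t(\KL(P_0\|P_1)+\epsilon)}\,\Prob_0(B^c\cap A_t)\ge e^{-t(\KL(P_0\|P_1)+\epsilon)}\bigl(1-\alpha-\Prob_0(A_t^c)\bigr).
\]
Taking $-\frac1t\log$, then $t\to\infty$, then $\epsilon\downarrow0$ finishes the argument.
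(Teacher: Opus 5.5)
Your proposal is correct and follows the paper's proof: the rejection rule at horizon $t$ is an admissible level-$\alpha$ test, so $\bar\gamma_t(\alpha)\ge\gamma^{\star}_t(\alpha)$, the Chernoff--Stein lemma gives the bound, and $\gamma_t(\alpha)\ge\bar\gamma_t(\alpha)$ transfers it to the fixed-horizon error. Your fallback change-of-measure argument is the one the paper uses for \Cref{prp:stein-gen}; it yields the bound for every $\alpha\in(0,1)$ directly, without citing the strong converse.
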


\begin{proof}
Fix $t$ and put $C_t:=\{\tau_\alpha>t\}$. Then $C_t\in\Fc_t=\Bs{\Xc}^{\otimes t}$,
and $P_0^{\otimes t}(C_t^{c})=\Prob_0(\tau_\alpha\le t)\le
\Prob_0(\tau_\alpha<\infty)\le\alpha$ by \Cref{thm:typeI}, so $C_t$ is an
admissible acceptance region in the Chernoff--Stein lemma, whose value
$\gamma^{\star}_t(\alpha)$ is the least type-II error of any test of level
$\alpha$ at horizon $t$
\cite{Chernoff56}, \cite[Thm.~11.8.3]{CT06}, \cite{HK89}
(\Cref{thm:app-stein}). Its type-II error is
$P_1^{\otimes t}(C_t)=\bar\gamma_t(\alpha)$, whence
$\bar\gamma_t(\alpha)\ge\gamma^{\star}_t(\alpha)$ and
$-\frac1t\log\bar\gamma_t(\alpha)\le-\frac1t\log\gamma^{\star}_t(\alpha)$. Let
$t\to\infty$ and apply \Cref{thm:app-stein}; the range-free form quoted there
(valid for every $\alpha\in(0,1)$ by the strong converse of \cite{HK89}) is what
lets $\alpha$ be arbitrary. For $\gamma_t(\alpha)$ take
instead $C_t:=\{W_t\le\frac1\alpha\}\in\Fc_t$, whose type-I error
$\Prob_0(W_t>\frac1\alpha)\le\alpha\,\E_{\Prob_0}[W_t]\le\alpha$ by Markov.
\end{proof}

\begin{lemma}[The limit of $\lambda$ at $0$; i.i.d.\ model]\label{lem:limit}
$\displaystyle\lim_{\beta\downarrow0}\lambda(\beta)=\KL(P_0\|P_1)$, in $[0,\infty]$.
\end{lemma}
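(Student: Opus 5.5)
The plan is to study $\kappa(\beta):=\log Z_\beta$ on $[0,1]$ and read $\lambda(\beta)=-\kappa(\beta)/\beta$ as a difference quotient at $0$. Two facts organise everything. First, $Z_0:=\lim_{\beta\downarrow0}Z_\beta=P_0(R>0)$, by monotone convergence on $\{R\le1\}$ and dominated convergence on $\{R>1\}$ using $R^\beta\le R$. Second, $\KL(P_0\|P_1)<\infty$ forces $P_0\ll P_1$, i.e.\ $P_0(R=0)=0$; in that case $\KL(P_0\|P_1)=\E_{P_0}[-\log R]$.

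\emph{Case $P_0(R=0)>0$.} Then $\KL(P_0\|P_1)=\infty$. Moreover $Z_\beta\to P_0(R>0)<1$, so $-\log Z_\beta$ tends to a strictly positive limit. Dividing by $\beta\downarrow0$ gives $\lambda(\beta)\to\infty$, as required.

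\emph{Case $P_0(R>0)=1$.} Write
\[
   \lambda(\beta)=-\frac1\beta\log\E_{P_0}\lB e^{\beta\log R}\rB .
\]
Jensen gives $\log\E_{P_0}[e^{\beta\log R}]\ge\beta\E_{P_0}[\log R]$. This yields the upper bound $\lambda(\beta)\le\E_{P_0}[-\log R]=\KL(P_0\|P_1)$ whenever the right-hand side is well defined. It is: $(\log R)_+\le R$ is $P_0$-integrable, so the expectation lies in $[-\infty,\infty)$.

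For the lower bound, use $\log x\le x-1$ to get
\[
   \lambda(\beta)\ \ge\ \frac{1-Z_\beta}{\beta}=\E_{P_0}\lB\frac{1-R^\beta}{\beta}\rB .
\]
For each fixed value $r>0$, the map $\beta\mapsto(1-r^\beta)/\beta$ is nonincreasing in $\beta$, by convexity of $\beta\mapsto r^\beta$. It therefore increases to $-\log r$ as $\beta\downarrow0$.

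It is dominated from below by its value at $\beta=1$, namely $1-R$, which is $P_0$-integrable. Monotone convergence, applied to the increasing family shifted by $R-1$, then gives
\[
   \lim_{\beta\downarrow0}\frac{1-Z_\beta}\beta=\E_{P_0}[-\log R]\in(-\infty,\infty].
\]
Combining the two bounds shows $\lambda(\beta)\to\E_{P_0}[-\log R]$, including the value $+\infty$. This equals $\KL(P_0\|P_1)$ by the definition in \Cref{ass:standing}, using $P_0\ll P_1$ and $dP_0/dP_1=1/R$ $P_1$-a.s.

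When $\E_{P_0}[-\log R]=\infty$, the upper bound is vacuous and the lower bound alone suffices. If $P_0\not\ll P_1$ despite $P_0(R>0)=1$, which cannot happen since $P_1(A)=\int_A R\,dP_0>0$ whenever $P_0(A)>0$, the divergence would be infinite anyway.

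The step I expect to need the most care is the lower-bound monotone convergence. The integrand can be unbounded in both directions: near $R=0$ it blows up to $+\infty$, and for large $R$ it tends to $-\infty$. The monotonicity in $\beta$ together with the integrable minorant $1-R$ is exactly what handles both.
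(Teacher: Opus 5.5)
Your proof is correct and follows the paper's approach. Both rest on monotone convergence of $(R^\beta-1)/\beta\downarrow\log R$, with the integrable bound $R-1$ coming from $\beta=1$, combined with $\log y\le y-1$. The differences are cosmetic: the paper closes the finite case with the expansion $\log Z_\beta=(Z_\beta-1)(1+o(1))$ where you use the Jensen bound $\lambda(\beta)\le\E_{P_0}[-\log R]$, and it folds the case $P_0(R=0)>0$ into the same monotone family (there $h_\beta=-1/\beta$) instead of treating it separately via $Z_\beta\to P_0(R>0)<1$.
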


\begin{proof}
For $\beta\in(0,1]$ define $h_\beta:\Xc\to\R$ by
$h_\beta:=\frac{R^{\beta}-1}{\beta}$. Three observations.

\emph{(a) Monotone decrease.} For fixed $x\in[0,\infty)$ the map
$\beta\mapsto\frac{x^{\beta}-1}{\beta}$ is nondecreasing on $(0,1]$. For $x>0$
it is the difference quotient at $0$ of the convex map
$\beta\mapsto e^{\beta\log x}$, hence nondecreasing for every value of
$\log x\in\R$; for $x=0$ it equals $-\frac1\beta$, which is nondecreasing. In
both cases it decreases to $\log x\in[-\infty,\infty)$ as $\beta\downarrow0$.
Hence $h_\beta\downarrow\log R$ pointwise.

\emph{(b) An integrable majorant.} By (a), $h_\beta\le h_1=R-1$ for all
$\beta\in(0,1]$, and $\E_{P_0}[|R-1|]\le2<\infty$.

\emph{(c) Passing to the limit.} By monotone convergence for the decreasing
family $(h_\beta)_{\beta\in(0,1]}$ dominated above by the integrable $R-1$,
\[
   \lim_{\beta\downarrow0}\E_{P_0}\lB h_\beta\rB
   =\E_{P_0}\lB\log R\rB=-\KL(P_0\|P_1)\in[-\infty,0] ,
\]
the last equality because $\KL(P_0\|P_1)=\E_{P_0}[\log\frac{dP_0}{dP_1}]
=-\E_{P_0}[\log R]$ when $P_0\ll P_1$, and both sides are $+\infty$ otherwise.

Now $\E_{P_0}[h_\beta]=\frac{Z_\beta-1}{\beta}$ and
$\lambda(\beta)=-\frac{\log Z_\beta}{\beta}$. If $\KL(P_0\|P_1)<\infty$ then
$Z_\beta\to1$, so $\log Z_\beta=(Z_\beta-1)(1+o(1))$ and
$\lambda(\beta)=-\frac{Z_\beta-1}{\beta}(1+o(1))\to\KL(P_0\|P_1)$. If
$\KL(P_0\|P_1)=\infty$ then $\frac{Z_\beta-1}{\beta}\to-\infty$; since
$\log y\le y-1$ for $y>0$ we get $\lambda(\beta)\ge-\frac{Z_\beta-1}{\beta}\to\infty$.
\end{proof}

\subsection{The lower bound}

From here to the end of \Cref{sec:horizon} we work in the i.i.d.\ model of
\Cref{def:iid}; \Cref{rem:whatgeneralises} records what this costs.

\begin{lemma}[$R_\beta$ is an \evar{} attaining the bound]\label{lem:flat}
For every $\beta\in(0,1]$: $R_\beta\in\Ev$ with $\E_{P_0}[R_\beta]=1$, and with
$s=\frac{1-\beta}{\beta}$,
\begin{equation}\label{eq:flat-attains}
   \Lambda_{s}(R_\beta)\ =\ \lambda(\beta),
   \qquad\text{hence}\qquad
   \Lambda(R_\beta)\ \ge\ \lambda(\beta) .
\end{equation}
\end{lemma}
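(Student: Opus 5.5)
The proof is a direct computation with a change of measure, and I would do it in three steps.

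\emph{Step 1: $R_\beta$ is an \evar{}.} For $\beta\in(0,1]$, \Cref{lem:Zf}(i) gives $Z_\beta\in(0,1]$, so $R_\beta=R^{\beta}/Z_\beta$ is a well-defined measurable map $\Xc\to[0,\infty)$. Its null mean is $\E_{P_0}[R_\beta]=Z_\beta/Z_\beta=1$, hence $R_\beta\in\Ev$. This is already recorded in \Cref{def:flattened}.

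\emph{Step 2: compute $\E_{P_1}[R_\beta^{-s}]$ with $s=\frac{1-\beta}{\beta}$, so $\beta s=1-\beta$.} The only delicate point is the set $\{R=0\}$, where $R_\beta^{-s}=+\infty$ under the paper's convention $0^{-s}=+\infty$. I would handle it by noting $P_1(R=0)=\E_{P_0}[R\mathbf 1_{\{R=0\}}]=0$, so $\{R=0\}$ is $P_1$-null and plays no role. On $\{R>0\}$ we have
\[
R_\beta^{-s}=Z_\beta^{s}R^{-\beta s}=Z_\beta^{s}R^{\beta-1}.
\]
Changing measure via $dP_1=R\,dP_0$ (valid since $P_1\ll P_0$) then gives
\[
\E_{P_1}[R_\beta^{-s}]=Z_\beta^{s}\,\E_{P_0}\lB R\cdot R^{\beta-1}\mathbf 1_{\{R>0\}}\rB=Z_\beta^{s}\,\E_{P_0}\lB R^{\beta}\rB=Z_\beta^{1+s}.
\]
The indicator can be dropped because $R^{\beta}=0$ on $\{R=0\}$, using $\beta>0$. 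Taking $-\log$ yields $\Lambda_s(R_\beta)=-(1+s)\log Z_\beta=-\frac1\beta\log Z_\beta=\lambda(\beta)$, since $1+s=\frac1\beta$.

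The case $\beta=1$ is covered by the same computation: then $s=0$ and both sides equal $0$, because $\lambda(1)=0$ by \Cref{lem:Zf}(i).

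\emph{Step 3: the bound on $\Lambda$.} The inequality $\Lambda(R_\beta)\ge\Lambda_s(R_\beta)=\lambda(\beta)$ is immediate from the definition of $\Lambda$ as a supremum over $s$.

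Alternatively, the equality could be read off the equality case of \Cref{thm:upper} for $\beta<1$. The direct computation is cleaner and also covers $\beta=1$.

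The main obstacle is purely the convention bookkeeping on $\{R=0\}$, and the $P_1$-nullity of that set disposes of it.
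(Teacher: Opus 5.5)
Your proof is correct and follows the paper's own argument: the change of measure $dP_1=R\,dP_0$, the identity $1-\beta s=\beta$, and hence $\E_{P_1}[R_\beta^{-s}]=Z_\beta^{1+s}=Z_\beta^{1/\beta}$. The only difference is cosmetic: you discard the $P_1$-null set $\{R=0\}$ explicitly, whereas the paper absorbs it into the identity $\E_{P_1}[\varphi(R)]=\E_{P_0}[R\,\varphi(R)]$ through the convention $0\cdot\infty:=0$.
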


\begin{proof}
$\E_{P_0}[R_\beta]=\E_{P_0}[R^{\beta}]/Z_\beta=1$, so $R_\beta\in\Ev$. For
\eqref{eq:flat-attains} compute directly:
\[
   \E_{P_1}\lB R_\beta^{-s}\rB
   =Z_\beta^{s}\,\E_{P_1}\lB R^{-\beta s}\rB
   =Z_\beta^{s}\,\E_{P_0}\lB R^{1-\beta s}\rB ,
\]
using the change of measure $\E_{P_1}[\varphi(R)]=\E_{P_0}[R\,\varphi(R)]$,
valid for every measurable $\varphi:[0,\infty)\to[0,\infty]$. Since
$\beta s=1-\beta$ we have
$1-\beta s=\beta$, so $\E_{P_0}[R^{1-\beta s}]=Z_\beta$ and
\[
   \E_{P_1}\lB R_\beta^{-s}\rB=Z_\beta^{s+1}=Z_\beta^{1/\beta} ,
\]
because $s+1=\frac{1-\beta}{\beta}+1=\frac1\beta$. Taking $-\log$,
$\Lambda_s(R_\beta)=-\frac1\beta\log Z_\beta=\lambda(\beta)$. The inequality follows
from $\Lambda(R_\beta)=\sup_{s'\ge0}\Lambda_{s'}(R_\beta)\ge\Lambda_s(R_\beta)$.
\end{proof}

\begin{corollary}[The exponent of the likelihood ratio]\label{cor:chernoffinfo}
Under \Cref{ass:standing} with $\KL(P_0\|P_1)<\infty$,
\begin{equation}\label{eq:chernoffinfo}
   \Lambda(R)\ =\ \sup_{\beta\in(0,1]}\beta\,\lambda(\beta)
   \ =\ -\inf_{\beta\in(0,1]}\log Z_\beta\ =\ C(P_0,P_1) ,
\end{equation}
the Chernoff information of the pair. Moreover, if $\log R$ is not $P_0$-a.s.\
constant then $\Lambda(R_\beta)>\lambda(\beta)$ strictly, for every
$\beta\in(0,1]$.
\end{corollary}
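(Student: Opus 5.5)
The plan is to compute $\Lambda_s(R)$ in closed form and then take the supremum over $s$; the strictness claim will follow from one differentiation.

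\emph{Closed form.} By the change of measure used in \Cref{lem:flat},
\[
   \E_{P_1}[R^{-s}]=\E_{P_0}[R^{1-s}\mathbf 1_{\{R>0\}}].
\]
Several cases follow.
\begin{itemize}
\item For $s\in[0,1)$, put $\beta:=1-s\in(0,1]$. Then $\E_{P_1}[R^{-s}]=Z_\beta$, because $R^{\beta}$ already vanishes on $\{R=0\}$. Hence $\Lambda_s(R)=-\log Z_{\beta}=\beta\lambda(\beta)$.
\item For $s=1$, $\E_{P_1}[R^{-1}]=P_0(R>0)$. The hypothesis $\KL(P_0\|P_1)<\infty$ forces $P_0\ll P_1$, so $P_0(R>0)=1$ and $\Lambda_1(R)=0$.
\item For $s>1$, Jensen gives $\E_{P_0}[R^{1-s}]\ge(\E_{P_0}R)^{1-s}=1$, since $x\mapsto x^{1-s}$ is convex. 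So $\Lambda_s(R)\le0$.
\end{itemize}
Taking the supremum over $s\ge0$ therefore gives $\Lambda(R)=\sup_{\beta\in(0,1]}\beta\lambda(\beta)=-\inf_{\beta\in(0,1]}\log Z_\beta$. The point $\beta=0$ is also harmless: $Z_0=P_0(R>0)=1$ contributes the value $0$, which is dominated by $\beta=1$.

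\emph{Chernoff information.} I would identify the last expression with $C(P_0,P_1)=-\inf_{a\in[0,1]}\log\int p_0^{1-a}p_1^{a}\,d\mu$. The integral at $a=\beta$ is $Z_\beta$, by the computation in \Cref{lem:Zf}(iii). This is just a matter of quoting the definition, presumably the one recorded in the appendix, and noting that the endpoints $a\in\{0,1\}$ give $\log Z=0$.

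\emph{Strictness.} Fix $\beta\in(0,1]$. Consider the map $u\mapsto\Lambda_u(R_\beta)$. By \Cref{lem:flat} it takes the value $\lambda(\beta)$ at $u=s=\frac{1-\beta}{\beta}$. I would show that this point is not a stationary point, so that moving $u$ slightly increases the value.

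Compute
\[
   \E_{P_1}[R_\beta^{-u}]=Z_\beta^{u}\,\E_{P_0}[R^{1-\beta u}]=Z_\beta^{u}Z_{1-\beta u},
\]
so that
\[
   \Lambda_u(R_\beta)=-u\kappa(\beta)-\kappa(1-\beta u),\qquad \kappa:=\log Z.
\]
By \Cref{rem:Zf-cond}, $\kappa$ is convex, with $\kappa(0)=\kappa(1)=0$ under $P_0\ll P_1$. It is finite on $[0,1]$ and differentiable on $(0,1)$, and it is strictly convex because $\log R$ is non-constant.

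The derivative in $u$ at $u=s$ is $-\kappa(\beta)+\beta\kappa'(\beta)$. This vanishes only if the tangent line to $\kappa$ at $\beta$ passes through $(0,\kappa(0))=(0,0)$. Strict convexity forbids that: the tangent at $\beta$ lies strictly below $\kappa$ at $0$, so $\kappa(\beta)-\beta\kappa'(\beta)<\kappa(0)=0$. Hence the derivative is positive, and slightly larger $u$ gives $\Lambda_u(R_\beta)>\lambda(\beta)$.

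\emph{Main obstacle.} The delicate part is $\beta=1$, together with differentiability at the boundary. For $\beta=1$ one must increase $u$ from $s=0$ and uses the right derivative of $\kappa$ at $1$. That derivative is $\kappa'(1^-)=\KL(P_1\|P_0)$, finite by \Cref{ass:standing}. Then $\Lambda_u(R)$ has right derivative $\KL(P_1\|P_0)>0$ at $u=0$ (positive because the laws differ), which is the same argument with one-sided derivatives.

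For $\beta\in(0,1)$, differentiability of $\kappa$ at interior points of its finite domain is the standard cumulant-generating-function fact. Finiteness of $\kappa$ near $\beta$ from both sides holds since $Z_\beta\le1$ on $[0,1]$. If needed, I would avoid derivatives altogether by using the strict convexity chord inequality directly on $u\mapsto\Lambda_u$ near $s$.
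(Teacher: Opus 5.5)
Your proof is correct and follows the paper's route. You compute $\Lambda_s(R)=-\log Z_{1-s}$ and confine the supremum to $\beta\in(0,1]$; you use Jensen on $x^{1-s}$ where the paper uses convexity of $\log Z$ with $Z_0=Z_1=1$. You get strictness from the positive derivative of $u\mapsto\Lambda_u(R_\beta)$ at the paired tilt, and your $\beta\kappa'(\beta)-\kappa(\beta)$ is exactly the paper's $\KL(P_\beta\|P_0)$. One wording slip: at $\beta=1$ the relevant one-sided derivative is the \emph{left} derivative of $\kappa$ at $1$, as your notation $\kappa'(1^-)$ already indicates, equivalently the right derivative of $u\mapsto\Lambda_u(R)$ at $0$.
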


\begin{proof}
For $s\ge0$, the change of measure $\E_{P_1}[\varphi(R)]=\E_{P_0}[R\,\varphi(R)]$
gives $\E_{P_1}[R^{-s}]=\E_{P_0}[R^{1-s}]=Z_{1-s}$ when $s\in[0,1)$, so
\[
   \Lambda_s(R)=-\log Z_{1-s} ;
\]
valid for every $s\ge0$ because $\KL(P_0\|P_1)<\infty$ gives $P_0(R>0)=1$, so
that $R>0$ holds $P_0$-a.s.\ as well as $P_1$-a.s. Substituting $\beta=1-s$
turns $\sup_{s\ge0}$ into $\sup_{\beta\le1}$ of $-\log Z_\beta$. The map
$\beta\mapsto\log Z_\beta$ is convex with $Z_0=Z_1=1$, so $-\log Z_\beta\le0$
for $\beta\le0$ while $-\log Z_\beta\ge0$ on $[0,1]$; the supremum is therefore
attained on $(0,1]$ and equals $\sup_{\beta\in(0,1]}\beta\lambda(\beta)$. The
last equality is the definition of the Chernoff information. For the strictness, write
$dP_\beta:=R_\beta\,dP_0$; differentiating $\sigma\mapsto\Lambda_\sigma(R_\beta)$
at the paired tilt $\sigma=s=\frac{1-\beta}{\beta}$ gives
\[
   \frac{d}{d\sigma}\Lambda_\sigma(R_\beta)\Big|_{\sigma=s}
   =\frac{\E_{P_1}[R_\beta^{-s}\log R_\beta]}{\E_{P_1}[R_\beta^{-s}]}
   =\KL(P_\beta\|P_0)\ >\ 0 ,
\]
the last inequality because $P_\beta=P_0$ would force $R^{\beta}$ to be
$P_0$-a.s.\ constant. So $\Lambda(R_\beta)\ge\Lambda_{\sigma}(R_\beta)>
\Lambda_s(R_\beta)=\lambda(\beta)$ for $\sigma$ slightly above $s$.
\end{proof}

\begin{remark}[Three named quantities]\label{rem:three}
\eqref{eq:chernoffinfo} identifies the exponent of the growth-optimal design in
closed form whenever $\KL(P_0\|P_1)<\infty$: it is the Chernoff information, the
largest rectangle $\beta\times\lambda(\beta)$ under the curve of
\Cref{fig:two}. The subject therefore has three named constants,
\[
   C(P_0,P_1)\ \le\ \min\lC\KL(P_1\|P_0),\ \KL(P_0\|P_1)\rC :
\]
the exponent $C(P_0,P_1)$ of the \epower{}-optimal design; the \epower{} ceiling
$\KL(P_1\|P_0)$, which that design attains; and the exponent ceiling
$\KL(P_0\|P_1)$, which no design attains. The last two are not comparable with
each other (\Cref{rem:notcomparable}).
\end{remark}

\begin{corollary}[Lower bound; i.i.d.\ model]\label{cor:lower}
$\displaystyle\sup_{E\in\Ev}\Lambda(E)\ \ge\ \KL(P_0\|P_1)$.
\end{corollary}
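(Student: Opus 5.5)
The plan is to evaluate the flattened family $R_\beta$ of \Cref{def:flattened} as $\beta\downarrow0$, using only \Cref{lem:flat} and \Cref{lem:limit}. For every $\beta\in(0,1]$, \Cref{lem:flat} gives $R_\beta\in\Ev$ and
\[
   \Lambda(R_\beta)\ \ge\ \Lambda_{s}(R_\beta)\ =\ \lambda(\beta),
   \qquad s=\tfrac{1-\beta}{\beta}.
\]
Hence
\[
   \sup_{E\in\Ev}\Lambda(E)\ \ge\ \sup_{\beta\in(0,1]}\lambda(\beta).
\]

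By \Cref{lem:Zf}(ii), $\lambda$ is nonincreasing on $(0,1]$. So this supremum equals $\lim_{\beta\downarrow0}\lambda(\beta)$, which by \Cref{lem:limit} equals $\KL(P_0\|P_1)$ in $[0,\infty]$. Chaining the two displays proves the claim.

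The infinite case is included. If $\KL(P_0\|P_1)=\infty$, the same chain gives $\sup_E\Lambda(E)=\infty$. This covers in particular the case $P_0\not\ll P_1$, because \Cref{lem:limit} was stated in $[0,\infty]$ and its proof treats that case.

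There is no real obstacle: the analytic content sits in \Cref{lem:limit}, which is already proved. The only point to check is that \Cref{lem:flat} is stated for all $\beta\in(0,1]$. It is, and it requires no assumption such as $\KL(P_0\|P_1)<\infty$, since $Z_\beta\in(0,1]$ by \Cref{lem:Zf}(i). So the argument reduces to a two-line squeeze along the flattened family, approaching the ceiling without attaining it.
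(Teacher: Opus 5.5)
Your proposal is correct and is essentially the paper's proof: \Cref{lem:flat} gives $\sup_{E\in\Ev}\Lambda(E)\ge\sup_{\beta\in(0,1]}\lambda(\beta)$, monotonicity from \Cref{lem:Zf}(ii) turns this supremum into $\lim_{\beta\downarrow0}\lambda(\beta)$, and \Cref{lem:limit} identifies that limit as $\KL(P_0\|P_1)$. Your remarks that \Cref{lem:flat} needs no finiteness hypothesis and that the infinite case goes through are accurate.
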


\begin{proof}
By \Cref{lem:flat}, $\sup_{E\in\Ev}\Lambda(E)\ge\sup_{\beta\in(0,1]}\lambda(\beta)
=\lim_{\beta\downarrow0}\lambda(\beta)$, the equality by monotonicity
(\Cref{lem:Zf}(ii)); and that limit is $\KL(P_0\|P_1)$ by \Cref{lem:limit}.
\end{proof}

\begin{remark}[What needs the i.i.d.\ model, and what does not]
\label{rem:whatgeneralises}
The upper half of this section is stated in the setting of \Cref{def:setting}:
\Cref{thm:upper-cond}, \Cref{prp:upper-seq}\ref{it:agg-exact}--\ref{it:agg-det}
and \Cref{cor:ceiling} use only that $\Prob_0,\Prob_1$ are laws on a filtered
space with $\Prob_1\ll\Prob_0$ on each $\Fc_t$. So do
\Cref{def:cond-evar}--\Cref{lem:order}, \Cref{lem:chernoff}, and
\Cref{sec:nfl} apart from \Cref{rem:quantifier}, \Cref{prp:powerone} and
\Cref{rem:cramer}, which name the i.i.d.\ model. What the general form gives up is that the ceiling is no longer
one number: it is the average $\frac1t\sum_{n\le t}\bar\lambda_n(0^{+})$ of
per-step conditional Chernoff--Stein exponents, and \Cref{cor:ceiling} must pass
to an essential supremum to make the per-step bounds aggregate.

Three things below genuinely use more. The lower bound needs a witness
whose per-step value is non-random, which is what lets the
per-step bounds aggregate deterministically; \Cref{prp:stein-ceiling} invokes the
Chernoff--Stein lemma, an i.i.d.\ statement; and \Cref{sec:horizon} rests on the
identity of \Cref{prp:exactfamily}, which requires $Z_{\beta,n}$ to be
deterministic and constant in $n$. The equality analysis of
\Cref{thm:upper}, and hence \Cref{prp:upper-seq}\ref{it:agg-iid} and
\Cref{prp:notattained}, is stated in the i.i.d.\ model for a different reason:
the conditional H\"older equality case carries an $\Fc_{n-1}$-measurable
proportionality factor, and identifying it is what the sections argument does.

\Cref{app:general} carries each of these out: it states the replacement for
each i.i.d.-model result of the main text, together with the hypothesis it
needs, and \Cref{tab:general} is the index; \Cref{rem:generalceiling} summarises
what the general ceiling does and does not settle. Three of them --- the conditional \epower{}
ceiling, the conditional flattened design, and the equality case --- need no
extra hypothesis at all.
\end{remark}

\subsection{Non-attainment}

Both statements of this subsection are about an arbitrary \etm{} in the i.i.d.\
model, and both are about $\Lambda^{(t)}$ of \Cref{def:exponent}.

\begin{lemma}[The supremum over tilts is attained at a finite tilt]
\label{lem:attained}
Assume in addition to \Cref{ass:standing} that $P_0\ll P_1$, so that
$P_0\sim P_1$. Let $(E_n)_{n\in\N}$ be a betting sequence and $t\in\N$ with
$\Prob_0(W_t\ne1)>0$. Then there is $s^{\ast}\in[0,\infty)$ with
$\Lambda^{(t)}=\Lambda^{(t)}_{s^{\ast}}$.
\end{lemma}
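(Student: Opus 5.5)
The plan is to study $\psi(s):=\log\E_{\Prob_1}[W_t^{-s}]$ on $[0,\infty)$. Since $\Lambda^{(t)}_s=-\psi(s)/t$, the task is to show that $\psi$ attains its infimum over $[0,\infty)$. Three properties of $\psi$ will do it. First, $\psi(0)=0$ and $\psi$ is convex: by H\"older (log-convexity of moments) it is convex wherever it is finite, and it is $+\infty$ off an interval containing $0$. Second, $\psi$ is lower semicontinuous: Fatou applied to $W_t^{-s}\to W_t^{-s_0}$ pointwise gives this on $[0,\infty)$. Third, $\psi(s)\to+\infty$ as $s\to\infty$, which is the key step. Together these make $\psi$ coercive and lsc on $[0,\infty)$, so the infimum is attained at some finite $s^\ast$. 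If the infimum is $0$, it is attained at $s^\ast=0$ and $\Lambda^{(t)}=0$.

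For the coercivity, first note that $W_t<\infty$ $\Prob_0$-a.s. by \Cref{lem:supmg}. Since $\Prob_0\sim\Prob_1$ on $\Fc_t$ (from $P_0\sim P_1$ and the product structure), also $W_t<\infty$ $\Prob_1$-a.s. The hypothesis $\Prob_0(W_t\ne1)>0$ together with $\E_{\Prob_0}[W_t]\le1$ gives $\Prob_0(W_t<1)>0$. Indeed, if $W_t\ge1$ held $\Prob_0$-a.s., then $\E_{\Prob_0}[W_t]\le1$ would force $W_t=1$ a.s., a contradiction. By equivalence of the measures, $\Prob_1(W_t<1)>0$. Choose $\delta\in(0,1)$ with $p:=\Prob_1(W_t\le\delta)>0$. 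Then $\E_{\Prob_1}[W_t^{-s}]\ge p\,\delta^{-s}$, so $\psi(s)\ge\log p+s\log\frac1\delta\to\infty$. The same event also covers the case $\Prob_1(W_t=0)>0$, where $\psi\equiv+\infty$ on $(0,\infty)$ and $s^\ast=0$ is trivially optimal.

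With these in hand the conclusion follows. Pick $S$ with $\psi(s)>0=\psi(0)$ for $s>S$. The infimum over $[0,\infty)$ then equals the infimum over $[0,S]$, and a lower semicontinuous function with values in $(-\infty,\infty]$ attains its infimum on this compact interval. Values $-\infty$ do not occur, since $W_t^{-s}>0$ $\Prob_1$-a.s. gives $\E_{\Prob_1}[W_t^{-s}]>0$.

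The main obstacle is the lower semicontinuity, which is what rules out an infimum approached but not attained at the boundary of the finiteness domain of $\psi$. Fatou gives $\liminf_{s\to s_0}\E_{\Prob_1}[W_t^{-s}]\ge\E_{\Prob_1}[W_t^{-s_0}]$ because $W_t^{-s}\ge0$ converges pointwise, including where $W_t=0$ ($+\infty$ for all $s>0$) and where $W_t=\infty$ (a null set). This holds at every $s_0>0$, and at $s_0=0$ after checking the convention $x^0=1$. That is exactly the semicontinuity needed. Convexity is then only a convenience, and I would not rely on it.
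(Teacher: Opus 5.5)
Your proposal is correct and follows the paper's proof. Both arguments take the same steps: $\Prob_1(W_t<1)>0$ from the supermartingale bound and $\Prob_0|_{\Fc_t}\sim\Prob_1|_{\Fc_t}$; coercivity via $\E_{\Prob_1}[W_t^{-s}]\ge p\,\delta^{-s}$, which reduces the problem to a compact $[0,S]$; and lower semicontinuity of $s\mapsto\E_{\Prob_1}[W_t^{-s}]$ by Fatou. One small correction: at $s_0=0$ on $\{W_t=0\}$ you have only pointwise lower semicontinuity ($+\infty\ge1$), not convergence. That is all Fatou needs, and your check of the convention $x^0=1$ covers it.
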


\begin{proof}
First, $\Prob_1(W_t<1)>0$. Indeed, suppose $\Prob_1(W_t<1)=0$, i.e.\ $W_t\ge1$
$\Prob_1$-a.s.; since $P_0\sim P_1$ gives $\Prob_0|_{\Fc_t}\sim\Prob_1|_{\Fc_t}$
and $W_t$ is $\Fc_t$-measurable, this gives $W_t\ge1$ $\Prob_0$-a.s., hence
$1\ge\E_{\Prob_0}[W_t]\ge1$ by the supermartingale property, so
$\E_{\Prob_0}[W_t-1]=0$ with $W_t-1\ge0$ $\Prob_0$-a.s., i.e.\ $W_t=1$
$\Prob_0$-a.s., contradicting the hypothesis.

Let $\delta:=\Prob_1(W_t<1)>0$ and pick $\epsilon\in(0,1)$ with
$\Prob_1(W_t\le1-\epsilon)\ge\delta/2$, possible since
$\{W_t<1\}=\bigcup_{k}\{W_t\le1-\frac1k\}$. Then for $s\ge0$,
\[
   \E_{\Prob_1}\lB W_t^{-s}\rB\ \ge\ (1-\epsilon)^{-s}\,\tfrac\delta2
   \ \xrightarrow[s\to\infty]{}\ \infty ,
\]
so $\Lambda^{(t)}_s\to-\infty$. Since $\Lambda^{(t)}_0=0$, there is $S<\infty$
with $\Lambda^{(t)}_s<0$ for all $s>S$, hence
$\Lambda^{(t)}=\sup_{s\in[0,S]}\Lambda^{(t)}_s$. Finally, $W_t<\infty$
$\Prob_1$-a.s.\ (\Cref{lem:supmg} and \Cref{ass:standing}), so for $\Prob_1$-a.e.\
$\omega$ the map $s\mapsto W_t(\omega)^{-s}$ is lower semicontinuous on
$[0,\infty)$: it is continuous where $W_t(\omega)\in(0,\infty)$, and where
$W_t(\omega)=0$ it equals $1$ at $s=0$ and $+\infty$ for $s>0$. By Fatou's
lemma $s\mapsto\E_{\Prob_1}[W_t^{-s}]$ is then lower semicontinuous, so
$s\mapsto\Lambda^{(t)}_s$ is upper semicontinuous on the compact $[0,S]$ and
attains its supremum there.
\end{proof}

\begin{remark}[Why not convexity]\label{rem:whyfatou}
The map $s\mapsto\log\E_{\Prob_1}[W_t^{-s}]$ is convex, but convexity alone
would not close the argument above: a convex function into $(-\infty,\infty]$
can fail to be lower semicontinuous at the endpoint of its effective domain,
which is exactly where the supremum may sit.
\end{remark}

\begin{proposition}[The ceiling is not attained; i.i.d.\ model]\label{prp:notattained}
Assume $\KL(P_0\|P_1)<\infty$ --- which with \Cref{ass:standing} already gives
$P_0\sim P_1$ --- and $P_0\ne P_1$, equivalently that $\log R$ is not
$P_0$-a.s.\ constant. Then
\[
   \Lambda^{(t)}\ <\ \KL(P_0\|P_1)
   \qquad\text{for every betting sequence and every }t\in\N ;
\]
in particular $\Lambda(E)<\KL(P_0\|P_1)$ for every $E\in\Ev$.
\end{proposition}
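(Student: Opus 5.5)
Fix a betting sequence and $t\in\N$. The plan is to show that the supremum over tilts is taken at a finite tilt, and that at every finite tilt the bound of \Cref{prp:upper-seq}\ref{it:agg-iid} is strictly below $\KL(P_0\|P_1)$.

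First dispose of the degenerate case $\Prob_0(W_t\ne1)=0$. Then $W_t=1$ $\Prob_0$-a.s., hence $\Prob_1$-a.s. because $P_0\sim P_1$. So $\Lambda^{(t)}_s=0$ for every $s$, and $\Lambda^{(t)}=0<\KL(P_0\|P_1)$. The last inequality is strict: $P_0\ne P_1$ together with $P_0\sim P_1$ gives $\KL(P_0\|P_1)>0$.

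Otherwise \Cref{lem:attained} applies, since its hypothesis $P_0\sim P_1$ holds: $\KL(P_0\|P_1)<\infty$ forces $P_0\ll P_1$, and \Cref{ass:standing} gives the converse. So there is $s^\ast\in[0,\infty)$ with $\Lambda^{(t)}=\Lambda^{(t)}_{s^\ast}$.

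If $s^\ast=0$, then $\Lambda^{(t)}=0<\KL(P_0\|P_1)$, as before.

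If $s^\ast>0$, put $\beta^\ast=\frac1{1+s^\ast}\in(0,1)$. Then \Cref{prp:upper-seq}\ref{it:agg-iid} gives $\Lambda^{(t)}\le\lambda(\beta^\ast)$. Since $\log R$ is not $P_0$-a.s.\ constant, \Cref{lem:Zf}(iv) says $\lambda$ is strictly decreasing on $(0,1]$. Hence
\[
\lambda(\beta^\ast)<\lambda(\beta^\ast/2)\le\lim_{\beta\downarrow0}\lambda(\beta)=\KL(P_0\|P_1),
\]
the last equality by \Cref{lem:limit}. This gives $\Lambda^{(t)}<\KL(P_0\|P_1)$.

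The single-\evar{} statement follows by taking the i.i.d.\ product of $E$, which has $\Lambda^{(t)}=\Lambda(E)$ by \Cref{lem:iidexp}.

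I need to check the equivalence claimed in the hypothesis. Under $P_0\sim P_1$, $\log R$ is $P_0$-a.s.\ constant exactly when $R=c$ a.s. Then $\E_{P_0}[R]=1$ forces $c=1$, so the constant case is exactly $P_0=P_1$. Granting \Cref{lem:attained}, this check is the only delicate point.

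The main obstacle is really the one \Cref{lem:attained} already handles: the supremum over $s$ might otherwise be approached only as $s\to\infty$, where $\lambda(\frac1{1+s})\to\KL(P_0\|P_1)$ and strictness would be lost. The lemma rules this out, so the argument reduces to the three cases above.
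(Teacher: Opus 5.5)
Your proof is correct and follows the paper's argument step for step. It handles the degenerate case $W_t=1$ $\Prob_0$-a.s., uses \Cref{lem:attained} to get a finite optimal tilt $s^\ast$, treats $s^\ast=0$ trivially, and for $s^\ast>0$ takes the bound $\lambda(\beta^\ast)$ from \Cref{prp:upper-seq} and makes it strict via \Cref{lem:Zf}(iv) and \Cref{lem:limit}. The only differences from the paper are cosmetic: you insert the explicit intermediate value $\lambda(\beta^\ast/2)$, and for the single-\evar{} clause you invoke \Cref{lem:iidexp}, whereas the paper specialises to $t=1$.
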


\begin{proof}
If $W_t=1$ $\Prob_0$-a.s.\ then $\Lambda^{(t)}_s=0$ for every $s$, so
$\Lambda^{(t)}=0<\KL(P_0\|P_1)$, the last inequality because $\log R$ is not
$P_0$-a.s.\ constant, so $P_0\ne P_1$. Otherwise \Cref{lem:attained} gives
$s^{\ast}\in[0,\infty)$ with $\Lambda^{(t)}=\Lambda^{(t)}_{s^{\ast}}$. If
$s^{\ast}=0$ then $\Lambda^{(t)}=0<\KL(P_0\|P_1)$. If $s^{\ast}>0$ then
\Cref{prp:upper-seq} gives $\Lambda^{(t)}\le \lambda(\beta^{\ast})$ with
$\beta^{\ast}=\frac1{1+s^{\ast}}\in(0,1)$, and \Cref{lem:Zf}(iv) gives
$\lambda(\beta^{\ast})<\lim_{\beta\downarrow0}\lambda(\beta)=\KL(P_0\|P_1)$ since $\lambda$ is
strictly decreasing. For the last clause take the i.i.d.\ product generated by
$E$ at $t=1$, where $W_1=E$ and $\Lambda^{(1)}=\Lambda(E)$: if $E=1$ $P_0$-a.s.\
then $\Lambda(E)=0<\KL(P_0\|P_1)$, and otherwise $\Prob_0(W_1\ne1)>0$.
\end{proof}

\subsection{The two-sided statement}

\begin{theorem}[The ceiling, matched; i.i.d.\ model]\label{thm:ceiling}
Work in the i.i.d.\ model of \Cref{def:iid}. Let $\mathcal W$ denote the class of all betting sequences (equivalently, of all
\etm s) for $\Prob_0$, and $\mathcal W_{\mathrm{iid}}\subseteq\mathcal W$ the
subclass of i.i.d.\ products generated by a single $E\in\Ev$. Under
\Cref{ass:standing}, for every $t\in\N$,
\begin{equation}\label{eq:matched}
   \sup_{\mathcal W}\ \Lambda^{(t)}
   \ =\ \sup_{\mathcal W_{\mathrm{iid}}}\ \Lambda^{(t)}
   \ =\ \sup_{E\in\Ev}\ \Lambda(E)
   \ =\ \KL(P_0\|P_1) ,
\end{equation}
the common value being approached along the flattened family
$(R_\beta)_{\beta\in(0,1]}$ of \Cref{def:flattened} as $\beta\downarrow0$. If
moreover $P_0\sim P_1$, $\KL(P_0\|P_1)<\infty$, and $\log R$ is not $P_0$-a.s.\
constant, then no member of $\mathcal W$ attains it.

Consequently, for every betting sequence, every $\alpha\in(0,1)$ and every
$t\in\N$,
\begin{equation}\label{eq:matched-error}
   \bar\gamma_t(\alpha)\ \le\ \gamma_t(\alpha)
   \ \le\ \exp\lp s\ell-t\,\Lambda^{(t)}_s\rp
   \quad\text{for all }s\ge0,
\end{equation}
and no betting sequence makes the exponent $\Lambda^{(t)}_s$ exceed
$\KL(P_0\|P_1)$, for any $s$ and any $t$.
\end{theorem}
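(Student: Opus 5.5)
The plan is to prove \eqref{eq:matched} as a chain of inequalities closed into a loop, then read off non-attainment and the error bound from results already in hand. First, the trivial inclusions: $\mathcal W_{\mathrm{iid}}\subseteq\mathcal W$ gives $\sup_{\mathcal W_{\mathrm{iid}}}\Lambda^{(t)}\le\sup_{\mathcal W}\Lambda^{(t)}$. By \Cref{lem:iidexp}, the i.i.d.\ product generated by $E$ has $\Lambda^{(t)}_s=\Lambda_s(E)$ for every $s$. Taking the supremum over $s$ gives $\Lambda^{(t)}=\Lambda(E)$ independently of $t$, and hence $\sup_{\mathcal W_{\mathrm{iid}}}\Lambda^{(t)}=\sup_{E\in\Ev}\Lambda(E)$.

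For the upper end of the loop, \Cref{cor:ceiling}, in its i.i.d.\ form \eqref{eq:ceiling-iid}, gives $\Lambda^{(t)}_s\le\KL(P_0\|P_1)$ for every betting sequence, every $s\ge0$ and every $t$. So $\sup_{\mathcal W}\Lambda^{(t)}\le\KL(P_0\|P_1)$. For the lower end, \Cref{cor:lower} gives $\sup_{E\in\Ev}\Lambda(E)\ge\KL(P_0\|P_1)$. Combining the three steps,
\[
   \KL(P_0\|P_1)\le\sup_{\Ev}\Lambda=\sup_{\mathcal W_{\mathrm{iid}}}\Lambda^{(t)}\le\sup_{\mathcal W}\Lambda^{(t)}\le\KL(P_0\|P_1),
\]
so all four quantities coincide. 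This holds in $[0,\infty]$, so the case $\KL(P_0\|P_1)=\infty$ is covered as well, since \Cref{lem:limit} allows an infinite limit.

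The claim that the value is approached along the flattened family follows from \Cref{lem:flat}: $\Lambda(R_\beta)\ge\lambda(\beta)$. As $\beta\downarrow0$, $\lambda(\beta)$ increases to $\KL(P_0\|P_1)$ by \Cref{lem:Zf}(ii) and \Cref{lem:limit}, and $\Lambda(R_\beta)\le\KL(P_0\|P_1)$ by the upper bound. Hence $\Lambda(R_\beta)\to\KL(P_0\|P_1)$. Non-attainment is exactly \Cref{prp:notattained}, whose hypotheses are those stated here, noting that $P_0\sim P_1$ is implied anyway. Finally, \eqref{eq:matched-error} is \Cref{lem:chernoff} verbatim. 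The closing sentence of the theorem restates \eqref{eq:ceiling-iid}.

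I expect no genuine obstacle, since every ingredient is proved earlier. The only point needing care is bookkeeping in the $\KL(P_0\|P_1)=\infty$ case and the $t$-independence. In that case the chain must be read in $[0,\infty]$, and the lower bound along $R_\beta$ diverges, which \Cref{lem:limit} handles. The $t$-independence is automatic, because both ends of the chain are $t$-free.
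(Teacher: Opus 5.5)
Your proof is correct and is essentially the paper's own: the same squeeze of $\sup_{\mathcal W}\Lambda^{(t)}\le\KL(P_0\|P_1)$ (\Cref{cor:ceiling}) against $\sup_{\mathcal W_{\mathrm{iid}}}\Lambda^{(t)}=\sup_{E\in\Ev}\Lambda(E)\ge\KL(P_0\|P_1)$ (\Cref{lem:iidexp} with \Cref{cor:lower}), with non-attainment from \Cref{prp:notattained} and \eqref{eq:matched-error} from \Cref{lem:chernoff}. Your sandwich $\lambda(\beta)\le\Lambda(R_\beta)\le\KL(P_0\|P_1)$ spells out the ``approached along the flattened family'' clause, which the paper leaves implicit in the proof of \Cref{cor:lower}.
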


\begin{proof}
$\sup_{\mathcal W}\Lambda^{(t)}\le\KL(P_0\|P_1)$ is \Cref{cor:ceiling}, and
$\sup_{\mathcal W_{\mathrm{iid}}}\Lambda^{(t)}=\sup_{E\in\Ev}\Lambda(E)\ge
\KL(P_0\|P_1)$ is \Cref{lem:iidexp} with \Cref{cor:lower}; since
$\mathcal W_{\mathrm{iid}}\subseteq\mathcal W$ the three suprema are squeezed
together. Non-attainment is \Cref{prp:notattained};
\eqref{eq:matched-error} is \Cref{lem:chernoff}; the final clause is
\Cref{cor:ceiling}.
\end{proof}

\begin{remark}[Where the two bounds live]\label{rem:scope}
The two halves of \eqref{eq:matched} have different scopes, and that asymmetry
is the content of the theorem. The upper bound holds for every \etm{}: the
predictable form \Cref{prp:upper-seq} applies \Cref{thm:upper-cond} once per step,
conditionally on the past, so adaptivity buys nothing at all --- and by
\Cref{prp:stein-ceiling} the same ceiling binds the type-II error itself and
not merely the Chernoff bound on it. The lower bound is attained already in the
smallest natural subclass, the i.i.d.\ products of a single \evar{}. Enlarging
the class from $\mathcal W_{\mathrm{iid}}$ to $\mathcal W$ therefore does not
raise the achievable exponent by so much as an $\epsilon$; and by the equality
case of \Cref{prp:upper-seq}, the only betting sequence achieving the bound
$\lambda(\beta)$ at a given tilt is, up to $\Prob_1$-null modification, the i.i.d.\
product of $R_\beta$.
\end{remark}

\begin{remark}[The value of the ceiling]\label{rem:stein}
$\KL(P_0\|P_1)$ is the Chernoff--Stein exponent: the optimal type-II error
exponent among all tests, sequential or not, whose type-I error is at most a
fixed $\alpha$ \cite{Chernoff56}, \cite[Thm.~11.8.3]{CT06}
(\Cref{thm:app-stein}). \Cref{thm:ceiling} therefore says that
the class of \etm s is exponent-optimal as a class, and that --- when
$\KL(P_0\|P_1)<\infty$ and $P_0\ne P_1$ --- no member of it attains the ceiling
at any finite horizon $t$. It does not say that no member
approaches it as $t\to\infty$: \Cref{prp:notattained} bounds $\Lambda^{(t)}$ for
each fixed $t$ and says nothing about $\limsup_{t}\Lambda^{(t)}$, which a single
sequence of bets with a tilt $\beta$ decreasing along blocks can drive to
$\KL(P_0\|P_1)$.
\end{remark}

\begin{remark}[Attribution]\label{rem:attribution}
\Cref{lem:Zf}(ii) and \Cref{lem:limit} are, in the R\'enyi form
$\lambda(\beta)=\Ren_{1-\beta}(P_0\|P_1)$ of \Cref{lem:Zf}(iii), the monotonicity of
R\'enyi divergence in its order and its limit at order one
\cite[Thms.~3 and 5]{vEH14}; we have given direct proofs to keep the
development self-contained. \Cref{thm:upper} with its equality case is the
$\Ev$-constrained instance of a known variational characterisation of R\'enyi
divergence \cite{Ana18,ACD15}, restated as \Cref{thm:app-variational}. What the two-sided form
\eqref{eq:matched} adds is the identification of the exact range of
$\Lambda$ on $\Ev$, together with the non-attainment.
\end{remark}

\subsection{The two ceilings}

Each of the two scalars of \Cref{rem:two-scalars} has a least upper bound
over $\Ev$. They are the same divergence in opposite arguments, and one is
attained where the other is not.

\begin{theorem}[The two ceilings; i.i.d.\ model]\label{thm:two}
Under \Cref{ass:standing}, in the i.i.d.\ model:
\begin{enumerate}[label=(\alph*),leftmargin=2.4em]
\item $\displaystyle\sup_{E\in\Ev}\ \E_{P_1}[\log E]\ =\ \KL(P_1\|P_0)$, and the
      supremum is attained, at $E=R$ and, up to $P_0$-null modification, only
      there;
\item $\displaystyle\sup_{E\in\Ev}\ \Lambda(E)\ =\ \KL(P_0\|P_1)$, and if
      $P_0\sim P_1$, $\KL(P_0\|P_1)<\infty$ and $\log R$ is not $P_0$-a.s.\
      constant, the supremum is attained by no $E\in\Ev$ --- indeed by no
      \etm{}, at any horizon.
\end{enumerate}
\end{theorem}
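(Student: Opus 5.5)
The theorem is an assembly of results already proved, so the plan is to collect them and check that the hypotheses line up.

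For (a), the upper bound $\E_{P_1}[\log E]\le\KL(P_1\|P_0)$ for every $E\in\Ev$ is the second claim of \Cref{lem:epower-welldef}. The same lemma gives the equality case: equality holds if and only if $E=R$ $P_0$-a.s. Since $R\in\Ev$, with $\E_{P_0}[R]=1$ by \Cref{rem:one-obs}, and $\E_{P_1}[\log R]=\KL(P_1\|P_0)$ by definition, the supremum is attained at $R$. Two points are worth stating explicitly. First, under \Cref{ass:standing} the \epower{} is well defined in $[-\infty,\infty)$ by the first claim of \Cref{lem:epower-welldef}, so the supremum makes sense. Second, "only there, up to $P_0$-null modification" is exactly the $P_0$-a.s.\ uniqueness proved in that lemma. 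There is a converse to check as well: a $P_0$-null modification of $R$ is also $P_1$-null because $P_1\ll P_0$. Hence it stays in $\Ev$ and keeps the same \epower{}, so the class of maximisers is precisely the $P_0$-equivalence class of $R$.

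For (b), the value of the supremum is the equality $\sup_{E\in\Ev}\Lambda(E)=\KL(P_0\|P_1)$, which is part of \eqref{eq:matched} in \Cref{thm:ceiling}. Spelled out, it combines two earlier results:
\begin{itemize}
\item the upper bound $\Lambda(E)\le\KL(P_0\|P_1)$ from \Cref{cor:ceiling}, via \Cref{lem:iidexp} at $t=1$;
\item the lower bound from \Cref{cor:lower}, i.e.\ the flattened family $R_\beta$ as $\beta\downarrow0$, using \Cref{lem:flat} and \Cref{lem:limit}.
\end{itemize}
Non-attainment, for a single $E\in\Ev$ and for every \etm{} at every horizon $t$, is \Cref{prp:notattained}.

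The only point needing care is matching hypotheses. \Cref{prp:notattained} is stated under $\KL(P_0\|P_1)<\infty$ and $P_0\ne P_1$. I will verify that the hypotheses of (b) imply these. Finiteness of $\KL(P_0\|P_1)$ is assumed directly. If $\log R$ is not $P_0$-a.s.\ constant, then $R\ne1$ on a set of positive $P_0$-measure, so $P_0\ne P_1$. The assumption $P_0\sim P_1$ is then redundant but harmless; it is the condition used in \Cref{lem:attained}, on which \Cref{prp:notattained} relies.

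I do not expect a real obstacle, since every ingredient is already proved. The step most likely to need attention is the bookkeeping in (a): making sure the phrase "only there" is read modulo $P_0$-null sets, which is what the equality case of \Cref{lem:epower-welldef} delivers.
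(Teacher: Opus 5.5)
Your proposal is correct and follows the paper's proof. Part (a) is \Cref{lem:epower-welldef} with its equality case, and part (b) is \Cref{thm:ceiling}, whose proof assembles exactly the pieces you list: \Cref{cor:ceiling} with \Cref{lem:iidexp}, then \Cref{cor:lower}, then \Cref{prp:notattained}. Your extra checks are sound and only spell out what the paper leaves implicit: that $P_0$-null modifications of $R$ stay in $\Ev$ with the same \epower{}, and that the hypotheses of (b) give $P_0\ne P_1$ while making $P_0\sim P_1$ redundant once $\KL(P_0\|P_1)<\infty$.
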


\begin{proof}
(a) is \Cref{lem:epower-welldef} together with its equality case; (b) is
\Cref{thm:ceiling}.
\end{proof}

\begin{remark}[The two values are not comparable]\label{rem:notcomparable}
$\KL(P_1\|P_0)$ and $\KL(P_0\|P_1)$ are the same functional evaluated in
opposite orders, and neither dominates the other: for
$P_0=\mathrm{Bern}(\frac12)$ and $P_1=\mathrm{Bern}(0.99)$ one has
$\KL(P_0\|P_1)=1.6145$ and $\KL(P_1\|P_0)=0.6371$, a ratio of $2.53$, and
exchanging the two hypotheses reverses it. (We assume here, as in \Cref{thm:two}(b), that
$\KL(P_0\|P_1)<\infty$ and $P_0\ne P_1$. Without them both suprema can be
attained by the same \evar{} --- in the model of \Cref{rem:eqcase},
$P_0=\mathrm{Bern}(\frac12)$ and $P_1=\delta_1$, one has $\Lambda(R)=\infty=
\KL(P_0\|P_1)$, and $R$ maximises both scalars.) So neither ceiling dominates
the other, and which is the larger depends on the model. They may of course also
coincide: in the Gaussian location model of \Cref{ex:gauss} both equal
$\Delta^{2}/2$. What \Cref{thm:two} asserts is not that the two values always
differ, but that the two \emph{optimisation problems} do --- different
maximisers, and one supremum attained where the other is not.
\end{remark}

\subsection{A worked instance}

\begin{example}\label{ex:ceiling}
Let $\Xc=\{0,1\}$, $P_0=\mathrm{Bern}(\frac12)$, $P_1=\mathrm{Bern}(0.6)$, so
$R(1)=1.2$ and $R(0)=0.8$. Then
\[
   \KL(P_0\|P_1)=\tfrac12\log\tfrac{0.5}{0.6}+\tfrac12\log\tfrac{0.5}{0.4}
   =0.0204110 ,
\]
and $Z_\beta=\frac12(1.2^{\beta}+0.8^{\beta})$, $\lambda(\beta)=-\beta^{-1}\log Z_\beta$:
\begin{center}
\small
\begin{tabular}{@{}lcccccc@{}}
\toprule
$\beta$ & $1$ & $0.5$ & $0.25$ & $0.1$ & $0.05$ & $\downarrow0$\\
\midrule
$\lambda(\beta)$ & $0$ & $0.0101534$ & $0.0152756$ & $0.0183561$ & $0.0193835$
   & $0.0204110$\\
$\Lambda(R_\beta)$ & $0.0050768$ & $0.0114340$ & $0.0155965$ & $0.0184075$
   & $0.0193963$ & $0.0204110$\\
$\Lambda(R_\beta)$ as \% of $\KL(P_0\|P_1)$ & $24.9$ & $56.0$ & $76.4$ & $90.2$
   & $95.0$ & $100$\\
\bottomrule
\end{tabular}
\end{center}
\Cref{lem:flat} gives $\Lambda_s(R_\beta)=\lambda(\beta)$ at the paired tilt
$s=\frac{1-\beta}{\beta}$, hence $\Lambda(R_\beta)\ge\lambda(\beta)$; the second
row shows the inequality is strict, as \Cref{cor:chernoffinfo} explains. The
columns $\beta=0.1$ and $\beta=0.05$ exhibit \evars{} whose exponent is within
$10\%$, respectively $5\%$, of the ceiling, and by \Cref{prp:notattained} no
\evar{} reaches $0.0204110$. The last column is a limit, not an \evar{}.

The likelihood ratio itself, $E=R=R_1$, has
\[
   \Lambda(R)=\sup_{s\ge0}\lC-\log\lp0.6\cdot1.2^{-s}+0.4\cdot0.8^{-s}\rp\rC
   =0.0050768 ,
\]
attained at $s^{\ast}=0.5017$, which is $24.9\%$ of the ceiling. The tilt that
\Cref{thm:upper} pairs with this $s^{\ast}$ is
$\beta^{\ast}=\frac1{1+s^{\ast}}=0.6659$, and the bound it gives is
\[
   \Lambda(R)=\Lambda_{s^{\ast}}(R)=0.0050768\ <\ \lambda(\beta^{\ast})=0.0067677 ,
\]
strictly, as it must be by the equality case of \Cref{thm:upper}: $R$ is not
equal to $R^{\beta^{\ast}}/Z_{\beta^{\ast}}$. So the \evar{} of maximal
\epower{} --- $\E_{P_1}[\log R]=\KL(P_1\|P_0)=0.0201355$, the largest possible by
\Cref{lem:epower-welldef} --- realises barely a quarter of the largest possible
exponent, and is beaten already by the single flattening
$R_{\beta^{\ast}}$. The two scalars of \Cref{rem:two-scalars} are maximised by
different \evars{}.
\end{example}

\section{What to bet at a finite horizon}
\label{sec:horizon}

\noindent
Throughout this section we work in the i.i.d.\ model of \Cref{def:iid};
\Cref{lem:surrogate-gen}, \Cref{prp:exactfamily-gen}, \Cref{lem:floor-gen},
\Cref{cor:price-gen} and \Cref{rem:betastar-gen} are what survive without it.

\Cref{thm:ceiling} says the exponent ceiling is approached as $\beta\downarrow0$
and never attained. On its own that is not advice: it does not say which
$\beta$ to use at a horizon one can actually afford. This section supplies the
missing statement. Everything rests on one identity.

A word on the range of $\beta$. In \Cref{sec:ceiling} the tilt and the design
were tied together --- $\beta=\frac1{1+s}$ with $s\in[0,\infty)$ --- so
$\beta\in(0,1]$ there was the \emph{image} of the admissible tilt range and not
a restriction: a design $R_\beta$ with $\beta>1$ is simply never the maximiser
of $\Lambda_s$, at any $s>0$. The question here is a different one. We fix a
level and a horizon and ask which member of the family has the smallest
\emph{exact} type-II error; the tilt is no longer a free parameter of the
question, and nothing recommends stopping at $\beta=1$. Throughout this section
$\beta$ therefore ranges over the whole of $\Bc$ of \Cref{def:flattened}.

Two consequences are worth recording once rather than repeatedly. For
$\beta>1$ strict convexity of $\kappa:=\log Z$ with $\kappa(1)=0$ and
$\kappa'(1)=\KL(P_1\|P_0)>0$ gives $Z_\beta>1$, hence $\lambda(\beta)<0$, so the
surrogate bound of \Cref{lem:surrogate} is vacuous there. That is beside the
point: by \Cref{prp:exactfamily} the exact error is a nondecreasing function of
$B_t(\beta)$, so $B_t$ orders the family exactly whether or not $e^{B_t}$ falls
below $1$, and minimising it remains the right thing to do. And the extension is
not idle --- below the crossover of \Cref{thm:betastar}(a) the optimum genuinely
exceeds $1$.

\subsection{The two scalars along the family}

\begin{lemma}[The two scalars along the flattened family]\label{lem:family}
For $\beta\in\Bc$ let $R_\beta$ be as in \Cref{def:flattened} and put
$c_\beta:=\E_{P_1}[\log R_\beta]$. Then
\begin{equation}\label{eq:cbeta}
   c_\beta\ =\ \beta\lp\KL(P_1\|P_0)+\lambda(\beta)\rp ,
\end{equation}
and, assuming $\KL(P_0\|P_1)<\infty$ and $\log R$ not $P_0$-a.s.\ constant:
\begin{enumerate}[label=(\roman*),leftmargin=2.4em]
\item $\beta\mapsto c_\beta$ is strictly increasing on $(0,1)$ and strictly
      decreasing on $(1,\beta_{\max})$, with $c_{0^{+}}=0$ and maximum
      $c_1=\KL(P_1\|P_0)$;
\item $\beta\mapsto\lambda(\beta)$ is strictly decreasing on
      $\Bc$, with $\lambda(0^{+})=\KL(P_0\|P_1)$, $\lambda(1)=0$
      and $\lambda(\beta)<0$ for $\beta>1$;
\item $c_\beta/\beta\uparrow\KL(P_0\|P_1)+\KL(P_1\|P_0)$, the Jeffreys
      divergence, as $\beta\downarrow0$.
\end{enumerate}
\end{lemma}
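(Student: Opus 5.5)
Everything runs through the cumulant function $\kappa(\beta)=\log Z_\beta$. First the identity \eqref{eq:cbeta}: since $\log R_\beta=\beta\log R-\kappa(\beta)$ and $\E_{P_1}[\log R]=\KL(P_1\|P_0)$ (finite by \Cref{ass:standing}; $\log R>-\infty$ $P_1$-a.s.), we get $c_\beta=\beta\KL(P_1\|P_0)-\kappa(\beta)=\beta(\KL(P_1\|P_0)+\lambda(\beta))$, because $-\kappa(\beta)=\beta\lambda(\beta)$. No extra hypotheses are needed here.

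Next I would establish the analytic facts about $\kappa$. Under $\KL(P_0\|P_1)<\infty$ we have $P_0\sim P_1$, so $R>0$ $P_0$-a.s., and hence $Z_0=1$ and $\kappa(0)=0$. On the interior of $\{\beta:Z_\beta<\infty\}\supseteq(0,\beta_{\max})$, $\kappa$ is the cumulant generating function of $\log R$ under $P_0$. It is therefore $C^\infty$ there, with $\kappa'(\beta)=\E_{P_\beta}[\log R]$ for the tilted law $dP_\beta=R_\beta\,dP_0$, and $\kappa''(\beta)=\Var_{P_\beta}(\log R)>0$ because $\log R$ is not $P_0$-a.s.\ constant and $P_\beta\sim P_0$. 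So $\kappa$ is strictly convex, with $\kappa'(1)=\E_{P_1}[\log R]=\KL(P_1\|P_0)$. At the left endpoint, monotone convergence as in \Cref{lem:limit} gives the right derivative $\kappa'(0^+)=\E_{P_0}[\log R]=-\KL(P_0\|P_1)$. I expect this endpoint step to be the main technical point: differentiability of $\kappa$ at $0$ and at $1$ when $0$ or $1$ sits at the boundary of the domain. The fix is to use one-sided difference quotients with monotone convergence exactly as in \Cref{lem:limit} and \Cref{rem:cramer}, dominated on the relevant side by the value at an interior point; at $\beta=1$ this must be taken from the left when $\beta_{\max}=1$.

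For (i), $c_\beta=\beta\kappa'(1)-\kappa(\beta)$ has derivative $\kappa'(1)-\kappa'(\beta)$. Since $\kappa'$ is strictly increasing, this derivative is positive on $(0,1)$ and negative on $(1,\beta_{\max})$. This gives the strict monotonicity and the maximum $c_1=\KL(P_1\|P_0)-\kappa(1)=\KL(P_1\|P_0)$. Also $c_{0^+}=0$, since $\kappa(0^+)=0$ by continuity of the convex $\kappa$ on $[0,1]$ (or because $\beta\lambda(\beta)\to0$ when $\lambda(0^+)$ is finite).

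For (ii), $\lambda(\beta)=-\kappa(\beta)/\beta=-(\kappa(\beta)-\kappa(0))/\beta$ is minus the secant slope of a strictly convex function from the point $0$. That slope is strictly increasing in $\beta$, so $\lambda$ is strictly decreasing on all of $\Bc$; this extends \Cref{lem:Zf}(iv) beyond $1$. The limit $\lambda(0^+)=\KL(P_0\|P_1)$ is \Cref{lem:limit}, $\lambda(1)=0$ is \Cref{lem:Zf}(i), and $\lambda(\beta)<\lambda(1)=0$ for $\beta>1$ follows from strictness.

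For (iii), $c_\beta/\beta=\KL(P_1\|P_0)+\lambda(\beta)$. By (ii) this increases as $\beta\downarrow0$, to $\KL(P_1\|P_0)+\KL(P_0\|P_1)$.
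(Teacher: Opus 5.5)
Your proposal is correct and follows the paper's proof almost step for step: the identity from $\log R_\beta=\beta\log R-\kappa(\beta)$, strict convexity of $\kappa$ via $\kappa''(\beta)=\Var_{P_\beta}(\log R)>0$ (which uses $P_0(R>0)=1$), the sign of $\kappa'(1)-\kappa'(\beta)$ for (i) with $\kappa'(1)$ taken as a left derivative when $\beta_{\max}=1$, the chord slope of $\kappa$ from $0$ for (ii), and (ii) again for (iii). One point to tighten: justify $\kappa(0^{+})=0$ by $P_0(R>0)=1$ together with dominated convergence, or by your parenthetical $\beta\lambda(\beta)\to0$, and not by continuity of a convex function alone --- a convex function can jump at an endpoint, and here in general $\kappa(0^{+})=\log P_0(R>0)$ while $Z_0=1$ by convention.
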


\begin{proof}
Write $\kappa(\beta):=\log Z_\beta$, so $\lambda(\beta)=-\kappa(\beta)/\beta$.
Then $\log R_\beta=\beta\log R-\kappa(\beta)$, and taking $\E_{P_1}$ gives
$c_\beta=\beta\KL(P_1\|P_0)-\kappa(\beta)=\beta\KL(P_1\|P_0)+\beta\lambda(\beta)$,
which is \eqref{eq:cbeta}. The map $\kappa$ is the cumulant generating function
of $\log R$ under $P_0$, hence convex. Note first that $\KL(P_0\|P_1)<\infty$
forces $P_0(R>0)=1$, so $Z_\beta\to P_0(R>0)=1$ and $\kappa(0^{+})=0$; also
$\kappa(1)=\log\E_{P_0}[R]=0$. Thus $\kappa(0^{+})=\kappa(1)=0$,
$\kappa'(0^{+})=\E_{P_0}[\log R]=-\KL(P_0\|P_1)$ and
$\kappa'(1)=\E_{P_0}[R\log R]=\KL(P_1\|P_0)$, the latter a left derivative,
since $Z_\beta$ may be infinite for every $\beta>1$; the interchange
$\frac{d}{d\beta}\E_{P_0}[R^{\beta}]=\E_{P_0}[R^{\beta}\log R]$ at $\beta\uparrow1$
is legitimate by monotone convergence on $\{R\ge1\}$ (where
$\E_{P_0}[(R\log R)_+]<\infty$ by \Cref{ass:standing}) and dominated convergence
on $\{R<1\}$ (where $R^{\beta}|\log R|\le1/(e\beta_0)$ for $\beta\ge\beta_0>0$).
The value $\kappa'(0^{+})=-\KL(P_0\|P_1)$ is not the same interchange: it is
$\lim_{\beta\downarrow0}\kappa(\beta)/\beta=-\lim_{\beta\downarrow0}\lambda(\beta)$
by convexity, and \Cref{lem:limit}. Finally $\kappa$ is strictly convex because
$\log R$ is not $P_0$-a.s.\ constant \emph{and}, by the previous sentence,
$P_0(R>0)=1$, so that $\kappa''(\beta)=\Var_{P_\beta}(\log R)>0$ for the tilt
$dP_\beta\propto R^{\beta}dP_0$. For (i),
$\frac{d}{d\beta}c_\beta=\KL(P_1\|P_0)-\kappa'(\beta)$, and $\kappa'$ is
strictly increasing with $\kappa'(1)=\KL(P_1\|P_0)$, so the derivative is
strictly positive on $(0,1)$, vanishes at $\beta=1$ and is strictly negative on
$(1,\beta_{\max})$; the values are $c_1=\KL(P_1\|P_0)$ and $c_{0^{+}}=0$. For
(ii), $\lambda(\beta)=-\kappa(\beta)/\beta$ is minus the slope of the chord of
$\kappa$ from $0$ to $\beta$, which is strictly increasing on all of
$\Bc$ because $\kappa$ is strictly convex with $\kappa(0^{+})=0$;
the two values are \Cref{lem:limit} and $\kappa(1)=0$, and $\lambda(\beta)<0$
for $\beta>1$ because $\kappa(\beta)>0$ there. For
(iii), $c_\beta/\beta=\KL(P_1\|P_0)+\lambda(\beta)$ by \eqref{eq:cbeta}, and
$\lambda(\beta)$ increases to $\lambda(0^{+})=\KL(P_0\|P_1)$ as
$\beta\downarrow0$ by (ii).
\end{proof}

\begin{remark}[What the family shows]\label{rem:familypicture}
\Cref{lem:family} is the two ceilings in one curve. At $\beta=1$ the design is
$R$: \epower{} maximal, exponent $\lambda(1)=0$ from this bound. As $\beta$
decreases the \epower{} falls monotonically to $0$ while the tilt-$\beta$
exponent $\lambda(\beta)$ rises monotonically to $\KL(P_0\|P_1)$. No member of the family is good at both, and
\Cref{thm:two} shows that the two objectives are maximised at different places
and that one supremum is attained where the other is not, and \Cref{fig:two}
plots the trade-off along the family for the model of \Cref{ex:ceiling}. We do
not prove a quantitative trade-off valid for arbitrary \evars{}; along the
family it can be read off the figure.
\end{remark}

\begin{figure}[t]
\centering
\begin{tikzpicture}
\begin{axis}[
  width=0.82\textwidth, height=6.2cm,
  xlabel={tilt $\beta$}, ylabel={nats per observation},
  xmin=0, xmax=1.02, ymin=0, ymax=0.0232,
  axis lines=left, tick align=outside,
  grid=major, grid style={gray!18, very thin},
  every axis plot/.append style={very thick},
  scaled y ticks=false, yticklabel style={/pgf/number format/fixed,
     /pgf/number format/precision=3},
  xtick={0,0.2,0.4,0.6,0.8,1}, ytick={0,0.005,0.010,0.015,0.020},
]
\addplot[gray!55, dashed, thin, forget plot] coordinates {(0,0.020411)(1.02,0.020411)};
\addplot[gray!55, dashed, thin, forget plot] coordinates {(0,0.005077)(1.02,0.005077)};
\addplot[black] coordinates {
(0.010,0.020205)(0.040,0.019589)(0.070,0.018973)(0.100,0.018356)
(0.130,0.017740)(0.160,0.017124)(0.190,0.016507)(0.220,0.015891)
(0.250,0.015276)(0.280,0.014660)(0.310,0.014045)(0.340,0.013429)
(0.370,0.012815)(0.400,0.012200)(0.430,0.011586)(0.460,0.010972)
(0.490,0.010358)(0.520,0.009745)(0.550,0.009132)(0.580,0.008519)
(0.610,0.007907)(0.640,0.007296)(0.670,0.006684)(0.700,0.006074)
(0.730,0.005464)(0.760,0.004854)(0.790,0.004245)(0.820,0.003637)
(0.850,0.003029)(0.880,0.002422)(0.910,0.001815)(0.940,0.001210)
(0.970,0.000604)(1.000,0.000000)};
\addplot[black, densely dotted] coordinates {
(0.010,0.000403)(0.040,0.001589)(0.070,0.002738)(0.100,0.003849)
(0.130,0.004924)(0.160,0.005961)(0.190,0.006962)(0.220,0.007926)
(0.250,0.008853)(0.280,0.009743)(0.310,0.010596)(0.340,0.011412)
(0.370,0.012192)(0.400,0.012934)(0.430,0.013640)(0.460,0.014309)
(0.490,0.014942)(0.520,0.015538)(0.550,0.016097)(0.580,0.016620)
(0.610,0.017106)(0.640,0.017556)(0.670,0.017969)(0.700,0.018347)
(0.730,0.018687)(0.760,0.018992)(0.790,0.019261)(0.820,0.019493)
(0.850,0.019690)(0.880,0.019851)(0.910,0.019975)(0.940,0.020064)
(0.970,0.020118)(1.000,0.020136)};
\node[anchor=west, font=\small] at (axis cs:0.03,0.02195)
  {$\KL(P_0\|P_1)=0.02041$, the exponent ceiling};
\node[anchor=west, font=\small] at (axis cs:0.42,0.00650) {$\Lambda(R)=0.00508$};
\node[anchor=west, font=\small] at (axis cs:0.055,0.01640) {$\lambda(\beta)$};
\node[anchor=west, font=\small] at (axis cs:0.30,0.00780) {$c_\beta$};
\end{axis}
\end{tikzpicture}
\caption{The two scalars along the flattened family, plotted on $\beta\in(0,1]$,
for $P_0=\mathrm{Bern}(\frac12)$, $P_1=\mathrm{Bern}(0.6)$. Solid: the
tilt-$\beta$ ceiling $\lambda(\beta)=\Lambda_{(1-\beta)/\beta}(R_\beta)$, the
largest exponent available at tilt $\beta$ (\Cref{thm:upper},
\Cref{lem:flat}), rising to $\KL(P_0\|P_1)$ as $\beta\downarrow0$ but never
reaching it. Dotted: the \epower{} $c_\beta$ of the same design, rising in the
opposite direction to its own ceiling $\KL(P_1\|P_0)=0.02014$ at $\beta=1$. The
full exponent $\Lambda(R_\beta)$, optimised over $s$, is strictly larger than
$\lambda(\beta)$ (\Cref{cor:chernoffinfo}); at $\beta=1$ the solid curve is $0$
while $\Lambda(R)=0.00508$, the lower dashed line, which by
\eqref{eq:chernoffinfo} is the largest rectangle $\beta\times\lambda(\beta)$
under the solid curve --- a quarter of the ceiling here.}
\label{fig:two}
\end{figure}

\subsection{The flattened family consists of likelihood-ratio tests}

\begin{definition}[Log-likelihood-ratio sum and threshold]\label{def:GB}
Put
\[
   G_\bullet:\N_0\times\Omega\to[-\infty,\infty),\qquad
   G_t:=\sum_{n=1}^{t}\log R(X_n),\qquad G_0:=0,
\]
and
\[
   B_\bullet(\beta):\N\to\R,\qquad
   B_t(\beta):=\frac{\ell}{\beta}-t\,\lambda(\beta) .
\]
\end{definition}

\begin{lemma}[A surrogate bound at the tilt $s=\frac1\beta$]\label{lem:surrogate}
For every $\beta\in\Bc$, $\alpha\in(0,1)$ and $t\in\N$, the i.i.d.\ product
generated by $R_\beta$ satisfies $\gamma_t(\alpha)\le e^{B_t(\beta)}$.
\end{lemma}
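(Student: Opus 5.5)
The plan is to apply \Cref{lem:chernoff} to the i.i.d.\ product of $R_\beta$ at the tilt $s=\frac1\beta$, and to evaluate the resulting moment exactly by a change of measure. By \Cref{lem:chernoff} combined with \Cref{lem:iidexp},
\[
   \gamma_t(\alpha)\ \le\ \exp\lp s\ell-t\,\Lambda_s(R_\beta)\rp
   \qquad\text{for every }s\ge0 ,
\]
so it suffices to show $\Lambda_{1/\beta}(R_\beta)=\lambda(\beta)$. With $s=\frac1\beta$ we then get $s\ell=\frac{\ell}{\beta}$, and the right-hand side becomes $e^{B_t(\beta)}$.

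For the computation, note that $R_\beta^{-1/\beta}=Z_\beta^{1/\beta}R^{-1}$. The change of measure $\E_{P_1}[\varphi(R)]=\E_{P_0}[R\,\varphi(R)]$, already used in \Cref{lem:flat}, then gives
\[
   \E_{P_1}\lB R_\beta^{-1/\beta}\rB
   \ =\ Z_\beta^{1/\beta}\,\E_{P_0}\lB R\cdot R^{-1}\rB
   \ =\ Z_\beta^{1/\beta}\,P_0(R>0) .
\]
On $\{R=0\}$ we have $R\cdot R^{-1}=0\cdot\infty=0$ by the paper's conventions, which is where the indicator comes from. Since $P_0(R>0)\le1$, this yields
\[
   \Lambda_{1/\beta}(R_\beta)\ \ge\ -\tfrac1\beta\log Z_\beta\ =\ \lambda(\beta) ,
\]
and an inequality in this direction is all the bound needs.

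The only delicate point is the bookkeeping for $\beta>1$ and for the degenerate set $\{R=0\}$. For $\beta\in\Bc$ we have $Z_\beta\in(0,\infty)$, so $\lambda(\beta)$ is finite, $R_\beta$ is a genuine \evar{} (\Cref{def:flattened}), and $R_\beta<\infty$. The set $\{R=0\}$ is $P_1$-null, so $R_\beta>0$ holds $P_1$-a.s.\ and the moment is finite. Beyond this, \Cref{lem:chernoff} holds for any $s\ge0$ and requires no restriction on $\beta$. Finally, when $\lambda(\beta)<0$ the bound may exceed $1$; it is then vacuous but still true, so no case split is needed.
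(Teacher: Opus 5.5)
Your proposal is correct and is essentially the paper's own proof. You rewrite $R_\beta^{-1/\beta}=Z_\beta^{1/\beta}R^{-1}$, use the change of measure to get $\E_{P_1}[R_\beta^{-1/\beta}]=Z_\beta^{1/\beta}P_0(R>0)\le e^{-\lambda(\beta)}$, and feed $\Lambda_{1/\beta}(R_\beta)\ge\lambda(\beta)$ into \Cref{lem:chernoff} at $s=\frac1\beta$. One small slip: where you say it ``suffices to show'' $\Lambda_{1/\beta}(R_\beta)=\lambda(\beta)$, that should read $\ge$, since your own computation shows equality fails whenever $P_0(R>0)<1$; the argument you actually carry out uses only $\ge$, so the proof is unaffected.
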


\begin{proof}
$R_\beta^{-1/\beta}=Z_\beta^{1/\beta}R^{-1}$, and
$\E_{P_1}[R^{-1}]=\E_{P_0}[R\cdot R^{-1}]=P_0(R>0)\le1$, so
$\E_{P_1}[R_\beta^{-1/\beta}]\le Z_\beta^{1/\beta}=e^{-\lambda(\beta)}$, i.e.\
$\Lambda_{1/\beta}(R_\beta)\ge\lambda(\beta)$. \Cref{lem:chernoff} at
$s=\frac1\beta$ then gives
$\gamma_t(\alpha)\le\exp(\frac{\ell}{\beta}-t\lambda(\beta))=e^{B_t(\beta)}$.
\end{proof}

\begin{proposition}[Exact error of a flattened design]\label{prp:exactfamily}
Fix $\beta\in\Bc$ and $\alpha\in(0,1)$, and let $(W_t)_{t\in\N_0}$ be the
i.i.d.\ product generated by $R_\beta$. Then for every $t\in\N$:
\begin{enumerate}[label=(\alph*),leftmargin=2.4em]
\item $\log W_t=\beta\lp G_t-B_t(\beta)\rp+\ell$, and consequently
      \begin{equation}\label{eq:exactgamma}
         \gamma_t(\alpha)=\Prob_1\lp G_t\le B_t(\beta)\rp,
         \qquad
         \bar\gamma_t(\alpha)
            =\Prob_1\lp G_n\le B_n(\beta)\ \text{for all }n\le t\rp ;
      \end{equation}
\item the whole family $(R_\beta)_{\beta\in\Bc}$ therefore performs the
      \emph{same} test --- reject when the log-likelihood-ratio sum $G_t$
      exceeds a threshold --- and differs only in where the threshold is put;
\item $\gamma_t(\alpha)\ \ge\ \Prob_1\lp G_t\le\E_{P_1}[G_t]\rp$ whenever
      $t\,c_\beta\le\ell$ --- in particular at every $t$ when $c_\beta\le0$ ---
      with $c_\beta$ as in \Cref{lem:family}; and the
      surrogate bound $e^{B_t(\beta)}$ of \Cref{lem:surrogate} is vacuous
      unless $\lambda(\beta)>0$ --- that is, unless $\beta<1$ --- and
      $t>\ell/(\beta\lambda(\beta))$. (The Chernoff bound of
      \Cref{lem:chernoff} at $s=\frac1\beta$ is $e^{B_t(\beta)}P_0(R>0)^{t}$,
      which is smaller when $P_0(R>0)<1$; the two agree under
      $\KL(P_0\|P_1)<\infty$.)
\end{enumerate}
\end{proposition}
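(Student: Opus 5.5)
The plan is a direct computation, since $\log R_\beta=\beta\log R-\kappa(\beta)$ with $\kappa(\beta)=\log Z_\beta=-\beta\lambda(\beta)$. For (a) I will first note that $R_\beta(X_n)\in[0,\infty)$, so $\log W_t=\sum_{n\le t}\log R_\beta(X_n)$ holds in $[-\infty,\infty)$ with no $0\cdot\infty$ issue. Summing gives $\log W_t=\beta G_t+t\beta\lambda(\beta)$. Then I expand $\beta(G_t-B_t(\beta))+\ell=\beta G_t-\ell+t\beta\lambda(\beta)+\ell$, which is the same expression; this holds for every $\omega$, including $G_t=-\infty$, where both sides are $-\infty$ because $\beta>0$. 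Since $\beta>0$, the events satisfy
\[
   \{W_t\le\tfrac1\alpha\}=\{\log W_t\le\ell\}=\{\beta(G_t-B_t(\beta))\le0\}=\{G_t\le B_t(\beta)\}.
\]
Applying this for each $n\le t$ and intersecting gives the formula for $\bar\gamma_t(\alpha)$ via \Cref{def:typeII}. For $n=0$ the event is $\{1\le\frac1\alpha\}$, which is automatic, so only $n\in\{1,\dots,t\}$ contributes.

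Part (b) is a restatement of (a). The rejection event $\{W_t>\frac1\alpha\}$ equals $\{G_t>B_t(\beta)\}$, which is a function of the single statistic $G_t$, and only $\beta$ enters, through the deterministic threshold.

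For (c), compute $\E_{P_1}[G_t]=t\KL(P_1\|P_0)$. This is finite and well defined by \Cref{ass:standing}: $\log R\in L^1(P_1)$ because its positive part is integrable and its mean is finite. Then use \eqref{eq:cbeta}:
\[
   B_t(\beta)-\E_{P_1}[G_t]=\frac{\ell}{\beta}-t\bigl(\lambda(\beta)+\KL(P_1\|P_0)\bigr)=\frac{\ell-t\,c_\beta}{\beta}.
\]
This is $\ge0$ exactly when $t c_\beta\le\ell$. On that range $\{G_t\le\E_{P_1}[G_t]\}\subseteq\{G_t\le B_t(\beta)\}$, and (a) gives the claimed lower bound. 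When $c_\beta\le0$ the condition holds for every $t$.

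For the vacuity claim: $e^{B_t(\beta)}<1$ iff $B_t(\beta)<0$ iff $t\lambda(\beta)>\ell/\beta$. Since $\ell>0$, this requires $\lambda(\beta)>0$, and then it is equivalent to $t>\ell/(\beta\lambda(\beta))$. By \Cref{lem:Zf}(i) we have $\lambda(1)=0$, and from $Z_\beta\ge1$ for $\beta>1$ we get $\lambda\le0$ on $\Bc\cap[1,\infty)$. Conversely $\lambda(\beta)>0$ for $\beta<1$ under non-degeneracy, by \Cref{lem:Zf}(iv). The inequality $Z_\beta\ge1$ for $\beta\ge1$ is conditional Jensen for the convex map $x^\beta$, using $\E_{P_0}[R]=1$.

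For the parenthetical, I will redo the computation in \Cref{lem:surrogate} with equality:
\[
   \E_{P_1}\bigl[R_\beta^{-1/\beta}\bigr]=Z_\beta^{1/\beta}\,P_0(R>0),
\]
so the Chernoff bound of \Cref{lem:chernoff} at $s=\frac1\beta$, via \Cref{lem:iidexp}, equals $e^{B_t(\beta)}P_0(R>0)^t$. If $\KL(P_0\|P_1)<\infty$ then $P_0(R>0)=1$, and the two bounds agree.

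I do not expect a real obstacle. The only care needed is the extended-real bookkeeping in (a) when $R(X_n)=0$, which has $\Prob_1$-probability zero anyway, and the sign conventions for $\beta>1$ in (c).
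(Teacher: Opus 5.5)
Your proposal is correct and follows the paper's own proof essentially step for step: the same rearrangement of $\log W_t=\beta G_t+t\beta\lambda(\beta)$ for (a), the same computation $B_t(\beta)-t\KL(P_1\|P_0)=(\ell-t\,c_\beta)/\beta$ via \eqref{eq:cbeta} for (c), and the same reduction of vacuity to the sign of $B_t(\beta)$. Beyond the paper, you also check several points correctly: the $n=0$ term; the sign of $\lambda$ on either side of $\beta=1$, rightly noting that $\lambda(\beta)>0$ for $\beta<1$ needs the non-degeneracy hypothesis of \Cref{lem:Zf}(iv); and the identity $\E_{P_1}[R_\beta^{-1/\beta}]=Z_\beta^{1/\beta}P_0(R>0)$ behind the parenthetical.
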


\begin{proof}
(a) $\log R_\beta=\beta\log R-\log Z_\beta$ and $-\log Z_\beta=\beta\lambda(\beta)$,
so $\log W_t=\beta G_t+t\beta\lambda(\beta)
=\beta\lp G_t-\frac{\ell}{\beta}+t\lambda(\beta)\rp+\ell
=\beta(G_t-B_t(\beta))+\ell$. Hence $W_t\le\frac1\alpha$, i.e.\
$\log W_t\le\ell$, holds precisely when $G_t\le B_t(\beta)$; the second identity
in \eqref{eq:exactgamma} follows by applying this at every $n\le t$ and using
$\{\tau_\alpha>t\}=\bigcap_{n\le t}\{W_n\le\frac1\alpha\}$. Part (b) restates
(a). For (c), $\E_{P_1}[G_t]=t\KL(P_1\|P_0)$, and by \eqref{eq:cbeta}
$B_t(\beta)\ge t\KL(P_1\|P_0)$ is equivalent to
$\ell\ge t\beta(\lambda(\beta)+\KL(P_1\|P_0))=t\,c_\beta$; monotonicity of
$x\mapsto\Prob_1(G_t\le x)$ then gives the stated inequality. The last claim is
$B_t(\beta)\ge0$, i.e.\ $t\lambda(\beta)\le\ell/\beta$.
\end{proof}

\subsection{The horizon-optimal tilt}

\begin{theorem}[Horizon-optimal flattening]\label{thm:betastar}
Assume $\KL(P_0\|P_1)<\infty$, $\KL(P_1\|P_0)<\infty$, and that $\log R$ is not
$P_0$-a.s.\ constant, and write
\[
   g(\beta)\ :=\ \beta\kappa'(\beta)-\kappa(\beta) ,
\]
which is strictly increasing on $\Bc$ with $g(0^{+})=0$ and
$g(1)=\KL(P_1\|P_0)$. For fixed $t$ the map $\beta\mapsto B_t(\beta)$ on
$\Bc$ is strictly decreasing and then, if $\ell/t$ lies in the
range of $g$, strictly increasing; in that case it has a unique minimiser
$\beta^{\ast}(t)$, characterised by
\begin{equation}\label{eq:betastar-root}
   g\lp\beta^{\ast}(t)\rp\ =\ \frac{\ell}{t} ,
\end{equation}
and otherwise the infimum is approached as $\beta\uparrow\beta_{\max}$. By
\Cref{prp:exactfamily}(a) the minimiser also minimises the exact type-II error
$\gamma_t(\alpha)$ over the family. Moreover:
\begin{enumerate}[label=(\alph*),leftmargin=2.4em]
\item \emph{(Crossover.)} Whenever $\beta^{\ast}(t)$ exists,
      $\beta^{\ast}(t)\gtreqless1$ according as
      $t\lesseqgtr\ell/\KL(P_1\|P_0)$. In particular the growth-optimal design
      $R$ is the horizon-optimal member of the family at exactly one, generally
      non-integer, horizon,
      \[
         t\ =\ \frac{\ell}{\KL(P_1\|P_0)} ;
      \]
      below it $R$ is beaten by \emph{sharpening} and above it by
      \emph{flattening}, with $\beta^{\ast}(t)\downarrow0$ as $t\to\infty$.
\item \emph{(Rate.)} Assume in addition that $\E_{P_0}[R^{-\delta}]<\infty$ for
      some $\delta>0$ --- equivalently, $Z_\beta<\infty$ for some $\beta<0$, so
      that $\kappa(\beta)=\log Z_\beta$ is finite, hence real-analytic, on a
      two-sided neighbourhood of $0$ --- and that
      $V_0:=\Var_{P_0}(\log R)=\kappa''(0)\in(0,\infty)$. Then
      \[
         \lambda(\beta)=\KL(P_0\|P_1)-\tfrac{V_0}{2}\,\beta+O(\beta^{2})
         \quad(\beta\downarrow0),
         \qquad
         \beta^{\ast}(t)=\sqrt{\frac{2\ell}{t\,V_0}}\,(1+o(1)) ,
      \]
      and, at that tilt,
      \begin{equation}\label{eq:capstone}
         \gamma_t(\alpha)\ \le\
         \exp\lp-t\,\KL(P_0\|P_1)+\sqrt{2\,\ell\,t\,V_0}+O(1)\rp
         \qquad(t\to\infty).
      \end{equation}
\end{enumerate}
\end{theorem}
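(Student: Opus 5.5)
The whole proof reduces to one-variable calculus on the convex function $\kappa=\log Z$ on $\Bc$. Write $B_t(\beta)=\frac{\ell}{\beta}+t\frac{\kappa(\beta)}{\beta}$, using $\lambda=-\kappa/\beta$. Differentiating gives
\[
   B_t'(\beta)\ =\ -\frac{\ell}{\beta^{2}}+t\,\frac{\beta\kappa'(\beta)-\kappa(\beta)}{\beta^{2}}
   \ =\ \frac{t\,g(\beta)-\ell}{\beta^{2}} .
\]
So the sign of $B_t'$ is the sign of $g(\beta)-\ell/t$, and everything follows once $g$ is understood. First we establish the properties of $g$ claimed in the statement. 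Strict convexity of $\kappa$ (from the proof of \Cref{lem:family}) gives $g'(\beta)=\beta\kappa''(\beta)>0$ on the interior of $\Bc$, so $g$ is strictly increasing. At the left end, $g(0^{+})=0$ follows from $\kappa(0^{+})=0$ and from $\beta\kappa'(\beta)\to0$. For the latter, $\kappa'$ is monotone, bounded below by $\kappa'(0^{+})=-\KL(P_0\|P_1)>-\infty$, and bounded above near $0$ by $\kappa'(1)$. At $\beta=1$ we get $g(1)=\kappa'(1)-0=\KL(P_1\|P_0)$, computed as a left derivative in \Cref{lem:family}. At $\beta_{\max}$ one has to take some care with one-sided derivatives if $\beta_{\max}\in\Bc$, but monotonicity still holds.

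\textbf{Monotonicity and the minimiser.} For $\beta$ small, $g(\beta)<\ell/t$, so $B_t$ is strictly decreasing there. If $\ell/t=g(\beta^{\ast})$ for some $\beta^{\ast}\in\Bc$, strict monotonicity of $g$ makes this root unique; $B_t'<0$ to its left and $B_t'>0$ to its right, which gives the unique minimiser and \eqref{eq:betastar-root}. Otherwise $g<\ell/t$ on all of $\Bc$, $B_t$ is decreasing throughout, and the infimum is approached as $\beta\uparrow\beta_{\max}$. By \Cref{prp:exactfamily}(a), $\gamma_t(\alpha)=\Prob_1(G_t\le B_t(\beta))$ is nondecreasing in $B_t(\beta)$, so minimising $B_t$ minimises the exact error.

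\textbf{Part (a).} Since $g$ is strictly increasing with $g(1)=\KL(P_1\|P_0)$, we have $\beta^{\ast}\gtreqless1$ if and only if $\ell/t\gtreqless\KL(P_1\|P_0)$, which is the crossover statement. As $t\to\infty$, $\ell/t\downarrow0$; this value lies in the range $(0,\KL(P_1\|P_0)]\subseteq g(\Bc)$ for large $t$, and continuity and strict monotonicity of $g$ at $0^{+}$ give $\beta^{\ast}(t)\downarrow0$.

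\textbf{Part (b).} The finiteness of $Z_\beta$ for some $\beta<0$ makes $\kappa$ analytic near $0$, with $\kappa(0)=0$, $\kappa'(0)=-\KL(P_0\|P_1)$ and $\kappa''(0)=V_0$. Taylor expansion then gives
\[
   \lambda(\beta)=\KL(P_0\|P_1)-\tfrac{V_0}{2}\beta+O(\beta^{2}),\qquad
   g(\beta)=\tfrac{V_0}{2}\beta^{2}+O(\beta^{3}).
\]
Inverting $g(\beta^{\ast})=\ell/t$ yields $\beta^{\ast}=\sqrt{2\ell/(tV_0)}(1+o(1))$. For the bound \eqref{eq:capstone}, use \Cref{lem:surrogate}: $\gamma_t(\alpha)\le e^{B_t(\beta^{\ast})}$, and $P_0(R>0)=1$ applies here. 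Expanding,
\[
   B_t(\beta)=\frac{\ell}{\beta}-t\KL(P_0\|P_1)+\frac{tV_0}{2}\beta+t\,O(\beta^{2}).
\]
At $\beta\asymp t^{-1/2}$ the remainder $tO(\beta^{2})$ is $O(1)$. The terms $\ell/\beta+tV_0\beta/2$ are minimised at $\sqrt{2\ell/(tV_0)}$, with value $\sqrt{2\ell tV_0}$.

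\textbf{Main obstacle.} The delicate step is that $\beta^{\ast}$ is only asymptotically equal to this minimiser. To avoid propagating the $o(1)$ error, I would bound $B_t(\beta^{\ast})\le B_t(\tilde\beta)$ with the explicit choice $\tilde\beta=\sqrt{2\ell/(tV_0)}$, which is legitimate because $\beta^{\ast}$ minimises $B_t$. Evaluating $B_t(\tilde\beta)$ exactly gives $-t\KL(P_0\|P_1)+\sqrt{2\ell tV_0}+O(1)$, with the $O(1)$ coming from the uniform Taylor remainder $|\kappa(\beta)-\kappa'(0)\beta-\tfrac{V_0}{2}\beta^{2}|\le C|\beta|^{3}$ near $0$. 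This gives $tO(\tilde\beta^{2})$ in $\lambda$ and hence an $O(t\cdot t^{-1})=O(1)$ contribution. The other potentially delicate point is the endpoint behaviour of $g$ near $0^{+}$ in the general (non-analytic) case. It is handled by monotonicity of $\kappa'$ together with the limit $\kappa'(0^{+})=-\KL(P_0\|P_1)$ from \Cref{lem:family}.
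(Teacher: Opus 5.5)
Your proposal is correct and follows the paper's proof essentially line by line. The shared steps are the derivative identity $B_t'(\beta)=(t\,g(\beta)-\ell)/\beta^{2}$, strict monotonicity of $g$ via $g'=\beta\kappa''>0$, the crossover read off from $g(1)=\KL(P_1\|P_0)$, and the device in (b) of bounding $B_t(\beta^{\ast}(t))$ by its value at the explicit tilt $\sqrt{2\ell/(tV_0)}$, with the analytic Taylor remainder supplying the $O(1)$. Your justification of $\beta\kappa'(\beta)\to0$, which bounds the monotone $\kappa'$ between $-\KL(P_0\|P_1)$ and $\kappa'(1)$, fills in a step the paper only asserts.
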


\begin{proof}
Since $\lambda'(\beta)=-g(\beta)/\beta^{2}$,
\[
   B_t'(\beta)=-\frac{\ell}{\beta^{2}}-t\,\lambda'(\beta)
   =\frac{t\,g(\beta)-\ell}{\beta^{2}} .
\]
Now $g(0^{+})=0$ --- this is where $\KL(P_0\|P_1)<\infty$ is consumed, since it
gives both $\kappa(0^{+})=0$ (equivalently $P_0(R>0)=1$) and
$\beta\kappa'(\beta)\to0$ --- and $g'(\beta)=\beta\kappa''(\beta)>0$, since
$\kappa$ is strictly convex; so $g$ is strictly increasing from $0$, and
$g(1)=\kappa'(1)-\kappa(1)=\KL(P_1\|P_0)$. Hence $\beta\mapsto t\,g(\beta)-\ell$
changes sign at most once, from negative to positive. If $\ell/t$ lies in the
range of $g$ the sign change occurs at the $\beta$ of
\eqref{eq:betastar-root}, $B_t$ is strictly decreasing and then strictly
increasing, and with $B_t(0^{+})=+\infty$ the minimiser is unique; if not, $B_t$
is strictly decreasing throughout $\Bc$ and its infimum is approached as
$\beta\uparrow\beta_{\max}$. Since
$\Prob_1(G_t\le x)$ is nondecreasing in $x$, minimising $B_t$ minimises
$\gamma_t(\alpha)=\Prob_1(G_t\le B_t(\beta))$ over the family.

(a) Since $g$ is strictly increasing, $\beta^{\ast}(t)=g^{-1}(\ell/t)$ exceeds,
equals or falls below $1$ according as $\ell/t$ exceeds, equals or falls below
$g(1)=\KL(P_1\|P_0)$. As $g$ does not depend on $t$, the solution decreases in
$t$, tending to $0$ because $g(0^{+})=0$; and $\ell/t<g(\beta_{\max}^{-})$ holds
for all large $t$, so $\beta^{\ast}(t)$ exists there.

(b) The extra hypothesis makes $\kappa$ finite on $(-\delta,1]$, hence
real-analytic on $(-\delta,1)$, so its Taylor expansion at $0$ carries a genuine
$O(\beta^{3})$ remainder --- which is what the $O(1)$ below needs; with only
$\kappa''(0^{+})$ finite one would get an error $t\cdot o(\beta)=o(\sqrt t)$.
With $\kappa(\beta)=-\KL(P_0\|P_1)\beta+\frac{V_0}{2}\beta^{2}+O(\beta^{3})$
we get $\lambda(\beta)=-\kappa(\beta)/\beta=\KL(P_0\|P_1)-\frac{V_0}{2}\beta
+O(\beta^{2})$, and $g(\beta)=\beta\kappa'(\beta)-\kappa(\beta)
=\frac{V_0}{2}\beta^{2}+O(\beta^{3})$; solving $t\,g(\beta)=\ell$ gives
$\beta^{\ast}(t)=\sqrt{2\ell/(tV_0)}(1+o(1))$. For \eqref{eq:capstone} we do not need the asymptotics of $\beta^{\ast}(t)$ at
all: since $\beta^{\ast}(t)$ minimises $B_t$, it suffices to evaluate $B_t$ at
the explicit $b_t:=\sqrt{2\ell/(tV_0)}$, which lies in $(0,1]$ for
$t\ge2\ell/V_0$. From
$\lambda(\beta)=\KL(P_0\|P_1)-\frac{V_0}{2}\beta+O(\beta^{2})$ and
$t\,b_t^{2}=2\ell/V_0=O(1)$,
\[
   B_t(\beta^{\ast}(t))\ \le\ B_t(b_t)
   =\frac{\ell}{b_t}+\frac{tV_0}{2}b_t-t\KL(P_0\|P_1)+O(t\,b_t^{2})
   =-t\KL(P_0\|P_1)+\sqrt{2\ell tV_0}+O(1) ,
\]
because $\frac{\ell}{b_t}=\frac{tV_0}{2}b_t=\sqrt{\ell tV_0/2}$.
\Cref{lem:surrogate} gives $\gamma_t(\alpha)\le e^{B_t(\beta)}$ for every
$\beta$; take $\beta=\beta^{\ast}(t)$.
\end{proof}

\begin{remark}[Reading the crossover]\label{rem:crossover}
The threshold $\ell/\KL(P_1\|P_0)$ in \Cref{thm:betastar}(a) is exactly the
number of observations the likelihood ratio needs to accumulate $\ell=\log\frac1\alpha$
nats of evidence \emph{on average}, since $\E_{P_1}[\log R]=\KL(P_1\|P_0)$. So
within the family the growth-optimal design is the right one at exactly the
horizon at which it typically rejects, and is improved on to either side of it
--- strictly in the threshold
$B_t$, and hence strictly in the error whenever $\Prob_1$ charges the interval
$(B_t(\beta^{\ast}(t)),B_t(1)]$. When $\log R$ is lattice-valued the
improvement can lag the threshold: in the model of \Cref{ex:ceiling}, at
$\alpha=0.05$, the crossover is at $t=148.8$, but $G_t$ lives on a lattice of
spacing $\log\frac32$ and the exact errors of $R$ and of $R_{\beta^{\ast}(t)}$
coincide for $t=149,\dots,191$, the first strict improvement occurring at
$t=192$; for larger $t$ they continue to coincide at isolated horizons. That is
the
operational form of \Cref{thm:two}: which ceiling one should be aiming at
depends on how long one is prepared to wait.
\end{remark}

\begin{remark}[What sharpening is worth]\label{rem:sharpen}
Below the crossover the horizon-optimal design is a \emph{sharpened} likelihood
ratio, $\beta^{\ast}(t)>1$: one raises $R$ to a power above $1$, which lowers
both scalars of \Cref{thm:two} --- the \epower{} $c_\beta$ by
\Cref{lem:family}(i), and the exponent, since
$\Lambda(R_\beta)=\sup_{u\le1}\lC-(1-u)\kappa(\beta)/\beta-\kappa(u)\rC
\le\Lambda(R)$ because $\kappa(\beta)>0$ for $\beta>1$ --- and still reduces the
error, because at those horizons what matters is dispersion rather than drift.
The surrogate exponent $\lambda(\beta)$ itself turns negative there
(\Cref{lem:family}(ii)).
In the Gaussian model of \Cref{ex:gauss} at $\Delta=0.8$, $\alpha=0.05$
(crossover $\ell/D=9.4$), the optimum $\beta^{\ast}(t)=\sqrt{\ell/(tD)}$ gives
$\gamma_2=0.906$ against $0.981$ for $R$, and $\gamma_5=0.745$ against $0.782$;
the surrogate bound is vacuous at both horizons, $e^{B_2}=8.4$ and
$e^{B_5}=16.1$, yet $B_t$ ranks the two designs correctly, which is
\Cref{prp:exactfamily}(a) at work. Sharpening is not always available. If
$\ell/t\ge g(\beta_{\max}^{-})$ there is no minimiser at all and $B_t$ decreases
throughout $\Bc$; this happens at small $t$ whenever $g$ is bounded, as in
\Cref{ex:ceiling}, where $g(\infty)=\log2$ and no $\beta^{\ast}(t)$ exists for
$t\le\ell/\log2=4.3$. And if $\beta_{\max}=1$ --- $Z_\beta=\infty$ for every
$\beta>1$, as for heavy-tailed $R$ --- the family stops at $R$ and no sharpening
exists to be had. \Cref{fig:horizon} shows what the crossover is
worth in the Gaussian model.
\end{remark}

\begin{remark}[Is \eqref{eq:capstone} tight?]\label{rem:bahadurrao}
\eqref{eq:capstone} is an upper bound, and one may ask by how much it errs. In
the Gaussian model of \Cref{ex:gauss} the answer is exact. There
$G_t\sim\mathcal N(tD,2tD)$ under $P_1$, so \Cref{prp:exactfamily}(a) gives
$\gamma_t(\alpha)=\Phi(-z_t)$ with
$z_t=(tD-B_t(\beta^{\ast}(t)))/\sqrt{2tD}$, and the standard normal tail
estimate $-\log\Phi(-z)=\frac{z^{2}}{2}+\log\lp z\sqrt{2\pi}\rp+o(1)$ yields
\[
   -\log\gamma_t(\alpha)\ -\ \lp-B_t(\beta^{\ast}(t))\rp
   \ =\ \tfrac12\log t\ +\ O(1)\qquad(t\to\infty) .
\]
So the bound is off by exactly half a logarithm --- the Bahadur--Rao prefactor
\cite{BR60}, \cite[\S3.7]{DZ10} --- and \eqref{eq:capstone} is tight to
$O(\log t)$. Numerically, at $\Delta=0.8$ and $\alpha=0.05$ the difference minus
$\frac12\log t$ equals $3.356$, $3.592$, $3.661$, $3.682$, $3.689$, $3.691$ at
$t=10^{2},\dots,10^{7}$, converging to the predicted
$\ell+\frac12\log(4\pi D)=3.6915$.

We expect the same to hold generally: that under the hypotheses of
\Cref{thm:betastar}(b) and a non-lattice condition on $\log R$,
\[
   -\log\gamma_t(\alpha)\ =\ t\,\KL(P_0\|P_1)-\sqrt{2\ell tV_0}
      +\tfrac12\log t+O(1) ,
\]
by a Bahadur--Rao expansion of $\Prob_1(G_t\le B_t(\beta))$ along the
moving threshold. We do not prove it here; the moving threshold is what makes it
more than a citation, since $B_t(\beta^{\ast}(t))-t\KL(P_1\|P_0)$ grows like
a fixed multiple of $t$ rather than staying at a bounded deviation.
\end{remark}

\begin{figure}[t]
\centering
\begin{tikzpicture}
\begin{axis}[
  width=0.82\textwidth, height=6.4cm,
  xlabel={horizon $t$}, ylabel={$\log_{10}\gamma_t(\alpha)$},
  xmin=0, xmax=205, ymin=-19.5, ymax=1.5,
  axis lines=left, tick align=outside,
  grid=major, grid style={gray!18, very thin},
  every axis plot/.append style={very thick},
  xtick={0,50,100,150,200}, ytick={0,-4,-8,-12,-16},
]
\addplot[gray!60, dashed, thin, forget plot] coordinates {(9.36,-19.5)(9.36,0.5)};
\addplot[black] coordinates {
(2,-0.008)(6,-0.150)(10,-0.330)(14,-0.509)(18,-0.683)(22,-0.852)(26,-1.018)
(30,-1.181)(40,-1.580)(52,-2.045)(64,-2.502)(76,-2.953)(88,-3.399)(100,-3.841)
(112,-4.281)(124,-4.719)(136,-5.155)(148,-5.589)(160,-6.022)(172,-6.454)
(184,-6.885)(196,-7.315)(200,-7.458)};
\addplot[black, densely dotted] coordinates {
(2,-0.043)(6,-0.163)(10,-0.330)(14,-0.534)(18,-0.765)(22,-1.018)(26,-1.289)
(30,-1.576)(40,-2.347)(52,-3.349)(64,-4.412)(76,-5.523)(88,-6.672)(100,-7.851)
(112,-9.056)(124,-10.283)(136,-11.529)(148,-12.793)(160,-14.071)(172,-15.362)
(184,-16.666)(196,-17.981)(200,-18.421)};
\node[anchor=west, font=\small] at (axis cs:11,0.4)
  {$t=\ell/\KL(P_1\|P_0)=9.4$};
\node[anchor=west, font=\small] at (axis cs:150,-5.2) {$R$};
\node[anchor=east, font=\small] at (axis cs:150,-14.6) {$R_{\beta^{\ast}(t)}$};
\end{axis}
\end{tikzpicture}
\caption{What the horizon-optimal tilt buys, in the Gaussian location model of
\Cref{ex:gauss} with $\Delta=0.8$ and $\alpha=0.05$. Both curves are the
\emph{exact} type-II error, computed from \Cref{prp:exactfamily}(a) and the law
$G_t\sim\mathcal N(tD,2tD)$. Solid: the growth-optimal design $R$, whose
exponent is a quarter of $tD$. Dotted: the horizon-optimal member
$R_{\beta^{\ast}(t)}$ of the flattened family, with
$\beta^{\ast}(t)=\sqrt{\ell/(tD)}$. The two touch only at the crossover
$t=\ell/\KL(P_1\|P_0)=9.4$ (\Cref{thm:betastar}(a), dashed), the dotted curve
lying below by sharpening before it and by flattening after: at $t=50$ the errors are $1.1\cdot10^{-2}$ and $6.7\cdot10^{-4}$, at
$t=200$ they are $3.5\cdot10^{-8}$ and $3.8\cdot10^{-19}$. The gap widens
without bound, since the two exponents are $\frac14tD$ and $tD(1-\beta^{\ast}(t))^{2}$.}
\label{fig:horizon}
\end{figure}

\subsection{The price of the ceiling}

Approaching the ceiling requires $\beta\downarrow0$, and \Cref{lem:family} shows
that drives the \epower{} to $0$. The following lemma converts that into a
horizon before which the test has essentially no power, for any i.i.d.\ design of
finite log-variance.

\begin{lemma}[A universal horizon floor]\label{lem:floor}
Let $E\in\Ev$ with $c:=\E_{P_1}[\log E]>0$ and $v:=\Var_{P_1}(\log E)<\infty$,
let $(W_t)_{t\in\N_0}$ be the i.i.d.\ product it generates, and suppose
$c\,\ell>2v$. Then
\[
   \bar\gamma_t(\alpha)\ \ge\ 1-\frac{2v}{c\,\ell}
   \qquad\text{for every }t\le\frac{\ell}{2c} .
\]
\end{lemma}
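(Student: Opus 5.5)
The plan is to control the running maximum of the centred log-wealth with Kolmogorov's maximal inequality. Set $Y_n:=\log E(X_n)$ and $S_n:=\sum_{m\le n}Y_m=\log W_n$.

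First I would check integrability. By \Cref{lem:epower-welldef}, $\E_{P_1}[(\log E)_+]<\infty$. Together with $c>0$, this gives $\log E\in L^1(P_1)$, and $v<\infty$ then upgrades this to $L^2(P_1)$. In particular $E\in(0,\infty)$ $P_1$-a.s., so every $S_n$ is real-valued $\Prob_1$-a.s. Under $\Prob_1=P_1^{\otimes\N}$ the $Y_n$ are i.i.d. with mean $c$ and variance $v$. The case $t=0$ is trivial, since $W_0=1\le\frac1\alpha$; so fix $1\le t\le\ell/(2c)$.

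Next I would reduce the rejection event to a large deviation of the centred walk. By \eqref{eq:gammabar},
\[
   1-\bar\gamma_t(\alpha)=\Prob_1\lp\max_{n\le t}S_n>\ell\rp .
\]
Suppose $S_n>\ell$ for some $n\le t$. Then $S_n-nc>\ell-nc\ge\ell-tc\ge\ell/2$, using $tc\le\ell/2$. Hence
\[
   \lC\max_{n\le t}S_n>\ell\rC\subseteq\lC\max_{n\le t}\lvert S_n-nc\rvert\ge\ell/2\rC .
\]
Kolmogorov's maximal inequality for the partial sums of the independent, mean-zero, square-integrable variables $Y_m-c$ bounds the right-hand probability by $\frac{tv}{(\ell/2)^2}=\frac{4tv}{\ell^2}$. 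Inserting $t\le\ell/(2c)$ gives
\[
   1-\bar\gamma_t(\alpha)\le\frac{4v}{\ell^2}\cdot\frac{\ell}{2c}=\frac{2v}{c\,\ell},
\]
which is the claimed bound. The hypothesis $c\ell>2v$ only serves to make this bound nontrivial.

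There is no real obstacle here. The only points needing care are the integrability of $\log E$ under $P_1$, which \Cref{lem:epower-welldef} supplies, and the observation that passing from the drift-$c$ walk to its centred version costs at most $tc\le\ell/2$ of the margin $\ell$. If the paper prefers not to quote Kolmogorov's inequality, I would instead apply Doob's $L^2$ maximal inequality to the martingale $S_n-nc$; this yields the same bound.
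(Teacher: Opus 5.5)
Your proposal is correct and is the paper's proof: you centre $\log W_n$ at its drift $nc$, use $tc\le\ell/2$ to give away half the margin $\ell$, and apply Kolmogorov's maximal inequality to get $4tv/\ell^{2}\le 2v/(c\,\ell)$. One caveat concerns only your fallback: Doob's $L^2$ maximal inequality $\E_{\Prob_1}[\max_{n\le t}(S_n-nc)^2]\le4\,\E_{\Prob_1}[(S_t-tc)^2]$ would cost a factor $4$, whereas the weak-type bound for the submartingale $(S_n-nc)^2$ reproduces the same constant; that weak-type bound is the paper's \Cref{lem:doob-gen}, used in \Cref{lem:floor-gen}.
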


\begin{proof}
Write $S_n:=\log W_n=\sum_{m\le n}\log E(X_m)$ and $M_n:=S_n-nc$, so
$(M_n)_{n\in\N_0}$ is a mean-zero random walk under $\Prob_1$ with
$\Var_{\Prob_1}(M_n)=nv$. Fix $t\le\ell/(2c)$. For $n\le t$ we have
$nc\le\ell/2$, so $\{S_n>\ell\}\subseteq\{M_n>\ell/2\}$ and hence
\[
   \Prob_1\lp\tau_\alpha\le t\rp
   =\Prob_1\lp\max_{n\le t}S_n>\ell\rp
   \ \le\ \Prob_1\lp\max_{n\le t}|M_n|\ge\tfrac\ell2\rp
   \ \le\ \frac{tv}{(\ell/2)^{2}}\ \le\ \frac{2v}{c\,\ell} ,
\]
by Kolmogorov's maximal inequality (\Cref{thm:app-kolmogorov},
\cite[Thm.~22.4]{Bil95}) and then $t\le\ell/(2c)$. The hypothesis $c\ell>2v$ is
what makes the conclusion non-trivial.
\end{proof}

\begin{corollary}[The cost of a near-optimal exponent]\label{cor:price}
Assume $\Var_{P_1}(\log R)<\infty$ and apply \Cref{lem:floor} to $E=R_\beta$,
with $v_\beta=\beta^{2}\Var_{P_1}(\log R)$ and $c_\beta$ as in \eqref{eq:cbeta}.
Whenever $c_\beta\ell>2v_\beta$, the design $R_\beta$ has power at most
$2v_\beta/(c_\beta\ell)=O(\beta)$ at every horizon $t\le\ell/(2c_\beta)$, and
$\ell/(2c_\beta)\sim\ell/\lp2\beta(\KL(P_0\|P_1)+\KL(P_1\|P_0))\rp$ diverges
like $\beta^{-1}$ as $\beta\downarrow0$.
\end{corollary}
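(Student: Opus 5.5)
The plan is to apply \Cref{lem:floor} directly to $E=R_\beta$, after checking its hypotheses and computing its two inputs from the affine identity $\log R_\beta=\beta\log R-\kappa(\beta)$ of \Cref{lem:family}.

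\emph{Variance.} Since $\kappa(\beta)$ is a constant, $\Var_{P_1}(\log R_\beta)=\beta^{2}\Var_{P_1}(\log R)=v_\beta$, and this is finite by assumption. Here $\beta\in(0,1]$, so $Z_\beta<\infty$ and $\kappa(\beta)$ is finite.

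\emph{Mean.} The mean $c_\beta=\E_{P_1}[\log R_\beta]$ is given by \eqref{eq:cbeta}. It is positive for $\beta\in(0,1]$: $\lambda(\beta)\ge0$ by \Cref{lem:Zf}(i), and $\KL(P_1\|P_0)>0$ whenever $P_0\ne P_1$, which the positivity of $c_\beta\ell>2v_\beta\ge0$ forces anyway.

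\emph{Applying the lemma.} The remaining hypothesis of \Cref{lem:floor} is $c_\beta\ell>2v_\beta$, and the corollary assumes it. The lemma then gives $\bar\gamma_t(\alpha)\ge1-2v_\beta/(c_\beta\ell)$ for every $t\le\ell/(2c_\beta)$. So the power $1-\bar\gamma_t$ is at most $2v_\beta/(c_\beta\ell)$, and by \Cref{lem:order} the same holds for the fixed-horizon power.

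\emph{Asymptotics as $\beta\downarrow0$.} By \eqref{eq:cbeta}, $c_\beta/\beta=\KL(P_1\|P_0)+\lambda(\beta)$. By \Cref{lem:family}(iii), or directly from \Cref{lem:limit}, this increases to $J:=\KL(P_0\|P_1)+\KL(P_1\|P_0)$. Two consequences follow.

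First, $2v_\beta/(c_\beta\ell)=2\beta\Var_{P_1}(\log R)/\bigl(\ell\,(c_\beta/\beta)\bigr)$. The denominator $c_\beta/\beta$ is bounded below by $\KL(P_1\|P_0)>0$, so this quantity is $O(\beta)$. It also shows that the hypothesis $c_\beta\ell>2v_\beta$ holds automatically for all small $\beta$, since its left side is of order $\beta$ and its right side of order $\beta^2$.

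Second, $\ell/(2c_\beta)=\ell/\bigl(2\beta(c_\beta/\beta)\bigr)$ is asymptotic to $\ell/(2\beta J)$ when $J<\infty$. If $J=\infty$, the right reading is only that the horizon is at least $\ell/(2\beta\,\cdot)$-type large. In either case it diverges like $\beta^{-1}$, because $c_\beta/\beta\ge\KL(P_1\|P_0)$ gives $\ell/(2c_\beta)\ge\ell/(2\beta\KL(P_1\|P_0))$ at the very least. For the exact $\sim$ I would implicitly assume $\KL(P_0\|P_1)<\infty$, as in \Cref{lem:family}.

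The only mildly delicate point is this finiteness caveat behind the asymptotic equivalence. Everything else is substitution into \Cref{lem:floor}.
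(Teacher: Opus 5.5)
Your route is the paper's: the affine identity $\log R_\beta=\beta\log R-\kappa(\beta)$ gives $v_\beta$, \eqref{eq:cbeta} gives $c_\beta$, and \Cref{lem:floor} does the rest. Your $O(\beta)$ bound is correct, and so is the equivalence $\ell/(2c_\beta)\sim\ell/(2\beta J)$ under $\KL(P_0\|P_1)<\infty$.

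One step is wrong: the sentence claiming the horizon diverges like $\beta^{-1}$ ``in either case''. The inequality $c_\beta/\beta\ge\KL(P_1\|P_0)$ is a \emph{lower} bound on $c_\beta$. It therefore yields $\ell/(2c_\beta)\le\ell/(2\beta\KL(P_1\|P_0))$, an \emph{upper} bound on the horizon, and cannot show divergence. Divergence needs an upper bound on $c_\beta/\beta$. The one available is the monotone convergence of \Cref{lem:family}(iii): $c_\beta/\beta\uparrow J$ gives $c_\beta\le\beta J$ and hence $\ell/(2c_\beta)\ge\ell/(2\beta J)$. This is informative only when $J<\infty$, i.e.\ $\KL(P_0\|P_1)<\infty$. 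It is exactly how the paper argues in \Cref{cor:price-gen}, where $J<\infty$ is an explicit hypothesis ``which does not follow from the others''.

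For $J=\infty$ the claim is not merely unproved but false. Take the model of \Cref{rem:eqcase}, $P_0=\mathrm{Bern}(\tfrac12)$ and $P_1=\delta_1$. There $Z_\beta=2^{\beta-1}$ and $\log R_\beta=\log2$ holds $P_1$-a.s., so $c_\beta=\log2$ and $v_\beta=0$ for every $\beta$. All the corollary's hypotheses hold, yet the horizon $\ell/(2\log2)$ stays bounded as $\beta\downarrow0$.

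When instead $P_0\ll P_1$ but $\KL(P_0\|P_1)=\infty$, one has $c_\beta\to0$ while $c_\beta/\beta\to\infty$. The horizon then diverges, but only as $o(\beta^{-1})$.

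So drop the ``either case'' sentence and make $\KL(P_0\|P_1)<\infty$ an explicit hypothesis of the divergence claim. The power bound $2\beta\Var_{P_1}(\log R)/(\ell\,\KL(P_1\|P_0))=O(\beta)$ on the horizon range does survive without it.
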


\begin{remark}[The two obstructions together]\label{rem:price}
\Cref{cor:price} is the counterweight to \Cref{thm:ceiling}. Buying an exponent
within $\epsilon$ of the ceiling costs a tilt of order $\epsilon$
(\Cref{thm:betastar}(b)) and therefore a horizon of order $\epsilon^{-1}$
before anything happens at all. In the model of \Cref{ex:ceiling}, at
$\beta=0.1$ --- an exponent $90\%$ of the ceiling --- the floor gives
$\bar\gamma_t(0.05)\ge0.9315$ for every $t\le389$, and the Chernoff bound
$e^{B_t(\beta)}$ stays above $1$ until $t>1632$. The ceiling of
\Cref{thm:ceiling} is an asymptotic statement, and \Cref{cor:price} says how
asymptotic.
\end{remark}

\subsection{The Gaussian model in closed form}

\begin{example}[Gaussian location]\label{ex:gauss}
Let $\Xc=\R$, $P_0=\mathcal N(0,1)$ and $P_1=\mathcal N(\Delta,1)$ with
$\Delta>0$, so $R(x)=e^{\Delta x-\Delta^{2}/2}$ and
$\KL(P_0\|P_1)=\KL(P_1\|P_0)=\frac{\Delta^{2}}2=:D$. Every quantity of this
paper is elementary here:
\[
   Z_\beta=e^{-\beta(1-\beta)\Delta^{2}/2},\qquad
   \lambda(\beta)=(1-\beta)\,D,\qquad
   c_\beta=\beta(2-\beta)\,D,\qquad
   V_0=\Delta^{2}=2D .
\]
Thus the fraction of the ceiling available at tilt $\beta$ is exactly $1-\beta$.
A direct computation gives moreover
\[
   \Lambda(R_\beta)=D\lp1-\tfrac\beta2\rp^{2}\ \ (0<\beta<2),
   \qquad
   \Lambda(R_\beta)=0\ \ (\beta\ge2),
   \qquad\text{so}\qquad
   \Lambda(R)=\Lambda(R_1)=\tfrac D4 :
\]
the growth-optimal \evar{} realises exactly one quarter of the ceiling, its
exponent being the Chernoff information of the pair. The crossover of
\Cref{thm:betastar}(a) is at $t=\ell/D$, and for $t\ge\ell/D$ the
horizon-optimal tilt is $\beta^{\ast}(t)=\sqrt{\ell/(tD)}$ exactly, not merely
to leading order; here $g(\beta)=D\beta^{2}$, so the same formula gives
$\beta^{\ast}(t)$ at every $t$, and returns a value above $1$ for $t<\ell/D$
(\Cref{rem:sharpen}).

Here $G_t\sim\mathcal N(tD,2tD)$ under $P_1$, so \Cref{prp:exactfamily}(a)
makes the type-II error of every $R_\beta$ an explicit normal tail, with
$-\log\gamma_t(\alpha)=\frac{(tD-B_t(\beta))^{2}}{4tD}(1+o(1))$. The two rows
below record that leading term, divided by $tD$: it is $(1-\beta^{\ast}(t))^{2}$
at the horizon-optimal tilt and $\frac14(1-\frac{\ell}{tD})^{2}$ at $\beta=1$.
(The exact $-\log\gamma_t(\alpha)/(tD)$ is larger at every finite $t$ --- at
$t=50$ it is $45.7\%$ and $28.3\%$ --- because the normal tail carries a
logarithmic prefactor; the two entries below are on one basis and are the
quantities that converge to the exponents.) Numerically, at
$\Delta=0.8$ (so $D=0.32$) and $\alpha=0.05$:

\begin{center}
\small
\begin{tabular}{@{}lrrrrr@{}}
\toprule
horizon $t$ & $50$ & $200$ & $1\,000$ & $5\,000$ & $20\,000$\\
\midrule
$\beta^{\ast}(t)$ & $0.433$ & $0.216$ & $0.097$ & $0.043$ & $0.022$\\
$(1-\beta^{\ast}(t))^{2}$, for $R_{\beta^{\ast}(t)}$
   & $32\%$ & $61\%$ & $82\%$ & $92\%$ & $96\%$\\
$\frac14(1-\frac{\ell}{tD})^{2}$, for $R$
   & $17\%$ & $23\%$ & $24.5\%$ & $24.9\%$ & $25.0\%$\\
\bottomrule
\end{tabular}
\end{center}

\noindent
The crossover $\ell/D=9.4$ is early here, and past it the horizon-optimal
design beats the growth-optimal one at every horizon: the exponent of $R$ tends
to a quarter of $tD$, while the exponent at $\beta^{\ast}(t)$ tends to $tD$
itself. Because $\log R$ has a density here, the improvement is strict at
every $t>\ell/D$.
\end{example}

\subsection{Adapting the tilt to the wealth}
\label{sec:adaptive}

\Cref{thm:betastar} fixes one $\beta$ for the whole horizon. Nothing in
\Cref{def:cond-evar} requires that: a predictable $\beta$ is admissible, and the
horizon-optimal tilt of a \emph{residual} problem is a natural choice for it.

\begin{definition}[Rolling-horizon tilt]\label{def:rolling}
Fix a horizon $T\in\N$. For $t\le T$ put
\[
   \ell_{t-1}:=\lp\ell-\log W_{t-1}\rp_+ ,
   \qquad m_t:=T-t+1 ,
\]
the \emph{residual threshold} and the number of remaining bets, and define the
predictable tilt
\begin{equation}\label{eq:rolling}
   \beta_{t-1}\ :=\ \argmin_{\beta\in\Bc}\
   \frac1\beta\lB\frac{\ell_{t-1}}{m_t}+\kappa(\beta)\rB .
\end{equation}
The associated design bets $E_t:=R_{\beta_{t-1}}$ at step $t$.
\end{definition}

\noindent
The bracket in \eqref{eq:rolling} is $B_{m_t}(\beta)/m_t$ of \Cref{def:GB} with
$\ell$ replaced by $\ell_{t-1}$ and $t$ by $m_t$: one solves the problem of
\Cref{thm:betastar} afresh at each step, for the threshold still to be cleared
and the time still available.

\begin{lemma}[The rolling-horizon tilt]\label{lem:rolling}
Let $g(\beta):=\beta\kappa'(\beta)-\kappa(\beta)$ on $\Bc$. Under
the hypotheses of \Cref{thm:betastar}:
\begin{enumerate}[label=(\alph*),leftmargin=2.4em]
\item $g$ is strictly increasing, with $g(0^{+})=0$ and
      $g(1)=\KL(P_1\|P_0)$, and, provided $\ell_{t-1}/m_t<g(\beta_{\max}^{-})$,
      the minimiser in \eqref{eq:rolling} is the unique solution of
      \begin{equation}\label{eq:rolling-root}
         g\lp\beta_{t-1}\rp\ =\ \frac{\ell_{t-1}}{m_t} ,
      \end{equation}
      while if $\ell_{t-1}/m_t\ge g(\beta_{\max}^{-})$ the infimum is approached
      as $\beta\uparrow\beta_{\max}$ and we set $\beta_{t-1}$ to the largest
      computationally admissible value;
      with the convention $\beta_{t-1}:=0$, that is $E_t:=1$, when
      $\ell_{t-1}=0$ (there the infimum is approached as $\beta\downarrow0$ and
      not attained);
\item $\beta_{t-1}$ is $\Fc_{t-1}$-measurable, so $(E_t)_{t\le T}$ is a betting
      sequence and \Cref{thm:typeI} applies unchanged;
\item $\beta_{t-1}$ is nonincreasing in $W_{t-1}$, and nondecreasing in $\ell$
      and --- at fixed $\ell_{t-1}$ --- in $t$: one bets harder when behind or
      when time runs short, and coasts once ahead;
\item $\beta_{t-1}\ge1$ if and only if
      $\ell_{t-1}/m_t\ge\KL(P_1\|P_0)$.
\end{enumerate}
\end{lemma}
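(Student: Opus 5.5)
The plan is to reduce everything to the one-variable analysis already done in the proof of \Cref{thm:betastar}, applied with $(\ell,t)$ replaced by $(\ell_{t-1},m_t)$. Write the objective in \eqref{eq:rolling} as
\[
F(\beta):=\frac1\beta\Bigl[\frac{\ell_{t-1}}{m_t}+\kappa(\beta)\Bigr].
\]
Then
\[
F(\beta)=\frac{1}{m_t}\Bigl(\frac{\ell_{t-1}}{\beta}-m_t\lambda(\beta)\Bigr),
\qquad
F'(\beta)=\frac{\beta\kappa'(\beta)-\kappa(\beta)-\ell_{t-1}/m_t}{\beta^{2}}=\frac{g(\beta)-\ell_{t-1}/m_t}{\beta^{2}} .
\]
The properties of $g$ in (a) are exactly those shown in the proof of \Cref{thm:betastar}. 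There $g'=\beta\kappa''>0$ by strict convexity of $\kappa$ (from \Cref{lem:family}), $g(0^{+})=0$ uses $\KL(P_0\|P_1)<\infty$, and $g(1)=\kappa'(1)=\KL(P_1\|P_0)$. So $F'$ changes sign at most once, from negative to positive. If $\ell_{t-1}>0$ and $\ell_{t-1}/m_t<g(\beta_{\max}^{-})$, the root of \eqref{eq:rolling-root} exists. It is unique by strict monotonicity of $g$, and it is the unique minimiser because $F(0^{+})=+\infty$. Otherwise $F$ is decreasing on $\Bc$, and the infimum sits at $\beta_{\max}$. When $\ell_{t-1}=0$, $F(\beta)=\kappa(\beta)/\beta=-\lambda(\beta)$ is strictly increasing by \Cref{lem:family}(ii). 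Its infimum is therefore approached as $\beta\downarrow0$ and not attained, which justifies the convention $E_t:=1$. That choice is a legitimate \evar{} with $R_0$ read as $R^0/Z_0=1$.

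For (b), $W_{t-1}$ is $\Fc_{t-1}$-measurable, hence so is $\ell_{t-1}$. The map from $\ell_{t-1}/m_t$ to $\beta_{t-1}$ is $g^{-1}$ on the range of $g$, which is continuous and strictly increasing. It is constant elsewhere, with the boundary conventions fixed deterministically. So $\beta_{t-1}$ is a Borel function of an $\Fc_{t-1}$-measurable variable. Then $E_t=R(X_t)^{\beta_{t-1}}/Z_{\beta_{t-1}}$ is jointly measurable in $(\beta,x)$, so it is $\Fc_t$-measurable. By independence of $X_t$ from $\Fc_{t-1}$ under $\Prob_0$ (the Fubini identity \eqref{eq:fubini}), $\E_{\Prob_0}[E_t\mid\Fc_{t-1}]=\E_{P_0}[R_\beta]|_{\beta=\beta_{t-1}}=1$. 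Hence this is a betting sequence, and \Cref{thm:typeI} applies.

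For (c), note that $\beta_{t-1}$ is a nondecreasing function of the single scalar $r:=\ell_{t-1}/m_t$. This holds on the range of $g$ because $g^{-1}$ is increasing, and at the boundary values ($0$ at $r=0$, and the maximal value beyond $g(\beta_{\max}^-)$) it is consistent provided the maximal admissible value is taken at least as large as interior roots. So it suffices to check the monotonicity of $r$. Increasing $W_{t-1}$ does not increase $(\ell-\log W_{t-1})_+$. Increasing $\ell$ does not decrease it. At fixed $\ell_{t-1}$, increasing $t$ decreases $m_t$ and so increases $r$. Part (d) is immediate: $\beta_{t-1}\ge1$ iff $r\ge g(1)=\KL(P_1\|P_0)$, using strict monotonicity of $g$ in the root case, and in the boundary case $r\ge g(\beta_{\max}^-)\ge g(1)$ with $\beta_{\max}\ge1$.

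The main obstacle I expect is the boundary bookkeeping, not the calculus. One issue is making sure the ad hoc choices ("largest computationally admissible value", $\beta=0$) preserve both measurability and the monotonicity in (c). A second is the case $\beta_{\max}=1$, where $g(\beta_{\max}^-)=g(1)$ and (d) must be read with the boundary convention. I would state explicitly that the boundary value is taken $\ge\sup\{g^{-1}(r)\}$ and $\ge1$, which makes both claims go through.
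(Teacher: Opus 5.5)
Your proposal follows the paper's proof. The derivative of the bracket in \eqref{eq:rolling} has the sign of $g(\beta)-\ell_{t-1}/m_t$, the properties of $g$ are those established for \Cref{thm:betastar}, measurability comes from $W_{t-1}$, and (c)--(d) follow by composing with the increasing $g^{-1}$ and using $g(1)=\KL(P_1\|P_0)$. In places you are more careful than the paper: you treat $\ell_{t-1}=0$ separately, and you actually check $\E_{\Prob_0}[E_t\mid\Fc_{t-1}]=1$ via \eqref{eq:fubini}.

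One correction concerns your proposed boundary convention, namely that the boundary value be taken $\ge\sup_r g^{-1}(r)$ and $\ge1$, with your $r=\ell_{t-1}/m_t$. This can be met by $\beta_{\max}$ when $\beta_{\max}<\infty$. It cannot be met when $\beta_{\max}=\infty$ and $g$ is bounded, and that is precisely the case that needs a ``computational'' cap, as in the model of \Cref{ex:ceiling} where $g(\infty)=\log 2$. There the supremum is $+\infty$. Moreover, a finite cap $\bar\beta$ used only for $r\ge g(\beta_{\max}^{-})$ actually breaks (c): for $g(\bar\beta)<r<g(\infty)$ one gets $g^{-1}(r)>\bar\beta$, which is the value assigned to larger $r$.

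The remedy is to cap uniformly: set $\beta_{t-1}:=\min\{g^{-1}(r),\bar\beta\}$ for a fixed $\bar\beta\ge1$. Equivalently, run (a)--(d) on $\Bc\cap(0,\bar\beta]$, where the objective is still decreasing-then-increasing, so this is exactly its minimiser there. This keeps $r\mapsto\beta_{t-1}$ nondecreasing and Borel, and (d) survives because $\bar\beta\ge1$. The paper's one-line proof of (c) passes over this point as well.
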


\begin{proof}
(a) Writing the bracket as $\frac{\ell_{t-1}/m_t+\kappa(\beta)}{\beta}$, its
derivative in $\beta$ has the sign of $\beta\kappa'(\beta)-\kappa(\beta)
-\ell_{t-1}/m_t=g(\beta)-\ell_{t-1}/m_t$; and $g'(\beta)=\beta\kappa''(\beta)>0$
with $g(0^{+})=-\kappa(0^{+})=0$, $g(1)=\kappa'(1)-\kappa(1)=\KL(P_1\|P_0)$, as
in the proof of \Cref{lem:family}. (b) $W_{t-1}$ is $\Fc_{t-1}$-measurable and
$m_t$ is deterministic. (c) $\ell_{t-1}$ is nonincreasing in $W_{t-1}$ and nondecreasing in $\ell$,
and $m_t$ is decreasing in $t$; apply $g^{-1}$, which is increasing. (d) is
\eqref{eq:rolling-root} with $g(1)=\KL(P_1\|P_0)$.
\end{proof}

\begin{remark}[Computing $\beta_{t-1}$]\label{rem:rolling-newton}
Only \eqref{eq:rolling-root} has to be solved, and $g$ is increasing, so
bisection always works and Newton's method converges quickly from a bisection
safeguard, using
\[
   \kappa'(\beta)=\frac{\E_{P_0}\lB R^{\beta}\log R\rB}{Z_\beta} ,
   \qquad
   \kappa''(\beta)=\frac{\E_{P_0}\lB R^{\beta}\log^{2}R\rB}{Z_\beta}
      -\kappa'(\beta)^{2} ,
   \qquad
   g'(\beta)=\beta\,\kappa''(\beta) ,
\]
so that the Newton step is
$\beta\mapsto\beta-\lp g(\beta)-\ell_{t-1}/m_t\rp/\lp\beta\kappa''(\beta)\rp$.
When $\kappa$ is not available in closed form the three expectations are over
the \emph{known} null $P_0$, so they are a quadrature or Monte-Carlo problem and
not a statistical one; a single sample from $P_0$, drawn once and reused, gives
plug-in values at every $\beta$ and every step. In the Gaussian model of
\Cref{ex:gauss}, $g(\beta)=D\beta^{2}$ with $D=\Delta^{2}/2$ and
\eqref{eq:rolling-root} is explicit,
$\beta_{t-1}=\sqrt{\ell_{t-1}/(m_tD)}$.
\end{remark}

\begin{remark}[What it buys]\label{rem:rolling-numbers}
\Cref{def:rolling} minimises a \emph{bound} on the residual error, not the error
itself, so it is a heuristic and not an optimum: the exactly optimal predictable
rule is the value function of a finite-horizon control problem with state
$(t,\log W_{t-1})$, which has no closed form. Numerically it recovers almost all
of what that control problem achieves. In the Gaussian model of
\Cref{ex:gauss} with $\Delta=1$ and $\alpha=0.05$, computed on a grid:
\begin{center}
\small
\begin{tabular}{@{}lrrr@{}}
\toprule
horizon $T$ & $20$ & $50$ & $100$\\
\midrule
best constant $\beta$, from \Cref{thm:betastar}
   & $2.15\cdot10^{-2}$ & $1.89\cdot10^{-6}$ & $2.14\cdot10^{-14}$\\
rolling-horizon tilt \eqref{eq:rolling}
   & $7.12\cdot10^{-3}$ & $1.53\cdot10^{-7}$ & $3.61\cdot10^{-16}$\\
optimal predictable rule
   & $6.46\cdot10^{-3}$ & $1.10\cdot10^{-7}$ & $1.67\cdot10^{-16}$\\
\midrule
\multicolumn{4}{@{}l@{}}{\footnotesize all three restricted to
   $\beta\in(0,1]$; \Cref{rem:rolling-beyond1} lifts the cap}\\
\midrule
share of the attainable gain & $95.6\%$ & $97.6\%$ & $99.1\%$\\
\bottomrule
\end{tabular}
\end{center}
\end{remark}

\begin{remark}[Why $\beta>1$ is allowed]\label{rem:rolling-beyond1}
By \Cref{lem:rolling}(d) the rule takes $\beta_{t-1}>1$ exactly when the
residual gap per remaining bet exceeds the largest attainable \epower{}. This is
the predictable counterpart of \Cref{rem:sharpen}, and it is strictly stronger:
a constant $\beta>1$ helps only below the crossover, whereas the rule reaches
for one at any horizon once the wealth has fallen far enough behind. In the
model of \Cref{rem:rolling-numbers}, capping \eqref{eq:rolling} at $\beta=1$
multiplies the error by a factor $1.10$ to $1.11$ at every horizon computed
($T=5,10,20,50$), and the optimal predictable rule uses $\beta>1$ as well ---
at $T=20$ it improves from $6.46\cdot10^{-3}$ to $5.93\cdot10^{-3}$ once
$\beta\le2$ is allowed. The reason is that the increment of $\log W$ under
$\Prob_1$ has mean $c_\beta$, concave in $\beta$ and decreasing past $\beta=1$
(\Cref{lem:family}(i)), while its standard deviation stays exactly proportional
to $\beta$; so raising $\beta$ past $1$ buys dispersion at the cost of drift ---
which is what a design that is behind with few bets left should want. It is the
same point as \Cref{thm:nfl}, in the opposite direction: a design can be
improved by lowering its \epower{}.
\end{remark}

\begin{remark}[What is not claimed]\label{rem:rolling-limits}
Three limitations. The rule is horizon-specific: $T$ is fixed in advance and the
target is $\gamma_T$; for $\bar\gamma_T$ the residual problem acquires an
absorbing region above the threshold and \eqref{eq:rolling} is no longer the
right surrogate. The exact identity of \Cref{prp:exactfamily} is lost, since
$\log W_t=\beta G_t-t\kappa(\beta)$ requires one common $\beta$; the error must
be evaluated numerically. And the \emph{certified} exponent does not improve:
\Cref{cor:ceiling} binds every betting sequence, so no predictable choice of
$\beta_{t-1}$ can raise $\Lambda^{(t)}$ above the ceiling. The gain is in the
constant, not in the rate.
\end{remark}

\section{Discussion}
\label{sec:discussion}

\begin{remark}[Simple against simple, and the oracle]\label{rem:oracle}
Both hypotheses are simple and both laws are known throughout. This matters for
reading \Cref{thm:ceiling} and \Cref{thm:betastar}: the ceiling
$\KL(P_0\|P_1)$ is an oracle quantity, and the designs that approach it,
$R_\beta=R^{\beta}/Z_\beta$, are built from $R=\frac{dP_1}{dP_0}$ and so require
$P_1$. Not needing $P_1$ is precisely the motivation for growth-rate-optimal
\evars{} in a composite alternative \cite{GHK24,LRR25}. \Cref{thm:two} is a
statement about a simple alternative and does not by itself settle what happens
there; what it does show is that, already for a simple alternative, the scalar
those constructions maximise is not the one that governs the type-II error. What the composite analogue of \Cref{thm:betastar} looks like ---
how much of $\inf_{P_1\in\Hc_1}\KL(P_0\|P_1)$ is available to a design that does
not know which $P_1$ obtains --- is not addressed here.
\end{remark}

\begin{remark}[Products of conditional \evars{} versus \eproc{}es]
\label{rem:eproc}
\Cref{def:wealth} builds $(W_t)_{t\in\N_0}$ as a product of conditional
\evars{}. Some of the results extend beyond that class and some do not.
\Cref{prp:stein-ceiling} uses only that rejecting when $\tau_\alpha\le t$ is a
level-$\alpha$ test, so it holds for any nonnegative $\Prob_0$-supermartingale
and indeed for any \eproc{} --- a nonnegative process dominated by a
supermartingale under every null, in the sense of
\cite{RRLK23,RGVS23}. \Cref{prp:upper-seq}\ref{it:agg-exact}--\ref{it:agg-det}
extend in a different direction --- they need no product structure at all, only
a filtration (\Cref{rem:whatgeneralises}) --- while the equality analysis of
\Cref{prp:upper-seq}\ref{it:agg-iid} does use it, through the sections
$E_n^{y}$ of \eqref{eq:fubini}.
\end{remark}

\begin{remark}[Relation to sequential analysis]\label{rem:sprt}
\Cref{prp:exactfamily} says that the flattened family performs one
likelihood-ratio test at a moving threshold, which places \Cref{sec:horizon}
squarely in the territory of Wald's sequential probability ratio test
\cite{Wal45}: there too the statistic is $G_t$, there too the two error
probabilities are governed by $\KL(P_1\|P_0)$ and $\KL(P_0\|P_1)$ in opposite
roles, and there too the asymmetry between the two divergences is the source of
the design trade-off. What is different here is the constraint. The SPRT
chooses two thresholds and stops between them; a \etm{} is constrained to have
$\E_{\Prob_0}[W_t\mid\Fc_{t-1}]\le W_{t-1}$, which buys type-I validity at every
stopping time simultaneously and fixes the upper threshold at $\frac1\alpha$.
\Cref{thm:ceiling} is the price of that constraint, and \Cref{thm:betastar} says
what remains free once it is imposed.

A second point of contact is with how the safe-testing literature sizes
experiments. Sample-size rules there are typically driven by the \epower{}, via
$t\approx\ell/\E_{P_1}[\log E]$ --- which \Cref{prp:exactfamily}(c) shows is
exactly the horizon at which the threshold $B_t(\beta)$ crosses the mean of
$G_t$, so the heuristic is locating the right horizon for the wrong reason.
\Cref{thm:nfl} says the heuristic cannot be turned into a guarantee uniform over
testing problems, and \Cref{lem:floor} says what a variance hypothesis buys
instead.
\end{remark}

\begin{remark}[What is bounded, and what is not]\label{rem:scopefinal}
The finite-horizon content of this paper is \eqref{eq:matched-error},
\Cref{prp:exactfamily} and \Cref{cor:price}; \Cref{thm:ceiling} itself is a
statement about an exponent, and the rate at which that exponent is approached
is quantified only to the order of the second term. The exact second-order
behaviour --- the constant in the $O(1)$ of \eqref{eq:capstone}, the
Bahadur--Rao prefactor \cite{BR60}, and the matching converse --- is not
treated here.
Nor is misspecification. The upper bounds \Cref{thm:upper} and
\Cref{prp:upper-seq} hold for an arbitrary \evar{}, so they are untouched by
it; what a misspecified $R$ costs is the \emph{attainment} half, since the
designs $R_\beta$ are built from the true likelihood ratio.

Nor, in the general filtered setting, is the ceiling known to be sharp. The
upper bound \Cref{cor:ceiling} holds there with no structural hypothesis at all,
and \Cref{lem:flat-cond} supplies a witness step by step; what is missing is a
reason for the per-step values to aggregate, and \Cref{ass:homog} --- that the
conditional divergences be non-random --- is a sufficient condition rather than
a characterisation. Whether the essential suprema in \eqref{eq:ceiling} can be
strict is open, as is the exponent of the sequential error, for which
\Cref{prp:exponent} claims only a $\liminf$.

Finally, this paper says what the exponent \emph{can} be, not what a given
design can be \emph{certified} to achieve. The two questions are independent,
and the second is the subject of the companion paper \cite{LADDER}, which
converts hypotheses on the lower tail of $\log E_n$ into explicit
finite-horizon bounds, under an arbitrary filtration and with no product
structure. Composite hypotheses, which \Cref{rem:oracle} leaves open here, are
treated there.
\end{remark}

\phantomsection
\addcontentsline{toc}{section}{Acknowledgements}
\section*{Acknowledgements}

This note was prepared with the assistance of Claude Opus 5, a large language
model developed by Anthropic. Based on the author's notes, ideas and input, the
model was used to re-draft and restructure the exposition, to prepare the
typescript, to search for and cross-check references, to check and complete
arguments, and to provide feedback, which the author used to refine further
inputs. All references, statements and proofs have been verified by the author,
who takes full responsibility for the content, including any remaining errors.

\phantomsection

\appendix

\section{Beyond the i.i.d.\ model}
\label{app:general}

This appendix collects the statements that replace the i.i.d.-model results of
the main text, together with the hypothesis each one needs. Throughout,
\Cref{def:setting}, \Cref{ass:standing}, \Cref{def:ratios}, \Cref{def:condZ} and
\Cref{lem:Zf-cond} are in force; the i.i.d.\ model is \emph{not} assumed.
\Cref{tab:general} is the index.

\begin{table}[htbp]
\centering
\small
\begin{tabular}{@{}lll@{}}
\toprule
Main text & Replaced by & Extra hypothesis\\
\midrule
\Cref{lem:epower-welldef}, \Cref{thm:two}(a)
   & \Cref{lem:epower-cond} & none\\
\Cref{def:flattened}, \Cref{lem:flat}
   & \Cref{lem:flat-cond} & none\\
equality case of \Cref{thm:upper}
   & \Cref{prp:equality-cond} & none\\
\Cref{cor:chernoffinfo} (identity)
   & \Cref{cor:chernoffinfo-cond} & $\Prob_0(R_n>0\mid\Fc_{n-1})=1$\\
\Cref{lem:iidexp}, \Cref{cor:lower}, \Cref{thm:ceiling}, \Cref{thm:two}(b)
   & \Cref{thm:ceiling-gen} & \Cref{ass:homog}\\
\Cref{lem:limit}
   & \Cref{lem:limit-cond} & none\\
\Cref{lem:family}
   & \Cref{lem:family-cond} & none for \eqref{eq:cbeta-cond}\\
\Cref{prp:exactfamily}, \Cref{lem:surrogate}
   & \Cref{prp:exactfamily-gen}, \Cref{lem:surrogate-gen} & \Cref{ass:homog}\\
\Cref{lem:floor}
   & \Cref{lem:floor-gen} & none\\
\Cref{cor:price}
   & \Cref{cor:price-gen} & \Cref{ass:homog}, bounded conditional variance,
     $J<\infty$\\
\Cref{thm:betastar} (partially)
   & \Cref{rem:betastar-gen} & \Cref{ass:homog}, $\lambda_n$ constant in $n$\\
\Cref{prp:stein-ceiling}
   & \Cref{prp:stein-gen} & \Cref{ass:spectrum}\\
\Cref{prp:notattained}
   & \Cref{thm:ceiling-gen} & \Cref{ass:homog}, $\lambda_n(0^{+})<\infty$,
     non-degeneracy\\
\Cref{prp:exponent}
   & \Cref{prp:exponent-gen} & \Cref{ass:ge}, $\Psi'(0)>0$, interiority\\
\Cref{prp:powerone}
   & \Cref{prp:powerone-gen} & conditional variances bounded\\
\bottomrule
\end{tabular}
\caption{What each i.i.d.-model result becomes, and what it costs.}
\label{tab:general}
\end{table}

\noindent
\Cref{lem:attained} is not replaced but reused: its proof uses only
$\Prob_0|_{\Fc_t}\sim\Prob_1|_{\Fc_t}$ and $\Prob_0(W_t\ne1)>0$, and
\Cref{thm:ceiling-gen} derives the first of these. Left without a general form:
\Cref{thm:betastar}, for the reason given in \Cref{rem:betastar-gen}; the
strictness clause of \Cref{cor:chernoffinfo}; and
\Cref{prp:exactfamily}(b)--(c).

\subsection{Free of any extra hypothesis}

\begin{definition}[Conditional divergences]\label{def:kl-cond}
For $t\in\N$ put
$\KL_t:=\E_{\Prob_1}\lB\log R_t\mid\Fc_{t-1}\rB\in[0,\infty]$.
By the chain rule and \Cref{ass:standing},
$\sum_{n\le t}\E_{\Prob_1}[\KL_n]=\KL(\Prob_1|_{\Fc_t}\|\Prob_0|_{\Fc_t})<\infty$,
so $\KL_t<\infty$ $\Prob_1$-a.s. In the i.i.d.\ model $\KL_t=\KL(P_1\|P_0)$
$\Prob_1$-a.s.
\end{definition}

\begin{lemma}[Conditional \epower{} ceiling]\label{lem:epower-cond}
Let $(E_n)_{n\in\N}$ be a betting sequence and $n\in\N$. Then $\Prob_1$-a.s.
\[
   \E_{\Prob_1}\lB(\log E_n)_+\mid\Fc_{n-1}\rB\ \le\ \KL_n+1 ,
   \qquad
   \E_{\Prob_1}\lB\log E_n\mid\Fc_{n-1}\rB\ \le\ \KL_n ,
\]
with equality in the second bound if and only if $E_n=R_n$ $\Prob_1$-a.s.
\end{lemma}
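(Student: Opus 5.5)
The plan is to transcribe the proof of \Cref{lem:epower-welldef} one step at a time, replacing $\E_{P_0}$ by $\E_{\Prob_0}[\,\cdot\mid\Fc_{n-1}]$ and $R$ by $R_n$. The change of measure of \Cref{lem:bayes} moves conditional $\Prob_1$-expectations to conditional $\Prob_0$-expectations against $R_n$.

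\emph{First bound.} By \Cref{lem:supmg}, $E_n<\infty$ $\Prob_0$-a.s. On that full set I will apply Young's inequality $xy\le x\log x-x+e^{y}$ pointwise with $x=R_n$ and $y=(\log E_n)_+$, and then take $\E_{\Prob_0}[\,\cdot\mid\Fc_{n-1}]$. \Cref{lem:bayes} with $Y=(\log E_n)_+$ turns the left side into $\E_{\Prob_1}[(\log E_n)_+\mid\Fc_{n-1}]$. On the right I use three facts:
\begin{itemize}
\item $\E_{\Prob_0}[R_n\log R_n\mid\Fc_{n-1}]=\KL_n$, which is \Cref{lem:bayes} with $Y=\log R_n$ after splitting into positive and negative parts;
\item $\E_{\Prob_0}[R_n\mid\Fc_{n-1}]=1$ $\Prob_0$-a.s., which is \Cref{lem:Zf-cond}(i) at $\beta=1$;
\item $e^{(\log E_n)_+}\le 1+E_n$, so that its conditional expectation is at most $2$.
\end{itemize}
The care needed is that $R_n\log R_n-R_n$ is bounded below by $-1$, so linearity of the conditional expectation is legitimate in $(-\infty,\infty]$. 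These combine to give $\KL_n+1$, $\Prob_1$-a.s., since $\Prob_1\ll\Prob_0$ on $\Fc_n$.

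\emph{Second bound.} I will write $\log E_n-\log R_n=\log(E_n/R_n)$ on $\{R_n>0\}$, which is a $\Prob_1$-full set. Conditional Jensen for the concave $\log$ under $\Prob_1$ gives
\[
\E_{\Prob_1}[\log(E_n/R_n)\mid\Fc_{n-1}]\le\log\E_{\Prob_1}[E_n/R_n\mid\Fc_{n-1}]=\log\E_{\Prob_0}[E_n\mathbf 1_{\{R_n>0\}}\mid\Fc_{n-1}]\le0.
\]
To subtract $\KL_n$ I need it finite, and it is finite $\Prob_1$-a.s.\ by \Cref{def:kl-cond}. I also need the integrability of $(\log E_n)_+$ given by the first bound. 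Together these make the difference of conditional expectations well defined, and the second inequality follows. The case where $\E_{\Prob_1}[\log E_n\mid\Fc_{n-1}]=-\infty$ is trivial.

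\emph{Equality case.} I expect this to be the main obstacle, because conditional Jensen's equality only forces $E_n/R_n$ to equal an $\Fc_{n-1}$-measurable factor $C$, not a constant. I will handle it by strict concavity. If equality holds a.s., then
\[
\E_{\Prob_1}\bigl[\log(E_n/R_n)-\log\E_{\Prob_1}[E_n/R_n\mid\Fc_{n-1}]\bigm|\Fc_{n-1}\bigr]=0,
\]
and the integrand is nonpositive in conditional mean with equality only when $E_n/R_n=C$ $\Prob_1$-a.s., where $C:=\E_{\Prob_1}[E_n/R_n\mid\Fc_{n-1}]$. This can be seen by the regular-conditional-distribution argument, or more simply via $\log u\le u-1$ applied to $u=(E_n/R_n)/C$, which gives strict inequality wherever $u\ne1$. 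Equality in the final step then forces $\log C=0$, that is $C=1$. Hence $E_n=R_n$ $\Prob_1$-a.s. For the converse, $E_n=R_n$ gives equality directly.
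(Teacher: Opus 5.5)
Your proposal is correct and follows the paper's proof essentially step by step. The first bound uses Young's inequality at $x=R_n$, $y=(\log E_n)_+$ under $\E_{\Prob_0}[\,\cdot\mid\Fc_{n-1}]$ together with \Cref{lem:bayes}, and the second uses conditional Jensen for $\log$ applied to $E_n/R_n$. For the equality case, your $\log u\le u-1$ argument spells out the strict-concavity step that the paper only asserts, and reading $C=1$ off the last inequality of the chain is equivalent to the paper's use of $\E_{\Prob_0}[R_n\mid\Fc_{n-1}]=1$ to pin down the $\Fc_{n-1}$-measurable factor.
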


\begin{proof}
By \Cref{lem:supmg}, $E_n<\infty$ $\Prob_0$-a.s., so $(\log E_n)_+\in\R$ on a
$\Prob_0$-full set. Young's inequality $xy\le x\log x-x+e^{y}$ ($x\ge0$,
$y\in\R$) applied there at $x=R_n$, $y=(\log E_n)_+$, under $\E_{\Prob_0}[\,\cdot\mid\Fc_{n-1}]$ and with
\Cref{lem:bayes}, gives
$\E_{\Prob_1}[(\log E_n)_+\mid\Fc_{n-1}]\le(\KL_n-1)+
\E_{\Prob_0}[\max(1,E_n)\mid\Fc_{n-1}]\le\KL_n+1$, using
$\max(1,E_n)\le1+E_n$ and \Cref{def:cond-evar}. For the second bound,
$R_n>0$ $\Prob_1$-a.s.\ by \Cref{lem:bayes}, and conditional Jensen for the
concave $\log$ gives
\[
   \E_{\Prob_1}\lB\log\tfrac{E_n}{R_n}\mid\Fc_{n-1}\rB
   \ \le\ \log\E_{\Prob_1}\lB\tfrac{E_n}{R_n}\mid\Fc_{n-1}\rB
   \ =\ \log\E_{\Prob_0}\lB E_n\mathbf 1_{\{R_n>0\}}\mid\Fc_{n-1}\rB
   \ \le\ 0 .
\]
Equality forces, by strict concavity, $E_n/R_n$ to be $\Prob_1$-a.s.\ equal to an
$\Fc_{n-1}$-measurable $\varkappa$, and then
$\E_{\Prob_0}[E_n\mathbf 1_{\{R_n>0\}}\mid\Fc_{n-1}]=1$ gives
$\varkappa\,\E_{\Prob_0}[R_n\mid\Fc_{n-1}]=\varkappa=1$.
\end{proof}

\begin{lemma}[The limit of $\lambda_n$ at $0$]\label{lem:limit-cond}
For every $n\in\N$, $\Prob_0$-a.s.,
\[
   \lambda_n(0^{+})\ =\ \KLrev_n\ :=\ -\E_{\Prob_0}\lB\log R_n\mid\Fc_{n-1}\rB
   \ \in[0,\infty] ,
\]
the conditional relative entropy of $\Prob_0$ with respect to $\Prob_1$ over
step $n$. In the i.i.d.\ model $\KLrev_n=\KL(P_0\|P_1)$ $\Prob_1$-a.s.
\end{lemma}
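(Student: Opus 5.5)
I will repeat the proof of \Cref{lem:limit} conditionally, with $\E_{\Prob_0}[\,\cdot\mid\Fc_{n-1}]$ in place of $\E_{P_0}$. Fix regular versions and work along a sequence $\beta_k\downarrow0$ of rationals. This suffices, because \Cref{lem:Zf-cond}(ii) makes $\lambda_n(0^{+})$ a monotone limit, and any sequence computes it.

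\emph{Step 1 (monotone difference quotients).} Define $h_\beta:=\frac{R_n^{\beta}-1}{\beta}$. By observation (a) in the proof of \Cref{lem:limit}, $h_\beta$ is pointwise nondecreasing in $\beta\in(0,1]$ and decreases to $\log R_n\in[-\infty,\infty)$ as $\beta\downarrow0$. Moreover $h_\beta\le h_1=R_n-1$, and $\E_{\Prob_0}[|R_n-1|\mid\Fc_{n-1}]\le2$ $\Prob_0$-a.s., since $\E_{\Prob_0}[R_n\mid\Fc_{n-1}]=1$ by \Cref{lem:Zf-cond}(i).

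Apply the conditional monotone convergence theorem to the increasing family $h_1-h_{\beta_k}\ge0$, and subtract the finite quantity $\E_{\Prob_0}[h_1\mid\Fc_{n-1}]=0$. This gives, $\Prob_0$-a.s.,
\[
   \frac{Z_{\beta_k,n}-1}{\beta_k}=\E_{\Prob_0}[h_{\beta_k}\mid\Fc_{n-1}]
   \ \downarrow\ \E_{\Prob_0}[\log R_n\mid\Fc_{n-1}]=-\KLrev_n\in[-\infty,0].
\]
The conditional expectation of $\log R_n$ is well defined as an extended value because its positive part is dominated by $R_n$, which is conditionally integrable.

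\emph{Step 2 (from $Z$ to $\lambda$).} This step is pointwise in $\omega$, so I split on the $\Fc_{n-1}$-event $A:=\{\KLrev_n<\infty\}$.

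On $A$, the limit of Step 1 is finite. Hence $Z_{\beta_k,n}-1=O(\beta_k)\to0$, so $\log Z_{\beta_k,n}=(Z_{\beta_k,n}-1)(1+o(1))$. Dividing by $-\beta_k$ gives $\lambda_n(\beta_k)\to\KLrev_n$.

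On $A^{c}$, the quotients tend to $-\infty$. Since $\log y\le y-1$ and $Z_{\beta,n}>0$ by \Cref{lem:Zf-cond}(i), we get $\lambda_n(\beta_k)\ge-\frac{Z_{\beta_k,n}-1}{\beta_k}\to\infty$.

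Together these give $\lambda_n(0^{+})=\KLrev_n$ $\Prob_0$-a.s. Nonnegativity is inherited from $\lambda_n\ge0$. I also note that $\KLrev_n$ is the conditional KL, since $-\log R_n=\log\frac{d\Prob_0}{d\Prob_1}$ on the relevant step where that density exists; this identification is only a name and needs no proof.

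\emph{Step 3 (i.i.d.\ case).} In the i.i.d.\ model, $R_n=R(X_n)$ $\Prob_1$-a.s., and $X_n$ is independent of $\Fc_{n-1}$. Applying the Fubini section identity \eqref{eq:fubini} to $\log R(X_n)$ gives $\KLrev_n=-\E_{P_0}[\log R]=\KL(P_0\|P_1)$.

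There is one subtlety: $R_n$ differs from $R(X_n)$ on $\{L_{n-1}=0\}$, which may carry $\Prob_0$-mass. That event is $\Fc_{n-1}$-measurable and $\Prob_1$-null, so the identity holds $\Prob_1$-a.s., which is exactly what is claimed. Alternatively, this also follows from \Cref{lem:limit} combined with \Cref{def:condZ}.

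\emph{Main obstacle.} The main obstacle is purely technical: making the conditional monotone convergence legitimate with an extended-valued limit, and handling null sets uniformly in $\beta$. Restricting to rational $\beta_k$, as in \Cref{lem:Zf-cond}, disposes of the null-set issue.
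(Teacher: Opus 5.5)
Your proof is correct and follows the paper's argument: conditional monotone convergence for $h_\beta=(R_n^{\beta}-1)/\beta$, dominated above by $R_n-1$, and then a split on the $\Fc_{n-1}$-event $\{\KLrev_n<\infty\}$ to pass from $Z_{\beta,n}$ to $\lambda_n(\beta)$. Your treatment of the i.i.d.\ clause, using independence of $X_n$ from $\Fc_{n-1}$ and setting aside the $\Prob_1$-null event $\{L_{n-1}=0\}$, makes explicit what the paper leaves implicit; one small correction is that ``fix regular versions'' should read ``fix versions of the conditional expectations'', since nothing in the argument needs regular conditional distributions, which need not exist on a general filtered space.
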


\begin{proof}
Put $h_\beta:=\frac{R_n^{\beta}-1}{\beta}$ for $\beta\in(0,1]$. For fixed
$x\in[0,\infty)$ the map $\beta\mapsto\frac{x^{\beta}-1}{\beta}$ is
nondecreasing on $(0,1]$ and decreases to $\log x$ as $\beta\downarrow0$ (for
$x>0$ it is a difference quotient of the convex $\beta\mapsto e^{\beta\log x}$;
for $x=0$ it is $-\frac1\beta$). Hence $h_\beta\downarrow\log R_n$ pointwise,
with $h_\beta\le h_1=R_n-1$ and
$\E_{\Prob_0}[|R_n-1|\mid\Fc_{n-1}]\le2$ by \Cref{lem:Zf-cond}(i). Conditional
monotone convergence for the decreasing family $(h_\beta)$ dominated above by an
integrable variable gives
$\E_{\Prob_0}[h_\beta\mid\Fc_{n-1}]\to\E_{\Prob_0}[\log R_n\mid\Fc_{n-1}]
=-\KLrev_n$ $\Prob_0$-a.s.; the conditional expectation is well defined in
$[-\infty,0]$ because $(\log R_n)_+\le R_n$.

Now $\E_{\Prob_0}[h_\beta\mid\Fc_{n-1}]=\frac{Z_{\beta,n}-1}{\beta}$ and
$\lambda_n(\beta)=-\frac{\log Z_{\beta,n}}{\beta}$. Both events below are
$\Fc_{n-1}$-measurable. On $\lC\KLrev_n<\infty\rC$ we get $Z_{\beta,n}\to1$, so
$\log Z_{\beta,n}=(Z_{\beta,n}-1)(1+o(1))$ and
$\lambda_n(\beta)\to\KLrev_n$. On $\lC\KLrev_n=\infty\rC$ we get
$\frac{Z_{\beta,n}-1}{\beta}\to-\infty$, and $\log y\le y-1$ for $y>0$ gives
$\lambda_n(\beta)\ge-\frac{Z_{\beta,n}-1}{\beta}\to\infty$.
\end{proof}

\begin{definition}[Conditional flattened design]\label{def:flat-cond}
For $\beta\in(0,1]$ and $n\in\N$ put
$R_{\beta,n}:=R_n^{\beta}/Z_{\beta,n}$ on $\lC0<Z_{\beta,n}<\infty\rC$ and
$R_{\beta,n}:=1$ elsewhere. In the i.i.d.\ model $R_{\beta,n}=R_\beta(X_n)$
$\Prob_1$-a.s.
\end{definition}

\begin{lemma}[The conditional flattened design attains the bound]
\label{lem:flat-cond}
For every $\beta\in(0,1]$ and $n\in\N$, $R_{\beta,n}$ is a conditional \evar{}
at time $n$, and with $s=\frac{1-\beta}{\beta}$,
\[
   \E_{\Prob_1}\lB R_{\beta,n}^{-s}\mid\Fc_{n-1}\rB\ =\ e^{-\lambda_n(\beta)}
   \qquad\Prob_1\text{-a.s.}
\]
\end{lemma}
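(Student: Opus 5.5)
The plan is to mirror the unconditional computation of \Cref{lem:flat}, one step at a time and conditionally on $\Fc_{n-1}$, using \Cref{lem:bayes} in place of the change of measure $\E_{P_1}[\varphi(R)]=\E_{P_0}[R\varphi(R)]$.

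\emph{Step 1 (conditional \evar{}).} $R_{\beta,n}$ is $\Fc_n$-measurable, since $R_n$ is $\Fc_n$-measurable and $Z_{\beta,n}$ is $\Fc_{n-1}$-measurable. By \Cref{rem:Zf-cond}, the event $A:=\lC0<Z_{\beta,n}<\infty\rC$ lies in $\Fc_{n-1}$ and has full $\Prob_0$-measure. On $A$, pulling out the $\Fc_{n-1}$-measurable factor $Z_{\beta,n}^{-1}$ gives
\[
   \E_{\Prob_0}[R_{\beta,n}\mid\Fc_{n-1}]=Z_{\beta,n}^{-1}\E_{\Prob_0}[R_n^{\beta}\mid\Fc_{n-1}]=1 .
\]
Off $A$ the design is identically $1$. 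So $\E_{\Prob_0}[R_{\beta,n}\mid\Fc_{n-1}]=1$ $\Prob_0$-a.s., and $R_{\beta,n}$ is a conditional \evar{}.

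\emph{Step 2 (the tilted moment).} Since $\Prob_1\ll\Prob_0$ on $\Fc_{n-1}$, $A$ also has full $\Prob_1$-measure, so we may work on $A$. There $R_{\beta,n}^{-s}=Z_{\beta,n}^{s}R_n^{-\beta s}$, with the conventions of \Cref{sec:setup} covering $R_n=0$. Pulling out $Z_{\beta,n}^{s}$ and applying \Cref{lem:bayes} to the nonnegative $\Fc_n$-measurable $Y=R_n^{-\beta s}$ gives, $\Prob_1$-a.s.,
\[
   \E_{\Prob_1}[R_{\beta,n}^{-s}\mid\Fc_{n-1}]=Z_{\beta,n}^{s}\,\E_{\Prob_0}[R_nR_n^{-\beta s}\mid\Fc_{n-1}].
\]
Because $\beta s=1-\beta$, the product $R_nR_n^{-\beta s}$ equals $R_n^{\beta}$ wherever $R_n>0$. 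Where $R_n=0$ it is $0\cdot\infty=0$ by convention, and $R_n^{\beta}=0$ there too since $\beta>0$. Hence the right-hand side is $Z_{\beta,n}^{s}Z_{\beta,n}=Z_{\beta,n}^{1/\beta}$, using $s+1=\frac1\beta$, and this equals $e^{-\lambda_n(\beta)}$ by \Cref{def:condZ}.

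\emph{Case $\beta=1$.} Here $s=0$, so the left-hand side is $\E_{\Prob_1}[1\mid\Fc_{n-1}]=1$. The right-hand side is $e^{-\lambda_n(1)}=1$ by \Cref{lem:Zf-cond}(i). The case is trivial.

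\emph{Main obstacle.} The only delicate point is the measure-theoretic bookkeeping. First, pulling out $Z_{\beta,n}^{\pm}$ requires it to lie in $(0,\infty)$, which holds $\Prob_1$-a.s.\ by the transfer of null sets from $\Prob_0$ to $\Prob_1$. Second, the identity $R_nR_n^{-\beta s}=R_n^{\beta}$ must hold $\Prob_0$-a.s.\ and not only where $R_n>0$, because the conditional expectation on the right is taken under $\Prob_0$, where $\lC R_n=0\rC$ may carry mass. The convention $0\cdot\infty=0$ settles this, exactly as in Step 1 of \Cref{thm:upper-cond}.
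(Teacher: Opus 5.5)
Your proposal is correct and is essentially the paper's own proof: pull out the $\Fc_{n-1}$-measurable factor $Z_{\beta,n}^{-1}$ to get $\E_{\Prob_0}[R_{\beta,n}\mid\Fc_{n-1}]=1$, then apply \Cref{lem:bayes} with $1-\beta s=\beta$ to reach $Z_{\beta,n}^{s}Z_{\beta,n}=Z_{\beta,n}^{1/\beta}=e^{-\lambda_n(\beta)}$. Your extra care with the event $\lC0<Z_{\beta,n}<\infty\rC$, its transfer to $\Prob_1$, and the set $\lC R_n=0\rC$ under the convention $0\cdot\infty=0$ spells out bookkeeping the paper leaves implicit; the separate case $\beta=1$ is harmless but already covered by the general computation.
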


\begin{proof}
$\E_{\Prob_0}[R_{\beta,n}\mid\Fc_{n-1}]=Z_{\beta,n}^{-1}
\E_{\Prob_0}[R_n^{\beta}\mid\Fc_{n-1}]=1$ $\Prob_0$-a.s.\ by
\Cref{lem:Zf-cond}(i) and \Cref{rem:Zf-cond}, so $R_{\beta,n}$ is a conditional
\evar{} in the sense of \Cref{def:cond-evar}. Since $\beta s=1-\beta$, we have $1-\beta s=\beta$, so
\Cref{lem:bayes} gives
$\E_{\Prob_1}[R_n^{-\beta s}\mid\Fc_{n-1}]
=\E_{\Prob_0}[R_n^{1-\beta s}\mid\Fc_{n-1}]=Z_{\beta,n}$, whence
$\E_{\Prob_1}[R_{\beta,n}^{-s}\mid\Fc_{n-1}]
=Z_{\beta,n}^{s}\,Z_{\beta,n}=Z_{\beta,n}^{1/\beta}=e^{-\lambda_n(\beta)}$.
\end{proof}

\begin{proposition}[Equality in \Cref{thm:upper-cond}]\label{prp:equality-cond}
Let $s\in(0,\infty)$, $\beta=\frac1{1+s}$, $n\in\N$. Then
$\E_{\Prob_1}[E_n^{-s}\mid\Fc_{n-1}]=e^{-\lambda_n(\beta)}$ $\Prob_1$-a.s.\ if
and only if $E_n=R_{\beta,n}$ $\Prob_1$-a.s. Consequently equality holds in
\Cref{prp:upper-seq}\ref{it:agg-exact} for a given $t$ if and only if
$E_n=R_{\beta,n}$ $\Prob_1$-a.s.\ for every $n\le t$.
\end{proposition}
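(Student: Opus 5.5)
The plan is to redo the proof of \Cref{thm:upper-cond} and track when each inequality is tight. The "if" direction is \Cref{lem:flat-cond}: if $E_n=R_{\beta,n}$ $\Prob_1$-a.s., then the conditional $\Prob_1$-expectations of $E_n^{-s}$ and $R_{\beta,n}^{-s}$ coincide. Hence the value is $e^{-\lambda_n(\beta)}$.

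For "only if", assume $\E_{\Prob_1}[E_n^{-s}\mid\Fc_{n-1}]=e^{-\lambda_n(\beta)}$, which is finite and positive. Then $\Prob_1(E_n=0\mid\Fc_{n-1})=0$, so by Step~1 of \Cref{thm:upper-cond} the factorisation \eqref{eq:factor-cond} holds $\Prob_0$-a.s.\ on the relevant set. The chain $Z_{\beta,n}\le(\E_{\Prob_0}[E_n\mid\Fc_{n-1}])^{1-\beta}(\E_{\Prob_1}[E_n^{-s}\mid\Fc_{n-1}])^{\beta}\le Z_{\beta,n}$ forces two equalities. First, $\E_{\Prob_0}[E_n\mid\Fc_{n-1}]=1$ $\Prob_1$-a.s. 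Second, conditional H\"older is tight.

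The main obstacle is the equality case of conditional H\"older, as the paper itself warns. I would handle it by the normalisation trick. Put $a:=E_n/A$ and $b:=R_nE_n^{-s}/B$, with $A:=\E_{\Prob_0}[E_n\mid\Fc_{n-1}]$ and $B:=\E_{\Prob_0}[R_nE_n^{-s}\mid\Fc_{n-1}]$; both are $\Fc_{n-1}$-measurable and lie in $(0,\infty)$ on the event in question. Young's inequality gives $a^{1-\beta}b^{\beta}\le(1-\beta)a+\beta b$ pointwise. Conditional expectations of both sides are equal (each is $1$), so the nonnegative gap has zero conditional expectation and vanishes $\Prob_0$-a.s. on $\{L_{n-1}>0\}$ intersected with the event. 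Strict concavity of $\log$ then gives $a=b$ there, that is $E_n^{1+s}=(A/B)R_n$. Equivalently $E_n=\varkappa R_n^{\beta}$ with $\varkappa=(A/B)^{\beta}$ an $\Fc_{n-1}$-measurable factor; where $R_n=0$ the relation $a=b$ gives $E_n=0=R_n^{\beta}$. This step needs care with the $\Prob_0$- versus $\Prob_1$-null sets. I would state everything on $\{L_{n-1}>0\}$, which carries all of $\Prob_1$, and transfer back via $\Prob_1\ll\Prob_0$ on $\Fc_n$.

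Next I identify $\varkappa$ from $1=A=\varkappa\,\E_{\Prob_0}[R_n^{\beta}\mid\Fc_{n-1}]=\varkappa Z_{\beta,n}$, which gives $\varkappa=Z_{\beta,n}^{-1}$. So $E_n=R_{\beta,n}$ $\Prob_1$-a.s., using \Cref{rem:Zf-cond} for $0<Z_{\beta,n}<\infty$.

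For the consequence, I follow the converse in the proof of \Cref{prp:upper-seq}\ref{it:agg-iid}, but with the random $\lambda_n$. Equality in \eqref{eq:upper-seq-exact} means $\E_{\Prob_1}[M_t]=1$. Since $M_n$ is a $\Prob_1$-submartingale, $\E_{\Prob_1}[M_n]\le\E_{\Prob_1}[M_t]=1$ for every $n\le t$, so all terms are finite and each step is tight: $\E_{\Prob_1}[M_{n-1}(e^{\lambda_n}\E_{\Prob_1}[E_n^{-s}\mid\Fc_{n-1}]-1)]=0$ with a nonnegative integrand. Because $W_{n-1}<\infty$ $\Prob_1$-a.s., we have $M_{n-1}>0$, which forces conditional equality at each step. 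The first part then gives $E_n=R_{\beta,n}$. The converse holds because every inequality in the induction becomes an equality by \Cref{lem:flat-cond}.
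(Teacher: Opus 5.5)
Your proposal is correct and follows the paper's proof. Sufficiency comes from \Cref{lem:flat-cond}. Necessity forces equality in both the constraint step and the conditional H\"older step of \Cref{thm:upper-cond}, and then the $\Fc_{n-1}$-measurable factor is pinned down via $\E_{\Prob_0}[E_n\mid\Fc_{n-1}]=1$. The consequence follows by running the induction of \Cref{prp:upper-seq}\ref{it:agg-exact} through equalities, using $M_{n-1}\in(0,\infty)$ $\Prob_1$-a.s.

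The only difference is presentational. You inline the equality clause of conditional H\"older through the Young/normalisation argument, which is exactly how the paper proves \Cref{cor:app-holder}. You are also more explicit than the paper about restricting to $\{L_{n-1}>0\}$ when passing between $\Prob_0$- and $\Prob_1$-null sets.
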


\begin{proof}
Sufficiency is \Cref{lem:flat-cond}. For necessity, equality forces equality in
both steps of the proof of \Cref{thm:upper-cond}. Under the equality hypothesis
$A^{r}=\E_{\Prob_0}[E_n\mid\Fc_{n-1}]\in(0,1]$ and
$B^{r'}=\E_{\Prob_1}[E_n^{-s}\mid\Fc_{n-1}]=e^{-\lambda_n(\beta)}\in(0,1]$
$\Prob_1$-a.s.\ by \Cref{lem:Zf-cond}(i), so the proviso of the equality clause
of \Cref{cor:app-holder} holds and that clause gives
$G^{r}/A^{r}=H^{r'}/B^{r'}$ $\Prob_0$-a.s.; with $G=E_n^{1-\beta}$,
$r=\frac1{1-\beta}$, $H=(R_nE_n^{-s})^{\beta}$, $r'=\frac1\beta$ this reads
$E_n^{1+s}=\varkappa'R_n$ $\Prob_0$-a.s.\ with $\varkappa'$
$\Fc_{n-1}$-measurable, hence $E_n=\varkappa R_n^{\beta}$ $\Prob_0$-a.s.\ with
$\varkappa:=(\varkappa')^{\beta}$. Equality in the constraint step requires
$\E_{\Prob_0}[E_n\mid\Fc_{n-1}]=1$, that is $\varkappa Z_{\beta,n}=1$. For the last claim, the induction proving
\Cref{prp:upper-seq}\ref{it:agg-exact} runs through equalities if and only if
each of its steps does, and $M_{n-1}\in(0,\infty)$ $\Prob_1$-a.s.
\end{proof}

\begin{corollary}[The exponent of the one-step likelihood ratio]
\label{cor:chernoffinfo-cond}
Let $n\in\N$ and assume $\Prob_0(R_n>0\mid\Fc_{n-1})=1$ $\Prob_0$-a.s.\ --- the
conditional form of $P_0\ll P_1$, and implied by $\lambda_n(0^{+})<\infty$
(\Cref{lem:limit-cond}) but not conversely. Then
\[
   \sup_{s\ge0}\lC-\log\E_{\Prob_1}\lB R_n^{-s}\mid\Fc_{n-1}\rB\rC
   \ =\ \sup_{\beta\in(0,1]}\beta\,\lambda_n(\beta)
   \qquad\Prob_1\text{-a.s.},
\]
the conditional Chernoff information of the one-step kernels. Without the
hypothesis the left-hand side may be $+\infty$ while the right-hand side is
finite.
\end{corollary}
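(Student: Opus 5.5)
The plan is to copy the unconditional proof of \Cref{cor:chernoffinfo}, replacing $P_0,P_1$ by the one-step kernels $\E_{\Prob_0}[\,\cdot\mid\Fc_{n-1}]$ and $\E_{\Prob_1}[\,\cdot\mid\Fc_{n-1}]$, with \Cref{lem:bayes} as the change of measure. Define the conditional cumulant function $\kappa_n(u):=\log\E_{\Prob_0}[R_n^{u}\mid\Fc_{n-1}]$ for real $u$, so that $\kappa_n(\beta)=\log Z_{\beta,n}$ on $(0,1]$. This is convex in $u$ by conditional H\"older (\Cref{rem:Zf-cond}). It satisfies $\kappa_n(1)=0$ by \Cref{lem:Zf-cond}(i), and, under the hypothesis $\Prob_0(R_n>0\mid\Fc_{n-1})=1$, also $\kappa_n(0)=0$.

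\emph{Step 1 (change of measure).} For $s\ge0$, \Cref{lem:bayes} with $Y=R_n^{-s}$ gives
\[
\E_{\Prob_1}[R_n^{-s}\mid\Fc_{n-1}]=\E_{\Prob_0}[R_nR_n^{-s}\mid\Fc_{n-1}]\qquad\Prob_1\text{-a.s.}
\]
On $\{R_n=0\}$ the integrand is $0\cdot\infty=0$. The hypothesis makes $\{R_n=0\}$ conditionally $\Prob_0$-null, so the right-hand side equals $e^{\kappa_n(1-s)}$ for every $s\ge0$, including $s\ge1$. This is exactly where the hypothesis enters. Without it, at $s=1$ one would only get $\Prob_0(R_n>0\mid\Fc_{n-1})<1$, and that gap is the source of the counterexample in the last sentence of the statement. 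Substituting $u=1-s$, the left-hand side of the claim becomes $\sup_{u\le1}\{-\kappa_n(u)\}$.

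\emph{Step 2 (convexity).} Since $\kappa_n$ is convex with $\kappa_n(0)=\kappa_n(1)=0$, it is $\ge0$ for $u\le0$, by convexity extended beyond the chord, and $\le0$ on $[0,1]$. Hence the supremum over $u\le1$ equals the supremum over $u\in(0,1]$, where the value at $u=0$ is $0$ and is dominated by the value at $u=1$. On $(0,1]$, $-\kappa_n(\beta)=-\log Z_{\beta,n}=\beta\lambda_n(\beta)$, which gives the identity.

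\emph{Main obstacle: null sets.} Each conditional expectation is defined only up to a null set depending on $s$, while the claim takes a supremum over uncountably many $s$. I would handle this as in the proof of \Cref{lem:Zf-cond}: fix versions for rational $s$ and $\beta$, prove the identity simultaneously off one null set, and then pass to the real suprema. This needs some regularity. Using the kernel picture, or conditional monotone and dominated convergence, $u\mapsto\kappa_n(u)$ is convex on its domain and continuous in the interior. At $u\to1^-$ and $u\to0^+$ continuity follows from dominated convergence with bound $1+R_n$, and at the left end of the domain from lower semicontinuity via conditional Fatou. So suprema over rationals equal suprema over reals on both sides. Throughout, $\Prob_1$-a.s.\ statements follow from $\Prob_0$-a.s.\ ones because $\Prob_1\ll\Prob_0$ on $\Fc_{n-1}$.

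\emph{Counterexample.} For the final sentence I would take the i.i.d.\ model of \Cref{rem:eqcase}. There the left-hand side is $\Lambda(R)=\infty$ because $R^{-s}=2^{-s}$ $P_1$-a.s., while $\sup_\beta\beta\lambda(\beta)=\sup_\beta(1-\beta)\log2=\log2<\infty$.
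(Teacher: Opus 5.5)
Your proposal is correct and follows the paper's proof essentially step for step. \Cref{lem:bayes} together with the hypothesis turns the left-hand side into $\sup_{u\le1}\{-\log Z_{u,n}\}$, valid also for $s\ge1$; convexity of $u\mapsto\log Z_{u,n}$ with zeros at $0$ and $1$ then confines the supremum to $(0,1]$; and the same $\mathrm{Bern}(\frac12)$ versus $\delta_1$ example settles the last sentence. The paper leaves implicit the choice of conditional-expectation versions simultaneously in $s$, and your handling of it (via a regular conditional distribution of $R_n$, or rational versions plus continuity on $(0,1]$) is correct.
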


\begin{proof}
Since $R_n>0$ $\Prob_0$-a.s.\ conditionally, \Cref{lem:bayes} gives
$\E_{\Prob_1}[R_n^{-s}\mid\Fc_{n-1}]=\E_{\Prob_0}[R_n^{1-s}\mid\Fc_{n-1}]
=Z_{1-s,n}$ for \emph{every} $s\ge0$, so the left-hand side is
$\sup_{\beta\le1}\lC-\log Z_{\beta,n}\rC$ under $\beta=1-s$. By
\Cref{rem:Zf-cond} the map $\beta\mapsto\log Z_{\beta,n}$ is convex and vanishes
at $\beta=0$ and $\beta=1$, hence is $\ge0$ on $\beta\le0$; so the supremum is
attained on $\beta\in(0,1]$, where $-\log Z_{\beta,n}=\beta\lambda_n(\beta)$.
For the last sentence take $P_0=\mathrm{Bern}(\frac12)$, $P_1=\delta_1$ of
\Cref{rem:eqcase}: then $\E_{P_1}[R^{-s}]=2^{-s}$ for every $s\ge0$, so the left
side is $+\infty$, while $\sup_\beta\beta\lambda(\beta)=\log2$.
\end{proof}

\subsection{Deterministic conditional divergences}

\begin{assumption}[Homogeneity]\label{ass:homog}
For every $\beta\in(0,1]$ and every $n\in\N$ there is a constant
$z_n(\beta)\in(0,1]$ with $Z_{\beta,n}=z_n(\beta)$ $\Prob_1$-a.s.; equivalently,
$\lambda_n(\beta)$ is $\Prob_1$-a.s.\ constant. No constancy in $n$ is required.
It suffices to require this for rational $\beta$: if $\beta_k\to\beta$ then
$R_n^{\beta_k}\to R_n^{\beta}$ pointwise, dominated by $1+R_n$, so conditional
dominated convergence gives $Z_{\beta_k,n}\to Z_{\beta,n}$ $\Prob_0$-a.s. Also
$\lambda_n(0^{+})=\sup_{\beta}\lambda_n(\beta)$ is then $\Prob_1$-a.s.\
constant. The qualifier must be $\Prob_1$ and not $\Prob_0$: on
$\lC L_{n-1}=0\rC$, which \Cref{def:ratios} may charge under $\Prob_0$, the
convention $R_n:=1$ makes $Z_{\beta,n}=1$, so even in the i.i.d.\ model
$Z_{\beta,n}$ is $\Prob_0$-a.s.\ constant only when $P_0\sim P_1$. Every use
below sits inside a $\Prob_1$-expectation or an $\operatorname{ess\,sup}$ under
$\Prob_1$.
\end{assumption}

\begin{remark}[When it holds]\label{rem:homog}
\Cref{ass:homog} holds whenever $\Prob_0,\Prob_1$ are product measures with
$\Fc_t$ generated by the coordinates --- the observations independent, not
necessarily identically distributed --- since then $R_n$ agrees $\Prob_1$-a.s.\
with a function of the $n$-th coordinate alone and
$Z_{\beta,n}=\E_{P_{0,n}}[R_n^{\beta}]$ $\Prob_1$-a.s. In particular it holds
for \Cref{con:nfl}, and in the i.i.d.\ model with $z_n(\beta)=Z_\beta$. It fails as soon as the one-step conditional R\'enyi
divergences depend on the past.
\end{remark}

\begin{lemma}[The two scalars along the conditional family]\label{lem:family-cond}
For $\beta\in(0,1]$ and $n\in\N$ put
$c_{\beta,n}:=\E_{\Prob_1}[\log R_{\beta,n}\mid\Fc_{n-1}]$. Then $\Prob_1$-a.s.
\begin{equation}\label{eq:cbeta-cond}
   c_{\beta,n}\ =\ \beta\lp\KL_n+\lambda_n(\beta)\rp ,
\end{equation}
with $\KL_n$ as in \Cref{def:kl-cond}. Call $R_n$ \emph{conditionally
non-degenerate} on an event $B\in\Fc_{n-1}$ if there is no
$\Fc_{n-1}$-measurable $V$ with $R_n=V$ $\Prob_0$-a.s.\ on $B$. If moreover
$\KLrev_n<\infty$ and $R_n$ is conditionally non-degenerate, then
$\Prob_1$-a.s.\ $\beta\mapsto c_{\beta,n}$ is strictly increasing on $(0,1)$
with $c_{0^{+},n}=0$ and $c_{1,n}=\KL_n$; $\beta\mapsto\lambda_n(\beta)$ is
strictly decreasing with $\lambda_n(0^{+})=\KLrev_n$ and $\lambda_n(1)=0$; and
$c_{\beta,n}/\beta\uparrow\KL_n+\KLrev_n$ as $\beta\downarrow0$.
\end{lemma}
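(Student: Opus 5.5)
I will mirror the proof of \Cref{lem:family}, one step at a time and conditionally on $\Fc_{n-1}$, with the conditional cumulant generating function $\kappa_n(\beta):=\log Z_{\beta,n}$ of \Cref{rem:Zf-cond} in place of $\kappa$.

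\textbf{The identity \eqref{eq:cbeta-cond}.} On the $\Prob_0$-full set $\lC0<Z_{\beta,n}<\infty\rC$ (\Cref{rem:Zf-cond}) we have $\log R_{\beta,n}=\beta\log R_n-\kappa_n(\beta)$, and $\kappa_n(\beta)$ is $\Fc_{n-1}$-measurable. Taking $\E_{\Prob_1}[\,\cdot\mid\Fc_{n-1}]$ and pulling it out gives $c_{\beta,n}=\beta\KL_n-\kappa_n(\beta)=\beta(\KL_n+\lambda_n(\beta))$. This uses only that $\log R_n$ has a well-defined conditional $\Prob_1$-mean, which holds because $\KL_n<\infty$ $\Prob_1$-a.s.\ (\Cref{def:kl-cond}). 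Since the claim is $\Prob_1$-a.s., the convention off $\lC L_{n-1}>0\rC$ is irrelevant.

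\textbf{The monotonicity claims.} I fix regular versions. On a Polish observation space I would take a regular conditional distribution of $R_n$ given $\Fc_{n-1}$ under $\Prob_0$. Otherwise I would work with rational $\beta$ first, as in \Cref{lem:Zf-cond}, and extend to all $\beta$ by the continuity argument of \Cref{ass:homog}. Then, for $\Prob_1$-a.e.\ $\omega$, $\kappa_n(\cdot)(\omega)$ is the cumulant generating function of $\log R_n$ under a fixed probability kernel. It is convex, with $\kappa_n(1)=0$ by \Cref{lem:Zf-cond}(i). Next, $\KLrev_n<\infty$ forces $\Prob_0(R_n>0\mid\Fc_{n-1})=1$ (as noted in \Cref{cor:chernoffinfo-cond}), hence $\kappa_n(0^{+})=0$. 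The slopes are $\kappa_n'(0^{+})=-\KLrev_n$, by convexity and \Cref{lem:limit-cond}, and $\kappa_n'(1^{-})=\KL_n$, by the same monotone/dominated convergence interchange as in \Cref{lem:family} applied to the kernel and transported with \Cref{lem:bayes}.

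Conditional non-degeneracy makes the kernel law of $\log R_n$ non-degenerate for $\Prob_1$-a.e.\ $\omega$, so $\kappa_n$ is strictly convex there. The three conclusions then follow exactly as before:
\begin{itemize}
\item $\partial_\beta c_{\beta,n}=\KL_n-\kappa_n'(\beta)>0$ on $(0,1)$, with $c_{0^{+},n}=0$ and $c_{1,n}=\KL_n$;
\item $\lambda_n(\beta)=-\kappa_n(\beta)/\beta$ is minus a chord slope from $0$, hence strictly decreasing, with the two endpoint values coming from \Cref{lem:limit-cond} and $\kappa_n(1)=0$;
\item $c_{\beta,n}/\beta=\KL_n+\lambda_n(\beta)\uparrow\KL_n+\KLrev_n$.
\end{itemize}

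\textbf{Main obstacle.} The hard part is passing from conditional non-degeneracy to pathwise strict convexity. The hypothesis says no $\Fc_{n-1}$-measurable $V$ equals $R_n$ on any event $B\in\Fc_{n-1}$ of positive measure. I will argue by contraposition. Let $B:=\lC\Var_{\Prob_0}(\log R_n\mid\Fc_{n-1})=0\rC$, computed under the kernel; this set is $\Fc_{n-1}$-measurable. On $B$, $R_n$ equals its conditional median, an $\Fc_{n-1}$-measurable $V$, $\Prob_0$-a.s. So non-degeneracy, read as ruling out every such positive-measure $B$, gives $\Prob_1(B)=0$. The conditional-median step is where care is needed, and I would do it through the regular kernel. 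On $B^{c}$ strict Jensen gives strict convexity. I also expect to treat the monotonicity in $\beta$ simultaneously for all $\beta$ by fixing the null set once, through the kernel.
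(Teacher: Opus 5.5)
Your proof is correct, but it gets strict convexity by a different route from the paper.

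\textbf{Your route.} You disintegrate. You take a regular conditional distribution $K_\omega$ of $R_n$ given $\Fc_{n-1}$ under $\Prob_0$ and choose every conditional quantity as an integral against $K_\omega$. You then rerun the unconditional proof of \Cref{lem:family} $\omega$ by $\omega$. Strict convexity comes from strict Jensen for the non-degenerate law $K_\omega$, and the degenerate set is null by hypothesis.

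\textbf{The paper's route.} The paper never disintegrates. It checks the inputs $\kappa_n(1)=0$, $\kappa_n(0^{+})=0$ and $\kappa_n'(1^{-})=\KL_n$ by conditional convergence theorems. It gets strict convexity of $\kappa_n$ from the equality clause of conditional H\"older (\Cref{cor:app-holder}) applied to $R_n^{\rho\beta_1}R_n^{(1-\rho)\beta_2}$. Equality there would force $R_n^{\beta_1-\beta_2}$, hence $R_n$, to equal an $\Fc_{n-1}$-measurable variable.

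\textbf{What each buys.} The paper's argument is kernel-free. However, it yields the strict inequality one triple $(\beta_1,\beta_2,\rho)$ at a time, each off its own null set. Turning that into a pathwise statement about $\beta\mapsto\kappa_n(\beta)$ needs a countable-dense-plus-continuity step, which the paper leaves implicit. Your route fixes one null set for all $\beta$ at once and reuses \Cref{lem:family} verbatim. That is its real advantage.

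\textbf{Points to tighten.}
\begin{itemize}
\item The hedge about a Polish observation space is unnecessary. $R_n$ is real-valued, so its regular conditional distribution given $\Fc_{n-1}$ exists on any probability space. In the general setting there is no observation space in any case.
\item Drop the rational-$\beta$ fallback. Without a kernel it would have to fall back on something like the paper's H\"older equality argument to get strictness.
\item The conditional median is not needed. Where $K_\omega$ is a point mass, its mean is $\E_{\Prob_0}[R_n\mid\Fc_{n-1}]=1$ by \Cref{lem:Zf-cond}(i), so $V\equiv1$.
\item Say explicitly that ``positive measure'' in your reading of non-degeneracy means positive $\Prob_1$-measure. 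With $\Prob_0$ the hypothesis could never hold when $\Prob_0(L_{n-1}=0)>0$, because the convention of \Cref{def:ratios} sets $R_n=1$ on that $\Fc_{n-1}$-event.
\end{itemize}
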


\begin{proof}
$\log R_{\beta,n}=\beta\log R_n-\kappa_n(\beta)$ with
$\kappa_n(\beta):=\log Z_{\beta,n}=-\beta\lambda_n(\beta)$; take
$\E_{\Prob_1}[\,\cdot\mid\Fc_{n-1}]$ and use \Cref{def:kl-cond} for
\eqref{eq:cbeta-cond}. For the rest, the proof of \Cref{lem:family} applies to
$\kappa_n$ with three substitutions: $\E_{P_0}$ by
$\E_{\Prob_0}[\,\cdot\mid\Fc_{n-1}]$, monotone and dominated convergence by
their conditional forms, and \Cref{lem:limit} by \Cref{lem:limit-cond}. Four
inputs are needed, and all are available.
$\kappa_n(1)=0$ by \Cref{lem:Zf-cond}(i).
$\kappa_n(0^{+})=0$: $\KLrev_n<\infty$ forces $\Prob_0(R_n>0\mid\Fc_{n-1})=1$
(\Cref{lem:limit-cond}), and $R_n^{\beta}\to\mathbf 1_{\{R_n>0\}}$ dominated by
$1+R_n$ gives $Z_{\beta,n}\to1$.
$\kappa_n'(1)=\E_{\Prob_0}[R_n\log R_n\mid\Fc_{n-1}]=\KL_n<\infty$
$\Prob_1$-a.s.\ by \Cref{lem:bayes} and \Cref{def:kl-cond}; the interchange at
$\beta\uparrow1$ is conditional monotone convergence on $\lC R_n\ge1\rC$ and
conditional dominated convergence on $\lC R_n<1\rC$, where
$R_n^{\beta}|\log R_n|\le\frac1{e\beta_0}$ for $\beta\ge\beta_0>0$.
Finally $\kappa_n$ is \emph{strictly} convex where $R_n$ is conditionally
non-degenerate: for $\beta_1\ne\beta_2$ and $\rho\in(0,1)$, conditional
H\"older gives
$\kappa_n(\rho\beta_1+(1-\rho)\beta_2)\le\rho\kappa_n(\beta_1)
+(1-\rho)\kappa_n(\beta_2)$, and by the equality clause of
\Cref{cor:app-holder} equality forces $R_n^{\beta_1-\beta_2}$ to equal an
$\Fc_{n-1}$-measurable variable $\Prob_0$-a.s., hence $R_n$ to do so. This
replaces the tilted-measure argument of \Cref{lem:family} and needs no regular
conditional distribution.
\end{proof}

\begin{theorem}[The ceiling, matched]\label{thm:ceiling-gen}
Under \Cref{ass:homog}, let $\mathcal W$ be the class of all betting sequences.
For every $t\in\N$,
\[
   \sup_{\mathcal W}\ \Lambda^{(t)}
   \ =\ \frac1t\sum_{n\le t}\lambda_n(0^{+})
   \ =\ \frac1t\sum_{n\le t}\KLrev_n
\]
(the second equality by \Cref{lem:limit-cond}, the right-hand side denoting the
$\Prob_1$-a.s.\ constant value of $\KLrev_n$),
approached along $(R_{\beta,n})_{n\le t}$ as $\beta\downarrow0$; and
$\Lambda^{(t)}_s=\frac1t\sum_{n\le t}\lambda_n(\beta)$ for the betting sequence
$E_n=R_{\beta,n}$, $\beta=\frac1{1+s}$. If moreover $\lambda_n(0^{+})<\infty$ for every $n\le t$, $R_n$ is conditionally non-degenerate in the
sense of \Cref{lem:family-cond} for some $n\le t$, and the betting sequence in question
has $\Prob_0(W_t\ne1)>0$, then the supremum is not attained by it.
\end{theorem}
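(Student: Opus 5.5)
The proof mirrors the i.i.d.\ argument of \Cref{thm:ceiling}, with the i.i.d.\ inputs replaced by their conditional forms under \Cref{ass:homog}.

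\emph{Upper bound.} Under \Cref{ass:homog} each $\lambda_n(0^{+})$ is $\Prob_1$-a.s.\ constant. Its essential supremum $\bar\lambda_n(0^{+})$ is therefore that constant, and \Cref{cor:ceiling} gives directly
\[
   \Lambda^{(t)}\le\frac1t\sum_{n\le t}\lambda_n(0^{+})
\]
for every betting sequence. The identification with $\KLrev_n$ is \Cref{lem:limit-cond}.

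\emph{Exact value along the flattened design.} Take $E_n=R_{\beta,n}$. This is a betting sequence by \Cref{lem:flat-cond}; predictability holds because $Z_{\beta,n}$ is $\Fc_{n-1}$-measurable. The same lemma gives $\E_{\Prob_1}[E_n^{-s}\mid\Fc_{n-1}]=e^{-\lambda_n(\beta)}$, and under \Cref{ass:homog} the right-hand side is a deterministic constant. I would then run the tower induction of \Cref{prp:upper-seq}\ref{it:agg-exact} with equality in every step:
\[
   \E_{\Prob_1}[W_t^{-s}]=\exp\Bigl(-\sum_{n\le t}\lambda_n(\beta)\Bigr).
\]
Here $W_n^{-s}=W_{n-1}^{-s}E_n^{-s}$ holds $\Prob_1$-a.s.\ because $E_n<\infty$. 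This yields $\Lambda^{(t)}_s=\frac1t\sum_n\lambda_n(\beta)$, hence $\Lambda^{(t)}\ge\frac1t\sum_n\lambda_n(\beta)$. Letting $\beta\downarrow0$ and using the monotonicity of \Cref{lem:Zf-cond}(ii) for each of the finitely many $n$, the sum increases to $\sum_n\lambda_n(0^{+})$. This matches the upper bound and shows the value is approached along this family; it also covers the case where some $\lambda_n(0^{+})=\infty$.

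\emph{Non-attainment.} This is the delicate part. First I would show $\Prob_0|_{\Fc_t}\sim\Prob_1|_{\Fc_t}$. From $\lambda_n(0^{+})<\infty$, \Cref{lem:limit-cond} gives $\Prob_0(R_n>0\mid\Fc_{n-1})=1$. Inductively this makes $L_t=\prod R_n>0$ $\Prob_0$-a.s., and with $\Prob_1\ll\Prob_0$ this yields equivalence. \Cref{lem:attained} then applies verbatim, since its proof uses only this equivalence and $\Prob_0(W_t\ne1)>0$. It produces $s^\ast$ with $\Lambda^{(t)}=\Lambda^{(t)}_{s^\ast}$.

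If $s^\ast=0$, the value is $0$. This is strictly below the ceiling because the conditionally non-degenerate step $n_0$ has $\lambda_{n_0}(0^{+})>\lambda_{n_0}(1)=0$, by the strict decrease in \Cref{lem:family-cond}.

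If $s^\ast>0$, put $\beta^\ast=1/(1+s^\ast)\in(0,1)$. \Cref{prp:upper-seq}\ref{it:agg-det} gives $\Lambda^{(t)}\le\frac1t\sum_n\lambda_n(\beta^\ast)$. Each term satisfies $\lambda_n(\beta^\ast)\le\lambda_n(0^{+})$, and the $n_0$ term is strict by \Cref{lem:family-cond}. All terms are finite, so the strict inequality survives summation.

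The main obstacle is that \Cref{lem:family-cond} states its strict decrease $\Prob_1$-a.s.\ for random quantities, while the inequality needed here is between constants. I would resolve this by evaluating on a single $\Prob_1$-full event where the constants from \Cref{ass:homog} and the strict monotonicity both hold; a strict inequality between constants then follows pointwise there.
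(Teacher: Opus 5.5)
Your proof is correct and follows the paper's argument essentially step for step. The upper bound comes from \Cref{cor:ceiling}, with the essential suprema collapsing to the constants of \Cref{ass:homog}. The lower bound is the exact tower computation along $R_{\beta,n}$, followed by monotone passage $\beta\downarrow0$. Non-attainment goes through $\Prob_0|_{\Fc_t}\sim\Prob_1|_{\Fc_t}$, \Cref{lem:attained}, and the strict decrease of $\lambda_n$ from \Cref{lem:family-cond}. The only cosmetic difference is in the last step: you split on $s^\ast=0$ versus $s^\ast>0$ and prove the strict inequality directly at the non-degenerate step, whereas the paper first rules out $s^\ast=0$ and then argues by contradiction that $\lambda_n(\beta^\ast)=\lambda_n(0^{+})$ would force $\lambda_n$ to be constant on $(0,\beta^\ast]$.
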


\begin{proof}
$\le$: \Cref{cor:ceiling}, whose essential suprema are here the constants
$\lambda_n(0^{+})$ of \Cref{ass:homog}. $\ge$: for $E_n=R_{\beta,n}$, \Cref{lem:flat-cond} and
the tower property give
$\E_{\Prob_1}[W_n^{-s}]=e^{-\lambda_n(\beta)}\E_{\Prob_1}[W_{n-1}^{-s}]$,
because $\lambda_n(\beta)$ is a constant; hence
$\Lambda^{(t)}_s=\frac1t\sum_{n\le t}\lambda_n(\beta)$. Each
$\lambda_n(\cdot)$ is nonincreasing (\Cref{lem:Zf-cond}(ii)), so the finite sum
is nonincreasing in $\beta$ and its supremum over $\beta\in(0,1]$ is
$\frac1t\sum_{n\le t}\lambda_n(0^{+})$ by monotone convergence. Taking the
supremum over $s\ge0$ and over $\mathcal W$ in either order gives the
displayed identity.

For non-attainment, suppose $\Lambda^{(t)}=\frac1t\sum_{n\le t}\lambda_n(0^{+})$
for some betting sequence with $\Prob_0(W_t\ne1)>0$. First,
$\lambda_n(0^{+})<\infty$ forces $\Prob_0(R_n>0\mid\Fc_{n-1})=1$
(\Cref{cor:chernoffinfo-cond}), so by induction $\Prob_0(L_t>0)=1$ and therefore
$\Prob_0|_{\Fc_t}\sim\Prob_1|_{\Fc_t}$. \Cref{lem:attained} --- whose proof uses
only that equivalence and $\Prob_0(W_t\ne1)>0$ --- then supplies an
$s^{\ast}\in[0,\infty)$ with $\Lambda^{(t)}=\Lambda^{(t)}_{s^{\ast}}$, and
$s^{\ast}>0$ because $\Lambda^{(t)}_0=0$ while the right-hand side is positive
(strict conditional Jensen below). Hence
$\lambda_n(\beta^{\ast})=\lambda_n(0^{+})$ for every $n\le t$ with
$\beta^{\ast}=\frac1{1+s^{\ast}}$; by \Cref{lem:Zf-cond}(ii) this forces
$\lambda_n$ to be constant on $(0,\beta^{\ast}]$. But $\lambda_n(\beta)$ is
minus the slope of the chord of $\kappa_n$ from $0$ to $\beta$, and
$\kappa_n(0^{+})=0$ with $\kappa_n$ strictly convex
(\Cref{lem:family-cond}, using $\lambda_n(0^{+})<\infty$ and conditional
non-degeneracy) makes that slope strictly increasing, so $\lambda_n$ is
strictly decreasing --- a contradiction.
\end{proof}

\begin{proposition}[Exact error of a flattened design]\label{prp:exactfamily-gen}
Under \Cref{ass:homog}, fix $\beta\in(0,1]$ and let $(W_t)_{t\in\N_0}$ be
generated by $E_n=R_{\beta,n}$. Put $G_t:=\log L_t$ and
$B_t(\beta):=\frac1\beta\lp\ell+\sum_{n\le t}\log z_n(\beta)\rp$. Then
$\log W_t=\beta\lp G_t-B_t(\beta)\rp+\ell$ $\Prob_1$-a.s., and
\[
   \gamma_t(\alpha)=\Prob_1\lp G_t\le B_t(\beta)\rp ,
   \qquad
   \bar\gamma_t(\alpha)=\Prob_1\lp G_n\le B_n(\beta)\ \text{for all }n\le t\rp .
\]
\end{proposition}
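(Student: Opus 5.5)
The plan is to follow the proof of \Cref{prp:exactfamily}(a). The only real task is to establish the pathwise identity $\log W_t=\beta(G_t-B_t(\beta))+\ell$ $\Prob_1$-a.s.; both error formulas then follow by set manipulations.

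\emph{Step 1 (one-step logarithm).} By \Cref{ass:homog}, $Z_{\beta,n}=z_n(\beta)\in(0,1]$ $\Prob_1$-a.s. So on a $\Prob_1$-full set we are in the branch $0<Z_{\beta,n}<\infty$ of \Cref{def:flat-cond}, and there $R_{\beta,n}=R_n^{\beta}/z_n(\beta)$. Moreover $R_n>0$ $\Prob_1$-a.s.: by \Cref{lem:bayes}, $\Prob_1(R_n=0\mid\Fc_{n-1})=\E_{\Prob_0}[R_n\mathbf 1_{\{R_n=0\}}\mid\Fc_{n-1}]=0$. Hence $\log R_{\beta,n}=\beta\log R_n-\log z_n(\beta)$ holds $\Prob_1$-a.s. as an identity of finite reals.

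\emph{Step 2 (telescoping).} $\Prob_1$-a.s. we have $L_{n-1}>0$ for every $n\le t$, since $\Prob_1(L_{n-1}=0)=0$ by \Cref{def:ratios}. Thus $R_n=L_n/L_{n-1}$ on that set, and summing gives $\sum_{n\le t}\log R_n=\log L_t-\log L_0=G_t$, with $L_0=1$. We also need $L_t>0$ $\Prob_1$-a.s., which holds for the same reason. Summing Step 1 over $n\le t$ then gives
\[
   \log W_t=\beta G_t-\sum_{n\le t}\log z_n(\beta)
   =\beta\Bigl(G_t-\tfrac1\beta\bigl(\ell+\textstyle\sum_{n\le t}\log z_n(\beta)\bigr)\Bigr)+\ell
   =\beta\bigl(G_t-B_t(\beta)\bigr)+\ell .
\]
Here $W_t$ is the finite product of finite positive factors, so $\log W_t=\sum\log E_n$ holds without any convention issues.

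\emph{Step 3 (the errors).} Because $\beta>0$, we have $\{W_t\le\frac1\alpha\}=\{\log W_t\le\ell\}=\{G_t\le B_t(\beta)\}$ up to a $\Prob_1$-null set, and this gives the formula for $\gamma_t(\alpha)$. For $\bar\gamma_t(\alpha)$, apply the same identity at each $n\le t$, which uses countably many null sets. Then intersect, using $\{\tau_\alpha>t\}=\bigcap_{n\le t}\{W_n\le\frac1\alpha\}$; the case $n=0$ is trivial since $W_0=1\le\frac1\alpha$ and $G_0=0\le B_0=\ell/\beta$.

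\emph{Main obstacle.} The only delicate point is the null-set bookkeeping. The convention $R_n:=1$ on $\{L_{n-1}=0\}$ and the fallback $R_{\beta,n}:=1$ both live on sets that may carry $\Prob_0$-mass. This is why the identity is claimed only $\Prob_1$-a.s., and Steps 1 and 2 must each be checked under $\Prob_1$, exactly as in \Cref{def:ratios} and \Cref{ass:homog}.
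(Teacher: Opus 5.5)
Your proof is correct and follows the paper's argument: the one-step identity $\log R_{\beta,n}=\beta\log R_n-\log z_n(\beta)$, the telescoping $\sum_{n\le t}\log R_n=\log L_t$, a rearrangement, and then reading off both error formulas from $\{\log W_n\le\ell\}=\{G_n\le B_n(\beta)\}$. Your explicit checks that $R_n>0$ and $L_{n-1}>0$ hold $\Prob_1$-a.s. spell out the null-set bookkeeping that the paper leaves implicit.
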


\begin{proof}
$\log R_{\beta,n}=\beta\log R_n-\log z_n(\beta)$ $\Prob_1$-a.s., and
$\sum_{n\le t}\log R_n=\log L_t$; sum and rearrange. Then
$\log W_t\le\ell$ is $G_t\le B_t(\beta)$, and applying this at every $n\le t$
gives the second identity, by \Cref{def:typeII}.
\end{proof}

\begin{remark}[The horizon-optimal tilt]\label{rem:betastar-gen}
On $\beta\in(0,1]$, \Cref{thm:betastar} carries over verbatim, with
$\lambda(\beta)$ replaced by $\lambda_1(\beta)$, whenever \Cref{ass:homog} holds
\emph{and}
$\lambda_n(\beta)=\lambda_1(\beta)$ for every $n$: only then is
$B_t(\beta)=\frac1\beta(\ell-t\beta\lambda_1(\beta))$ affine in $t$, which is
what the analysis of $\beta\mapsto B_t(\beta)$ uses. Without constancy in $n$,
$B_t$ is still deterministic and \Cref{prp:exactfamily-gen} still reduces the
design problem to one scalar per horizon, but the crossover statement
\Cref{thm:betastar}(a) and the rate \Cref{thm:betastar}(b) have no closed form.
The extension to $\beta>1$ needs more still: every statement of
\Cref{app:general} is written for $\beta\in(0,1]$, and beyond it one would have
to require $Z_{\beta,n}$ to be finite and $\Prob_1$-a.s.\ constant there too,
which \Cref{ass:homog} does not provide.
\end{remark}

\begin{lemma}[Surrogate bound at the tilt $s=\frac1\beta$]\label{lem:surrogate-gen}
Under \Cref{ass:homog}, for every $\beta\in(0,1]$, $\alpha\in(0,1)$ and
$t\in\N$, the betting sequence $E_n=R_{\beta,n}$ satisfies
$\gamma_t(\alpha)\le e^{B_t(\beta)}$ with
$B_t(\beta)=\frac{\ell}{\beta}-\sum_{n\le t}\lambda_n(\beta)$, in agreement with
\Cref{prp:exactfamily-gen}.
\end{lemma}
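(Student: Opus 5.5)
The plan mirrors the proof of \Cref{lem:surrogate}: bound one conditional moment per step, aggregate the steps by the tower property as in \Cref{prp:upper-seq}\ref{it:agg-exact}, and close with the Chernoff bound of \Cref{lem:chernoff} at the tilt $s=\frac1\beta$.

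\emph{Step 1 (one step).} On $\lC0<Z_{\beta,n}<\infty\rC$, which has full $\Prob_0$-measure by \Cref{rem:Zf-cond} and hence full $\Prob_1$-measure on $\Fc_{n-1}$, we have $R_{\beta,n}^{-1/\beta}=Z_{\beta,n}^{1/\beta}R_n^{-1}$, with the convention $0^{-1}=\infty$. By \Cref{lem:bayes} applied to $Y=R_n^{-1}$,
\[
   \E_{\Prob_1}\lB R_n^{-1}\mid\Fc_{n-1}\rB
   =\E_{\Prob_0}\lB R_nR_n^{-1}\mid\Fc_{n-1}\rB
   =\Prob_0\lp R_n>0\mid\Fc_{n-1}\rp\le1
\]
$\Prob_1$-a.s., since $0\cdot\infty:=0$. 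Hence
\[
   \E_{\Prob_1}\lB R_{\beta,n}^{-1/\beta}\mid\Fc_{n-1}\rB
   \le Z_{\beta,n}^{1/\beta}=e^{-\lambda_n(\beta)}
\]
$\Prob_1$-a.s. Under \Cref{ass:homog} the right-hand side equals the constant $z_n(\beta)^{1/\beta}$.

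\emph{Step 2 (aggregation).} As in \Cref{prp:upper-seq}\ref{it:agg-exact}, we have $W_n^{-1/\beta}=W_{n-1}^{-1/\beta}R_{\beta,n}^{-1/\beta}$ $\Prob_1$-a.s., because each $R_{\beta,n}$ is finite. Pulling out the $\Fc_{n-1}$-measurable factor and using Step 1 with a deterministic bound gives
\[
   \E_{\Prob_1}\lB W_n^{-1/\beta}\rB\le e^{-\lambda_n(\beta)}\,\E_{\Prob_1}\lB W_{n-1}^{-1/\beta}\rB .
\]
Induction from $W_0=1$ then yields $\E_{\Prob_1}[W_t^{-1/\beta}]\le\exp(-\sum_{n\le t}\lambda_n(\beta))$. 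This is the remark in \Cref{rem:whatworks} with $\le$ in place of $\ge$. \emph{Homogeneity is exactly what lets the constant leave the conditional expectation} at each step.

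\emph{Step 3 (Chernoff).} \Cref{lem:chernoff} at $s=\frac1\beta$ gives
\[
   \gamma_t(\alpha)\le\exp\lp\tfrac{\ell}{\beta}\rp\E_{\Prob_1}\lB W_t^{-1/\beta}\rB
   \le\exp\lp\tfrac{\ell}{\beta}-\sum_{n\le t}\lambda_n(\beta)\rp=e^{B_t(\beta)} .
\]
Since $-\lambda_n(\beta)=\frac1\beta\log z_n(\beta)$, this $B_t(\beta)$ coincides with the one in \Cref{prp:exactfamily-gen}, which is the claimed agreement.

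The only delicate point is Step 1 on the event $\{R_n=0\}$, where $R_n^{-1}=\infty$. There the change of measure has to be applied to the extended-valued $Y$, which \Cref{lem:bayes} allows, together with the conventions of \Cref{sec:setup}. The event itself is $\Prob_1$-null, so it does not affect the bound.
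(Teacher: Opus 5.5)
Your proposal is correct and follows the paper's proof step for step. It has the same three ingredients: the one-step bound $\E_{\Prob_1}[R_{\beta,n}^{-1/\beta}\mid\Fc_{n-1}]\le Z_{\beta,n}^{1/\beta}$ via \Cref{lem:bayes}, the tower induction of \Cref{prp:upper-seq}\ref{it:agg-exact} run with $\le$ in place of $\ge$ (where \Cref{ass:homog} is what lets the constant leave the expectation), and \Cref{lem:chernoff} at $s=\frac1\beta$. Your handling of the $\Prob_1$-null event $\{R_n=0\}$ and of the extended-value conventions only spells out details the paper leaves implicit.
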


\begin{proof}
$R_{\beta,n}^{-1/\beta}=Z_{\beta,n}^{1/\beta}R_n^{-1}$, and
$\E_{\Prob_1}[R_n^{-1}\mid\Fc_{n-1}]
=\E_{\Prob_0}[R_n R_n^{-1}\mid\Fc_{n-1}]
=\Prob_0(R_n>0\mid\Fc_{n-1})\le1$ by \Cref{lem:bayes}, so
$\E_{\Prob_1}[R_{\beta,n}^{-1/\beta}\mid\Fc_{n-1}]\le Z_{\beta,n}^{1/\beta}
=e^{-\lambda_n(\beta)}$. Since the $\lambda_n(\beta)$ are constants, the tower
induction of \Cref{prp:upper-seq}\ref{it:agg-exact} run with $\le$ in place of
$\ge$ gives
$\E_{\Prob_1}[W_t^{-1/\beta}]\le\exp(-\sum_{n\le t}\lambda_n(\beta))$;
\Cref{lem:chernoff} at $s=\frac1\beta$ then gives the claim.
\end{proof}

\begin{lemma}[Maximal inequality for square-integrable martingales]
\label{lem:doob-gen}
Let $(M_n)_{n\le t}$ be a $\Prob_1$-martingale with $M_0=0$ and
$\E_{\Prob_1}[M_t^{2}]<\infty$. Then
$\Prob_1(\max_{n\le t}|M_n|\ge x)\le\E_{\Prob_1}[M_t^{2}]/x^{2}$ for every
$x>0$.
\end{lemma}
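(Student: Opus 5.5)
This is Doob's $L^2$ maximal inequality; I will derive it from the submartingale maximal inequality applied to $M_n^2$. The plan has three steps.

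\emph{Step 1 (submartingale).} By conditional Jensen for the convex map $x\mapsto x^{2}$, the process $(M_n^{2})_{n\le t}$ is a nonnegative $\Prob_1$-submartingale. It is integrable: since $\E_{\Prob_1}[M_t^{2}]<\infty$, conditional Jensen gives $\E_{\Prob_1}[M_n^{2}]\le\E_{\Prob_1}[M_t^{2}]<\infty$ for every $n\le t$.

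\emph{Step 2 (first-passage decomposition).} Let $A:=\{\max_{n\le t}|M_n|\ge x\}$. Split it into the disjoint events
\[
   A_k:=\{|M_k|\ge x,\ |M_j|<x\ \text{for all } j<k\},\qquad k\le t .
\]
Each $A_k$ lies in $\Fc_k$. Note that $A_0=\emptyset$ because $M_0=0$ and $x>0$.

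\emph{Step 3 (estimate on each piece).} On $A_k$ we have $x^{2}\le M_k^{2}$. The submartingale property gives $\E_{\Prob_1}[M_k^{2}\mathbf 1_{A_k}]\le\E_{\Prob_1}[M_t^{2}\mathbf 1_{A_k}]$. Combining the two,
\[
   x^{2}\,\Prob_1(A_k)\ \le\ \E_{\Prob_1}\lB M_t^{2}\mathbf 1_{A_k}\rB .
\]
Summing over $k$ and using disjointness gives
\[
   x^{2}\,\Prob_1(A)\ \le\ \E_{\Prob_1}\lB M_t^{2}\mathbf 1_{A}\rB\ \le\ \E_{\Prob_1}\lB M_t^{2}\rB ,
\]
which is the claim.

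There is no real obstacle here. The only point to check is integrability of the intermediate $M_n^{2}$, so that the submartingale step in Step 3 is legitimate, and Step 1 provides exactly that. Alternatively, the statement is \Cref{thm:app-kolmogorov} without independence, since Billingsley's proof of Kolmogorov's inequality uses only the martingale structure.
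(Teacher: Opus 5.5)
Your proof is correct and essentially the paper's own: your events $A_k$ are exactly $\{\sigma=k\}$ for the first-passage time $\sigma=\inf\{n\le t:|M_n|\ge x\}$, and your submartingale step is the same conditional-Jensen bound $\E_{\Prob_1}[M_t^{2}\mid\Fc_k]\ge M_k^{2}$ that the paper integrates over $\{\sigma=k\}$. The integrability check in your Step 1 is harmless but not needed, since every quantity involved is nonnegative.
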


\begin{proof}
Let $\sigma:=\inf\lC n\le t\st|M_n|\ge x\rC$ and $A_n:=\lC\sigma=n\rC\in\Fc_n$.
Conditional Jensen gives $\E_{\Prob_1}[M_t^{2}\mid\Fc_n]\ge M_n^{2}$, so
\[
   \E_{\Prob_1}\lB M_t^{2}\rB\ \ge\ \sum_{n\le t}\E_{\Prob_1}\lB M_t^{2}\mathbf 1_{A_n}\rB
   \ \ge\ \sum_{n\le t}\E_{\Prob_1}\lB M_n^{2}\mathbf 1_{A_n}\rB
   \ \ge\ x^{2}\,\Prob_1(\sigma\le t) . \qedhere
\]
\end{proof}

\begin{lemma}[A universal horizon floor]\label{lem:floor-gen}
Let $(E_n)_{n\in\N}$ be a betting sequence with
$\E_{\Prob_1}[\log E_n\mid\Fc_{n-1}]\le c$ and
$\Var_{\Prob_1}(\log E_n\mid\Fc_{n-1})\le v<\infty$ $\Prob_1$-a.s.\ for every
$n$, where $c>0$ and $c\,\ell>2v$. Then
$\bar\gamma_t(\alpha)\ge1-\frac{2v}{c\,\ell}$ for every $t\le\frac{\ell}{2c}$.
\end{lemma}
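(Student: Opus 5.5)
The plan is to copy the proof of \Cref{lem:floor} and replace Kolmogorov's maximal inequality, which needs independent increments, by \Cref{lem:doob-gen}. I will also replace the deterministic drift $nc$ by the predictable compensator of $\log W_n$.

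Write $S_n:=\log W_n=\sum_{m\le n}\log E_m$. Put $D_m:=\E_{\Prob_1}[\log E_m\mid\Fc_{m-1}]$, which is at most $c$ by hypothesis, and $M_n:=S_n-\sum_{m\le n}D_m$.

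\emph{Step 1: $M$ is a square-integrable $\Prob_1$-martingale.} First I need each $D_m$ to be finite. The upper tail is handled by \Cref{lem:epower-cond}: $\E_{\Prob_1}[(\log E_m)_+\mid\Fc_{m-1}]\le\KL_m+1<\infty$ $\Prob_1$-a.s. A finite conditional variance then also forces the negative part to be conditionally integrable, since the conditional variance is only defined, or finite, when $\log E_m$ is conditionally square integrable. So I read the variance hypothesis as including $\E_{\Prob_1}[(\log E_m)^2\mid\Fc_{m-1}]<\infty$.

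The increments $\xi_m:=\log E_m-D_m$ then satisfy $\E_{\Prob_1}[\xi_m\mid\Fc_{m-1}]=0$ and $\E_{\Prob_1}[\xi_m^2\mid\Fc_{m-1}]\le v$. By the tower property and orthogonality of martingale increments, $\E_{\Prob_1}[M_t^2]=\sum_{m\le t}\E_{\Prob_1}[\xi_m^2]\le tv$. To avoid circularity in the integrability claim, I will establish orthogonality inductively: $\E_{\Prob_1}[M_n^2]\le nv<\infty$, and the cross term $\E_{\Prob_1}[M_{n-1}\xi_n]$ vanishes by conditioning on $\Fc_{n-1}$. This conditioning needs $M_{n-1}\in L^2$, which the induction supplies, and Cauchy--Schwarz makes the product integrable.

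\emph{Step 2: reduction to a maximal deviation.} Fix $t\le\ell/(2c)$. For $n\le t$, $\sum_{m\le n}D_m\le nc\le\ell/2$, so $\{S_n>\ell\}\subseteq\{M_n>\ell/2\}$. Hence
\[
\Prob_1(\tau_\alpha\le t)\le\Prob_1\big(\max_{n\le t}|M_n|\ge\ell/2\big)\le\frac{tv}{(\ell/2)^2}\le\frac{2v}{c\ell}
\]
by \Cref{lem:doob-gen} with $x=\ell/2$ and then $t\le\ell/(2c)$. Taking complements gives $\bar\gamma_t(\alpha)\ge1-2v/(c\ell)$. The hypothesis $c\ell>2v$ only makes this bound nontrivial; it is not used anywhere in the argument.

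\emph{Main obstacle.} The only delicate point is Step 1's integrability. The compensator must be finite $\Prob_1$-a.s., and $M$ must be a genuine $L^2$ martingale rather than only a local one, because \Cref{lem:doob-gen} needs $\E_{\Prob_1}[M_t^2]<\infty$. If the hypothesis is read only as a bound on conditional variances, I would get this from the inductive orthogonality computation above, which bounds $\E_{\Prob_1}[M_n^2]$ by $nv$ directly. In effect this is the conditional-variance version of Kolmogorov's inequality.
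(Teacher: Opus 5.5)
Your proposal is correct and follows the paper's own proof essentially line for line: the same compensator decomposition $M_n=S_n-\sum_{m\le n}\E_{\Prob_1}[\log E_m\mid\Fc_{m-1}]$, the same orthogonal-increments bound $\E_{\Prob_1}[M_t^{2}]\le tv$, the same inclusion $\{S_n>\ell\}\subseteq\{M_n>\ell/2\}$, and \Cref{lem:doob-gen} at $x=\ell/2$. Your Step~1 spells out the $L^{2}$-integrability that the paper takes for granted, which is a harmless addition; your induction there is not strictly needed, since $\E_{\Prob_1}[\xi_m^{2}]\le v$ already puts each increment, and hence $M_t$, in $L^{2}(\Prob_1)$.
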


\begin{proof}
Put $S_n:=\log W_n$ and
$M_n:=S_n-\sum_{m\le n}\E_{\Prob_1}[\log E_m\mid\Fc_{m-1}]$, a $\Prob_1$-martingale
with $M_0=0$ and orthogonal increments, so
$\E_{\Prob_1}[M_t^{2}]=\sum_{m\le t}\E_{\Prob_1}[(\Delta M_m)^{2}]\le tv$. Fix
$t\le\ell/(2c)$. For $n\le t$ the drift term is at most $nc\le\ell/2$, so
$\lC S_n>\ell\rC\subseteq\lC M_n>\frac\ell2\rC$ and, by \Cref{lem:doob-gen},
\[
   \Prob_1(\tau_\alpha\le t)=\Prob_1\lp\max_{n\le t}S_n>\ell\rp
   \ \le\ \Prob_1\lp\max_{n\le t}|M_n|\ge\tfrac\ell2\rp
   \ \le\ \frac{tv}{(\ell/2)^{2}}\ \le\ \frac{2v}{c\,\ell} . \qedhere
\]
\end{proof}

\begin{corollary}[The cost of a near-optimal exponent]\label{cor:price-gen}
Assume \Cref{ass:homog}, $\Var_{\Prob_1}(\log R_n\mid\Fc_{n-1})\le\bar v<\infty$
$\Prob_1$-a.s.\ for every $n$, and
$J:=\sup_n\operatorname*{ess\,sup}_{\Prob_1}\lp\KL_n+\KLrev_n\rp<\infty$ --- the
last of which is what makes the horizon range below non-empty, and does not
follow from the others. Put
$\bar c_\beta:=\sup_n\operatorname*{ess\,sup}_{\Prob_1}c_{\beta,n}\le\beta J$.
Then $\Var_{\Prob_1}(\log R_{\beta,n}\mid\Fc_{n-1})\le\beta^{2}\bar v$, and
whenever $\bar c_\beta\ell>2\beta^{2}\bar v$ the design $E_n=R_{\beta,n}$ has
power at most $2\beta^{2}\bar v/(\bar c_\beta\ell)=O(\beta)$ at every horizon
$t\le\ell/(2\bar c_\beta)$, and $\ell/(2\bar c_\beta)\ge\ell/(2\beta J)$
diverges like $\beta^{-1}$ as $\beta\downarrow0$.
\end{corollary}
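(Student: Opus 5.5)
The corollary is \Cref{lem:floor-gen} applied to the betting sequence $E_n=R_{\beta,n}$. We verify its two hypotheses, a conditional drift bound $c=\bar c_\beta$ and a conditional variance bound $v=\beta^{2}\bar v$, and then read off the horizon range.

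\emph{Variance.} By \Cref{ass:homog}, $Z_{\beta,n}=z_n(\beta)$ is a constant $\Prob_1$-a.s. Hence $\log R_{\beta,n}=\beta\log R_n-\log z_n(\beta)$ differs from $\beta\log R_n$ by a deterministic constant. Conditional variance is invariant under adding an $\Fc_{n-1}$-measurable shift and scales quadratically, so
\[
\Var_{\Prob_1}(\log R_{\beta,n}\mid\Fc_{n-1})=\beta^{2}\Var_{\Prob_1}(\log R_n\mid\Fc_{n-1})\le\beta^{2}\bar v .
\]
Homogeneity is not even needed for this step, since $\log Z_{\beta,n}$ is $\Fc_{n-1}$-measurable in any case. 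We only need to check that $\log Z_{\beta,n}$ is finite $\Prob_1$-a.s., which is \Cref{lem:Zf-cond}(i).

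\emph{Drift.} By definition $c_{\beta,n}=\E_{\Prob_1}[\log R_{\beta,n}\mid\Fc_{n-1}]\le\bar c_\beta$ $\Prob_1$-a.s. For the bound $\bar c_\beta\le\beta J$, use \eqref{eq:cbeta-cond}, $c_{\beta,n}=\beta(\KL_n+\lambda_n(\beta))$, together with $\lambda_n(\beta)\le\lambda_n(0^{+})=\KLrev_n$ from \Cref{lem:Zf-cond}(ii) and \Cref{lem:limit-cond}. This gives $c_{\beta,n}\le\beta(\KL_n+\KLrev_n)\le\beta J$; take the essential supremum and then the supremum over $n$. \Cref{lem:floor-gen} requires $c>0$. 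This is implicit in the hypothesis $\bar c_\beta\ell>2\beta^{2}\bar v\ge0$, which forces $\bar c_\beta>0$, so no separate argument is needed.

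\emph{Conclusion.} Apply \Cref{lem:floor-gen} with $c=\bar c_\beta$ and $v=\beta^{2}\bar v$. For $t\le\ell/(2\bar c_\beta)$ it gives $\bar\gamma_t(\alpha)\ge1-2\beta^{2}\bar v/(\bar c_\beta\ell)$. Since the power is $1-\bar\gamma_t(\alpha)$, it is at most $2\beta^{2}\bar v/(\bar c_\beta\ell)$.

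\emph{Order of the bound.} The claim that this is $O(\beta)$ needs a lower bound $\bar c_\beta\gtrsim\beta$. Of the whole argument, this is the one step I consider delicate. I would argue it via $c_{\beta,n}\ge\beta\KL_n$, since $\lambda_n(\beta)\ge0$ by \Cref{lem:Zf-cond}(i). Then $\bar c_\beta\ge\beta\,\sup_n\operatorname*{ess\,sup}\KL_n$, and so
\[
\frac{2\beta^{2}\bar v}{\bar c_\beta\,\ell}\le\frac{2\beta\bar v}{\ell\,\sup_n\operatorname*{ess\,sup}\KL_n}=O(\beta),
\]
provided some $\KL_n$ is not $\Prob_1$-a.s.\ zero. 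Without that proviso the statement is vacuous: its hypothesis $\bar c_\beta\ell>2\beta^{2}\bar v$ already forces $\bar c_\beta>0$.

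\emph{Horizon range.} From $\bar c_\beta\le\beta J$ we get $\ell/(2\bar c_\beta)\ge\ell/(2\beta J)$, which diverges like $\beta^{-1}$ because $J<\infty$. This is exactly where finiteness of $J$ is used.
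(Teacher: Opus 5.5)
Your proof is correct and follows the paper's own argument. You apply \Cref{lem:floor-gen} to $E_n=R_{\beta,n}$ with $c=\bar c_\beta$ and $v=\beta^{2}\bar v$, get the variance scaling because $\log Z_{\beta,n}$ is a finite $\Fc_{n-1}$-measurable shift, and get $\bar c_\beta\le\beta J$ from \eqref{eq:cbeta-cond} with $\lambda_n(\beta)\le\lambda_n(0^{+})=\KLrev_n$. Your lower bound $c_{\beta,n}\ge\beta\KL_n$ supplies the justification of the $O(\beta)$ claim, which the paper leaves implicit. The degenerate case is indeed vacuous, but your stated reason (that the hypothesis forces $\bar c_\beta>0$) does not by itself exclude positivity coming from $\lambda_n(\beta)$. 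The clean reason is \Cref{lem:epower-cond}: it gives $c_{\beta,n}\le\KL_n$, so $\KL_n=0$ for all $n$ forces $\bar c_\beta\le0$.
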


\begin{proof}
$\log R_{\beta,n}=\beta\log R_n-\kappa_n(\beta)$ with $\kappa_n(\beta)$
constant, so the conditional variance scales by $\beta^{2}$; apply
\Cref{lem:floor-gen} with $c=\bar c_\beta$ and $v=\beta^{2}\bar v$. By
\eqref{eq:cbeta-cond} and $\lambda_n(\beta)\le\lambda_n(0^{+})=\KLrev_n$,
$\bar c_\beta=\beta\sup_n\operatorname*{ess\,sup}(\KL_n+\lambda_n(\beta))
\le\beta J$.
\end{proof}

\subsection{A Stein-type converse}

\begin{assumption}[Information spectrum]\label{ass:spectrum}
There is $D\in[0,\infty)$ such that for every $\epsilon>0$,
\[
   \Prob_0\lp-\tfrac1t\log L_t\ >\ D+\epsilon\rp\ \xrightarrow[t\to\infty]{}\ 0 .
\]
\end{assumption}

\begin{proposition}[The ceiling binds the error]\label{prp:stein-gen}
Under \Cref{ass:spectrum}, for every betting sequence and every
$\alpha\in(0,1)$,
\[
   \limsup_{t\to\infty}\ -\frac1t\log\bar\gamma_t(\alpha)\ \le\ D ,
\]
and the same holds with $\gamma_t(\alpha)$ in place of $\bar\gamma_t(\alpha)$.
\end{proposition}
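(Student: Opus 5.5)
The plan is the standard weak converse of the Chernoff--Stein lemma, run on the path space with the information-spectrum hypothesis replacing the law of large numbers. As in \Cref{prp:stein-ceiling}, fix $t$ and put $C_t:=\{\tau_\alpha>t\}\in\Fc_t$. By \Cref{thm:typeI}, $\Prob_0(C_t)\ge1-\alpha$. For $\gamma_t(\alpha)$ take $C_t:=\{W_t\le\frac1\alpha\}$ instead; by Markov and \Cref{lem:supmg} it also satisfies $\Prob_0(C_t)\ge1-\alpha$. In both cases the type-II error is $\Prob_1(C_t)$, so the argument only needs to show that any $C_t\in\Fc_t$ with $\Prob_0(C_t)\ge1-\alpha$ has $\Prob_1(C_t)\ge e^{-t(D+\epsilon)}$ times a constant for large $t$.

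Fix $\epsilon>0$ and set $A_t:=\{-\frac1t\log L_t\le D+\epsilon\}=\{L_t\ge e^{-t(D+\epsilon)}\}\in\Fc_t$, with the convention $\log0=-\infty$. This makes $\{L_t=0\}$ lie outside $A_t$. Since $L_t$ is the density of $\Prob_1$ with respect to $\Prob_0$ on $\Fc_t$,
\[
   \Prob_1(C_t)\ \ge\ \E_{\Prob_0}\lB L_t\mathbf 1_{C_t\cap A_t}\rB
   \ \ge\ e^{-t(D+\epsilon)}\,\Prob_0(C_t\cap A_t)
   \ \ge\ e^{-t(D+\epsilon)}\lp1-\alpha-\Prob_0(A_t^{c})\rp .
\]
\Cref{ass:spectrum} gives $\Prob_0(A_t^{c})\to0$. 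Hence for all large $t$ the bracket is at least $(1-\alpha)/2>0$, using $\alpha<1$, and
\[
   -\tfrac1t\log\Prob_1(C_t)\ \le\ D+\epsilon+\tfrac1t\log\tfrac{2}{1-\alpha} .
\]
Taking $\limsup_t$ and then letting $\epsilon\downarrow0$ gives the claim for both $\bar\gamma_t$ and $\gamma_t$.

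The only point needing care is bookkeeping rather than a genuine obstacle. The change of measure has to be done on $\Fc_t$, where $\Prob_1\ll\Prob_0$ holds by \Cref{ass:standing}, and the event $\{L_t=0\}$, which carries no $\Prob_1$-mass, must be checked to fall in $A_t^{c}$ so that it is controlled by the spectrum hypothesis. No strong converse is needed, because the bound $1-\alpha>0$ already suffices for every $\alpha\in(0,1)$.
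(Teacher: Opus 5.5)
Your proof is correct and is essentially the paper's argument: the same change of measure on $\Fc_t$ giving $\Prob_1(C_t)\ge e^{-t(D+\epsilon)}\bigl(1-\alpha-\Prob_0(A_t^{c})\bigr)$, with the acceptance region $\{\tau_\alpha>t\}$ controlled by \Cref{thm:typeI} and $\{W_t\le\frac1\alpha\}$ by Markov. Your observation that $\{L_t=0\}\subseteq A_t^{c}$ and that no strong converse is needed is also exactly how the paper's proof runs.
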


\begin{proof}
Fix $\epsilon>0$, put $A_t:=\{\tau_\alpha>t\}\in\Fc_t$ and
$B_t:=\lC L_t\ge e^{-t(D+\epsilon)}\rC$, so $\Prob_0(B_t^{c})\to0$ by
\Cref{ass:spectrum} and $\Prob_0(A_t^{c})\le\Prob_0(\tau_\alpha<\infty)\le\alpha$
by \Cref{thm:typeI}. Since $\Prob_1\ll\Prob_0$ on $\Fc_t$ with density $L_t$,
\[
   \bar\gamma_t(\alpha)=\E_{\Prob_0}\lB L_t\mathbf 1_{A_t}\rB
   \ \ge\ e^{-t(D+\epsilon)}\,\Prob_0(A_t\cap B_t)
   \ \ge\ e^{-t(D+\epsilon)}\lp1-\alpha-\Prob_0(B_t^{c})\rp .
\]
Hence $-\frac1t\log\bar\gamma_t(\alpha)\le D+\epsilon-\frac1t\log(1-\alpha-o(1))$,
and $\epsilon>0$ was arbitrary. For $\gamma_t(\alpha)$ replace $A_t$ by
$\lC W_t\le\frac1\alpha\rC$, whose complement has
$\Prob_0(W_t>\frac1\alpha)\le\alpha\E_{\Prob_0}[W_t]\le\alpha$ by Markov and
\Cref{lem:supmg}.
\end{proof}

\begin{remark}[Recovering \Cref{prp:stein-ceiling}]\label{rem:stein-iid}
In the i.i.d.\ model with $\KL(P_0\|P_1)<\infty$ one has
$-\frac1t\log L_t=-\frac1t\sum_{n\le t}\log R(X_n)\to\KL(P_0\|P_1)$
$\Prob_0$-a.s.\ by the strong law, so \Cref{ass:spectrum} holds with
$D=\KL(P_0\|P_1)$ and \Cref{prp:stein-gen} contains \Cref{prp:stein-ceiling}.
Under \Cref{ass:homog} and provided $\Prob_0(L_t>0)=1$ for every $t$ ---
equivalently $\lambda_n(0^{+})<\infty$ for every $n$, so that
$\log L_t=\sum_{n\le t}\log R_n$ $\Prob_0$-a.s.\ --- the quantity
$\E_{\Prob_0}[-\log R_n]$ need not be constant in $n$, and $D$ is then the limit
of $\frac1t\sum_{n\le t}\E_{\Prob_0}[-\log R_n]$ whenever a weak law applies.
\end{remark}

\subsection{Where the time average goes}

\begin{lemma}[The ceiling is the compensator of the information spectrum]
\label{lem:compensator}
Fix $t\in\N$ and assume $\lambda_n(0^{+})<\infty$ $\Prob_0$-a.s.\ for every
$n\le t$, so that $\Prob_0(L_t>0)=1$. Then $\Prob_0$-a.s.
\[
   -\log L_t\ =\ A_t+M_t ,\qquad
   A_t:=\sum_{n\le t}\KLrev_n ,\qquad
   M_t:=\sum_{n\le t}\lp-\log R_n-\KLrev_n\rp ,
\]
with $(A_t)$ predictable and nondecreasing and $(M_t)$ a $\Prob_0$-martingale
with $M_0=0$; and
\begin{equation}\label{eq:ceiling-is-divergence}
   \E_{\Prob_0}\lB\frac{A_t}{t}\rB
   \ =\ \frac1t\,\KL\lp\Prob_0|_{\Fc_t}\,\big\|\,\Prob_1|_{\Fc_t}\rp .
\end{equation}
\end{lemma}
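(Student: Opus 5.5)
First I establish $\Prob_0(L_t>0)=1$. By \Cref{lem:limit-cond}, $\lambda_n(0^{+})=\KLrev_n<\infty$ $\Prob_0$-a.s. forces $\Prob_0(R_n>0\mid\Fc_{n-1})=1$; otherwise $\E_{\Prob_0}[\log R_n\mid\Fc_{n-1}]=-\infty$ on a set of positive measure. Hence $R_n>0$ $\Prob_0$-a.s. for each $n\le t$. I then induct on $n$. Suppose $L_{n-1}>0$ $\Prob_0$-a.s. On that set $L_n=L_{n-1}R_n$ by \Cref{def:ratios}, so $L_n>0$ $\Prob_0$-a.s. The same identity, telescoped, gives $\log L_t=\sum_{n\le t}\log R_n$ $\Prob_0$-a.s., with every term in $[-\infty,\infty)$ and in fact finite a.s. Adding and subtracting $\KLrev_n$ in each summand yields $-\log L_t=A_t+M_t$ as an identity of a.s. finite variables.

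Next, the structural claims. Each $\KLrev_n$ is $\Fc_{n-1}$-measurable, so $A_t$ is predictable. Each $\KLrev_n\ge0$ by \Cref{lem:Zf-cond}(i) together with \Cref{lem:limit-cond}, so $A_t$ is nondecreasing. For the martingale property of $M$, I need $-\log R_n$ to be conditionally integrable. The positive part is handled by $(\log R_n)_+\le R_n$, which has $\E_{\Prob_0}[R_n\mid\Fc_{n-1}]=1$. The negative part is controlled because $\KLrev_n<\infty$ a.s., so $\E_{\Prob_0}[|\log R_n|\mid\Fc_{n-1}]\le 2+\KLrev_n<\infty$. The increment $-\log R_n-\KLrev_n$ then has conditional mean zero by definition of $\KLrev_n$.

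Global integrability of $M_t$ is the main obstacle, because $\KLrev_n$ is only a.s. finite, not integrable. I would state the martingale property in the generalised (local) sense. Concretely, this means conditional mean zero increments, or equivalently a martingale along the localising sets $\{\sum_{n\le t}\KLrev_n\le k\}\in\Fc_{t-1}$. I would point out that this is all the decomposition needs. If the paper insists on a true martingale, I would add the hypothesis $\E_{\Prob_0}[\KLrev_n]<\infty$. Under that hypothesis the argument above already gives $\E_{\Prob_0}|\log R_n|<\infty$.

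For \eqref{eq:ceiling-is-divergence} I avoid the integrability issue entirely. The tower property holds for nonnegative variables with values in $[0,\infty]$, so
\[
\E_{\Prob_0}[A_t]=\sum_{n\le t}\E_{\Prob_0}[\KLrev_n]=\sum_{n\le t}\E_{\Prob_0}\bigl[\E_{\Prob_0}[-\log R_n\mid\Fc_{n-1}]\bigr].
\]
Each summand equals $\E_{\Prob_0}[-\log R_n]$, which is well defined because $(\log R_n)_+$ is integrable. Summing gives $\E_{\Prob_0}[-\log L_t]$, with linearity justified since all positive parts are integrable. Because $\Prob_0\sim\Prob_1$ on $\Fc_t$ with $d\Prob_0/d\Prob_1=1/L_t$, this expectation is $\KL(\Prob_0|_{\Fc_t}\|\Prob_1|_{\Fc_t})$ by the definition in \Cref{ass:standing}; both sides are allowed to be $+\infty$. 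Dividing by $t$ completes the proof.
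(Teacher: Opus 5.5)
Your proof is correct, and for \eqref{eq:ceiling-is-divergence} it takes a different and sturdier route than the paper. The paper calls the decomposition the Doob decomposition, takes $\E_{\Prob_0}$ of it, and drops the martingale part using $\E_{\Prob_0}[M_t]=0$. You never take the expectation of $M_t$. Instead you compute $\E_{\Prob_0}[A_t]$ summand by summand, using the tower property for variables with integrable negative part (via $(\log R_n)_+\le R_n$), then sum and change measure with $d\Prob_0|_{\Fc_t}/d\Prob_1|_{\Fc_t}=1/L_t$. The paper's step presupposes $M_t\in L^1(\Prob_0)$, and the hypothesis $\lambda_n(0^{+})<\infty$ $\Prob_0$-a.s.\ does not supply this. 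Your computation holds regardless, including when both sides of \eqref{eq:ceiling-is-divergence} equal $+\infty$.

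Your caveat about the martingale clause points to a real defect in the statement, not excess caution. Here is a counterexample. Let $X_1\in\N$ have the same law $(p_k)$ under both measures. Given $X_1=k$, let $X_2$ be $\mathrm{Bern}(\frac12)$ under $\Prob_0$ and $\mathrm{Bern}(q_k)$ under $\Prob_1$, with $q_k\in(0,1)$ and $\sum_k p_k\log(1/q_k)=\infty$ (e.g.\ $p_k\propto k^{-2}$, $q_k=e^{-k}$). Then $R_1=1$ and $\KL_2\le\log2$, so \Cref{ass:standing} holds, and $\KLrev_2=-\frac12\log(4q_k(1-q_k))<\infty$ everywhere. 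Yet $|M_2|=\frac12\bigl|\log\frac{1-q_k}{q_k}\bigr|$ on $\{X_1=k\}$, so $M_2\notin L^1(\Prob_0)$.

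So the clause needs one of two repairs. The first is the extra hypothesis $\E_{\Prob_0}[\KLrev_n]<\infty$ for $n\le t$; by the identity itself this is equivalent to $\KL(\Prob_0|_{\Fc_t}\|\Prob_1|_{\Fc_t})<\infty$. The second is the generalised-martingale reading you propose. For the latter, localise with the stopping times $T_k:=\inf\{n\ge0: A_{n+1}>k\}$, which are stopping times because $A$ is predictable. Do not use the single set $\{A_t\le k\}\in\Fc_{t-1}$: against it, $\mathbf 1_{\{A_t\le k\}}M_n$ is not adapted for $n<t-1$. With the stopping times, your bound $\E_{\Prob_0}[|\log R_n|\mid\Fc_{n-1}]\le2+\KLrev_n$ makes each stopped process $M^{T_k}$ a true martingale.
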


\begin{proof}
$\log L_t=\sum_{n\le t}\log R_n$ $\Prob_0$-a.s.\ on $\lC L_t>0\rC$, and
$\KLrev_n=-\E_{\Prob_0}[\log R_n\mid\Fc_{n-1}]\in[0,\infty)$ by
\Cref{lem:limit-cond}; this is the Doob decomposition. Taking
$\E_{\Prob_0}$ and using $\E_{\Prob_0}[M_t]=0$ gives
$\E_{\Prob_0}[A_t]=\E_{\Prob_0}[-\log L_t]
=\E_{\Prob_0}[\log\frac{d\Prob_0|_{\Fc_t}}{d\Prob_1|_{\Fc_t}}]$.
\end{proof}

\begin{proposition}[The ceiling and the Stein exponent have one limit]
\label{prp:rate}
Assume $\lambda_n(0^{+})<\infty$ $\Prob_0$-a.s.\ and
$\Var_{\Prob_0}(\log R_n\mid\Fc_{n-1})\le \bar v<\infty$ $\Prob_0$-a.s.\ for
every $n$. Then $\frac1tM_t\to0$ $\Prob_0$-a.s., so that
$\frac1tA_t$ and $-\frac1t\log L_t$ have the same limits: for
$D\in[0,\infty)$,
\[
   \frac1t\sum_{n\le t}\KLrev_n\ \xrightarrow[t\to\infty]{}\ D
   \quad\Prob_0\text{-a.s.}
   \qquad\Longleftrightarrow\qquad
   -\frac1t\log L_t\ \xrightarrow[t\to\infty]{}\ D
   \quad\Prob_0\text{-a.s.}
\]
In that case \Cref{ass:spectrum} holds with this $D$, and under
\Cref{ass:homog} the ceiling $\frac1t\sum_{n\le t}\KLrev_n$ of
\Cref{thm:ceiling-gen} converges to the exponent $D$ of \Cref{prp:stein-gen}.
\end{proposition}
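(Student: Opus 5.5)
The plan is to use the Doob decomposition of \Cref{lem:compensator} and prove a strong law for the martingale part. Under $\lambda_n(0^{+})<\infty$ $\Prob_0$-a.s., \Cref{lem:limit-cond} gives $\KLrev_n\in[0,\infty)$ and $\Prob_0(L_t>0)=1$. Hence $-\log L_t=A_t+M_t$ holds $\Prob_0$-a.s., with increments $\Delta M_n:=-\log R_n-\KLrev_n$. These increments satisfy $\E_{\Prob_0}[\Delta M_n\mid\Fc_{n-1}]=0$ and $\E_{\Prob_0}[(\Delta M_n)^{2}\mid\Fc_{n-1}]=\Var_{\Prob_0}(\log R_n\mid\Fc_{n-1})\le\bar v$. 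The conditional expectation is well defined, because $(\log R_n)_+\le R_n$ is conditionally integrable and $\KLrev_n<\infty$. The conditional variance bound then makes each $\Delta M_n$ square-integrable, with $\E_{\Prob_0}[(\Delta M_n)^2]\le\bar v$.

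The key step, which I take to be the main technical point, is to show $\frac1tM_t\to0$ $\Prob_0$-a.s. I would use the standard martingale route to Kolmogorov's strong law. Set $N_t:=\sum_{n\le t}\Delta M_n/n$. This is an $L^2$-bounded $\Prob_0$-martingale, since orthogonality of increments gives
\[
\E_{\Prob_0}[N_t^2]=\sum_{n\le t}\E_{\Prob_0}[(\Delta M_n)^2]/n^2\le\bar v\sum_n n^{-2}<\infty .
\]
By the $L^2$ martingale convergence theorem (Doob), $N_t$ converges $\Prob_0$-a.s. Kronecker's lemma, applied pathwise with weights $b_n=n\uparrow\infty$, then gives $\frac1t\sum_{n\le t}\Delta M_n\to0$ a.s. If an appeal to Doob's convergence theorem is to be avoided, \Cref{lem:doob-gen} applied to the tails $N_{t'}-N_t$ shows that $(N_t)$ is a.s. Cauchy, via a Borel--Cantelli argument over dyadic blocks. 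Checking this step carefully, including the square integrability and the exact form of the Kronecker lemma, is where I would spend the effort.

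The equivalence then follows at once. On the full-measure set where the decomposition holds and $\frac1tM_t\to0$, we have $\frac1tA_t-(-\frac1t\log L_t)=-\frac1tM_t\to0$. So one sequence converges to $D$ exactly when the other does.

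For the consequences, almost sure convergence $-\frac1t\log L_t\to D$ implies convergence in $\Prob_0$-probability. This gives $\Prob_0(-\frac1t\log L_t>D+\epsilon)\to0$, which is \Cref{ass:spectrum}. Under \Cref{ass:homog} each $\KLrev_n=\lambda_n(0^{+})$ is $\Prob_1$-a.s. constant. I would note that the hypothesis $\lambda_n(0^+)<\infty$ gives $\Prob_0|_{\Fc_t}\sim\Prob_1|_{\Fc_t}$, as in the proof of \Cref{thm:ceiling-gen}, so these variables are also $\Prob_0$-a.s. equal to the same constants. The ceiling $\frac1t\sum_{n\le t}\KLrev_n$ of \Cref{thm:ceiling-gen} is then a deterministic sequence whose a.s. limit is $D$. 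That $D$ is the constant of \Cref{prp:stein-gen}.
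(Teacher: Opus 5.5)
Your proof is correct and follows the paper's own route: the Doob decomposition of \Cref{lem:compensator}, then the $L^2$-bounded martingale $\sum_{n\le t}\Delta M_n/n$ with Kronecker's lemma exactly as in the proof of \Cref{prp:powerone-gen} (with $\Prob_0$ in place of $\Prob_1$), and finally a.s.\ convergence implying \Cref{ass:spectrum}. Your extra remark, that $\Prob_0|_{\Fc_t}\sim\Prob_1|_{\Fc_t}$ turns the $\Prob_1$-a.s.\ constancy of \Cref{ass:homog} into $\Prob_0$-a.s.\ constancy of $\KLrev_n$, fills in a detail the paper leaves implicit, and you handle it correctly.
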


\begin{proof}
The increments $\Delta M_n=-\log R_n-\KLrev_n$ are $\Prob_0$-martingale
differences with $\E_{\Prob_0}[(\Delta M_n)^{2}]\le\bar v$, so the argument of
\Cref{prp:powerone-gen} --- with $\Prob_1$ replaced by $\Prob_0$ and $\log E_n$
by $-\log R_n$ --- gives $\frac1tM_t\to0$ $\Prob_0$-a.s. The equivalence and the
last two claims follow from \Cref{lem:compensator}, a.s.\ convergence implying
convergence in probability.
\end{proof}

\begin{remark}[Three regimes]\label{rem:rate}
Whether the limit exists at all depends on the model.
\begin{enumerate}[label=(\alph*),leftmargin=2.4em]
\item \emph{The i.i.d.\ model.} $\KLrev_n=\KL(P_0\|P_1)$ for every $n$
      (\Cref{lem:limit-cond}), so $D=\KL(P_0\|P_1)$ and the ceiling does not
      depend on the horizon. This is why \Cref{sec:ceiling} states one number.
\item \emph{Independent, not identically distributed.} Under \Cref{ass:homog}
      the average is the Ces\`aro mean of the deterministic sequence
      $\KLrev_n$, which need not converge. Let
      $\KLrev_n=1$ for $n\in[2^{2k},2^{2k+1})$ and $\KLrev_n=0$ for
      $n\in[2^{2k+1},2^{2k+2})$, $k\in\N_0$. Then
      $\frac1t\sum_{n\le t}\KLrev_n$ has $\liminf=\frac13$ and
      $\limsup=\frac23$. The ceiling is then genuinely a function of the
      horizon, and \Cref{thm:ceiling-gen} is the only form available.
\item \emph{Stationary ergodic.} If $\Prob_0$ is stationary and ergodic and
      $\Prob_1$ is a stationary Markov measure of finite order, then
      $-\frac1t\log L_t$ converges $\Prob_0$-a.s.\ to the relative entropy rate
      $\bar D(\Prob_0\|\Prob_1)
      =\lim_t\frac1t\KL(\Prob_0|_{\Fc_t}\|\Prob_1|_{\Fc_t})$, by the generalised
      Shannon--McMillan--Breiman theorem \cite{Bar85}, \cite{AC88}. By
      \Cref{prp:rate} the ceiling converges to the same value, and
      \eqref{eq:ceiling-is-divergence} identifies it in expectation at every
      finite $t$.
\end{enumerate}
Note the direction: \Cref{ass:standing} constrains
$\KL(\Prob_1|_{\Fc_t}\|\Prob_0|_{\Fc_t})$, while
\eqref{eq:ceiling-is-divergence} is about
$\KL(\Prob_0|_{\Fc_t}\|\Prob_1|_{\Fc_t})$ --- the same asymmetry between the two
divergences that \Cref{thm:two} records for the i.i.d.\ model.
\end{remark}

\subsection{Identifying the exponent, and power one}

\begin{assumption}[G\"artner--Ellis]\label{ass:ge}
With $S_t:=\log W_t$, the limit
$\Psi(\theta):=\lim_{t\to\infty}\frac1t\log\E_{\Prob_1}[e^{\theta S_t}]$ exists
in $(-\infty,\infty]$ for every $\theta\in\R$; $0$ lies in the interior of
$\mathcal D_\Psi:=\lC\theta\st\Psi(\theta)<\infty\rC$; and $\Psi$ is lower
semicontinuous and essentially smooth on $\operatorname{int}\mathcal D_\Psi$ ---
that is, the hypotheses of \cite[Thm.~2.3.6]{DZ10}. Write
$\Psi^{\star}(x):=\sup_{\theta\in\R}\lC\theta x-\Psi(\theta)\rC$.
\end{assumption}

\begin{proposition}[The exponent]\label{prp:exponent-gen}
Assume \Cref{ass:ge}, that $\Psi'(0)>0$, and that $0$ lies in the interior of
$\lC x\st\Psi^{\star}(x)<\infty\rC$. Then, for every $\alpha\in(0,1)$,
\[
   \lim_{t\to\infty}-\frac1t\log\gamma_t(\alpha)
   \ =\ \sup_{s\ge0}\ \lim_{t\to\infty}\Lambda^{(t)}_s
   \ =\ \Psi^{\star}(0) .
\]
Both extra hypotheses are needed: they are the analogues of
$\E_{P_1}[\log E]>0$ and of the interiority hypothesis of \Cref{prp:exponent},
and \Cref{ass:ge} alone does not imply either.
\end{proposition}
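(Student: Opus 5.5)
The proof follows the template of \Cref{prp:exponent}, with Cramér's theorem replaced by the Gärtner--Ellis theorem \cite[Thm.~2.3.6]{DZ10}. Put $\bar S_t:=S_t/t$, so that $\gamma_t(\alpha)=\Prob_1(\bar S_t\le\ell/t)$. Under \Cref{ass:ge} the laws of $\bar S_t$ under $\Prob_1$ satisfy the large-deviation upper bound with good rate function $\Psi^{\star}$ on closed sets. They also satisfy the lower bound on open sets. Essential smoothness and lower semicontinuity are exactly what make the lower bound hold with $\Psi^{\star}$ itself, since every point of an open set can then be approximated through exposed points.

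First I record the properties of $\Psi^{\star}$ used in \Cref{prp:exponent}. It is convex, and $\Psi$ is differentiable at $0\in\operatorname{int}\mathcal D_\Psi$, so $\Psi^{\star}$ vanishes at $\Psi'(0)$. Since $\Psi'(0)>0$, $\Psi^{\star}$ is nonincreasing on $(-\infty,\Psi'(0)]$. By the interiority hypothesis it is finite near $0$, hence continuous at $0$.

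The squeeze then repeats \Cref{prp:exponent} verbatim.
\begin{itemize}
\item Upper bound. Fix $t_1$ with $\ell/t_1<\Psi'(0)$ and take $F=(-\infty,\ell/t_1]$. This gives $\liminf_t-\frac1t\log\gamma_t\ge\Psi^{\star}(\ell/t_1)$.
\item Lower bound. Take $G=(-\infty,0)$. This gives $\limsup_t-\frac1t\log\gamma_t\le\inf_{x<0}\Psi^{\star}(x)=\Psi^{\star}(0)$.
\item Conclusion. Let $t_1\to\infty$ and use continuity at $0$. The limit exists and equals $\Psi^{\star}(0)$.
\end{itemize}

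For the middle term, note that $\Lambda^{(t)}_s=-\frac1t\log\E_{\Prob_1}[e^{-sS_t}]\to-\Psi(-s)$ for every $s$, by \Cref{ass:ge} at $\theta=-s$. Hence
\[
   \sup_{s\ge0}\lim_t\Lambda^{(t)}_s=\sup_{\theta\le0}\{-\Psi(\theta)\},
   \qquad
   \Psi^{\star}(0)=\sup_{\theta\in\R}\{-\Psi(\theta)\}.
\]
It remains to show that $\theta>0$ contributes nothing. Every $\Lambda_t(\theta):=\frac1t\log\E_{\Prob_1}[e^{\theta S_t}]$ is convex, so $\Psi$ is convex as a pointwise limit. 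Also $\Psi(0)=0$, and $\Psi$ is differentiable at $0$ with $\Psi'(0)>0$. Convexity then gives $\Psi(\theta)\ge\theta\Psi'(0)>0$ for $\theta>0$, so $-\Psi(\theta)<0=-\Psi(0)$ there. The two suprema therefore agree.

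The main obstacle is confirming that essential smoothness gives the Gärtner--Ellis lower bound with rate $\Psi^{\star}$ on $G=(-\infty,0)$, near $0$, rather than only at exposed points. I would invoke \cite[Thm.~2.3.6(c)]{DZ10} directly, since its full statement includes this. Two further points need care. One is that essential smoothness includes differentiability at the interior point $0$, so $\Psi'(0)$ is meaningful. The other is the remark that both extra hypotheses are needed. For that remark I would give simple i.i.d.\ examples showing that \Cref{ass:ge} alone allows $\Psi'(0)\le0$, where the error tends to $1$ and the identity fails. I would also exhibit an example where $0$ lies on the boundary of the domain of $\Psi^{\star}$, mirroring the i.i.d.\ hypotheses of \Cref{prp:exponent}.
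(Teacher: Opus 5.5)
Your proposal is correct and is essentially the paper's own proof: the G\"artner--Ellis theorem (its part (c), for the lower bound with the full rate function $\Psi^{\star}$) in place of Cram\'er, the same monotonicity and continuity properties of $\Psi^{\star}$, the same squeeze as in \Cref{prp:exponent}, and the substitution $\theta=-s$ together with $\Psi(\theta)\ge\theta\Psi'(0)>0$ to discard $\theta>0$. The paper's proof does not argue the ``both hypotheses are needed'' clause at all, so your examples are an addition; if you want the first one to isolate $\Psi'(0)$, take $\Psi'(0)<0$ with interiority intact (e.g.\ the Gaussian $R_\beta$ with $\beta>2$, where $c_\beta<0$ and $\gamma_t(\alpha)\to1$ but $\Psi^{\star}(0)=(2-\beta)^{2}\Delta^{2}/8>0$), since at $\Psi'(0)=0$ the identity need not fail.
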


\begin{proof}
\Cref{ass:ge} is the hypothesis of the G\"artner--Ellis theorem
\cite[Thm.~2.3.6]{DZ10} for $(S_t/t)_{t\in\N}$, which therefore satisfies a
large-deviation principle with good convex rate function $\Psi^{\star}$; and
$\Psi^{\star}$ vanishes at $\Psi'(0)>0$, is nonincreasing on
$(-\infty,\Psi'(0)]$, and is continuous at $0$ by the interiority hypothesis.
The argument of \Cref{prp:exponent} now applies with $I=\Psi^{\star}$ and
$\gamma_t(\alpha)=\Prob_1(S_t/t\le\ell/t)$, giving the first equality. For the
second, $\lim_t\Lambda^{(t)}_s=-\Psi(-s)$ by definition, so
$\sup_{s\ge0}\lim_t\Lambda^{(t)}_s=\sup_{\theta\le0}\{-\Psi(\theta)\}$; and for
$\theta>0$ convexity gives $\Psi(\theta)\ge\Psi(0)+\theta\Psi'(0)>0$, so
$-\Psi(\theta)<0\le-\Psi(0)\le\Psi^{\star}(0)$ and positive $\theta$ do not
contribute to $\Psi^{\star}(0)=\sup_{\theta\in\R}\{-\Psi(\theta)\}$.
\end{proof}

\begin{proposition}[Power one]\label{prp:powerone-gen}
Let $(E_n)_{n\in\N}$ be a betting sequence with
$\E_{\Prob_1}[\log E_n\mid\Fc_{n-1}]\ge c>0$ and
$\Var_{\Prob_1}(\log E_n\mid\Fc_{n-1})\le v<\infty$ $\Prob_1$-a.s.\ for every
$n$. Then $\tau_\alpha<\infty$ $\Prob_1$-a.s.\ for every $\alpha\in(0,1)$.
\end{proposition}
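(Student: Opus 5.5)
The plan is a martingale strong law for square-integrable martingale differences. Write $S_n:=\log W_n=\sum_{m\le n}\log E_m$, and split it into its predictable drift and a martingale part:
\[
   D_n:=\sum_{m\le n}\E_{\Prob_1}\lB\log E_m\mid\Fc_{m-1}\rB ,
   \qquad
   M_n:=S_n-D_n .
\]
By hypothesis $D_n\ge nc$. The conditional variance bound makes each increment $\Delta M_m=\log E_m-\E_{\Prob_1}[\log E_m\mid\Fc_{m-1}]$ square-integrable, with $\E_{\Prob_1}[(\Delta M_m)^2]\le v$. The increments are orthogonal, so $\E_{\Prob_1}[M_n^2]\le nv$; this is the computation already used in \Cref{lem:floor-gen}. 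Integrability of $\log E_m$ under $\Prob_1$ needs a word. The positive part is controlled by \Cref{lem:epower-cond}, and the finite conditional variance gives finite conditional second moments, hence $L^1$ conditionally. That suffices for the decomposition to make sense $\Prob_1$-a.s.

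The goal is $M_n/n\to0$ $\Prob_1$-a.s. Then $S_n/n\ge c+M_n/n$, so $\liminf_n S_n/n\ge c>0$. This gives $S_n\to\infty$ a.s., so $S_n>\ell$ eventually and $\tau_\alpha<\infty$, exactly as in \Cref{prp:powerone}. The same statement $M_n/n\to0$ is what \Cref{prp:rate} borrows from this proof, so I would state it as a standalone step.

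For $M_n/n\to0$ I would use a self-contained dyadic-block argument rather than cite Kronecker. There are two steps.
\begin{enumerate}[label=(\roman*),leftmargin=2.4em]
\item \emph{Blocks.} Apply \Cref{lem:doob-gen} on each block $n\le 2^{k+1}$ with threshold $x=\epsilon 2^k$:
\[
   \Prob_1\lp\max_{n\le 2^{k+1}}|M_n|\ge\epsilon 2^k\rp\ \le\ \frac{2^{k+1}v}{\epsilon^2 4^k}=\frac{2v}{\epsilon^2 2^k} .
\]
The right-hand side is summable in $k$.
\item \emph{Borel--Cantelli.} Almost surely, for all large $k$, every $n\in[2^k,2^{k+1})$ has $|M_n|/n\le|M_n|/2^k<\epsilon$. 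Taking $\epsilon=1/j$ over a countable family gives $M_n/n\to0$ a.s.
\end{enumerate}

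I expect the main obstacle to be technical rather than conceptual: $\log E_n$ may equal $-\infty$ on $\{E_n=0\}$. This is ruled out $\Prob_1$-a.s. by the finiteness of the conditional variance and mean. To see it, note that $\Var_{\Prob_1}(\log E_n\mid\Fc_{n-1})\le v$ presupposes that $\log E_n$ is conditionally square-integrable, hence finite a.s. Once that is settled, orthogonality of the increments follows from the tower property, and the rest is routine.
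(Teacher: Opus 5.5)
Your argument is correct. It shares the paper's skeleton, the split $\log W_n=D_n+M_n$ with $D_n\ge nc$ and $\E_{\Prob_1}[M_n^{2}]\le nv$, but it reaches $M_n/n\to0$ by a different key step.

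The paper forms the $L^{2}$-bounded martingale $N_t=\sum_{n\le t}\Delta_n/n$. It uses the almost-sure convergence of $L^{2}$-bounded martingales and then Kronecker's lemma. You instead apply \Cref{lem:doob-gen} to the initial segments $n\le2^{k+1}$ at level $\epsilon2^{k}$, and sum the resulting bounds $2v/(\epsilon^{2}2^{k})$ by Borel--Cantelli.

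The paper's route is the textbook one and shorter on the page. However, it relies on two results, martingale convergence and Kronecker's lemma, that are neither proved in the paper nor listed in \Cref{app:quoted}. Yours needs only a maximal inequality the paper proves itself, plus Borel--Cantelli. It carries over to $\Prob_0$ for \Cref{prp:rate} just as the paper's does.

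Your route also gives a quantitative by-product that the Kronecker argument does not supply without extra work. Summing the block bounds gives $\Prob_1(\sup_{n\ge2^{K}}|M_n|/n\ge\epsilon)\le4v/(\epsilon^{2}2^{K})$. Finally, your remarks on $\{E_n=0\}$ and on the finiteness of the drift fill in points the paper's proof passes over silently.
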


\begin{proof}
Put $\Delta_n:=\log E_n-\E_{\Prob_1}[\log E_n\mid\Fc_{n-1}]$, a square-integrable
$\Prob_1$-martingale difference sequence. The martingale
$N_t:=\sum_{n\le t}\frac{\Delta_n}{n}$ has
$\E_{\Prob_1}[N_t^{2}]=\sum_{n\le t}\frac{\E_{\Prob_1}[\Delta_n^{2}]}{n^{2}}
\le v\sum_{n\ge1}n^{-2}<\infty$, so $N_t$ converges $\Prob_1$-a.s.; Kronecker's
lemma gives $\frac1t\sum_{n\le t}\Delta_n\to0$ $\Prob_1$-a.s. Hence
$\liminf_t\frac1t\log W_t\ge c>0$ and $\log W_t\to+\infty$ $\Prob_1$-a.s.
\end{proof}

\section{Results quoted from the literature}
\label{app:quoted}

Each result is stated in the notation of this paper. Nothing here is proved;
references are given for each.

\begin{theorem}[Ville's inequality; {\cite{Vil39}}, {\cite[Lem.~1]{How20}}]
\label{thm:app-ville}
Let $(U_t)_{t\in\N_0}$ be a nonnegative supermartingale on a filtered
probability space $(\Omega,\Bs{\Omega},(\Fc_t)_{t\in\N_0},\Prob)$ with $U_0\le1$
$\Prob$-a.s. Then for every $x\in(1,\infty)$,
\[
   \Prob\lp\sup_{t\in\N_0}U_t\ \ge\ x\rp\ \le\ \frac1x .
\]
\end{theorem}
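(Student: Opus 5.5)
The plan is the classical stopping-time argument: stop the supermartingale the first time it reaches a level just below $x$, and use that a supermartingale stopped at a bounded time has expectation at most $\E[U_0]\le1$.

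\emph{Step 1 (setup and reduction to a strict level).} Fix $x\in(1,\infty)$ and $x'\in(0,x)$. Define the stopping time
\[
   \sigma:=\inf\lC t\in\N_0\st U_t\ge x'\rC ,\qquad \inf\emptyset:=\infty .
\]
The point of using $x'<x$ is that the supremum in the statement need not be attained. If $\sup_tU_t\ge x>x'$, then some $U_t$ exceeds $x'$, so $\{\sup_tU_t\ge x\}\subseteq\{\sigma<\infty\}$. It therefore suffices to show $\Prob(\sigma<\infty)\le 1/x'$ and then let $x'\uparrow x$.

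\emph{Step 2 (bounded optional stopping).} For fixed $N\in\N$ write
\[
   U_{\sigma\wedge N}=U_0+\sum_{n=1}^{N}\mathbf 1_{\{\sigma\ge n\}}\lp U_n-U_{n-1}\rp .
\]
Each $\{\sigma\ge n\}=\{\sigma\le n-1\}^{c}$ lies in $\Fc_{n-1}$, and each $U_n$ is integrable because it is a supermartingale. The pull-out property and $\E[U_n-U_{n-1}\mid\Fc_{n-1}]\le0$ then give
\[
   \E[U_{\sigma\wedge N}]\le\E[U_0]\le1 .
\]

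\emph{Step 3 (Markov-type bound).} On $\{\sigma\le N\}$ we have $U_{\sigma\wedge N}=U_\sigma\ge x'$. Off it, $U_{\sigma\wedge N}\ge0$ by nonnegativity. Hence
\[
   x'\,\Prob(\sigma\le N)\le\E[U_{\sigma\wedge N}]\le1 .
\]
Letting $N\to\infty$ and using continuity from below gives $\Prob(\sigma<\infty)\le1/x'$. By Step 1, $\Prob(\sup_tU_t\ge x)\le1/x'$ for every $x'<x$, and letting $x'\uparrow x$ yields $1/x$.

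The only delicate point is the possibly non-attained supremum, which Step 1 handles. Everything else is the routine discrete optional-stopping identity; for bounded stopping times it needs no uniform integrability.
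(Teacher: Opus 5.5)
Your proof is correct and complete. There is no in-paper argument to compare it with: the paper does not prove this statement, but lists it among the results quoted from the literature (attributed to Ville and to Howard et al., Lemma~1). What you wrote is the standard optional-stopping proof that those citations rest on, made self-contained.

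Two small remarks on what your version buys.

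\emph{The auxiliary level.} The device $x'<x$ is genuinely needed for the non-strict event $\{\sup_t U_t\ge x\}$ as stated, because the supremum need not be attained. For the event $\{\exists t:\,U_t\ge x\}$, the stopping time at level $x$ itself would already suffice. In the paper's only application, \Cref{thm:typeI}, the event is the strict $\{\sup_t W_t>\frac1\alpha\}$, which is contained in $\{\sigma<\infty\}$ for $\sigma$ at level $\frac1\alpha$. So there your Steps~2--3 work with no limiting argument in $x'$.

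\emph{Integrability.} The paper lets wealth processes take values in $[0,\infty]$ (\Cref{lem:supmg}). Integrability of $U_n$ is therefore more safely justified by $\E[U_n]\le\E[U_0]\le1$, which follows from the same pull-out induction, than by appeal to the definition of a supermartingale. It then also gives $U_n<\infty$ almost surely, so the differences $U_n-U_{n-1}$ in your Step~2 are well defined almost surely.
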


\begin{theorem}[H\"older's inequality; {\cite[Thm.~3.5 and Rem.~3.6]{Rud87}}]
\label{thm:app-holder}
Let $(\Xc,\Bs{\Xc},\mu)$ be a measure space and let $r,r'\in(1,\infty)$ satisfy
$\frac1r+\frac1{r'}=1$. Then for all measurable $g,h:\Xc\to[0,\infty]$,
\[
   \int gh\,d\mu\ \le\ \lp\int g^{r}d\mu\rp^{1/r}\lp\int h^{r'}d\mu\rp^{1/r'} ,
\]
with equality if and only if there are constants $c_1,c_2\ge0$, not both zero,
with $c_1g^{r}=c_2h^{r'}$ $\mu$-a.e.; for the ``only if'' direction one needs
$0<\int g^{r}d\mu<\infty$ and $0<\int h^{r'}d\mu<\infty$.
\end{theorem}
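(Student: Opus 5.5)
The plan follows the classical route via Young's inequality. First dispose of the degenerate cases. Put $A:=\int g^{r}d\mu$ and $B:=\int h^{r'}d\mu$, both in $[0,\infty]$. If $A=0$ then $g=0$ $\mu$-a.e., so $\int gh\,d\mu=0$ and the inequality holds under the convention $0\cdot\infty:=0$ of \Cref{sec:setup}; the case $B=0$ is symmetric. If both are positive and one of them is $+\infty$, the right-hand side is $+\infty$ and there is nothing to prove. So the substantive case is $0<A,B<\infty$, which is exactly the proviso under which the ``only if'' half of the equality clause is asserted.

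In that case normalise: put $\tilde g:=g/A^{1/r}$ and $\tilde h:=h/B^{1/r'}$, so that $\int\tilde g^{r}d\mu=\int\tilde h^{r'}d\mu=1$. Apply Young's inequality $ab\le\frac{a^{r}}{r}+\frac{b^{r'}}{r'}$ pointwise for $a,b\in[0,\infty]$.
\begin{itemize}
\item For finite $a,b>0$ it follows from concavity of $\log$: $\log\lp\frac1r a^{r}+\frac1{r'}b^{r'}\rp\ge\frac1r\log a^{r}+\frac1{r'}\log b^{r'}=\log(ab)$.
\item Strict concavity gives equality if and only if $a^{r}=b^{r'}$.
\item The boundary cases $a$ or $b\in\{0,\infty\}$ are checked by hand, and equality there again forces $a^{r}=b^{r'}$.
\end{itemize}
Integrating yields $\int\tilde g\tilde h\,d\mu\le\frac1r+\frac1{r'}=1$. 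Multiplying back by $A^{1/r}B^{1/r'}$ gives the inequality.

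For the equality clause under the proviso, equality forces $\int\lp\frac{\tilde g^{r}}{r}+\frac{\tilde h^{r'}}{r'}-\tilde g\tilde h\rp d\mu=0$. The integrand is nonnegative, and it is $\mu$-a.e.\ finite because $\tilde g^{r}$ and $\tilde h^{r'}$ are integrable. Hence it vanishes $\mu$-a.e., and pointwise Young equality gives $\tilde g^{r}=\tilde h^{r'}$, that is, $g^{r}/A=h^{r'}/B$ $\mu$-a.e. So $c_1:=1/A$ and $c_2:=1/B$ are admissible constants.

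Conversely, suppose $c_1g^{r}=c_2h^{r'}$ with $c_1,c_2$ not both zero.
\begin{itemize}
\item If $c_2=0$, then $g=0$ a.e.\ and both sides vanish; the case $c_1=0$ is symmetric.
\item If $c_1,c_2>0$, write $h=\lp\frac{c_1}{c_2}\rp^{1/r'}g^{r/r'}$. Then $gh=\lp\frac{c_1}{c_2}\rp^{1/r'}g^{r}$ because $1+r/r'=r$, and $\int h^{r'}d\mu=\frac{c_1}{c_2}A$. A direct substitution using $\frac1r+\frac1{r'}=1$ shows both sides equal $\lp\frac{c_1}{c_2}\rp^{1/r'}A$, including when $A=\infty$.
\end{itemize}

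The only delicate point is bookkeeping of $0$ and $\infty$ values, both in Young's inequality and in the passage from ``zero integral of a nonnegative function'' to ``zero a.e.''. The latter needs the finiteness of the normalised integrals, which is precisely why the proviso $0<A,B<\infty$ is imposed for the ``only if'' direction.
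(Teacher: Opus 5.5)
Your proof is correct, and it is the same normalise-then-apply-Young argument the paper gives for the conditional version (\Cref{cor:app-holder}) and Rudin gives for the quoted theorem: you first dispose of $A=0$, $B=0$, $A=\infty$ or $B=\infty$, and then read equality off the vanishing of the nonnegative gap. One harmless slip: at boundary values, equality in Young's inequality does not force $a^{r}=b^{r'}$ (take $a=\infty$, $b\in(0,\infty)$, where both sides are $+\infty$), but you only use the equality case on the $\mu$-full set where $\tilde g,\tilde h$ are finite, so the argument stands.
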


\begin{corollary}[Conditional H\"older]\label{cor:app-holder}
Let $(\Omega,\Fc,\Prob)$ be a probability space, $\mathcal G\subseteq\Fc$ a
sub-$\sigma$-algebra, and $r,r'\in(1,\infty)$ with $\frac1r+\frac1{r'}=1$. Then
for all measurable $G,H:\Omega\to[0,\infty]$,
\[
   \E\lB GH\mid\mathcal G\rB\ \le\ \lp\E\lB G^{r}\mid\mathcal G\rB\rp^{1/r}
                            \lp\E\lB H^{r'}\mid\mathcal G\rB\rp^{1/r'}
   \qquad\Prob\text{-a.s.}
\]
Write $A:=(\E[G^{r}\mid\mathcal G])^{1/r}$ and
$B:=(\E[H^{r'}\mid\mathcal G])^{1/r'}$. On the $\mathcal G$-measurable event
$\lC0<A,B<\infty\rC$, equality holds a.s.\ if and only if
$G^{r}/A^{r}=H^{r'}/B^{r'}$ a.s.
\end{corollary}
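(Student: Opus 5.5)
The plan is to prove the conditional inequality pointwise through Young's inequality, normalising by the $\mathcal G$-measurable quantities $A$ and $B$. This avoids regular conditional distributions altogether. Throughout, I use the conventions of \Cref{sec:setup} ($0\cdot\infty:=0$, etc.). I also use that conditional expectations of $[0,\infty]$-valued variables are well defined and satisfy the pull-out property for nonnegative $\mathcal G$-measurable factors.

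\emph{Step 1 (degenerate events).} I split $\Omega$ into the $\mathcal G$-measurable events $\{A=0\}$, $\{B=0\}$, $\{0<A,B<\infty\}$, and the remainder.

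On $\{A=0\}$ we have $\E[G^{r}\mathbf 1_{\{A=0\}}]=\E[A^{r}\mathbf 1_{\{A=0\}}]=0$, so $G=0$ a.s.\ there. Hence $GH=0$ a.s.\ on that event, and the left-hand side vanishes. The event $\{B=0\}$ is handled symmetrically.

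On the remainder, one of $A,B$ is $+\infty$ and the other is positive. So the right-hand side is $+\infty$ and there is nothing to prove.

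\emph{Step 2 (the main case).} On $E:=\{0<A,B<\infty\}$, put $g:=G/A$ and $h:=H/B$. Since $\E[G^{r}\mid\mathcal G]<\infty$ on $E$, we get $G<\infty$ a.s.\ on $E$; likewise $H<\infty$ a.s.\ on $E$. So $g$ and $h$ are a.s.\ finite there. Young's inequality gives
\[
   gh\ \le\ \tfrac1r g^{r}+\tfrac1{r'}h^{r'}
\]
pointwise. Multiply by $\mathbf 1_E$, take $\E[\,\cdot\mid\mathcal G]$, and pull out the $\mathcal G$-measurable factors $A^{-1}$, $B^{-1}$, $A^{-r}$, $B^{-r'}$. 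This yields
\[
   \frac{\E[GH\mid\mathcal G]}{AB}\ \le\ \frac1r\cdot1+\frac1{r'}\cdot1=1
   \qquad\text{a.s.\ on }E,
\]
which is the inequality.

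\emph{Step 3 (equality clause).} Write $\Delta:=\tfrac1r g^{r}+\tfrac1{r'}h^{r'}-gh\ge0$ on $E$. It is a.s.\ finite there and has finite conditional expectation, so no $\infty-\infty$ issue arises.

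Suppose equality holds a.s.\ on $E$. Then $\E[\Delta\mathbf 1_E\mid\mathcal G]=0$. Taking expectations gives $\Delta\mathbf 1_E=0$ a.s. The equality case of Young's inequality for finite nonnegative reals, namely $gh=\tfrac1rg^{r}+\tfrac1{r'}h^{r'}$ if and only if $g^{r}=h^{r'}$, then gives $G^{r}/A^{r}=H^{r'}/B^{r'}$ a.s.\ on $E$.

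Conversely, suppose $g^{r}=h^{r'}$ on $E$. Then $h=g^{r/r'}$, so $gh=g^{1+r/r'}=g^{r}$. Hence $\E[GH\mid\mathcal G]=AB\,\E[g^{r}\mid\mathcal G]=AB$ on $E$.

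\emph{Main obstacle.} The only delicate point is the bookkeeping on the degenerate events and the measure-zero sets where $G$ or $H$ is infinite. I would handle it as in Step 1, by showing the relevant variable vanishes a.s.\ on each $\mathcal G$-event.
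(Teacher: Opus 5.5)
Your proof is correct and is essentially the paper's argument. You dispose of $\{A=0\}$, $\{B=0\}$ and the infinite cases, apply Young's inequality to $G/A$ and $H/B$ on $\{0<A,B<\infty\}$ using the pull-out property, and obtain the equality case from the vanishing of the nonnegative Young gap. The one difference is the converse direction: you compute it directly via $gh=g^{r}$, whereas the paper reads both directions off the gap, and this variant is harmless.
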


\begin{proof}
Write $A:=(\E[G^{r}\mid\mathcal G])^{1/r}$ and $B:=(\E[H^{r'}\mid\mathcal G])^{1/r'}$, both
$\mathcal G$-measurable with values in $[0,\infty]$. On $\lC A=0\rC$ we have $G=0$
a.s.\ conditionally, hence $\E[GH\mid\mathcal G]=0$; likewise on $\lC B=0\rC$; and on
$\lC A=\infty\rC\cup\lC B=\infty\rC$ the right-hand side is $+\infty$ unless the
other factor vanishes, which is the case just treated. So it suffices to argue
on $\mathcal G$-measurable event $\lC0<A,B<\infty\rC$. There Young's inequality
$xy\le\frac{x^{r}}{r}+\frac{y^{r'}}{r'}$ applied pointwise to $x=G/A$,
$y=H/B$ and conditioned on $\mathcal G$ gives, using that $A,B$ are $\mathcal G$-measurable,
\[
   \frac{\E[GH\mid\mathcal G]}{AB}
   \ \le\ \frac{\E[G^{r}\mid\mathcal G]}{r\,A^{r}}+\frac{\E[H^{r'}\mid\mathcal G]}{r'\,B^{r'}}
   \ =\ \frac1r+\frac1{r'}\ =\ 1 .
\]
Young's inequality is an equality exactly at $x^{r}=y^{r'}$, so on
$\lC0<A,B<\infty\rC$ the displayed inequality is an equality if and only if the
conditional expectation of the nonnegative gap
$\frac{G^{r}}{rA^{r}}+\frac{H^{r'}}{r'B^{r'}}-\frac{GH}{AB}$ vanishes, i.e.\ if
and only if the gap itself vanishes a.s., i.e.\ $G^{r}/A^{r}=H^{r'}/B^{r'}$ a.s.
\end{proof}

\begin{remark}[Use in this paper]\label{rem:app-holder}
\Cref{thm:upper-cond} applies \Cref{cor:app-holder} under
$\E_{\Prob_0}[\,\cdot\mid\Fc_{n-1}]$ with $r=\frac1{1-\beta}$, $r'=\frac1\beta$,
$G=E_n^{1-\beta}$ and $H=(R_nE_n^{-s})^{\beta}$; no finiteness is needed for the
inequality, and \Cref{prp:equality-cond} checks the proviso of the equality
clause before using it.

\Cref{thm:upper} applies the unconditional \Cref{thm:app-holder} with
$r=\frac{1}{1-\beta}$,
$r'=\frac1\beta$, $g=(p_0E)^{1-\beta}$ and $h=(p_1E^{-s})^{\beta}$, so that
$\int g^{r}d\mu=\E_{P_0}[E]\in(0,1]$ and
$\int h^{r'}d\mu=\E_{P_1}[E^{-s}]\in(0,\infty]$ once the degenerate cases of
Step 1 there are excluded. The inequality itself needs no finiteness. For the
equality case the provisos do hold: if $\E_{P_1}[E^{-s}]=\infty$ then
$\Lambda_s(E)=-\infty<\lambda(\beta)$, so equality forces it to be finite, and
$\E_{P_0}[E]>0$ because $\E_{P_0}[E]=0$ would give $E=0$ $P_0$-a.s.\ and hence
$P_1$-a.s.
\end{remark}

\begin{theorem}[Kolmogorov's maximal inequality; {\cite[Thm.~22.4]{Bil95}}]
\label{thm:app-kolmogorov}
Let $(Y_n)_{n\in\N}$ be independent real random variables on
$(\Omega,\Bs{\Omega},\Prob)$ with $\E[Y_n]=0$ and $\E[Y_n^{2}]<\infty$, and put
$M_n:=\sum_{m\le n}Y_m$. Then for every $t\in\N$ and every $x>0$,
\[
   \Prob\lp\max_{n\le t}|M_n|\ \ge\ x\rp\ \le\ \frac{\E[M_t^{2}]}{x^{2}} .
\]
\end{theorem}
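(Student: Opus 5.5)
The plan is a first-passage decomposition, the same argument as \Cref{lem:doob-gen}, with independence standing in for the martingale property. Let $\mathcal G_n:=\sigma(Y_1,\dots,Y_n)$ and $\mathcal G_0$ the trivial $\sigma$-algebra. Each $Y_m$ is square-integrable and the $Y_m$ are independent with mean zero, so:
\begin{itemize}[leftmargin=2.4em]
\item the cross terms $\E[Y_mY_k]=\E[Y_m]\E[Y_k]$, $m\ne k$, vanish, and $\E[M_t^{2}]=\sum_{m\le t}\E[Y_m^{2}]<\infty$;
\item $(M_n)_{n\le t}$ is a $\mathcal G_n$-martingale with $M_0=0$, because $\E[Y_{n+1}\mid\mathcal G_n]=\E[Y_{n+1}]=0$ by independence.
\end{itemize}
At this point one could simply invoke \Cref{lem:doob-gen}. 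Its proof never uses that the measure is $\Prob_1$ or that the filtration is $(\Fc_n)$, so it transfers verbatim to $(\Omega,\Bs{\Omega},\Prob)$ with $(\mathcal G_n)$. I would nevertheless write the argument out directly, to keep the quoted statement self-contained.

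Fix $x>0$ and let $\sigma:=\inf\{n\le t:\ |M_n|\ge x\}$, with $\inf\emptyset:=\infty$. The events $A_n:=\{\sigma=n\}$, $n\le t$, are $\mathcal G_n$-measurable and pairwise disjoint, their union is $\{\max_{n\le t}|M_n|\ge x\}$, and $|M_n|\ge x$ on $A_n$. Disjointness gives
\[
   \E[M_t^{2}]\ \ge\ \sum_{n\le t}\E\bigl[M_t^{2}\mathbf 1_{A_n}\bigr].
\]
In each term I expand $M_t=M_n+(M_t-M_n)$:
\[
   \E\bigl[M_t^{2}\mathbf 1_{A_n}\bigr]
   =\E\bigl[M_n^{2}\mathbf 1_{A_n}\bigr]
   +2\,\E\bigl[M_n\mathbf 1_{A_n}(M_t-M_n)\bigr]
   +\E\bigl[(M_t-M_n)^{2}\mathbf 1_{A_n}\bigr].
\]
The last term is nonnegative, so it can be dropped.

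The middle term is the only point needing care. The factor $M_n\mathbf 1_{A_n}$ is $\mathcal G_n$-measurable and lies in $L^{2}$. The factor $M_t-M_n=\sum_{n<m\le t}Y_m$ lies in $L^{2}$, is independent of $\mathcal G_n$, and has mean zero. By Cauchy--Schwarz the product is integrable, and by independence its expectation factors as $\E[M_n\mathbf 1_{A_n}]\cdot\E[M_t-M_n]=0$.

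Combining, $\E[M_t^{2}]\ge\sum_{n\le t}\E[M_n^{2}\mathbf 1_{A_n}]\ge x^{2}\sum_{n\le t}\Prob(A_n)=x^{2}\,\Prob(\max_{n\le t}|M_n|\ge x)$. Dividing by $x^{2}$ gives the claim. The integrability of the cross term is the main obstacle, and it is handled by the $L^{2}$ bounds together with independence as above.
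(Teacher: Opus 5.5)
Your proof is correct and complete. The paper gives no proof of this statement: it is quoted in \Cref{app:quoted} from Billingsley, and Billingsley's argument is exactly yours, a first-passage decomposition in which independence kills the cross term. The paper's own in-house version, \Cref{lem:doob-gen}, runs the same decomposition for a general square-integrable martingale and uses conditional Jensen, $\E[M_t^{2}\mid\mathcal G_n]\ge M_n^{2}$, in place of your expansion of $M_t^{2}$. So your remark that one could simply invoke that lemma, once the martingale property is checked, is also right.
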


\begin{theorem}[Cram\'er's theorem on $\R$; {\cite{Cra38}}, {\cite[Thm.~2.2.3]{DZ10}}]
\label{thm:app-cramer}
Let $(Y_n)_{n\in\N}$ be i.i.d.\ real random variables on $(\Omega,\Fc_\infty,\Prob)$
with cumulant generating function $\kappa_Y:\R\to(-\infty,\infty]$,
$\kappa_Y(\theta):=\log\E[e^{\theta Y_1}]$, and let
$\bar Y_t:=t^{-1}\sum_{n\le t}Y_n$. Put
\[
   I:\R\to[0,\infty],\qquad
   I(x):=\sup_{\theta\in\R}\lC\theta x-\kappa_Y(\theta)\rC .
\]
Then $(\bar Y_t)_{t\in\N}$ satisfies a large deviation principle with rate
function $I$: for every closed $F\subseteq\R$ and every open $G\subseteq\R$,
\[
   \limsup_{t\to\infty}\tfrac1t\log\Prob(\bar Y_t\in F)\le-\inf_{x\in F}I(x),
   \qquad
   \liminf_{t\to\infty}\tfrac1t\log\Prob(\bar Y_t\in G)\ge-\inf_{x\in G}I(x) .
\]
\end{theorem}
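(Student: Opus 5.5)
This is Cramér's theorem, a quoted result, so the plan is the standard proof of \cite[Thm.~2.2.3]{DZ10} organised in two halves: an exponential-Chebyshev upper bound for half-lines and a tilted-measure lower bound for small intervals.

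\emph{Properties of $I$.} I first record that $\kappa_Y$ is convex and lower semicontinuous with $\kappa_Y(0)=0$. Consequently $I$ is convex, lower semicontinuous and nonnegative. If $\bar y:=\E[Y_1]$ exists in $[-\infty,\infty]$, then Jensen gives $I(\bar y)=0$, and $I$ is nonincreasing left of $\bar y$ and nondecreasing right of it. On $(-\infty,\bar y]$ the supremum defining $I$ may be restricted to $\theta\le0$, and on $[\bar y,\infty)$ to $\theta\ge0$. If $\kappa_Y$ is finite only at $0$, then $I\equiv0$ and both bounds are trivial.

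\emph{Upper bound.} For a half-line I use Markov on $e^{t\theta\bar Y_t}$:
\[
\Prob(\bar Y_t\ge x)\le\exp\lp-t(\theta x-\kappa_Y(\theta))\rp \quad\text{for } \theta\ge0,
\]
and symmetrically for $\Prob(\bar Y_t\le x)$ with $\theta\le0$. Optimising over $\theta$ and using the monotonicity above gives the bound $e^{-tI(x)}$ for $x\ge\bar y$ (respectively $x\le\bar y$). For a general closed $F$, if $\bar y\in F$ there is nothing to prove. Otherwise I take $x_\pm$ to be the nearest points of $F$ on either side of $\bar y$, so that $F\subseteq(-\infty,x_-]\cup[x_+,\infty)$. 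A union bound then gives rate $\min(I(x_-),I(x_+))=\inf_FI$, again by monotonicity. The case where $\bar y$ does not exist is handled by noting that $\kappa_Y=\infty$ on one side, so only one half-line matters.

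\emph{Lower bound.} It suffices to show that
\[
\liminf_t\tfrac1t\log\Prob(|\bar Y_t-x|<\delta)\ge-I(x)
\]
for every $x$ and $\delta>0$. The main case is when $x$ is attained as $\kappa_Y'(\eta)$ for some $\eta$ in the interior of the domain of $\kappa_Y$. There I define the tilted law $d\tilde\Prob\propto e^{\eta Y_1}d\Prob$, which has mean $x$. The change of measure gives
\[
\Prob(|\bar Y_t-x|<\delta)\ge e^{-t(\eta x-\kappa_Y(\eta))-t|\eta|\delta}\,\tilde\Prob(|\bar Y_t-x|<\delta),
\]
and the weak law of large numbers under $\tilde\Prob$ sends the last factor to $1$. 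Since $I(x)=\eta x-\kappa_Y(\eta)$ in this case, letting $\delta\downarrow0$ finishes it.

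\textbf{The main obstacle} is removing the assumption that such an $\eta$ exists, i.e.\ handling $x$ at the boundary or when the supremum defining $I(x)$ is not attained. I would first try the Dembo--Zeitouni device: truncate the law of $Y_1$ to $[-M,M]$ by conditioning. For the truncated law, $\kappa^M$ is finite everywhere and the supremum is attained, so the tilted argument applies. Then I let $M\to\infty$, using that $\kappa^M\uparrow\kappa_Y$ and a compactness argument on the level sets to get $\limsup_M I^M(x)\le I(x)$. Finally, for open $G$, the lower bound follows by taking the supremum over $x\in G$ of the local bound.
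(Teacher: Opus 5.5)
Your outline follows the Dembo--Zeitouni proof that the paper cites for this quoted result; the paper gives no proof of its own. Your exponential-Chebyshev upper bound works, and so does the tilted lower bound when $x=\kappa_Y'(\eta)$ with $\eta$ interior.

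There is one genuine gap, and it sits in the step you offer for the main obstacle. After truncating to $[-M,M]$, the supremum defining the rate function is attained only if the truncated law charges both $(-\infty,x)$ and $(x,\infty)$. Truncation cannot create mass on a side that has none. Take $Y_1\sim\mathrm{Bern}(\tfrac12)$ and $x=0$:
\begin{itemize}
\item truncation with $M\ge1$ changes nothing, and $I(0)=\log2$;
\item $-\kappa^M(\theta)=-\log(\tfrac12+\tfrac12e^{\theta})$ reaches $\log2$ only as $\theta\to-\infty$, so there is no $\eta$ to tilt by;
\item $\theta\mapsto\kappa^M(\theta)-\theta x$ is not coercive, so your level-set compactness argument for $M\to\infty$ fails as well.
\end{itemize}
The missing piece is Dembo--Zeitouni's separate one-sided case. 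Suppose $Y_1\ge x$ a.s.\ (symmetrically $Y_1\le x$). Then $\theta x-\kappa_Y(\theta)\le0$ for $\theta\ge0$, and dominated convergence as $\theta\to-\infty$ gives $I(x)=-\log\Prob(Y_1=x)$. The lower bound is then immediate from $\Prob(|\bar Y_t-x|<\delta)\ge\Prob(Y_1=x)^{t}$, and it is trivial when there is no atom, since then $I(x)=\infty$. In the remaining two-sided case, take $M$ large enough that the truncated law still charges both sides of $x$. That condition, not boundedness of the support, is what gives attainment and compact sublevel sets.

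Two smaller repairs are also needed.
\begin{itemize}
\item The monotonicity $\kappa^M\uparrow\kappa_Y$ holds for the unnormalised $\Lambda_M(\theta):=\log\E[e^{\theta Y_1}\mathbf 1_{\{|Y_1|\le M\}}]$. It need not hold for the cumulant generating function of the conditioned law, which is not monotone in $M$ in general. So compare $\Prob(|\bar Y_t-x|<\delta)\ge\Prob(|Y_1|\le M)^{t}\,\Prob_M(|\bar Y_t-x|<\delta)$, where under $\Prob_M$ the $Y_n$ are i.i.d.\ with the law of $Y_1$ given $|Y_1|\le M$. Then run the nested-level-set argument with $\Lambda_M$.
\item In the upper bound with $\E[Y_1]=-\infty$ (symmetrically $+\infty$), reducing to one half-line covers only closed sets bounded below. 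For closed sets unbounded below you need $\inf_{\R}I=0$. This follows from $I(x)\le-\log\Prob(Y_1\ge x)\to0$ as $x\to-\infty$. When $\E[Y_1]$ is undefined, $\kappa_Y=\infty$ on both sides of $0$, and your $I\equiv0$ remark covers that case.
\end{itemize}
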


\begin{theorem}[Chernoff--Stein lemma, with strong converse;
{\cite{Chernoff56}}, {\cite[Thm.~11.8.3]{CT06}}, {\cite{HK89}}]
\label{thm:app-stein}
Let $X_1,\dots,X_t$ be i.i.d.\ with one-observation law $P_0$ or $P_1$, and let
$\KL(P_0\|P_1)<\infty$. For $\alpha\in(0,1)$ set
\[
   \gamma^{\star}_t(\alpha):=\inf\lC P_1^{\otimes t}(C_t)\ :\
      C_t\in\Bs{\Xc}^{\otimes t},\ P_0^{\otimes t}(C_t^{c})\le\alpha\rC ,
\]
the least type-II error of a test with acceptance region $C_t$ for $P_0$ and
type-I error at most $\alpha$. Then
\[
   \lim_{t\to\infty}\ -\frac1t\log\gamma^{\star}_t(\alpha)\ =\ \KL(P_0\|P_1)
   \qquad\text{for every }\alpha\in(0,1) .
\]
\end{theorem}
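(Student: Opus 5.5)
This is the Chernoff--Stein lemma with strong converse, quoted from the literature. I would prove it by two matching bounds on $\gamma^{\star}_t(\alpha)$, working with the log-likelihood-ratio sum $G_t=\sum_{n\le t}\log R(X_n)$. The hypothesis $\KL(P_0\|P_1)<\infty$ gives $P_0\sim P_1$ together with \Cref{ass:standing}. It therefore also gives $\log R\in L^1(P_0)$ with $\E_{P_0}[\log R]=-\KL(P_0\|P_1)=:-D$. By the weak law of large numbers, $-\frac1tG_t\to D$ in $P_0^{\otimes t}$-probability.

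\emph{Achievability (upper bound on $\gamma^{\star}_t$).} Fix $\epsilon>0$ and take the acceptance region
\[
C_t:=\lC -G_t\ge t(D-\epsilon)\rC .
\]
By the weak law, $P_0^{\otimes t}(C_t^{c})\to0$, so $C_t$ is admissible for every large $t$. On $C_t$ the change of measure gives $dP_1^{\otimes t}=e^{G_t}dP_0^{\otimes t}\le e^{-t(D-\epsilon)}dP_0^{\otimes t}$. Hence $P_1^{\otimes t}(C_t)\le e^{-t(D-\epsilon)}$, and so $\liminf_t-\frac1t\log\gamma^{\star}_t(\alpha)\ge D-\epsilon$.

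\emph{Converse, valid for every $\alpha\in(0,1)$.} This is exactly the argument of \Cref{prp:stein-gen} with $L_t=e^{G_t}$. Let $C_t$ be any admissible region and put $B_t:=\{-G_t\le t(D+\epsilon)\}$. Then
\[
P_1^{\otimes t}(C_t)\ge e^{-t(D+\epsilon)}P_0^{\otimes t}(C_t\cap B_t)\ge e^{-t(D+\epsilon)}\bigl(1-\alpha-P_0^{\otimes t}(B_t^{c})\bigr),
\]
and $P_0^{\otimes t}(B_t^{c})\to0$ by the weak law. Because $1-\alpha>0$, this yields $\limsup_t-\frac1t\log\gamma^{\star}_t(\alpha)\le D+\epsilon$ uniformly over admissible $C_t$, for every $\alpha<1$. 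Letting $\epsilon\downarrow0$ in both bounds gives the limit.

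The only delicate point is the converse for $\alpha$ close to $1$. The naive Stein converse uses Markov or data processing and yields only a bound of order $D/(1-\alpha)$. The information-spectrum form above avoids that loss, and it needs nothing more than the weak law for $\log R$ under $P_0$. That law applies because $\log R$ is $P_0$-integrable: the positive part satisfies $(\log R)_+\le R$, and the negative part is integrable because $D<\infty$. No second moment is required, so the weak law (rather than the strong law) suffices throughout.
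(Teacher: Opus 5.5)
Your proof is correct. The paper does not prove \Cref{thm:app-stein} at all. It quotes the result, taking $\alpha\in(0,\frac12)$ from \cite{CT06} and the remaining range from the strong converse of \cite{HK89} (\Cref{rem:app-stein}).

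You instead prove both halves directly. Achievability uses the threshold region $\{-G_t\ge t(D-\epsilon)\}$. The converse uses the information-spectrum bound, which is exactly the argument the paper gives in \Cref{prp:stein-gen}, applied to an arbitrary admissible region, with the weak law supplying \Cref{ass:spectrum}.

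What your route buys is that the range-free form comes at no cost. The spectrum tolerance $\epsilon$ is decoupled from $\alpha$, so the factor $1-\alpha-o(1)$ stays positive for every $\alpha<1$. The appeal to \cite{HK89} is therefore not needed for anything the paper uses. That reference proves more: the type-I error tends to one once the type-II exponent exceeds $\KL(P_0\|P_1)$. Indeed \Cref{prp:stein-ceiling}, the only place the theorem is used, needs only your converse half, and \Cref{rem:stein-iid} already notes that \Cref{prp:stein-gen} delivers it. Your achievability half is needed only for the identity as stated. What the citation buys is brevity and the stronger statement.

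Two small remarks.
\begin{itemize}
\item You take $P_1\ll P_0$ from \Cref{ass:standing}, which the quoted statement does not list. Nothing is lost without it: in the achievability, restrict $C_t$ to $(\Xc\setminus N)^{t}$ for a $P_0$-null $N$ carrying the singular part of $P_1$. In the converse, use only the absolutely continuous part, via $P_1^{\otimes t}(C_t)\ge\E_{P_0^{\otimes t}}[e^{G_t}\mathbf 1_{C_t}]$.
\item For i.i.d.\ summands the strong law also needs only integrability. So the economy in your last sentence is not weak versus strong law; it is that only convergence in probability is used.
\end{itemize}
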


\begin{remark}[Use in this paper, and the range of $\alpha$]\label{rem:app-stein}
\cite[Thm.~11.8.3]{CT06} proves \Cref{thm:app-stein} for
$\alpha\in(0,\frac12)$; that the value does not change for
$\alpha\in[\frac12,1)$ is the strong converse of \cite{HK89}. It is that
range-free form which \Cref{prp:stein-ceiling} uses, since nothing in this
paper restricts $\alpha$ below $\frac12$. (\cite{CT06} constrain the type-I
error by $<\alpha$ rather than $\le\alpha$; the exponent is the same for every
$\alpha\in(0,1)$, so the difference is immaterial.)
\end{remark}

\begin{theorem}[R\'enyi divergence: order monotonicity and the limit at one;
{\cite[Prop.~2, Thms.~3 and 5]{vEH14}}]
\label{thm:app-renyi}
Let $Q,P$ be probability measures on $(\Xc,\Bs{\Xc})$ and let $\Ren_a$ be as in
\Cref{def:renyi}; no absolute continuity is assumed. Then, for $a\in(0,1)$,
\[
   \Ren_a(Q\|P)=\frac{a}{1-a}\,\Ren_{1-a}(P\|Q) ;
\]
the map $a\mapsto\Ren_a(Q\|P)$ is nondecreasing on $(0,1)$; and
$\lim_{a\uparrow1}\Ren_a(Q\|P)=\KL(Q\|P)$, with no further hypothesis. (The
proviso ``$\KL(Q\|P)=\infty$ or $\Ren_b(Q\|P)<\infty$ for some $b>1$'' in
\cite{vEH14} governs the two-sided limit at order one, not the left limit used
here.)
\end{theorem}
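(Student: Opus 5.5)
We will prove the three assertions directly with densities, reusing the arguments of \Cref{lem:Zf} and \Cref{lem:limit} with $P_0$ replaced by $Q$. Fix $\mu$ dominating both measures, with $q=\frac{dQ}{d\mu}$ and $p=\frac{dP}{d\mu}$, and put $H(a):=\int q^{a}p^{1-a}\,d\mu$ for $a\in(0,1)$. The integrand vanishes wherever $q=0$ or $p=0$, so $H(a)=\E_Q[Y^{1-a}]$ with $Y:=p/q$ on $\{q>0\}$. This $Y$ is $Q$-a.s.\ finite, and it vanishes on $\{p=0\}$, a set which may carry $Q$-mass since no absolute continuity is assumed. Moreover $\E_Q[Y]=P(q>0)\le1$, and hence $H(a)\in[0,1]$ by Jensen. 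Every assertion below will be a statement about the single $Q$-integrable variable $Y$ and the exponent $b:=1-a\in(0,1)$.

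\emph{Skew symmetry.} The same integral represents both sides. Indeed $\Ren_a(Q\|P)=\frac{1}{a-1}\log H(a)$, and swapping the roles of $q,p$ and of $a,1-a$ gives $\Ren_{1-a}(P\|Q)=\frac{1}{-a}\log H(a)$. Taking the ratio yields the factor $\frac{a}{1-a}$. The value $H(a)=0$, which makes both sides $+\infty$, is consistent.

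\emph{Monotonicity in the order.} We have $\Ren_a(Q\|P)=-\frac1b\log\E_Q[Y^{b}]$. Take $0<b'<b<1$ and put $\rho:=b'/b$. Jensen's inequality for the concave map $x\mapsto x^{\rho}$ gives $\E_Q[Y^{b'}]\le(\E_Q[Y^{b}])^{\rho}$. Taking $\log$ and dividing by $-b'<0$ gives $-\frac1{b'}\log\E_Q[Y^{b'}]\ge-\frac1b\log\E_Q[Y^{b}]$. Smaller $b$ means larger $a$, so this is exactly $\Ren_{1-b'}\ge\Ren_{1-b}$, i.e.\ the map $a\mapsto\Ren_a(Q\|P)$ is nondecreasing. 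This is the argument of \Cref{lem:Zf}(ii) verbatim.

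\emph{The limit $a\uparrow1$, i.e.\ $b\downarrow0$.} We follow \Cref{lem:limit}. Put $h_b:=(Y^{b}-1)/b$. These decrease pointwise to $\log Y\in[-\infty,\infty)$ as $b\downarrow0$, and they are dominated above by $h_1=Y-1$, which is $Q$-integrable since $\E_Q[Y]\le1$. Monotone convergence for decreasing families then gives
\[
   \frac{\E_Q[Y^{b}]-1}{b}\ \longrightarrow\ \E_Q[\log Y] .
\]
The right-hand side equals $-\KL(Q\|P)$. When $Q\ll P$ this is the definition. Otherwise $Q(Y=0)>0$, so both sides are $-\infty$. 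One must still check that $\E_Q[(\log Y)_+]\le\E_Q[Y]<\infty$, so that the expectation is well defined.

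The step needing care is converting this into the statement about $-\frac1b\log\E_Q[Y^{b}]$, and we split into two cases.
\begin{enumerate}
\item If $\KL(Q\|P)<\infty$, then $\E_Q[Y^{b}]\to1$. Hence $\log\E_Q[Y^{b}]=(\E_Q[Y^{b}]-1)(1+o(1))$, and the limit is $\KL(Q\|P)$.
\item If $\KL(Q\|P)=\infty$, the inequality $\log y\le y-1$ gives $-\frac1b\log\E_Q[Y^{b}]\ge-\frac{\E_Q[Y^{b}]-1}{b}\to\infty$. This covers the non-absolutely-continuous case, in which $\E_Q[Y^{b}]\to Q(Y>0)<1$ and the divergence is $+\infty$ for trivial reasons.
\end{enumerate}
Only the one-sided limit is claimed, which is why no hypothesis on orders $b>1$ is needed.
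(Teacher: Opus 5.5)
Your proof is correct and follows the paper's own argument. The paper only cites \cite{vEH14} for the general statement, but it proves the instance it uses ($Q=P_0$, $P=P_1$, $P_1\ll P_0$) in \Cref{lem:Zf}(ii)--(iii) and \Cref{lem:limit} by your same three steps: direct evaluation of the integral, Jensen for $x\mapsto x^{\rho}$, and monotone convergence of $(x^{\beta}-1)/\beta$ under the integrable majorant $x-1$. What you add is the extension to an arbitrary pair with no absolute continuity: you replace $R$ (with $\E_{P_0}[R]=1$) by $Y=p/q$ (with $\E_Q[Y]=P(q>0)\le1$), and you check that $Q\not\ll P$ forces $Q(Y=0)>0$.
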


\begin{remark}[Use in this paper]\label{rem:app-renyi}
By \Cref{lem:Zf}(iii), $\lambda(\beta)=\Ren_{1-\beta}(P_0\|P_1)$, so
\Cref{lem:Zf}(ii) and \Cref{lem:limit} are instances of the monotonicity and
the limit above, and \Cref{lem:Zf}(iii) itself is the skew-symmetry identity.
We proved all three directly in \Cref{sec:ceiling} to keep the argument
self-contained.
\end{remark}

\begin{theorem}[Variational characterisation of R\'enyi divergence;
{\cite[Thm.~1]{Ana18}}]
\label{thm:app-variational}
Let $a\in(0,1)$ and let $Q,P$ be probability measures on $(\Xc,\Bs{\Xc})$ with
$Q\ll P$. Then, with $\Ren_a$ as in \Cref{def:renyi},
\begin{equation}\label{eq:app-variational}
   \Ren_a(Q\|P)\ =\ \inf_{\mu\ll Q}
   \lC\ \KL(\mu\|P)\ +\ \frac{a}{1-a}\,\KL(\mu\|Q)\ \rC ,
\end{equation}
the infimum running over probability measures $\mu$ on $(\Xc,\Bs{\Xc})$ and
being attained at the measure $\mu_a$ with
$\frac{d\mu_a}{dP}\propto\lp\frac{dQ}{dP}\rp^{a}$.
\end{theorem}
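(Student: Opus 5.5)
The plan is a direct computation that completes the square in relative-entropy form, using the tilted measure $\mu_a$ as the reference. Write $q:=\frac{dQ}{dP}$ and $Z:=\int q^{a}\,dP=\E_{P}[q^{a}]$. By \Cref{def:renyi}, with $\mu=P$, we have $\Ren_a(Q\|P)=\frac{1}{a-1}\log Z$. Jensen for the concave $x\mapsto x^{a}$ gives $Z\in(0,1]$; positivity holds because $Q(q>0)=1$ and $Q\ll P$. So we may define the probability measure
\[
   d\mu_a:=\frac{q^{a}}{Z}\,dP .
\]
It satisfies $\mu_a\ll Q$, since $q^{a}$ vanishes wherever $q$ does. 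Hence $\mu_a$ is admissible in \eqref{eq:app-variational}.

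The key step is an exact identity for admissible $\mu$. Fix $\mu\ll Q$. Then $\mu\ll P$, and $m:=\frac{d\mu}{dP}$ satisfies $\frac{d\mu}{dQ}=m/q$ $\mu$-a.s., since $q>0$ $\mu$-a.s. If $\KL(\mu\|P)=\infty$ or $\KL(\mu\|Q)=\infty$, the right-hand side of \eqref{eq:app-variational} is $+\infty$ at $\mu$ and there is nothing to prove. So assume both are finite. Then $\log m$ and $\log(m/q)$ are $\mu$-integrable, and hence so is $\log q$. Linearity therefore legitimately gives
\[
   \KL(\mu\|P)+\tfrac{a}{1-a}\KL(\mu\|Q)
   =\tfrac{1}{1-a}\,\E_{\mu}\bigl[\log m-a\log q\bigr]
   =\tfrac{1}{1-a}\,\E_{\mu}\Bigl[\log\tfrac{m}{q^{a}}\Bigr].
\]
Now $\frac{m}{q^{a}}=\frac1Z\frac{d\mu}{d\mu_a}$ $\mu$-a.s. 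Here $\mu\ll\mu_a$ holds because $\mu\ll Q$ and $\mu_a$ has positive density with respect to $P$ on $\{q>0\}$. It follows that
\[
   \KL(\mu\|P)+\tfrac{a}{1-a}\KL(\mu\|Q)
   =\tfrac{1}{1-a}\KL(\mu\|\mu_a)+\tfrac{1}{a-1}\log Z
   =\tfrac{1}{1-a}\KL(\mu\|\mu_a)+\Ren_a(Q\|P).
\]

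The conclusion then follows from Gibbs' inequality. Since $\KL(\mu\|\mu_a)\ge0$ with equality iff $\mu=\mu_a$, and $\frac{1}{1-a}>0$, the infimum equals $\Ren_a(Q\|P)$ and is attained at $\mu_a$. It remains to check that $\mu_a$ itself has finite $\KL(\mu_a\|P)$ and $\KL(\mu_a\|Q)$, so that the identity applies to it. I would verify this by computing $\E_{\mu_a}[\log q]$ via the tilted cumulant generating function of $\log q$.

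I expect this last step, together with the integrability bookkeeping above, to be the main obstacle. If $\E_{\mu_a}[\log q]$ has no finite value, I would instead approximate $\mu_a$ by truncations $\mu_a(\,\cdot\mid q\in[1/k,k])$, use the identity on each, and pass to the limit $k\to\infty$. That yields the infimum, and attainment is then recovered from the identity in the form $\KL(\mu\|P)+\frac{a}{1-a}\KL(\mu\|Q)\ge\Ren_a$, with equality forced at $\mu_a$ by direct evaluation.
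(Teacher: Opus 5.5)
The paper does not prove this statement; it quotes it from \cite{Ana18}, so there is no in-paper argument to compare with. Your proof is the standard one: the exact decomposition $\KL(\mu\|P)+\frac{a}{1-a}\KL(\mu\|Q)=\frac{1}{1-a}\KL(\mu\|\mu_a)+\Ren_a(Q\|P)$ for every admissible $\mu$ with both divergences finite, followed by Gibbs' inequality. It is correct, including the integrability bookkeeping, which rests on the fact that the negative part of a log-density has integral at most $e^{-1}$.

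The step you flag as the main obstacle is not one, and you should drop the truncation fallback. Since $\mu_a(q=0)=0$ and $\E_P[q]=1$,
\[
   \E_{\mu_a}\bigl[|\log q|\bigr]
   =\frac1Z\,\E_P\bigl[q^{a}|\log q|\bigr]
   \le\frac1Z\Bigl(\frac1{ea}+\frac1{e(1-a)}\Bigr)<\infty .
\]
This uses $x^{a}|\log x|\le\frac1{ea}$ on $(0,1)$ and $x^{a}\log x\le\frac{x}{e(1-a)}$ on $[1,\infty)$, so both ends of $a\in(0,1)$ are used here.

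Equivalently, you can reuse the same $e^{-1}$ bound you already invoke. The negative parts of $\log\frac{d\mu_a}{dP}=a\log q-\log Z$ and $\log\frac{d\mu_a}{dQ}=-(1-a)\log q-\log Z$ are $\mu_a$-integrable. Since $-\log Z\ge0$, this gives $a(\log q)_-\le(a\log q-\log Z)_-+|\log Z|$ and $(1-a)(\log q)_+\le(-(1-a)\log q-\log Z)_-+|\log Z|$.

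Either way, $\KL(\mu_a\|P)=a\,\E_{\mu_a}[\log q]-\log Z$ and $\KL(\mu_a\|Q)=-(1-a)\,\E_{\mu_a}[\log q]-\log Z$ are finite. Their weighted sum is $-\frac{1}{1-a}\log Z=\Ren_a(Q\|P)$ directly, because the $\log q$ terms cancel.

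Note also that the fallback could not have rescued attainment in any case. If both divergences at $\mu_a$ were finite, their log-densities would be integrable, and so would their difference, which is $\log q$. So had $\log q$ failed to be $\mu_a$-integrable, one of the two divergences at $\mu_a$ would have been infinite, and $\mu_a$ would not have been a minimiser.
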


\begin{remark}[Use in this paper: the two descriptions of $\lambda(\beta)$]
\label{rem:app-variational}
Put $a=1-\beta$ and $s=\frac{1-\beta}{\beta}$, so that $\frac{a}{1-a}=s$, and
take $Q=P_0$, $P=P_1$. Then \eqref{eq:app-variational} reads
\begin{equation}\label{eq:lambda-inf}
   \lambda(\beta)\ =\ \Ren_{1-\beta}(P_0\|P_1)
   \ =\ \inf_{\mu\ll P_0}\lC\ \KL(\mu\|P_1)+s\,\KL(\mu\|P_0)\ \rC ,
\end{equation}
attained at the $\mu$ with $\frac{d\mu}{dP_0}=R_\beta=\frac{R^{\beta}}{Z_\beta}$,
while \Cref{thm:upper} together with \Cref{lem:flat} reads
\begin{equation}\label{eq:lambda-sup}
   \lambda(\beta)\ =\ \sup_{E\in\Ev}\ \lC-\log\E_{P_1}\lB E^{-s}\rB\rC
   \ =\ \sup_{E\in\Ev}\ \Lambda_s(E) ,
\end{equation}
attained at $E=R_\beta$. The two are convex duals: \eqref{eq:lambda-inf} is an
infimum over \emph{measures} of a weighted pair of Kullback--Leibler
divergences, \eqref{eq:lambda-sup} a supremum over \emph{\evars{}} of a
log-moment, and the same $R_\beta$ solves both --- as a density in the first and
as a test function in the second. What \Cref{sec:ceiling} adds to
\eqref{eq:lambda-inf} is \eqref{eq:lambda-sup}: the constraint set is exactly
$\Ev$, so the value is exactly the largest tilted exponent available at tilt
$\beta$, and by \Cref{prp:upper-seq} the same bound survives conditioning, hence
holds for every \etm{}.

The companion duality in the opposite direction --- a variational formula for
an exponential integral in which a R\'enyi divergence appears as the penalty
rather than as the value --- is \cite[Thm.~2.1]{ACD15}: for real
$\beta'<\gamma'$, both nonzero, and bounded measurable $g:\Xc\to\R$,
\[
   \frac1{\beta'}\log\int e^{\beta' g}\,d\nu
   \ =\ \inf_{\theta}\lC\frac1{\gamma'}\log\int e^{\gamma' g}\,d\theta
      +\frac1{\gamma'}\,\Ren_{\gamma'/(\gamma'-\beta')}(\nu\|\theta)\rC ,
\]
the infimum over probability measures $\theta$ on $(\Xc,\Bs{\Xc})$. Here the
order $a=\gamma'/(\gamma'-\beta')$ lies outside $(0,1)$; the formula
$\Ren_a(Q\|P)=\frac{1}{a-1}\log\int q^{a}p^{1-a}d\mu$ of \Cref{def:renyi}
defines $\Ren_a$ for every $a\in\R\setminus\{0,1\}$ and is what is meant. The
coefficient $\frac1{\gamma'}$ is $\frac{1}{\gamma'-\beta'}$ in the normalisation
$\frac{1}{a(a-1)}\log\int q^{a}p^{1-a}d\mu$ used in \cite{ACD15}, and likewise
\Cref{thm:app-variational} is \cite[Thm.~1]{Ana18} rescaled by
$\Ren_a=a\,R_a$.
\end{remark}

\end{document}